\definecolor{vert}{rgb}{0.,0.5,0.}
\newcommand{\Gr}{\mathbf{Gr^{\circlearrowright}}}
\newcommand{\PGr}{\mathbf{PGr^{\circlearrowright}}}
\newcommand{\Grc}{\mathbf{Gr}^{\uparrow}}
\newcommand{\PGrc}{\mathbf{PGr}^{\uparrow}}
\newcommand{\Gri}{\mathbf{Gr}^{\circlearrowright}_\mathrm{ind}}
\newcommand{\PGri}{\mathbf{PGr}^{\circlearrowright}_\mathrm{ind}}
\newcommand{\Grci}{\mathbf{Gr}^{\uparrow}_\mathrm{ind}}
\newcommand{\PGrci}{\mathbf{PGr}^{\uparrow}_\mathrm{ind}}
\newcommand{\Gacirc}{\Gamma^{\circlearrowright}}
\newcommand{\Ga}{\Gamma^{\uparrow}}
\newcommand{\Hom}{\mathbf{Hom}}
\newcommand{\rond}[1]{*++[o][F-]{#1}}
\newcommand{\K}{\mathbb{K}}
\newcommand{\Z}{\mathbb{Z}}
\newcommand{\C}{\mathbb{C}}
\newcommand{\R}{\mathbb{R}}
\newcommand{\N}{\mathbb{N}}
\renewcommand{\geq}{\geqslant}
\renewcommand{\leq}{\leqslant}
\newcommand{\sym}{\mathfrak{S}}
\newcommand{\calO}{\mathcal{O}}
\newcommand{\grapheo}{\mathcal{O}}
\newcommand{\End}{\mathrm{End}}
\newcommand{\cat}{\mathcal{C}}
\newcommand{\catssm}{\mathbf{Mod}_\sym }
\newcommand{\Trap}{\mathbf{TraP}}
\newcommand{\Prop}{\mathbf{ProP}}
\theoremstyle{plain}
\newtheorem{defi}{Definition}[subsection]
\newtheorem{theo}[defi]{Theorem}
\newtheorem{lemma}[defi]{Lemma}
\newtheorem{cor}[defi]{Corollary}
\newtheorem{prop}[defi]{Proposition}
\newtheorem{defiprop}[defi]{Definition-Proposition}
\theoremstyle{remark}
\newtheorem{remark}{Remark}[subsection]
\newtheorem{notation}{Notations}[subsection]
\newtheorem{example}{Example}[subsection]
\begin{document}

\title{ProPs of graphs and generalised traces}

\author{Pierre J. Clavier${}^{1,2}$, Lo\"ic Foissy${}^3$, Sylvie Paycha${}^{1,4}$\\
~\\
{\small \it $^1$ Universit\"at Potsdam, Institut f\"ur Mathematik,} \\
{\small \it Campus II - Golm, Haus 9} \\
{\small \it Karl-Liebknecht-Stra\ss e 24-25} \\
{\small \it D-14476 Potsdam, Germany}\\ 
~\\
{\small \it $^2$ Technische Universit\"at, Institut f\"ur Mathematik,}\\
{\small \it Str. des 17. Juni 136,}\\
{\small \it 10587 Berlin, Germany}\\
~\\
{\small \it $^3$ Univ. Littoral Côte d'Opale, UR 2597}\\
{\small \it LMPA, Laboratoire de Mathématiques Pures et Appliquées Joseph Liouville}\\
{\small \it F-62100 Calais, France}\\ 
~\\
{\small\it $^4$ On leave from the University of Clermont-Auvergne, Clermont-Ferrand, France}\\
~\\
{\small emails: \it clavier@math.uni-potsdam.de, foissy@univ-littoral.fr, paycha@math.uni-potsdam.de}}

\date{}

\maketitle

	\begin{abstract}
We assign  generalised convolutions (resp. 
traces) to graphs  whose edges are decorated by smooth kernels (resp. smoothing operators) on a closed manifold. To do so, we introduce the concept of TraPs (Traces and Permutations), which roughly correspond to ProPs (Products and Permutations) without 
vertical concatenation and equipped with families of generalised partial traces. They can be equipped with a ProP structure in deriving vertical 
concatenation from the partial traces and we relate TraPs to wheeled ProPs first introduced by Merkulov. We further build their free object and give 
precise proofs of universal properties of ProPs and TraPs.
	\end{abstract}
\textbf{Classification:} 
18M85, 46E99, 47G30\\ 
\textbf{Key words:} ProP, trace, distribution kernel, convolution
	\tableofcontents
	
	\section*{Introduction}
	
	\addcontentsline{toc}{section}{Introduction}
	
\subsection*{State of the art}

ProPs (\underline{Pro}ducts and \underline{P}ermutations)\footnote{The traditional notation for ProP is PROP or and  more recently prop. We choose to 
use ProP as an acronym with capital letters for the first letter of the words and use the same convention for the related concept of TraP.} provide an 
algebraic structure that allows to deal with an arbitrary number of inputs and outputs. As such they generalise many other algebraic structures such as 
operads, which have one output and multiple inputs. ProPs appeared in \cite{MacLane65} and later in the book \cite{BV73} in the context of cartesian 
categories. Operads stemmed from this work in \cite{May72}, although their origin can also be traced back to the earlier work \cite{BV68}\footnote{We 
thank B. Vallette for his enlightening comments on these historical aspects.}.
	
An important asset of ProPs over operads is that they encompass algebraic structures such as bialgebras and Hopf algebras that lie outside the realm of 
operads or co-operads. We refer the reader to \cite{Pirashvili01} for the study of bialgebras in the ProPs framework and \cite{Markl} for other 
classical examples of ProPs. 
	
Our two central examples of ProPs are the ProP $\Hom_V$ of homomorphisms  of a finite dimensional vector space $V$ which we generalise 
to the ProP $\Hom^c_V$ of continuous homomorphisms  of the  nuclear Fréchet space $V$, and the  ProP  $\Gr$ of graphs\footnote{We use Merkulov's 
notations.}. In the context of deformation quantisation,  the complex of oriented graphs whether 
directed or wheeled,  plays an important role in the construction of a free ProP   generated by a $\sym\times \sym^{op}$-module  (see e.g  
\cite[Paragraph 2.1.3]{Merkulov2004}). However,  to our knowledge, the ProP of oriented graphs, briefly  mentioned  in \cite{Ionescu07}, has not  yet found concrete applications in the 
perturbative approach to quantum field theory. Filling this gap is  a long term goal we  have in mind.

\subsection*{ProPs and oriented graphs}

		In space-time variables, a Feynman rule is expected to assign to a graph $G$ with $k$ incoming and $l$ outgoing edges, a correlation function 
	(it is actually a distribution) $K_G$ in $k+l$ variables. Our long term goal is to interpret the correlation function associated with the 
	composition $G\circ G'$ of two graphs as a generalised convolution $K_G \star K_{G'}$  of the correlation functions $K_{G}$ and $K_{G'}$  
	associated with $G $ and $G'$, aiming to derive the existence and the properties of the map $G\mapsto K_G$ from a universal property 
	of the ProP  structure on graphs.
	
	ProPs entail two operations, called horizontal and vertical concatenations, which are the natural operations implemented on oriented graphs. With the goal we have in mind, ProPs are therefore natural structures to consider.  We provide a precise formulation of the well-known fact that oriented graphs can   be equipped with a ProP structure as well as a complete proof (see Theorem 
	\ref{theo:ProP_graph}) of this   statement. We also give a similar statement for (resp. planar) vertex decorated 
	graphs in Theorem \ref{theo:ProP_graph_2} (resp. Theorem \ref{thm:prop_struct_gacirc}).
	The horizontal concatenation of this ProP is the natural concatenation of 
	graphs and the  vertical concatenation is   the composition, which   to a graph $G $ with $k$ incoming and $l$ outgoing edges, and a graph 
	$G'$  with $l$ incoming and $m$ outgoing edges, assigns a a graph $G \circ G'$ with $k$ incoming and $m$ outgoing edges. Roughly speaking, 
	$G\circ G'$ is obtained by gluing together the  {outgoing} edges of $G$ and the  incoming edges of $G'$ according to their indexation.  
	
	In Theorem \ref{thm:freeness_Gr}, we show that the ProP of oriented graphs is the free ProP generated by what we call  
	indecomposable graphs (see Definition \ref{def:indecomposable}). We provide a planar version of this result in Theorem \ref{thm:freeness_PGr}.
	These universal properties are generalised to decorated graphs in Theorem 
	\ref{thm:univ_prop_deco}. Such universal properties were stated without detailed proofs in   previous work, see  e.g.
	\cite[Proposition 57]{Markl} and   \cite{Vallette1,Vallette3}. 
	
	We make use of the universal property of oriented graphs  when decorating the corresponding   ProP $\Gr$  with another ProP whose structure 
	is   compatible with that  of the one on graphs (see Subsection \ref{subsec:propdecprop}). In particular, we show in Theorem 
	\ref{thm:freeness_GaX} that $\Ga(X)$ is the free ProP generated by the $\sym\times \sym^{op}$-module $X$. The decorating set $X$
	will eventually  be a ProP of smooth kernels. Along the way, we use Theorem \ref{thm:freeness_GaX} in Corollaries \ref{cor:homGrc}, 
	\ref{cor:lift_alg_over_Prop} to build algebra over ProPs; see Definition \ref{def:Palgebra}. The same constructions and universal properties 
	hold for edge-decorated oriented graphs, i.e. Feynman graphs (see Remark \ref{rk:decoration}).
	
We have chosen to work with the ProP $\Gr$ which comprises loops, although the latter play a passive role in the presentation of a ProP. Yet,  they will be relevant   in the presentation of TraPs that come  later in Section 
	\ref{subsection:generalised_graphs}. Introducing them right at the beginning unifies the presentation, since otherwise two similar constructions over two different sets of graphs would have been necessary. 
	
	\subsection*{Correlation functions and generalised convolutions}
	
	By  means of  blow-up   methods, generalised convolutions of Green functions were built on a closed Riemannian manifold  in    \cite{DangZhang17}, with the goal of renormalising multiple loop 
	amplitudes for Euclidean QFT on Riemannian manifolds.
	We hope to be able to simplify    the intricate analytic aspects of the renormalisation procedure for   multiple loop amplitudes,  by adopting an algebraic point of view on correlation functions 
	using ProPs.  There were    earlier attempts to describe QFT theories in 
	terms of ProPs (see e.g. \cite{Ionescu,Ionescu07}), yet to our knowledge, none with  the focus we are putting on generalised convolutions to 
	describe correlation functions. 
	
	Our goal is to use the ProP (actually TraP) structure of graphs decorated by distribution (e.g. Green) kernels to build the  resulting convolutions as generalised convolutions of kernels associated with the 
	decorated graph. The  expected singularities of the resulting correlation functions are immediate obstacles in defining such generalised convolutions.  In this paper, we  focus on the smooth setup, for which the correlation functions are smooth. Our goal in the smooth case  is to 
	provide an adequate algebraic and analytic framework in which we carry out this construction for correlation functions emanating 
	from graphs decorated  with smooth kernels. 
	
	A smooth kernel $K$ on   a closed manifold $M$  gives rise to a smoothing   operator 
	\[ {\mathcal D}^\prime(M)\ni u\longmapsto \left(L_K(u): x\mapsto \int_M K(x, y)\, u(y)\, dy\right)\in C^\infty (M),\] 
	which maps the space ${\mathcal D}^\prime(M)$ of distributions on $M$ to the space $C^\infty (M)$  of smooth functions on $M$.
	So, in  generalising the convolution of smooth kernels, we generalise the composition of smoothing operators.  

	\subsection*{Graphs with oriented cycles and TraPs}
	
One challenge present  both in the smooth and non-smooth case is the treatment of oriented cycles.
A first step   is the  study of the sub-ProPs of (decorated and non decorated) graphs
without oriented cycles carried out in Subsection \ref{subsection:cycless_graph}. These structures are then used in Section 
\ref{section:Hom_dec_graphs}. Yet in order to tackle Feynman graphs, we need 
graphs that can contain  oriented cycles.

TraPs (see Definition \ref{defi:Trap}) provide a natural structure to   take into account oriented cycles in the graph. It indeed 
provides a framework to host 	(partial) traces on graphs that generalise the 
ordinary trace Tr$ (L_K)=\int_M K(x, x)\, dx$. 
The TraP structure, which we relate in Section \ref{subsec:Trap_wProPs} to Merkulov's notion of wheeled ProPs (see Corollary 
\ref{cor:lien_Trap_wProP}), encompasses families of generalised traces. In Definition \ref{defi:graph}, we introduce the set of   $\Gr$ of  
graphs which includes graphs with oriented loops. Proposition \ref{prop:TrapGG} shows that $\Gr$ can be equipped with  a TraP structure 
and  Theorem  \ref{theoFGlibre} shows that this TraP is free. This result is then generalised by Theorem \ref{theo:freeTrapplanar} 
which describes free TraP. An appendix is dedicated to the precise definition of the trace on  $\Gr$.  
Paragraph \ref{subsection:free_Traps} provides a description of a free TraP generated by a given set. 
	
We have postponed the detailed proofs of   two main results  Theorem \ref{thm:freeness_Gr} and Theorem \ref{theoFGlibre}  
to the appendix, so as not to burden the bulk of the paper with technicalities. A sketch of the proof is given straight 
after the statement so that the reader can nevertheless have an idea of the proof.


Alongside the ProP of graphs, another  guiding example throughout the paper is the ProP of 
homomorphisms, which we investigate in the infinite dimensional setup. In Theorem \ref{thm:Hom_V_generalised}, we introduce 
the ProP $\Hom_V^c$ of {\it continuous morphisms} for a topological Fr\'echet nuclear space $V$, which generalises the well-known ProP 
$\Hom_V$  (see e.g. the classical monograph \cite{Markl}) of morphisms on a finite dimensional vector space (see Definition\ref{defi:Hom_V}). 

In Proposition \ref{prop:trop_prop} we define the TraP  $(\Hom_V^c(k, l))_{k,l\geq 0}$ corresponding to the ProP $\Hom_V^c$ of continuous
morphisms on an infinite dimensional Fr\'echet nuclear space $V$. In the finite dimensional case it reduces to the TraP  
$(\Hom_V(k, l))_{k,l\geq 0}$.

\subsection*{Functorial properties: TraPs versus wheeled ProPs}
 
Much in the same way as  we build the functor (see Proposition \ref{prop:functGamma_2})  
\begin{equation*}
 \Ga:\catssm\longrightarrow \Prop
\end{equation*}
from the category $\catssm$ (Definition \ref{deficatssm}) of $\sym\times \sym^{op}$-modules to the category $\Prop$ 
(Definition \ref{deficatProP}) of ProPs, which to   
a $\sym\times \sym^{op}$-module $P$ assigns a graph-ProP $\Ga(P)$ whose vertices are decorated by $P$, following Merkulov's approach, we build a functor 
\[ \Gacirc: \catssm\longrightarrow \Trap\] which takes $\sym\times\sym^{op}$-modules
to TraPs (Proposition \ref{prop::GaTraP}). 
Combining them with forgetful functors from $\Prop$ or $\Trap$ to $\catssm$,
we can view $\Ga$ as an endofunctor of $\catssm$ or of $\Prop$, and $\Gacirc$
and an endofunctor of $\catssm$ or of $\Trap$.

In Paragraph \ref{subsec:monad}, we provide a detailed description of Merkulov's construction of the monad structure of $\Gacirc$ on the 
category $\catssm$ (Proposition \ref{prop:monad}), by means of which  (wheeled)  ProPs are defined. Our Definition 
\ref{defi:Trap} of TraPs corresponds to unital wheeled ProPs. 
Using the construction of free TraPs of Section \ref{subsection:free_Traps},  in Corollary \ref{cor:lien_Trap_wProP} we establish an 
isomorphism between the categories of wheeled TraPs on the one hand  and  of TraPs on the other hand.

Our constructions have some similarity with those underlying traced monoidal categories introduced in \cite{JSV}, yet  the  framework and the  axioms in  the two approaches   differ.

\subsection*{TraPs viewed as  ProPs: the trace and the composition}	
	
It follows from the identification between TraPs and wheeled ProPs  mentioned above, that a TraP is a ProP.  In Proposition \ref{prop:trop_prop}, we 
provide a detailed description of the ProP structure on TraPs as a result of the fact that both the trace  and composition of morphisms (see Lemma 
\ref{lem:compo_dual_pairing}) can be expressed in terms of a dual pairing. Let us illustrate this fact in the finite dimensional setup. 

	Given a finite dimensional vector space $V$ over a commutative field $\K$, both the composition and the  trace on the algebra of morphisms 
	$\Hom(V)\simeq V^*\otimes    V$  involve the dual pairing \[V^*\times V\ni  (v^*, w)\mapsto v^*(w)\in \K,\] between the algebraic dual 
	$V^*$ and the space $V$.

	Extending this to the infinite dimensional setup requires the use of a completed tensor product $\widehat \otimes$ in order to have an 
	isomorphism 
	\[\Hom_V^c(k,l)\simeq\left(V'\right)^{\widehat\otimes  k}\widehat \otimes V^{\widehat\otimes  l},\]  
	where $\Hom_V^c(k,l)$ stands for the algebra of continuous morphisms from $V^{\widehat\otimes  l}$ to $V^{\widehat\otimes  k}$ (see 
	Definition \ref{defi:Hom_V_generalised}) and
	$V'$ for the topological dual of a topological space $V$. This holds in the framework of Fr\'echet nuclear spaces which form a monoidal 
	category under
	the completed tensor product  $E\widehat\otimes  F$  (Lemma \ref{lem:prod_Frechet_nuclear}). On Fr\'echet nuclear spaces, the composition can 
	indeed be described as a dual pairing (see Lemma \ref{lem:compo_dual_pairing}) so it  comes as no surprise that (see Proposition 
	\ref{prop:TropPropHomc})
	for a Fr\'echet nuclear space $V$, the ProP built from the TraP  $(\Hom_V^c(k, l))_{k,l\geq 0}$ is 
	isomorphic, as a ProP, to the ProP $\Hom_V^c$. In  the finite dimensional setting, this induces an isomorphism of ProPs between TraP  
	$(\Hom_V(k, l))_{k,l\geq 0}$  and  $\Hom_V$. 

   In practice,  the partial trace maps $t_{i,j}$ arising in the definition of a TraP might not be defined on every operator. To circumvent this 
   difficulty, in Paragraph \ref{subsec:partial}, we introduce the notion of quasi-TraP, which we embed in a complete TraP.
   
   \subsection*{Openings}
   
	   As announced in the abstract, by means of a  (quasi-) TraP structure, we were able to build  generalised convolutions (resp. traces) associated with   graphs decorated with smooth 
	   kernels (see Remark \ref{rk:smoothkernelquasitrop}). We expect this algebraic approach  to enable us to tackle non smooth kernels and   thus  to describe  correlation functions as generalised 
	   convolutions of distribution kernels associated with graphs. At this stage these are open questions we hope to address in future work. 
	   
	 \section*{Acknowledgements} 
	 The first and last authors would like to acknowledge the Perimeter Institute for hosting them during the preparation of this paper.
	 They are also grateful for inspiring discussions with Matilde Marcolli. The three authors thank  Bruno 
	 Vallette and Dominique Manchon most warmly for pointing the wheeled ProP literature to us. The article was partially restructured in the 
	 aftermath of their suggestions and we appreciate Dominique Manchon's comments after that.

	\section*{Notation}\begin{enumerate}
		\item Any vector space in this text is taken over $\K$, chosen to be the field $\R$ or the field $\C$.	
		\item For any $k\in  \N_0=\Z_{\geq0}$, we denote by $[k]$ the set $\{1,\ldots,k\}$. In particular, $[0]=\emptyset$.
	\end{enumerate} 

\section{Two guiding examples of ProPs}

We define ProPs, the first main protagonists of the paper, and two ProPs which we shall use as a driving thread throughout the paper.

\subsection{Definition}

Following \cite{Vallette1,Markl}, a ProP is a symmetric strict monoidal category, whose objects are identified with $(\N_0)^2$
and such that the tensor product of two objects is identified with the sum of integers on each copy of $\N_0$. Here is a more detailed description.

\begin{defi} \label{def:prop}
A \textbf{ProP} is a family $P=(P(k,l))_{k,l\in  \N_0}$ of vector spaces such that:
\begin{enumerate}
\item $P$ is a $\sym\times\sym^{op}$-module, that is to say, for any $(k,l)\in  \N_0^2$, $P(k,l)$
is a $\sym_l\times \sym_k^{op}$-module. In other words, there exist maps
\begin{align*}
&\left\{\begin{array}{rcl}
\sym_l\times P(k,l)&\longrightarrow&P(k,l)\\
(\sigma,p)&\longrightarrow&\sigma\cdot p,
\end{array}\right.&
&\left\{\begin{array}{rcl}
P(k,l)\times \sym_k&\longrightarrow&P(k,l)\\
(p,\tau)&\longrightarrow&p\cdot \tau,
\end{array}\right.
\end{align*}
such that for any $(k,l)\in  \N_0^2$, for any $(\sigma,\sigma',\tau,\tau')\in \sym_l^2\times \sym_k^2$, for any $p\in P(k,l)$,
\begin{align*}
&&\mathrm{Id}_{[l]}\cdot p&=p\cdot \mathrm{Id}_{[k]}=p,\\
\sigma\cdot (\sigma'\cdot p)&=(\sigma\sigma')\cdot p,&
\sigma \cdot (p\cdot \tau)&=(\sigma\cdot p)\cdot \tau,&
(p\cdot\tau)\cdot \tau'&=p\cdot(\tau\tau').
\end{align*}
\item For any $(k,l,k',l')\in  \N_0^4$, there exists a product $*$ from $P(k,l)\otimes P(k',l')$ to $P(k+k',l+l')$
such that:
\begin{enumerate}
\item For any $(k,l,k',l',k'',l'')\in  \N_0^6$, for any $(p,p',p'')\in P(k,l)\times  P(k',l') \times P(k'',l'')$,
\[p*(p'*p'')=(p*p')*p''.\]
\item There exists $I_0\in P(0,0)$, such that for any $(k,l)\in  \N_0^2$, for any $p\in P(k,l)$,
\[p*I_0=I_0*p=p.\]
\end{enumerate}
This product $*$ is called the  \textbf{horizontal concatenation}.
\item For any $(k,l,m)\in  \N_0^3$, there exists a product $\circ$ from $P(l,m)\otimes P(k,l)$ to $P(k,m)$ such that:
\begin{enumerate}
\item For any $(k,l,m,n)\in  \N_0^4$, for any $(p,q,r)\in P(m,n)\times P(l,m)\times P(k,l)$,
\[p\circ (q\circ r)=(p\circ q)\circ r.\]
\item There exists $I_1\in P(1,1)$, such that for any $(k,l)\in  \N_0^2$, for any $p\in P(k,l)$,
\[p\circ I_k=I_l\circ p=p,\]
where we put $I_n=I_1^{*n}$ for any $n\in  \N_0$, with the convention $I_1^{*0}=I_0$.
\end{enumerate}
This product $\circ$ is called the  \textbf{vertical concatenation}.
\item The vertical and horizontal concatenations are compatible: for any $(k,k',l,l',m,m') \in  \N_0^6$, 
for any $(p,p',q,q')\in P(l,m)\times P(l',m') \times P(k,l) \times P(k',l')$,
\[(p*p')\circ(q*q')=(p\circ q)*(p'{\circ}q').\]
\item The vertical concatenation and the action of $\sym\times \sym^{op}$ are compatible:
for any $(k,l,m)\in  \N_0^3$, for any $(p,q)\in P(l,m)\times P(k,l)$, for any $(\sigma,\tau,\nu) \in \sym_m\times\sym_l\times\sym_k$,
\begin{align*}
\sigma\cdot(p\circ q)&=(\sigma\cdot p)\circ q,&
(p\circ q)\cdot \nu&=p\circ (q\cdot \nu),&
(p\cdot \tau)\circ q&=p\circ (\tau\cdot q).
\end{align*}
\item The horizontal concatenation and the action of $\sym\times \sym^{op}$ are compatible:
\begin{enumerate}
\item For any $(k,k',l,l')\in  \N_0^4$, for any $(p,p')\in P(k,l)\times P(k',l')$, 
for any $(\sigma,\sigma',\tau,\tau') \in \sym_l\times\sym_{l'}\times\sym_k\times\sym_{k'}$,
\begin{align*}
(\sigma\cdot p)*(\sigma'\cdot p')&=(\sigma\otimes \sigma')\cdot(p*p'),&
(p\cdot \tau)*(p'\cdot \tau')&=(p*p')\cdot (\tau \otimes \tau'),
\end{align*}
where for any $\alpha \in \sym_m$, $\beta \in \sym_n$, $\alpha \otimes \beta \in \sym_{m+n}$ is defined by:
\[\alpha \otimes \beta(i)=\begin{cases}
\alpha(i)\mbox{ if }i\leq m,\\
\beta(i-m)+m\mbox{ if }i>m.
\end{cases}\]
\item (Commutativity of the horizontal concatenation). For any $(k,k',l,l')\in \N_0^4$, for any $(p,p')\in P(k,l)\times P(k',l')$,
\begin{equation}\label{eqpstarpprime}c_{l,l'}\cdot (p*p')=(p'*p)\cdot c_{k,k'},\end{equation}
where for any $(m,n)\in  \N_0^2$, $c_{m,n}\in \sym_{m+n}$ is defined by:
\begin{align} 
\label{defcmn} c_{m,n}(i)&=\begin{cases}
i+n\mbox{ if }i\leq m,\\
i-m\mbox{ if }i>m.
\end{cases}
\end{align}
\end{enumerate}\end{enumerate}\end{defi}

\begin{remark}
 \begin{enumerate}
  \item Note that $c_{k, 0}= \mathrm{Id}_{[k]}=c_{0,k}$.
  \item In particular, $(P(0,0),*)$ is a unitary associative and commutative algebra, whose unit is $I_0$, which, consequently is unique.
  \item Similarly, $(P(1,1),\circ)$ is a unitary associative non commutative algebra, whose unit is $I_1$ which, consequently is unique.
  \item For any $\sigma \in \sym_k$, as a consequence of the compatibility between the vertical concatenation and
the action of $\sym\times \sym^{op}$ and the definition of $I_k\in P(k,k)$:
\[I_k\cdot \sigma=(I_k\cdot \sigma)\circ I_k=I_k\circ (\sigma\cdot I_k)=\sigma\cdot I_k.\]
Hence, $I_k\cdot \sigma=\sigma\cdot I_k$.
  \item By the commutativity axiom, if $p\in P(k,l)$ and $p_0 \in P(0,0)$, by the first item of this Remark,
  it follows   from (\ref{eqpstarpprime}) that $p*p_0=p_0*p$. So the elements of $P(0,0)$ are central for the horizontal concatenation.
If $q\in P(l,m)$, by the compatibility between the two concatenations:
\begin{align*}
(p*p_0)\circ q&=(p*p_0)\circ (q*I_0)\\
&=(p\circ q)*(p_0\circ I_0)\\
&=(p\circ q)*p_0.
\end{align*}
Similarly, $p\circ (q*p_0)=(p\circ q)*p_0$.
 \end{enumerate}
\end{remark}

We adapt the definition of morphisms of ProPs of \cite{Vallette1} in our non categorical language.
\begin{defi}      
     Let $P=(P(k,l))_{k,l\geq0}$ and $Q=(Q(k,l))_{k,l\geq0}$ be two ProPs. A \textbf{morphism of ProPs} is a family 
     $\phi=(\phi_{k,l})_{k,l\geq0}$ of linear  maps $\phi_{k,l}:P(k,l)\mapsto Q(k,l)$ which form a morphism for the horizontal 
     concatenation, the vertical concatenation and the actions of the symmetric groups. More precisely, for any $(k,l,m,n)\in \N_0^4$:
     \begin{itemize}
      \item $\forall (p,q)\in P(l,m)\times P(k,l),~\phi_{k,m}(p\circ q) = \phi_{l,m}(p)\circ \phi_{k,l}(q)$,
      \item $\forall (p,q)\in P(k,l)\times P(n,m),~\phi_{k+n,l+m}(p* q) = \phi_{k,l}(p)* \phi_{n,m}(q)$,
      \item $\forall (\sigma,p)\in\sym_l\times P(k,l),~\phi_{k,l}(\sigma.p)=\sigma.\phi_{k,l}(p)$,
      \item $\forall (p,\tau)\in P(k,l)\times\sym_k,~\phi_{k,l}(p.\tau)=\phi_{k,l}(p).\tau$.
     \end{itemize}
 By abuse of  notation, we  shall write $\phi(p)$ instead of $\phi_{k,l}(p)$ for $p\in P(k,l)$. 
    \end{defi}

    In particular, ProPs form a category. 
    \begin{defi} \label{deficatProP} 
     Let $\Prop$ be the category with objects given by $P=(P(k,l))_{(k,l)\in  \N_0^2}$ and the morphisms of which
     are morphisms $\phi:P\longrightarrow Q$ of ProPs given by families $(\phi_{k,l})_{(k,l)\in  \N_0^2}$. Here, for any $(k,l)\in  \N_0^2$,
     $\phi_{k,l}:P(k,l)\longrightarrow Q(k,l)$ is a morphism of $\sym_l\otimes \sym_k^{op}$-modules, 
  compatible with the vertical
     and horizontal concatenations, which sends the units $I_0$ and $I_1$ of $P$ to the corresponding units of $Q$.
 More explicitly, we have that
\begin{itemize}
\item For any $(k,l,k',l')\in \N_0^4$, for any $(p,p')\in P(k,l)\times P(k',l')$, 
$\phi_{k+k',l+l'}(p*p')=\phi_{k,l}(p)*\phi_{k', l'}(p')$.
\item For any $(k,l,m)\in \N_0^3$, for any $(p,p')\in P(l,m)\times P(k,l)$,
$\phi_{k,m}(p\circ p')=\phi_{l,m}(p)\circ \phi_{k,l}(p')$.
\item $\phi_{0,0}(I_0)=J_0$ and $\phi_{1,1}(I_1)=J_1$, where $I_0$,  $I_1$ are the units of $P$ and $J_0$, $J_1$ are the units of $Q$.
\end{itemize}     
    \end{defi}
    
Let $P=(P(k,l))_{k,l\geqslant 0}$ be a ProP and, for any $k,l\geqslant 0$, $Q(k,l)$ be a subspace of $P(k,l)$.
We shall say that $Q=(Q(k,l))_{k,l\geqslant 0}$ is a sub-ProP of $P$ if it is stable under the horizontal and vertical compositions,
under the action of the symmetric groups and if it contains the units  $I_0$ and $I_1$. More precisely:
\begin{itemize}
\item For any $(k,l,m)\in  \N_0^3$, $Q(l,m)\circ Q(k,l)\subseteq Q(k,m)$.
\item For any $(k,l,k',l')\in  \N_0^4$, $Q(k,l)*Q(k',l')\subseteq Q(k+k',l+l')$.
\item For any $(k,l)\in  \N_0^2$, for any $(\sigma,\tau) \in \sym_l\times \sym_k$, $\sigma.Q(k,l).\tau\subseteq Q(k,l)$.
\item $I_0\in Q(0,0)$ and $I_1\in Q(1,1)$.
\end{itemize} 

Let $P$ be a ProP.
	\begin{itemize}
\item If $Q$ is a sub-ProP of $P$, then $Q$ is also a ProP, 
and the canonical injection from $P$ to $Q$ is a ProP morphism. 
\item If $(Q_i)_{i\in I}$ is a family of sub-ProPs of $P$, then $\displaystyle \bigcap_{i\in I} Q_i$ is also a sub-ProP of $P$.
\end{itemize}
This leads to the following 
\begin{defiprop} 
Let $P$ be a ProP.
If for any $k,l\geqslant 0$, $R(k,l)$ is a subspace of $P(k,l)$, then there exists a smallest sub-ProP of $P$ containing
$R=(R(k,l))_{k,l\geqslant 0}$ 
\[\langle R\rangle:= \bigcap_{\substack{\mbox{\scriptsize $Q$ sub-ProP}\\ \mbox{\scriptsize of $P$ containing $R$}}}  Q.\] 
\end{defiprop}

\begin{remark}
Since   $Q$ contains $I_1$, by $*$-stability, $Q$ contains $\underbrace{I_1*\ldots*I_1}_{k\, \text{times}}=I_k$ and as a consequence of stability under the action of the symmetry groups,
$Q$ further contains $\sigma.I_k.\tau$.
\end{remark} 

\subsection{The ProP of  linear morphisms: $\Hom_V$} \label{sec:propHom_finiteDim}

We recall a classical example of ProP.
\begin{defiprop} \label{defi:Hom_V} 
Given a finite dimensional $\K$-vector space $V$, the ProP 
$\Hom_V$ is defined in the following way:
\begin{enumerate}
\item For any $k,l\in  \N_0$, \[\Hom_V(k,l):=\Hom(V^{\otimes k}, V^{\otimes l}).\]
\item For any $\sigma \in \sym_n$, let  $\theta_\sigma$ be the endomorphism of $V^{\otimes n}$ defined by
\[\theta_\sigma(v_1\otimes \ldots \otimes v_n):=v_{\sigma^{-1}(1)}\otimes \ldots \otimes v_{\sigma^{-1}(n)}.\] 
This defines a left action of $\sym_n$ on $V^{\otimes n}$. 
For any $(k,l)\in  \N_0^2$, for any $f\in \Hom_V (k,l)$, for any  $(\sigma,\tau) \in \sym_l\times \sym_k$, we set:
\begin{align*}
\sigma\cdot f&:=\theta_\sigma \circ f ,&f\cdot \tau&:=f\circ \theta_\tau.
\end{align*} 
\item The horizontal concatenation is the tensor product of maps and $I_0:\K\longrightarrow\K$ is the identity 
map $I_0:=\mathrm{Id}_\K$.
\item The vertical concatenation is the usual composition of maps and $I_1:V\longrightarrow V$ is the identity map $I_1:=\mathrm{Id}_V$.
\end{enumerate}
 \end{defiprop} 
\begin{remark}
 This ProP is mentioned in \cite{Vallette1} and \cite{Markl}, but without an explicit proof of its ProP structure. We add such a proof here for completeness and in 
 preparation for the
 infinite dimensional case, which will be similar in spirit.
\end{remark}

     \begin{remark}
  Following {our} convention for a  ProP  $P=(P(k,l))_{k,l\in \N_0}$, where an element in $  P(k,l)$ has  {``$k$ entries and $l$ exits''}, 
     for the ProP $\Hom_V$,  an element $f\in\Hom_V(k,l)$ has  {``$k$ entries and $l$ exits''}.
    \end{remark}

 \begin{proof}
  \begin{enumerate}
   \item The maps $\theta_\sigma$ turns $\Hom_V$ into a $\sym_l\times \sym_k^{op}$-module by associativity of the composition product.
   \item The horizontal concatenation is associative as a result of the associativity of the tensor product $\otimes$, and we trivially have that 
   $\otimes$ maps $\Hom_V(k,l)\otimes\Hom_V(k',l')$ to $\Hom_V(k+k',l+l')$.
    Furthermore, if $(k,l)\in  \N_0^2$ and  $f\in\Hom_V(k,l)$, for any $v\in V^{\otimes k}$, we have
       \begin{equation*}
    (I_0\otimes f)(v) =(I_0\otimes f)(1.v):= I_0(1)\otimes f(v) = 1_\K\otimes f(v) = f(v)
   \end{equation*}
   \item The vertical concatenation is associative as the consequence of the associativity of the composition product. We furthermore have 
   $I_n:=I_1^{\otimes n}=\mathrm{Id}_V^{\otimes n} = \mathrm{Id}_{V^{\otimes n}}$ where the last identity follows
   from the definition of the tensor product of maps.
   \item For any $f\in\Hom_V(l,m)$, $f'\in\Hom_V(l',m')$, $g\in\Hom_V(k,l)$, $g'\in\Hom_V(k',l')$, $v\in V^{\otimes k}$ and 
   $v'\in V^{\otimes k'}$ we have
   \begin{align*}
    (f\otimes f')\circ (g\otimes g')(v\otimes v') & = (f\otimes f')(g(v)\otimes g'(v')) \\
						  & = (f\circ g)(v)\otimes (f'\circ g')(v') \\
						  & = [(f\circ g)\otimes (f'\circ g')](v\otimes v').
   \end{align*}
   Thus, the horizontal and vertical concatenation are compatible.
   \item The vertical concatenation and the action of $\sym\times \sym^{op}$ are compatible by associativity of the composition product.
   \item For any $f\in \Hom_V(k,l)$, $f'\in \Hom_V(k',l')$, $\sigma \in \sym_l$, $\sigma'\in \sym_{l'}$, $v\in V^{\otimes k}$, 
   $v'\in V^{\otimes k'}$ we have 
   \begin{align*}
    (\sigma.f)\otimes(\sigma'.f')(v\otimes v') & = ({\theta_\sigma}\circ f)\otimes ({\theta_{\sigma'}}\circ f')(v\otimes v') \\
					       & = {\theta_\sigma}(f(v))\otimes ({\theta_{\sigma'}}f'(v') \\
					       & = ({\theta_\sigma}\otimes {\theta_{\sigma'}})[f(v)\otimes f'(v')] \\
					       & = (\sigma\otimes \sigma').(f\otimes f')(v\otimes v').
   \end{align*}
   Similarly, we have $(f.\tau)\otimes(f'.\tau') = (f\otimes f').(\tau\otimes\tau')$ and $c_{l,l'}\cdot (f*f')=(f'*f)\cdot c_{k,k'}$, therefore the horizontal action of $\sym\times \sym^{op}$ are compatible. \qedhere
  \end{enumerate}
 \end{proof}

\begin{remark}\label{ex:findimtensor}  
  Let  $V$ be an $n$-dimensional  $\K$-vector space equipped with a basis $(e_1,\cdots, e_n)$, and let $(e^1,\cdots, e^n)$ be the dual basis. We 
  write  $v_j=\sum_{k_j=1}^n b_j^{k_j} e_{k_j}$ the elements of $V$ and $v_i^*= \sum_{k_i=1}^n a_{k_i}^i e^{k_i}$ the elements of $V^*$. Then an element 
  $f=v_1^*\otimes \cdots \otimes v_k^*\otimes v_1\otimes \cdots \otimes v_l\in \Hom_V(k, l)$ reads 
  \[  f= \sum_{\vec I, \vec J} a_{\vec J}^{\vec I} \, e^{\vec J}  \otimes e_{ \vec I},\]
 where,  for two finite sequences $\vec I=(i_1, \cdots, i_k)$, $\vec J=(j_1, \cdots, j_l)$  of $k$ and $l$ elements of $[n]$, we have set
\begin{align*}
e_{\vec I}   &:=e_{i_1} \otimes \cdots \otimes e_{i_k}; &e^{\vec J} &:=e^{j_1} \otimes \cdots \otimes e^{j_l}
\end{align*} 
  and the $a^{\vec I}_{\vec J} \in \K$ are coefficients built from sums of products of the coefficients $a_{k_i}^i $ and $b_j^{k_j}$. In 
 particular, an element of this ProP is completely determined by this collection of numbers $a^{\vec I}_{\vec J}$. We can therefore view $f$ as a 
 map from pairs of subsets $I,J$ of $[n]$ with $k$ and $l$ elements respectively into $\K$.
 
 It follows that for any $n$-dimensional vector space $V$, $\Hom_V$ is isomorphic as a ProP to the set of maps from pairs of finite sequences of elements of $[n]$ to $\K$:
\begin{equation*}
 \Hom_V \simeq \left(\left\{a:\mathrm{Seq}_k([n])\times\mathrm{Seq}_l([n])\longrightarrow\K\right\}\right)_{k,l\geq0}.
\end{equation*}
\end{remark}

\subsection{The ProP of graphs: $\Gr$} \label{sectiongraphes}

\begin{defi} \label{defi:graph}
A \textbf{graph} is a family $G=(V(G),E(G),I(G),O(G),IO(G),L(G),s,t,\alpha,\beta)$, where:
\begin{enumerate}
\item $V(G)$ (set of vertices), $E(G)$ (set of internal edges), $I(G)$ (set of input edges),
$O(G)$ (set of output edges), $IO(G)$ (set of input-output edges) and $L(G)$ (set of loops, that is to say
edges with no endpoints) are finite (maybe empty) sets.
\item $s:E(G)\sqcup O(G)\longrightarrow V(G)$ is a map (source map).
\item $t:E(G)\sqcup I(G)\longrightarrow V(G)$ is a map (target map).
\item $\alpha:I(G)\sqcup IO(G)\longrightarrow [i(G)]$ is a bijection, with $i(G)=|I(G)|+|IO(G)|$
(indexation of the input edges).
\item $\beta:O(G)\sqcup IO(G)\longrightarrow [o(G)]$ is a bijection, with $o(G)=|O(G)|+|IO(G)|$
(indexation of the output edges).
\end{enumerate}
\end{defi}

\begin{example}\label{ex4}
Here is a graph $G$ : 
\begin{align*}
V(G)&=\{x,y\},&E(G)&=\{a,b\},&I(G)&=\{c,d\},&O(G)&=\{e,f\},&IO(G)&=\{g\}, &L(G)&=\{h,k\},
\end{align*}
and:
\begin{align*}
s&:\left\{\begin{array}{rcl}
a&\mapsto&y\\
b&\mapsto&x\\
e&\mapsto&y\\
f&\mapsto&y,
\end{array}\right.&
t&:\left\{\begin{array}{rcl}
a&\mapsto&x\\
b&\mapsto&y\\
c&\mapsto&x\\
d&\mapsto&x,
\end{array}\right.&
\alpha&:\left\{\begin{array}{rcl}
c&\mapsto&1\\
d&\mapsto&2\\
g&\mapsto&3,
\end{array}\right.&
\beta&:\left\{\begin{array}{rcl}
e&\mapsto&3\\
f&\mapsto&1\\
g&\mapsto&2.
\end{array}\right.&
\end{align*}
This is graphically represented as follows:
\[\xymatrix{1&&3&2&&\\
&\rond{y}\ar[ru]_e \ar[lu]^f \ar@/_1pc/[d]_a&&&\ar@(ul,dl)[]^h&\ar@(ul,dl)[]^k\\
&\rond{x}\ar@/_1pc/[u]_b&&&&\\
1\ar[ru]^c&&2\ar[lu]_d&3\ar[uuu]_g&&}\]
Note that this graph contains two loops, represented by $\xymatrix{&\ar@(ul,dl)[]^h}$
and $\xymatrix{&\ar@(ul,dl)[]^k}$.
\end{example}  

\begin{remark}
 As explained in the introduction, although loops play a passive role in the presentation of a ProP, their role  will be essential in the presentation of TraPs, see Section 
 \ref{subsection:generalised_graphs}.
\end{remark}

\begin{defi}
Let $G$ and $G'$ be two graphs. An \textbf{(resp. iso-)morphism} of graphs from $G$ to $G'$ is a family of (resp. bijections) maps $f=(f_V,f_E,f_I,f_O,f_{IO},f_L)$ with:
\begin{align*}
f_V&:V(G)\longrightarrow V(G'),&f_E&:E(G)\longrightarrow E(G'),&f_I&:I(G)\longrightarrow I(G'),\\
f_O&:O(G)\longrightarrow O(G'),&f_{IO}&:IO(G)\longrightarrow IO(G'),&f_L&:L(G)\longrightarrow L(G'),
\end{align*}
such that:
\begin{align*}
s'\circ f_E&=f_V\circ s_{\mid E(G)},&s'\circ f_O&=f_V\circ s_{\mid O(G)},\\
t'\circ f_E&=f_V\circ t_{\mid E(G)},&t'\circ f_I&=f_V\circ t_{\mid I(G)},\\
\alpha'\circ f_I&=\alpha_{\mid I(G)},&\alpha'\circ f_{IO}&=\alpha_{\mid IO(G)},\\
\beta'\circ f_O&=\beta_{\mid O(G)},&\beta'\circ f_{IO}&=\beta_{\mid IO(G)}.
\end{align*}
For any $k,l\in  \N_0$, we denote by $\Gr(k,l)$ the space generated by the isoclasses of graphs $G$ such that
$i(G)=k$ and $o(G)=l$, i.e. $\Gr(k,l)$ is the quotient space of graphs with $k$ input edges and $l$ output edges by the equivalence relation given by isomorphism.
\end{defi}
In what follows, we shall write \emph{graphs} for \emph{isoclasses of graphs}.
\begin{example}
The isomorphism class of the graph of Example \ref{ex4} is represented by:
\[\xymatrix{1&&3&2&&\\
&\rond{}\ar[ru] \ar[lu] \ar@/_1pc/[d]&&&\ar@(ul,dl)[]&\ar@(ul,dl)[]\\
&\rond{}\ar@/_1pc/[u]&&&&\\
1\ar[ru]&&2\ar[lu]&3\ar[uuu]&&}\]
\end{example}
We now want  to equip the set $\Gr$ of isoclasses of graphs  with a ProP structure.
\begin{itemize}
	\item  We first define the \textbf{horizontal concatenation}. If $G$ and $G^\prime$ are two disjoint graphs,
	we define a graph $G*G'$ in the following way:
	\begin{align*}
	V(G*G')&=V(G)\sqcup V(G'),&E(G*G')&=E(G)\sqcup E(G'),&L(G*G')&=L(G)\sqcup L(G'),\\
	I(G*G')&=I(G)\sqcup I(G'),&O(G*G')&=O(G)\sqcup O(G'),&IO(G*G')&=IO(G)\sqcup IO(G').
	\end{align*}
	The source and target maps are given by:
	\begin{align*}
	s''_{\mid E(G)\sqcup O(G)}&=s,&s''_{\mid E(G')\sqcup O(G')}&=s',\\
	t''_{\mid E(G)\sqcup I(G)}&=t,&t''_{\mid E(G')\sqcup I(G')}&=t'.
	\end{align*}
	The indexations of the input and output edges are given by:
	\begin{align*}
	\alpha''_{\mid I(G)\sqcup IO(G)}&=\alpha,&\alpha''_{\mid I(G')\sqcup IO(G')}&=i(G)+\alpha',\\
	\beta''_{\mid O(G)\sqcup IO(G)}&=\beta,&\beta''_{\mid O(G')\sqcup IO(G')}&=o(G)+\beta'
	\end{align*}
	with an obvious abuse of notation in the definition of the second column.
	Notice that this product is not commutative in the usual sense for $G*G'$ and $G'*G$ might differ by the indexation of their input and output 
	edges. However, it is commutative in the sense of Axiom 6.(b) of ProPs.
	Roughly speaking, $G*G'$ is the disjoint union of $G$ and $G'$, the input and output edges of $G'$ being indexed
	after the input and output edges of $G$. 
	\begin{center}
		\begin{tikzpicture}[line cap=round,line join=round,>=triangle 45,x=0.5cm,y=0.5cm]
		\clip(-2.5,-4.) rectangle (1.,4.);
		\draw [line width=0.4pt] (-2.,1.)-- (0.5,1.);
		\draw [line width=0.4pt] (0.5,1.)-- (0.5,-1.);
		\draw [line width=0.4pt] (0.5,-1.)-- (-2.,-1.);
		\draw [line width=0.4pt] (-2.,-1.)-- (-2.,1.);
		\draw [->,line width=0.4pt] (-1.5,1.) -- (-1.5,3.);
		\draw [->,line width=0.4pt] (0.,1.) -- (0.,3.);
		\draw [->,line width=0.4pt] (-1.5,-3.) -- (-1.5,-1.);
		\draw [->,line width=0.4pt] (0.,-3.) -- (0.,-1.);
		\draw (-1.25,0.5) node[anchor=north west] {$G$};
		\draw (-1.8,-3) node[anchor=north west] {$1$};
		\draw (-0.3,-3) node[anchor=north west] {$k$};
		\draw (-1.4,-2.2) node[anchor=north west] {$\ldots$};
		\draw (-1.8,4.2) node[anchor=north west] {$1$};
		\draw (-0.3,4.2) node[anchor=north west] {$l$};
		\draw (-1.4,2.) node[anchor=north west] {$\ldots$};
		\end{tikzpicture}
		$\substack{\displaystyle *\\ \vspace{3cm}}$
		\begin{tikzpicture}[line cap=round,line join=round,>=triangle 45,x=0.5cm,y=0.5cm]
		\clip(-2.5,-4.) rectangle (0.7,4.);
		\draw [line width=0.4pt] (-2.,1.)-- (0.5,1.);
		\draw [line width=0.4pt] (0.5,1.)-- (0.5,-1.);
		\draw [line width=0.4pt] (0.5,-1.)-- (-2.,-1.);
		\draw [line width=0.4pt] (-2.,-1.)-- (-2.,1.);
		\draw [->,line width=0.4pt] (-1.5,1.) -- (-1.5,3.);
		\draw [->,line width=0.4pt] (0.,1.) -- (0.,3.);
		\draw [->,line width=0.4pt] (-1.5,-3.) -- (-1.5,-1.);
		\draw [->,line width=0.4pt] (0.,-3.) -- (0.,-1.);
		\draw (-1.25,0.5) node[anchor=north west] {$G'$};
		\draw (-1.8,-3) node[anchor=north west] {$1$};
		\draw (-0.3,-3) node[anchor=north west] {$k'$};
		\draw (-1.4,-2.2) node[anchor=north west] {$\ldots$};
		\draw (-1.8,4.2) node[anchor=north west] {$1$};
		\draw (-0.3,4.2) node[anchor=north west] {$l'$};
		\draw (-1.4,2.) node[anchor=north west] {$\ldots$};
		\end{tikzpicture}
		$\substack{\displaystyle =\\ \vspace{3cm}}$
		\begin{tikzpicture}[line cap=round,line join=round,>=triangle 45,x=0.5cm,y=0.5cm]
		\clip(-2.5,-4.) rectangle (0.5,4.);
		\draw [line width=0.4pt] (-2.,1.)-- (0.5,1.);
		\draw [line width=0.4pt] (0.5,1.)-- (0.5,-1.);
		\draw [line width=0.4pt] (0.5,-1.)-- (-2.,-1.);
		\draw [line width=0.4pt] (-2.,-1.)-- (-2.,1.);
		\draw [->,line width=0.4pt] (-1.5,1.) -- (-1.5,3.);
		\draw [->,line width=0.4pt] (0.,1.) -- (0.,3.);
		\draw [->,line width=0.4pt] (-1.5,-3.) -- (-1.5,-1.);
		\draw [->,line width=0.4pt] (0.,-3.) -- (0.,-1.);
		\draw (-1.25,0.5) node[anchor=north west] {$G$};
		\draw (-1.8,-3) node[anchor=north west] {$1$};
		\draw (-0.3,-3) node[anchor=north west] {$k$};
		\draw (-1.4,-2.2) node[anchor=north west] {$\ldots$};
		\draw (-1.8,4.2) node[anchor=north west] {$1$};
		\draw (-0.3,4.2) node[anchor=north west] {$l$};
		\draw (-1.4,2.) node[anchor=north west] {$\ldots$};
		\end{tikzpicture}
		\begin{tikzpicture}[line cap=round,line join=round,>=triangle 45,x=0.5cm,y=0.5cm]
		\clip(-2.5,-4.) rectangle (2.,4.);
		\draw [line width=0.4pt] (-2.,1.)-- (0.5,1.);
		\draw [line width=0.4pt] (0.5,1.)-- (0.5,-1.);
		\draw [line width=0.4pt] (0.5,-1.)-- (-2.,-1.);
		\draw [line width=0.4pt] (-2.,-1.)-- (-2.,1.);
		\draw [->,line width=0.4pt] (-1.5,1.) -- (-1.5,3.);
		\draw [->,line width=0.4pt] (0.,1.) -- (0.,3.);
		\draw [->,line width=0.4pt] (-1.5,-3.) -- (-1.5,-1.);
		\draw [->,line width=0.4pt] (0.,-3.) -- (0.,-1.);
		\draw (-1.25,0.5) node[anchor=north west] {$G'$};
		\draw (-2.3,-3) node[anchor=north west] {$k+1$};
		\draw (-0.3,-3) node[anchor=north west] {$k+k'$};
		\draw (-1.4,-2.2) node[anchor=north west] {$\ldots$};
		\draw (-2.3,4.2) node[anchor=north west] {$l+1$};
		\draw (-0.3,4.2) node[anchor=north west] {$l+l'$};
		\draw (-1.4,2.) node[anchor=north west] {$\ldots$};
		\end{tikzpicture}
		
		\vspace{-1.5cm}
	\end{center}
	
	\begin{example} Here is an example of horizontal concatenation :\\
	
	\vspace{-2cm}
	
			\[\xymatrix{1&3&2\\
			&\rond{}\ar[lu] \ar[u]\ar[d]&\\
			&\rond{}\ar[ruu]&\\
			1\ar[ru]&&2\ar[lu]} 
			\substack{\vspace{3cm}\\\displaystyle \mbox{$*$}}
		\xymatrix{1&&2\\
			&\rond{}\ar[ru] \ar[lu]&\\
			&&\\
			&1\ar[uu]&}
					\substack{\vspace{3cm}\\\displaystyle =}
					\xymatrix{1&3&2&4&&5\\
			&\rond{}\ar[lu] \ar[u]\ar[d]&&&\rond{}\ar[ru] \ar[lu]&\\
			&\rond{}\ar[ruu]&&&&\\
			1\ar[ru]&&2\ar[lu]&&3\ar[uu]&}\]
	\end{example}
	
	This product of graphs induces a product $*:\Gr(k,l)\otimes \Gr(k',l')\longrightarrow
	\Gr(k+k',l+l')$. If $G$, $G'$ and $G''$ are three graphs, clearly
	\[G*(G'*G'')=(G*G')*G''.\]
	Hence, the product $*$ is associative. Its unit $I_0$ is the unique graph such that
	$V(I_0)=E(I_0)=I(I_0)=O(I_0)=IO(I_0)=\emptyset$. 
	
	\item We now define the \textbf{ vertical concatenation}. Let $G$ and $G'$ be {disjoint} graphs such that $o(G)=i(G')$. We define a graph 
	$G''=G'\circ G$
	in the following way:
	\begin{align*}
	V(G'')&=V(G)\sqcup V(G'),\\
	E(G'')&=E(G)\sqcup E(G')\sqcup \{(f,e)\in O(G)\times I(G'):\beta(f)=\alpha'(e)\},\\
	I(G'')&=I(G)\sqcup \{(f,e)\in IO(G)\times I(G'):\beta(f)=\alpha'(e)\},\\
	O(G'')&=O(G)\sqcup \{(f,e)\in O(G)\times IO(G') : \beta(f)=\alpha'(e)\},\\
	IO(G'')&=\{(f,e)\in IO(G)\times IO(G'): \beta(f)=\alpha'(e)\},\\
	L(G'')&=L(G)\sqcup L(G').
	\end{align*}	
	Its \textbf{source} and \textbf{target} maps  are given by:
	\begin{align*}
	s''_{\mid E(G)}&=s_{\mid E(G)},&s''_{\mid E(G')}&=s'_{\mid E(G')},&
	s''_{\mid O(G')}&=s'_{\mid O(G')},&s''((f,e))&=s(f),\\
	t''_{\mid E(G)}&=t_{\mid E(G)},&t''_{\mid E(G')}&=s'_{\mid E(G')},&
	t''_{\mid I(G)}&=s_{\mid I(G)},&s''((f,e))&=t'(e).
	\end{align*}
	The indexations of its input and output edges are given by:
	\begin{align*}
	\alpha''_{\mid I(G)}&=\alpha_{\mid I(G)},& \alpha''((f,e))&=\alpha(f),\\
	\beta''_{\mid O(G')}&=\beta'_{\mid O(G')},&\beta''((f,e))&=\beta'(e).
	\end{align*}
	Roughly speaking, $G'\circ G$ is obtained by gluing together the outgoing edges of $G$ and the incoming
	edges of $G'$ according to their indexation. 
		\begin{center}
		\begin{tikzpicture}[line cap=round,line join=round,>=triangle 45,x=0.5cm,y=0.5cm]
		\clip(-2.5,-4.) rectangle (1.,4.);
		\draw [line width=0.4pt] (-2.,1.)-- (0.5,1.);
		\draw [line width=0.4pt] (0.5,1.)-- (0.5,-1.);
		\draw [line width=0.4pt] (0.5,-1.)-- (-2.,-1.);
		\draw [line width=0.4pt] (-2.,-1.)-- (-2.,1.);
		\draw [->,line width=0.4pt] (-1.5,1.) -- (-1.5,3.);
		\draw [->,line width=0.4pt] (0.,1.) -- (0.,3.);
		\draw [->,line width=0.4pt] (-1.5,-3.) -- (-1.5,-1.);
		\draw [->,line width=0.4pt] (0.,-3.) -- (0.,-1.);
		\draw (-1.25,0.5) node[anchor=north west] {$G'$};
		\draw (-1.8,-3) node[anchor=north west] {$1$};
		\draw (-0.3,-3) node[anchor=north west] {$l$};
		\draw (-1.4,-2.2) node[anchor=north west] {$\ldots$};
		\draw (-1.8,4.2) node[anchor=north west] {$1$};
		\draw (-0.3,4.2) node[anchor=north west] {$m$};
		\draw (-1.4,2.) node[anchor=north west] {$\ldots$};
		\end{tikzpicture}
		$\substack{\displaystyle \circ\\ \vspace{3cm}}$		
		\begin{tikzpicture}[line cap=round,line join=round,>=triangle 45,x=0.5cm,y=0.5cm]
		\clip(-2.5,-4.) rectangle (0.7,4.);
		\draw [line width=0.4pt] (-2.,1.)-- (0.5,1.);
		\draw [line width=0.4pt] (0.5,1.)-- (0.5,-1.);
		\draw [line width=0.4pt] (0.5,-1.)-- (-2.,-1.);
		\draw [line width=0.4pt] (-2.,-1.)-- (-2.,1.);
		\draw [->,line width=0.4pt] (-1.5,1.) -- (-1.5,3.);
		\draw [->,line width=0.4pt] (0.,1.) -- (0.,3.);
		\draw [->,line width=0.4pt] (-1.5,-3.) -- (-1.5,-1.);
		\draw [->,line width=0.4pt] (0.,-3.) -- (0.,-1.);
		\draw (-1.25,0.5) node[anchor=north west] {$G$};
		\draw (-1.8,-3) node[anchor=north west] {$1$};
		\draw (-0.3,-3) node[anchor=north west] {$k$};
		\draw (-1.4,-2.2) node[anchor=north west] {$\ldots$};
		\draw (-1.8,4.2) node[anchor=north west] {$1$};
		\draw (-0.4,4.1) node[anchor=north west] {$l$};
		\draw (-1.4,2.) node[anchor=north west] {$\ldots$};
		\end{tikzpicture}
		$\substack{\displaystyle =\\ \vspace{3cm}}$
		\begin{tikzpicture}[line cap=round,line join=round,>=triangle 45,x=0.5cm,y=0.5cm]
		\clip(-2.5,-4.) rectangle (0.5,8.);
		\draw [line width=0.4pt] (-2.,1.)-- (0.5,1.);
		\draw [line width=0.4pt] (0.5,1.)-- (0.5,-1.);
		\draw [line width=0.4pt] (0.5,-1.)-- (-2.,-1.);
		\draw [line width=0.4pt] (-2.,-1.)-- (-2.,1.);
		\draw [->,line width=0.4pt] (-1.5,1.) -- (-1.5,3.);
		\draw [->,line width=0.4pt] (0.,1.) -- (0.,3.);
		\draw [->,line width=0.4pt] (-1.5,-3.) -- (-1.5,-1.);
		\draw [->,line width=0.4pt] (0.,-3.) -- (0.,-1.);
		\draw (-1.25,0.5) node[anchor=north west] {$G$};
		\draw (-1.8,-3) node[anchor=north west] {$1$};
		\draw (-0.3,-3) node[anchor=north west] {$k$};
		\draw (-1.4,-2.2) node[anchor=north west] {$\ldots$};
		\draw [line width=0.4pt] (-2.,5.)-- (0.5,5.);
		\draw [line width=0.4pt] (0.5,5.)-- (0.5,3.);
		\draw [line width=0.4pt] (0.5,3.)-- (-2.,3.);
		\draw [line width=0.4pt] (-2.,3.)-- (-2.,5.);
		\draw [->,line width=0.4pt] (-1.5,5.) -- (-1.5,7.);
		\draw [->,line width=0.4pt] (0.,5.) -- (0.,7.);
		\draw (-1.25,4.5) node[anchor=north west] {$G'$};
		\draw (-1.8,8.2) node[anchor=north west] {$1$};
		\draw (-0.4,8.1) node[anchor=north west] {$m$};
		\draw (-1.4,6.) node[anchor=north west] {$\ldots$};
		\end{tikzpicture}
		
		\vspace{-1.5cm}
	\end{center}
	\begin{example} Here is an example of vertical concatenation :\\
	
	\vspace{-1.8cm}
		\[
		\xymatrix{&2&1& \\ 
			&\rond{}\ar[u]&\rond{}\ar[l] \ar[u] &\ar@(ul,dl)[]^l\\
			1\ar[ru]&2\ar[u]&3\ar[u]&}\hspace{5mm}
		\substack{\vspace{2.5cm}\\ \displaystyle \circ}
		\xymatrix{&2&1&3\\
			&\rond{}\ar[u]\ar@/_1pc/[r]&\rond{}\ar[u]\ar[ru]\ar@/_1pc/[l]&\\
			1\ar[ru]&2\ar[u]&3\ar[u]&4\ar[lu]} 
						\substack{\vspace{2.5cm}\\ \hspace{.3cm}\displaystyle=}
		\xymatrix{&2&1&\\
			&\rond{}\ar[u]&\rond{}\ar[u]\ar[l]&\ar@(ul,dl)[]^l\\
			&\rond{}\ar[u]\ar@/_1pc/[r]&\rond{}\ar@/_.5pc/[lu]\ar[u]\ar@/_1pc/[l]&\\
			1\ar[ru]&2\ar[u]&3\ar[u]&4\ar[lu]}\]
	\end{example}
	\end{itemize}
	\begin{theo} \label{theo:ProP_graph}
	The family $\Gr=(\Gr_{k,l})_{k,l\in  \N_0}$, equipped with this $\sym\times \sym^{op}$-action and these horizontal and vertical concatenations,
	is a ProP.
\end{theo}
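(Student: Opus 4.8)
The plan is to verify, one axiom at a time, the six conditions of Definition~\ref{def:prop} directly on the combinatorial data $(V,E,I,O,IO,L,s,t,\alpha,\beta)$ of a graph, and then to observe that every construction descends to isoclasses and that each asserted identity holds up to a canonical isomorphism of graphs, hence becomes a genuine equality in $\Gr(k,l)$. Here the $\sym_l\times\sym_k^{op}$-action on $\Gr(k,l)$ is the one relabelling the output indexation $\beta$ (for the left action of $\sym_l$) and the input indexation $\alpha$ (for the right action of $\sym_k$), all other data being untouched; with this description the module axioms of item~1 are immediate from the group axioms of $\sym_l$ and $\sym_k$, since composition of bijections is associative and the identity permutations act trivially. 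Throughout, the loop set $L(G)$ is carried along passively by disjoint union and plays no role in any of the checks.

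For the horizontal concatenation (item~2), $G*(G'*G'')$ and $(G*G')*G''$ have literally the same underlying vertices, edges and loops, and the same input and output edges reindexed by the same cumulative shifts, so they are isomorphic; and the empty graph $I_0$ is a strict two-sided unit for disjoint union with the trivial reindexation. Compatibility of $*$ with the $\sym\times\sym^{op}$-action (item~6(a)) amounts to checking that precomposing, resp.\ postcomposing, the indexations of a disjoint union with the block permutations $\sigma\otimes\sigma'$, resp.\ $\tau\otimes\tau'$, agrees summand by summand with the action on $G$ and $G'$, which is exactly the definition of $\alpha\otimes\beta$. For the commutativity axiom (item~6(b)), $G*G'$ and $G'*G$ share the same underlying graph and differ only by the input/output indexations, the two being interchanged precisely by the block transpositions $c_{m,n}$ of $(\ref{defcmn})$; one then checks that $c_{l,l'}\cdot(G*G')$ and $(G'*G)\cdot c_{k,k'}$ carry identical $\alpha$ and $\beta$, which gives $(\ref{eqpstarpprime})$.

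For the vertical concatenation, one first notes that $G'\circ G$ is well defined on isoclasses, since isomorphisms $G\to\widetilde G$ and $G'\to\widetilde G'$ transport the gluing set $\{(f,e):\beta(f)=\alpha'(e)\}$ bijectively and commute with the source, target and indexation maps of the composite. The unit axiom (item~3(b)) reads $I_l\circ G=G=G\circ I_k$, where $I_k=I_1^{*k}$ is the disjoint union of $k$ ``wires'', i.e.\ graphs with a single input-output edge; gluing a wire onto an output (resp.\ input) edge of $G$ merely prolongs that edge, and the map $(f,e)\mapsto f$ (resp.\ $(f,e)\mapsto e$) on glued edges is the required graph isomorphism. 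Compatibility of $\circ$ with the $\sym\times\sym^{op}$-action (item~5) is transparent: the permutation acts on the free ends of the composite, which are exactly the indexations of $G$ and $G'$ surviving in $G'\circ G$, while relabelling the two interface indexations $\beta$ of $G$ and $\alpha'$ of $G'$ by one and the same $\tau\in\sym_l$ leaves the matching condition $\beta(f)=\alpha'(e)$, and hence the composite graph, unchanged. The interchange law (item~4) is pure bookkeeping: in $(G*G')\circ(H*H')$ the block form of the indexations of a horizontal concatenation forces the gluings to occur only between $G$ and $H$ and between $G'$ and $H'$, which is precisely the data defining $(G\circ H)*(G'\circ H')$.

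The main obstacle is the associativity of the vertical concatenation, item~3(a). The difficulty is genuinely combinatorial: the edge sets of $G''\circ(G'\circ G)$ and of $(G''\circ G')\circ G$ are built from iterated ordered pairs nested in two different orders, and among them the input-output edges must be tracked carefully, since a single edge may be threaded through all three graphs. The plan is to describe, for both triple composites, an edge of the total graph as a maximal ``compatible chain'' consisting of an edge $f_0$ of $G$, an edge $f_1$ of $G'$ and an edge $f_2$ of $G''$ (or a shorter sub-chain) satisfying the successive matching conditions $\beta(f_0)=\alpha'(f_1)$ and $\beta'(f_1)=\alpha''(f_2)$, and to read off the type of the resulting edge of the composite (internal, input, output, or input-output) solely from the types of the two extreme links of the chain. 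This description is manifestly insensitive to the order in which the two compositions are performed, so ``flattening'' the nested pairs yields bijections of $E,I,O,IO$ for one triple composite onto those of the other; one then checks that these bijections intertwine the source, target and indexation maps, so that together they constitute an isomorphism of graphs. Hence the two triple composites represent the same element of $\Gr(k,m)$, which, together with the verifications above, proves that $\Gr$ is a ProP.
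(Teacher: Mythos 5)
Your proposal is correct and follows essentially the same route as the paper: a direct, axiom-by-axiom verification on the combinatorial data, with the one nontrivial point being associativity of $\circ$, which the paper also handles by exhibiting a common "flattened" model of the triple composite whose edges are exactly your maximal compatible chains (the singletons, pairs $(f,e)$, and triples $(f,f',e)$ threading through an input-output edge of the middle graph, with the type of the resulting edge read off from the two extreme links). The only cosmetic difference is that the paper writes out these edge sets as explicit disjoint unions rather than describing them as chains.
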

\begin{proof}
\begin{itemize}
	\item We check the \textbf{associativity} of $\circ$. Let $G$, $G'$ and $G''$ be three graphs with $o(G)=i(G')$ and $o(G')=i(G'')$. 
	The graphs $(G''\circ G')\circ G$ and $G''\circ (G'\circ G)$ may be different, but both are isomorphic to the graph $H$
	defined by:
	\begin{align*}
	V(H)&=V(G)\sqcup V(G')\sqcup V(G''),\\
	E(H)&=E(G)\sqcup E(G')\sqcup E(G'')\\
	&\sqcup \{(f,e)\in O(G)\times I(G'): \beta(f)=\alpha'(e)\}
	\sqcup \{(f,e)\in O(G')\times I(G''): \beta'(f)=\alpha''(e)\}\\
	&\sqcup \{(f,f',e)\in O(G)\times IO(G')\times I(G''): \beta(f)=\alpha'(f'),\beta'(f')=\alpha''(e)\},\\
	I(H)&=I(G)\sqcup \{(f,e)\in IO(G)\times I(G'): \beta(f)=\alpha'(e)\}\\
	&\sqcup \{(f,f',e)\in IO(G)\times IO(G')\times I(G''): \beta(f)=\alpha'(f'),\beta'(f')=\alpha''(e)\},\\
	O(H)&=O(G'')\sqcup \{(f,e)\in O(G')\times IO(G''): \beta'(f)=\alpha''(e)\}\\
	&\sqcup \{(f,f',e)\in O(G)\times IO(G')\times IO(G''): \beta(f)=\alpha'(f'),\beta'(f')=\alpha''(e)\},\\
	IO(H)&=\{(f,f',e)\in IO(G)\times IO(G')\times IO(G''): \beta(f)=\alpha'(f'),\beta'(f')=\alpha''(e)\},\\
	L(H)&=L(G)\sqcup L(G')\sqcup L(G''),
	\end{align*}
	with immediate  source, target  and indexation maps. So $\circ$ induces an associative product  $\circ: \Gr(l,m)\otimes\Gr(k,l)\longrightarrow\Gr(k,m)$.  
	
	\item Let $I_1$ be the graph such that
	\begin{align*}
	V(I_1)&=E(I_1)=I(I_1)=O(I_1)=L(I_1)=\emptyset,&
	IO(I_1)&=[1].
	\end{align*}
	We show that $I_1$ is the \textbf{unit} for $\circ$:  
	The indexation maps are both the identity of $[1]$. 
	For any integer $n\in  \N_0$, $I_1^{*n}$ is isomorphic to the graph $I_n$ such that
	\begin{align*}
	V(I_n)&=E(I_n)=I(I_n)=O(I_n)=L(I_n)=\emptyset,&
	IO(I_n)&=[n],
	\end{align*}
	the indexation maps being both the identity of $[n]$. If $G$ is a graph and $k=i(G)$, then $H=G\circ I_k$
	is the graph such that:
	\begin{align*}
	V(H)&=V(G),&I(H)&=\{(\alpha(e),e): e\in I(G)\},\\
	E(H)&=E(G),&IO(H)&=\{(\alpha(e),e):e\in IO(G)\},\\
	O(H)&=O(G),&L(H)&=L(G),
	\end{align*}
	with immediate source, target  and indexation maps. This graph $H$ is isomorphic to $G$, via the isomorphism given by:
	\begin{align*}
	f_V&=\mathrm{Id}_{V(G)},&f_I((\alpha(e),e))&=e,\\
	f_E&=\mathrm{Id}_{E(G)},&f_{IO}((\alpha(e),e))&=e,\\
	f_O&=\mathrm{Id}_{O(G)},\\
	f_L&=\mathrm{Id}_{L(G)}.
	\end{align*}
	Similarly, $I_l\circ G$ and $G$ are isomorphic. Hence, $I_1$ is the unit of $\circ$ in $\Gr$. 
	\item We check the \textbf{compatibility} of the horizontal and vertical concatenations.
	Let $G$, $G'$, $H$ and $H'$ be graphs such that $o(G)=i(H)$ and $o(G')=i(H')$. The graphs
	$(H*H')\circ (G*G')$ and $(H\circ G)* (H'\circ G')$ are both equal to the graph $K$, such that:
	\begin{align*}
	V(K)&=V(G)\sqcup V(G')\sqcup V(H)\sqcup V(H'),\\
	E(K)&=E(G)\sqcup E(G')\sqcup E(H)\sqcup E(H')\\
	&\sqcup \{(f,e)\in O(G)\times I(H): \beta(f)=\alpha'(e)\}\\
	&\sqcup \{(f,e)\in O(G')\times I(H'); \beta(f)=\alpha'(e)\},\\
	I(K)&=I(G)\sqcup I(G')\sqcup \{(f,e)\in IO(G)\times I(H); \beta(f)=\alpha'(e)\}\\
	&\sqcup \{(f,e)\in IO(G')\times I(H'); \beta(f)=\alpha'(e)\},\\
	O(K)&=O(H)\sqcup O(H')\sqcup \{(f,e)\in O(G)\times IO(H); \beta(f)=\alpha'(e)\}\\
	&\sqcup \{(f,e)\in O(G')\times IO(H'); \beta(f)=\alpha'(e)\},\\
	IO(K)&=\sqcup \{(f,e)\in IO(G)\times IO(H); \beta(f)=\alpha'(e)\}\\
	&\sqcup \{(f,e)\in IO(G')\times IO(H'); \beta(f)=\alpha'(e)\},\\
	L(K)&\l=L(G)\sqcup L(G')\sqcup L(H)\sqcup L(H'),
	\end{align*}
	with obvious source, target  and indexation maps. Hence, the vertical and the horizontal concatenations are compatible. 
	\item We check the \textbf{module} structure of $\Gr$ over the symmetric group.
	Let $G$ be a graph, $\sigma\in \sym_{o(G)}$ and $\tau\in \sym_{i(G)}$. We set:
	\begin{align}\label{eqGrmod}
	\sigma\cdot G&=(V(G),E(G),I(G),O(G),IO(G),L(G),s,t,\alpha,\sigma \circ\beta),\nonumber\\
	G\cdot \tau&=(V(G),E(G),I(G),O(G),IO(G),L(G),s,t,\tau^{-1}\circ\alpha,\beta).
	\end{align} 
	This induces a structure of $\sym\times \sym^{op}$-module over $\Gr$.
	
	Let us prove the compatibility of this action with the vertical concatenation. Let $G$ and $G'$ be two graphs
	such that $o(G)=i(G')$, and let $\sigma \in \sym_{o(G')}$, $\tau\in \sym_{o(G)}$,
	$\nu\in \sym_{i(G)}$. Clearly, the graphs $\sigma\cdot(G'\circ G)$ and $(\sigma\cdot G')\circ G$ are equal;
	the graphs $(G'\circ G)\cdot \nu$ and $G'\circ (G\cdot \nu)$ are equal. Let us compare the graphs
	$H=(G'\cdot \tau)\circ G$ and $H'=G'\circ (\tau \cdot G)$. Their set of vertices coincide. Moreover:
	\begin{align*}
	E(H)&=E(G)\sqcup E(G')\sqcup\{(f,e)\in O(G)\times I(G'): \beta(f)=\tau^{-1}\circ \alpha'(e)\},\\
	E(H')&=E(G)\sqcup E(G')\sqcup\{(f,e)\in O(G)\times I(G'): \tau\circ \beta(f)=\alpha'(e)\},
	\end{align*}
	so $E(H)=E(H')$. Similarly, $I(H)=I(H')$, $O(H)=O(H')$,  $IO(H)=IO(H')$ and $L(H)=L(H')$.  
	Moreover, the source, target and indexation maps are the same for $H$ and $H'$, so $H=H'$.
	\item 
	We finally prove the \textbf{compatibility} of the $\sym\times \sym^{op}$-action with the horizontal composition.
	Let $G$ and $G'$ be two graphs, $\sigma \in \sym_{o(G)}$ and $\sigma'\in \sym_{o(G')}$. We put
	$H=(\sigma\cdot G)*(\sigma'\cdot G')$ and $H'=(\sigma \otimes \sigma')\cdot (G*G')$. 
	They have the same set of vertices, whether internal, input, output and input-output edges, and the source,
	target and indexation of output edges maps   for $H$ and $H'$ coincide.
	Both indexations of the set of output edges are   given by:
	\[\sigma''(e)=\begin{cases}
	\sigma \circ \beta(e)\mbox{ if }e\in O(G)\sqcup IO(G),\\
	o(G)+\sigma' \circ \beta'(e)\mbox{ if }e\in O(G')\sqcup IO(G').
	\end{cases}\]
	So $H=H'$.
	
	Let $G$ and $G'$ be graphs. We set
	$H=c_{o(G),o(G')}\cdot (G*G')$ and $H'=(G'*G)\cdot c_{i(G),i(G')}$, where $c_{m, n}\in \sym_{m+n}$  was defined in (\ref{defcmn}). They have the same sets of vertices, internal,
	input, output and input-output edges, and the same source and target maps. The indexations maps are given by:
	\begin{align*}
	\alpha_H(e)&=\begin{cases}
	\alpha(e)+i(G')\mbox{ if }e\in I(G)\sqcup IO(G),\\
	\alpha'(e)\mbox{ if }e\in I(G')\sqcup IO(G'),
	\end{cases}\\
	\beta_H(e)&=\begin{cases}
	\beta(e)\mbox{ if }e\in O(G)\sqcup IO(G),\\
	\beta'(e)+o(G)\mbox{ if }e\in O(G')\sqcup IO(G'),
	\end{cases}\\ \\
	\alpha_{H'}(e)&=\begin{cases}
	\alpha'(e)\mbox{ if }e\in I(G')\sqcup IO(G'),\\
	\alpha(e)+i(G')\mbox{ if }e\in I(G)\sqcup IO(G),
	\end{cases}\\
	\beta_{H'}(e)&=\begin{cases}
	\beta'(e)+o(G)\mbox{ if }e\in O(G')\sqcup IO(G'),\\
	\beta(e)\mbox{ if }e\in O(G)\sqcup IO(G),
	\end{cases}
	\end{align*}
	so $H=H'$. \qedhere
\end{itemize}
 
\end{proof}

\section{The ProP of continuous morphisms: $\Hom_V^c$} \label{sec:propHom}

We now generalise the ProP $\Hom_V$ of Subsection \ref{sec:propHom_finiteDim} to a ProP
$\Hom^c_V$ (the superscript "c" for continuous) for a topological vector space $V$.

We   work in the context of nuclear Fr\'echet spaces. One could relax these conditions (for example Fr\'echet could be replaced by 
barreled) yet the nuclear setup is comfortable to work in and general enough for our purposes. We refer the reader to \cite{Treves67} for the  
more general cases.

\subsection{Fr\'echet nuclear spaces} \label{subsection:Frechet_nuc} 

Nuclear spaces were defined in the seminal work \cite{Gr54}. Most of the results stated here can  be found in \cite{Gr52,Gr54}. We also 
refer to the more recent presentation \cite{Treves67}.

We recall that \begin{itemize}
                \item A topological vector space is \textbf{Fr\'echet} if it is Hausdorff, has its topology induced by a family of semi-norms and is complete with respect 
  to this family of semi-norms. 
  \item A topological vector space is called \textbf{reflexive} if $E''=(E')'=E$, where $E'$ is the topological dual of $E$.
               \end{itemize}
               In the following $E$ and $F$ are two topological vector spaces and $\Hom^c(E,F)$ is the set of continuous  linear maps from $E$ to $F$.
               \begin{remark}
                When $E$ and $F$ are finite dimensional, we have $\Hom^c(E,F)$=$\Hom(E,F)$.
               \end{remark}
In order to build the $\Hom$  ProP in the infinite dimensional case, we need Grothendieck's   completion of the tensor product, a notion we recall 
here in the setup of locally convex topological $\K$-vector spaces.

Let $E$ and $F$ be two vector spaces. Recall that there exists a   vector space $E\otimes F$, and a bilinear map 
$\phi:E\times F\longrightarrow E\otimes F$ such that for any vector space $V$ and bilinear map $f:E\times F\longrightarrow V$, there is a unique 
linear map $\tilde f:E\otimes F\to V$ satisfying  $f=\tilde f\circ \phi$. The space $E\otimes F$ is unique modulo isomorphism and is called the \textbf{tensor product} of 
$E$ and $F$.

Given two     topological vector spaces, $E$ and $F$ one can a priori equip   $E\otimes F$ with several topologies, among which the 
\textbf{$\epsilon$-topology} and the \textbf{projective topology}  whose construction are recalled in Appendix \ref{section:topologies_tens_prod}. We 
denote by $E\otimes_\epsilon F$ (resp. $E\otimes_\pi F$) the space $E\otimes F$ endowed with 
the $\epsilon$-topology (resp. the projective topology)  and by
  $E\widehat\otimes _\epsilon F$ (resp. $E\widehat\otimes _\epsilon F$) of $E\otimes_\epsilon F$ (resp. $E\otimes_\epsilon F$) their completion with respect to the 
$\epsilon$-topology (resp. projective topology). These two spaces differ in general but coincide for nuclear spaces.
\begin{defi} \cite{Gr54}
 A locally convex topological vector space $E$ is \textbf{nuclear} if, and only if, for any locally convex topological vector space $F$,
 \begin{equation*}
 E\widehat\otimes _\epsilon F = E\widehat\otimes _\pi F =: E\widehat\otimes  F
\end{equation*}
holds, in which case $E\widehat\otimes  F $ is called the \textbf{completed tensor product} of $E$ and $F$.
\end{defi}
There are other equivalent definitions of nuclearity, see for example \cite{gelfand1964,hida2008}.

Given a locally convex topological vector space $E$, its topological dual $E'$ can be endowed with various topologies. An important one for our applications will be 
the \textbf{strong topology}, generated by the family of semi-norms of $E'$ defined, on any $f\in E'$: $||f||_B:=\sup_{x\in B}|f(x)|$
for any bounded set $B$ of $E$. The topological dual $E'$ endowed with this topology is called the \textbf{strong dual}.

For Fr\'echet spaces, nuclearity is preserved under strong duality.
\begin{prop} \begin{itemize}
              \item 
\cite[Proposition 50.6]{Treves67} \label{prop:Frechet_nuclear}
 A Fr\'echet space is nuclear if and only if its strong dual is nuclear.
 \item 
 \cite[Proposition 36.5]{Treves67} A Fr\'echet nuclear space is reflexive.
             \end{itemize}
\end{prop}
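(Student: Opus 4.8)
The plan is to reduce both statements to the local Banach space characterization of nuclearity: a Fr\'echet space $E$ with a fundamental increasing sequence of seminorms $(p_n)_{n\geq 1}$ is nuclear if and only if, writing $E_n$ for the local Banach space $\widehat{E}_{p_n}$ and $u_{n+1,n}\colon E_{n+1}\to E_n$ for the canonical linking maps (so that $E=\varprojlim E_n$), each $u_{n+1,n}$ can be chosen nuclear. Both items then become bookkeeping with this description, modulo two standard inputs from the theory of (DF)-spaces which we quote from \cite{Treves67}.

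First item. For $E$ Fr\'echet, the strong dual $E'_b$ is the locally convex inductive limit $\varinjlim (E_n)'_b$, the connecting maps being the transposes $u_{n+1,n}^{t}\colon (E_n)'\to(E_{n+1})'$. If $E$ is nuclear, the $u_{n+1,n}$ may be taken nuclear, hence so are their transposes (the transpose of a nuclear operator between Banach spaces is nuclear, with the same defining absolutely summable series), and a countable locally convex inductive limit of Banach spaces with nuclear linking maps is a nuclear (DF)-space; thus $E'_b$ is nuclear. Conversely, if $E'_b$ is nuclear then it is a nuclear (DF)-space, and the strong dual of a nuclear (DF)-space is a nuclear Fr\'echet space, so $E''_b$ is nuclear. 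Since $E$ is Fr\'echet, the canonical map $E\to E''_b$ is a topological embedding, and nuclearity is inherited by linear subspaces; hence $E$ is nuclear. No use of reflexivity is made here, so there is no circularity with the second item.

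Second item. If $E$ is nuclear and Fr\'echet, every bounded subset $B$ of $E$ is precompact: for each $n$, $B$ lands in a bounded subset of $E_{n+1}$ whose image under the nuclear — in particular compact — map $u_{n+1,n}$ is precompact in $E_n$, so $B$ is precompact for each $p_n$; combined with completeness, $\overline{B}$ is compact. Thus $E$ is a Fr\'echet--Montel space, and a Montel space is reflexive (bounded sets being relatively compact, the evaluation $E\to E''_b$ is a topological isomorphism, the strong dual being itself Montel).

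The main obstacle is the converse direction of the first item: it needs the (DF)-theoretic fact that the strong dual of a nuclear (DF)-space is a nuclear Fr\'echet space, together with the fact that $E\hookrightarrow E''$ is a topological embedding for Fr\'echet $E$. Both are classical but genuinely non-trivial, and we simply invoke \cite{Treves67}; the remaining steps are routine manipulations with the local Banach space presentation of nuclearity and with transposes of nuclear maps.
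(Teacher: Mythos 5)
The paper does not prove this proposition at all: it is quoted directly from Tr\`eves (Propositions 50.6 and 36.5 of \cite{Treves67}) and used as a black box, so there is no in-paper argument to compare yours against. Your sketch is a reasonable reconstruction of the standard proofs. The second item is correct and is the classical route: nuclear linking maps are precompact, so bounded sets are precompact, hence (by completeness) relatively compact, so $E$ is Fr\'echet--Montel and therefore reflexive. The converse direction of the first item, via the bidual of a nuclear (DF)-space and heredity of nuclearity under topological embeddings, is also the standard argument.

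One point deserves a caveat. In the forward direction you assert that for a Fr\'echet space $E$ the strong dual $E'_b$ \emph{is} the locally convex inductive limit $\varinjlim (E_n)'_b$. In general the strong topology is only coarser than this inductive limit topology, and the two coincide precisely when $E$ is distinguished; moreover nuclearity of a finer topology does not automatically pass to a coarser one. The gap is fillable --- a nuclear Fr\'echet space is Montel, hence distinguished, so the identification does hold in the case you need --- but as written the step is stated for arbitrary Fr\'echet $E$, where it fails, and the justification (Montel $\Rightarrow$ distinguished) in turn leans on the precompactness argument you only develop in the second item. Either reorder the two items, or prove nuclearity of $E'_b$ directly from the seminorm characterisation without identifying the two topologies. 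Since the paper merely cites \cite{Treves67}, none of this affects the paper itself.
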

Many spaces relevant to renormalisation issues are Fr\'echet and nuclear. We list here some examples.
\begin{example}\label{ex:findimtensor1}  
 Any finite dimensional vector space  can be equipped with a norm and for any of these norms, they are trivially  Banach, hence Fr\'echet and nuclear. 
 If $E$ and $F$ are finite dimensional vector spaces  we have
 $\Hom^c (E, F)=  \Hom (E, F)\simeq  E^* \otimes F$, where $  \Hom (E, F)$ stands for the space of $F$-valued linear maps on 
 $E$ and where the dual $E^*$ is the { \bf algebraic dual}.
\end{example} 
\begin{example}\label{ex:infindimtensor1} 
Let $U$  be an open subset of $\R^n$.
Take $E= C^\infty(U)=:{\mathcal E}(U)$. The topological dual 
 is the space ${E'}={\mathcal E}^\prime(U)$ of distributions 
 on $U$ with compact support. 
 
 Then $E$ is Fr\'echet (\cite{Treves67}, pp. 86-89), and $E'$ is nuclear (\cite{Treves67}, Corollary p. 530). By Proposition 
 \ref{prop:Frechet_nuclear}, $E$ is also nuclear. 
\end{example}
\begin{remark}\label{rk:dualnotfrechet}  Note that the dual $E' $ of a Fr\'echet space  $E$ is  never a Fr\'echet space (for any of the natural topologies on $E'$), unless $E$  is actually a Banach space (see for example 
\cite{kothe1969}).   In particular, ${\mathcal E}^\prime(U)$ is generally not Fr\'echet.
\end{remark} 
We now sum up various results of \cite{Treves67} of importance for  later purposes.
\begin{theo}\cite[Equations (50.17)--(50.19)]{Treves67}
 Let $E$ and $F$ be two Fr\'echet spaces, with $E$ nuclear. The following isomorphisms of topological vector spaces hold.
 \begin{align}
  & E'\widehat\otimes  F \simeq \Hom^c(E,F) \label{eq:E_prime_otimes_F} \\
  & E\widehat\otimes  F \simeq \Hom^c(E',F) \label{eq:E_otimes_F} \\
  & E'\widehat\otimes  F' \simeq (E\widehat\otimes  F)' \simeq {\mathcal B}^c(E\times F, \K). \label{eq:E_prime_otimes_F_prime}
 \end{align}
 with ${\mathcal B}^c(E\times F, \K)$ the set of continuous bilinear maps 
 $K:E\times F\longrightarrow\K$. Here the duals are endowed with the strong dual topology, 
 $\Hom^c(E,F)$ with the strong topology and ${\mathcal B}^c(E\times F, \K)$ with the topology of uniform convergence on products of bounded sets.
\end{theo}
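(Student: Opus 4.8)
The plan is to derive the three isomorphisms of equations \eqref{eq:E_prime_otimes_F}--\eqref{eq:E_prime_otimes_F_prime} from the known structural results about nuclear Fréchet spaces and Grothendieck's theory of topological tensor products, all of which are either recalled in the excerpt or are standard references to \cite{Treves67,Gr52,Gr54}. Since the statement is itself attributed to \cite[Equations (50.17)--(50.19)]{Treves67}, the ``proof'' here is really an orchestration of previously established facts rather than a genuinely new argument; I would present it as such, citing the relevant propositions from \cite{Treves67} and using the nuclearity hypothesis on $E$ to collapse the $\epsilon$- and $\pi$-topologies via the definition of nuclear space given just above.

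First I would fix notation: recall that for $E$ nuclear and $F$ arbitrary locally convex, $E\widehat\otimes_\epsilon F = E\widehat\otimes_\pi F = E\widehat\otimes F$, so there is no ambiguity in the completed tensor product; and recall from Proposition \ref{prop:Frechet_nuclear} that the strong dual $E'$ of a nuclear Fréchet space is again nuclear, and that $E$ is reflexive, i.e. $E'' \simeq E$. For \eqref{eq:E_prime_otimes_F}, the plan is to invoke Grothendieck's identification of $E'\widehat\otimes_\epsilon F$ with the space of continuous linear maps $E\to F$ that are compact / nuclear, and then use that for $E$ nuclear \emph{every} continuous linear map $E\to F$ is nuclear, so that $E'\widehat\otimes F \simeq \Hom^c(E,F)$ as topological vector spaces (the $\Hom$ side carrying the strong / bounded-convergence topology). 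For \eqref{eq:E_otimes_F}, I would apply \eqref{eq:E_prime_otimes_F} with $E$ replaced by $E'$: since $E'$ is a nuclear (DF, not Fréchet — see Remark \ref{rk:dualnotfrechet}) space whose strong dual is $E''\simeq E$ by reflexivity, one gets $E\widehat\otimes F \simeq (E')'\widehat\otimes F \simeq \Hom^c(E',F)$; here I would be slightly careful that the version of \eqref{eq:E_prime_otimes_F} being quoted from \cite{Treves67} is stated in enough generality to allow $E'$ in the first slot, or else cite the appropriate companion statement in \cite{Treves67}. For \eqref{eq:E_prime_otimes_F_prime}, take $F' $ in place of $F$ in \eqref{eq:E_prime_otimes_F} to get $E'\widehat\otimes F' \simeq \Hom^c(E,F')$; then identify $\Hom^c(E,F')$ with the space of continuous bilinear forms $\mathcal B^c(E\times F,\K)$ by the exponential law / currying (a continuous linear map $E\to F'$ is the same as a separately-and-hence-jointly-continuous bilinear form on $E\times F$, using that $E,F$ are Fréchet so separate continuity implies joint continuity), and finally identify $\mathcal B^c(E\times F,\K)$ with $(E\widehat\otimes_\pi F)' = (E\widehat\otimes F)'$ by the universal property defining the projective topology — this last identification is essentially the definition of $\otimes_\pi$.

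The main obstacle, such as it is, is bookkeeping about \emph{which} topology sits on each side and verifying that the abstract algebraic isomorphisms are homeomorphisms: one must track the strong-dual topology on the $E',F'$ factors, the topology of uniform convergence on products of bounded sets on $\mathcal B^c(E\times F,\K)$, and the strong topology on $\Hom^c$, and check these match up under currying and under the universal property of $\widehat\otimes_\pi$. For Fréchet (and nuclear-Fréchet-dual) spaces these compatibilities are exactly the content of the cited sections of \cite{Treves67}, so the honest approach is to state the three isomorphisms, indicate the chain \eqref{eq:E_prime_otimes_F} $\Rightarrow$ \eqref{eq:E_otimes_F} (via reflexivity) $\Rightarrow$ \eqref{eq:E_prime_otimes_F_prime} (via $F \rightsquigarrow F'$ and currying), and refer to \cite[Chapter 50]{Treves67} for the topological identifications, flagging the separate-versus-joint continuity point and the reflexivity of Fréchet nuclear spaces (Proposition \ref{prop:Frechet_nuclear}) as the two inputs that make the argument go through. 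I would keep the written proof short, essentially a guided citation, since reproducing Grothendieck's tensor-product machinery in full would be out of scope for this paper.
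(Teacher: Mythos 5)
The paper offers no proof of this theorem: it is quoted verbatim from \cite[Equations (50.17)--(50.19)]{Treves67}, so there is no in-paper argument to compare your proposal against. Your ``guided citation'' is therefore consistent with how the paper itself treats the statement, and the logical chain you sketch (\eqref{eq:E_prime_otimes_F} $\Rightarrow$ \eqref{eq:E_otimes_F} via reflexivity, \eqref{eq:E_prime_otimes_F_prime} via currying and the universal property of $\otimes_\pi$) is broadly the right skeleton. Two cautions if you were to write this out: first, deducing \eqref{eq:E_otimes_F} and \eqref{eq:E_prime_otimes_F_prime} by substituting $E'$ (resp.\ $F'$) into \eqref{eq:E_prime_otimes_F} requires a version of that isomorphism valid outside the Fr\'echet category, since by Remark \ref{rk:dualnotfrechet} the strong dual of a Fr\'echet space is never Fr\'echet unless the space is Banach --- you flag this for $E'$ but the same issue arises for $F'$ in the third isomorphism, where Treves in fact argues directly from the duality $(E\widehat\otimes_\pi F)'\simeq \mathcal{B}^c(E\times F,\K)$ rather than by substitution. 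Second, the assertion that for $E$ nuclear ``every continuous linear map $E\to F$ is nuclear'' is only literally true when $F$ is a Banach space; the identification $E'\widehat\otimes F\simeq \Hom^c(E,F)$ for Fr\'echet $F$ goes through the kernel-theorem machinery rather than through nuclearity of arbitrary maps into $F$. Neither point invalidates your plan, but both would need to be resolved by citing the precise statements in \cite{Treves67} rather than by the substitutions as written.
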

We  also need the stability of Fr\'echet nuclear spaces under completed tensor products, for which we need the following   lemma.
\begin{lemma} \label{lem:prod_Frechet_nuclear}
 The completed tensor product  $E\widehat\otimes  F$ of two Fr\'echet nuclear spaces is a Fr\'echet nuclear space.
\end{lemma}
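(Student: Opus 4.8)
The plan is to establish the two properties separately: that $E\widehat\otimes F$ is Fr\'echet, and that it is nuclear. For the Fr\'echet part, recall that both $E$ and $F$ carry topologies defined by countable families of seminorms $(p_i)_{i\in\N}$ and $(q_j)_{j\in\N}$ respectively, which we may take to be increasing. The projective tensor product topology on $E\otimes_\pi F$ is then generated by the countable family of seminorms $p_i\otimes_\pi q_j$ (the projective tensor product of seminorms), so $E\otimes_\pi F$ is metrizable; its completion $E\widehat\otimes_\pi F$ is then a complete metrizable locally convex space, i.e. Fr\'echet. Since $E$ is nuclear we have $E\widehat\otimes F = E\widehat\otimes_\pi F$ by the definition of nuclearity recalled above, so $E\widehat\otimes F$ is Fr\'echet.

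For nuclearity, the clean route is to invoke the permanence properties of the class of nuclear spaces established in \cite{Gr54} (and recollected in \cite{Treves67}): the completed tensor product of two nuclear spaces is nuclear. Concretely, one uses the characterization of nuclearity via summability of the canonical maps between the local Banach spaces $\widehat{E}_{p_i}$, $\widehat{F}_{q_j}$: for each $i$ there is $i'\geq i$ such that the linking map $\widehat{E}_{p_{i'}}\to\widehat{E}_{p_i}$ is nuclear (a sum of rank-one maps with summable coefficients), and similarly for $F$; the tensor product of two nuclear maps of Banach spaces is nuclear, and the local Banach spaces of $E\otimes_\pi F$ are (completions of) tensor products $\widehat{E}_{p_i}\widehat\otimes_\pi\widehat{F}_{q_j}$. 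Hence every seminorm of $E\widehat\otimes F$ is dominated by a larger one for which the linking map is nuclear, which is exactly nuclearity. Alternatively, and more in the spirit of the excerpt, one can argue by reflexivity and duality: a Fr\'echet space is nuclear iff its strong dual is nuclear (Proposition \ref{prop:Frechet_nuclear}), and combine this with the isomorphisms \eqref{eq:E_prime_otimes_F}--\eqref{eq:E_prime_otimes_F_prime}, but this is less direct than simply citing the permanence theorem.

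The main obstacle is that the genuinely substantive input — that the class of nuclear spaces is stable under $\widehat\otimes$ — is not something one should reprove from scratch; it is a theorem of Grothendieck. So the proof is essentially a matter of correctly assembling citations: metrizability plus completeness for the Fr\'echet claim, and the permanence theorem of \cite{Gr54} for the nuclearity claim. I would therefore write it as: first observe $E\otimes_\pi F$ is metrizable because the projective topology on a tensor product of two spaces with countable fundamental systems of seminorms again has a countable fundamental system; conclude $E\widehat\otimes_\pi F$ is Fr\'echet; then note $E\widehat\otimes F=E\widehat\otimes_\pi F$ by nuclearity of $E$; and finally cite \cite[Chapter II]{Gr54} (or the corresponding statement in \cite[\S50]{Treves67}) for nuclearity of the completed tensor product of nuclear spaces. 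The only mild care needed is to make sure the space is Hausdorff — which it is, since the projective tensor product of Hausdorff locally convex spaces is Hausdorff, its seminorms separating points — so that completion yields a Fr\'echet space in the sense fixed at the start of Subsection \ref{subsection:Frechet_nuc}.
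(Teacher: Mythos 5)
Your proof is correct and follows essentially the same route as the paper: cite the permanence of nuclearity under completed tensor products (\cite[Equation (50.9)]{Treves67}, going back to Grothendieck) and observe that the completion is complete. You are in fact more careful than the paper's own two-line proof, which silently omits the metrizability and Hausdorffness of $E\otimes_\pi F$ that you correctly supply via the countable family of projective seminorms.
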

\begin{proof}
 If $E$ and $F$ are two nuclear spaces then $E\widehat\otimes  F$ is a nuclear space (\cite[Equation (50.9)]{Treves67}). It is moreover complete since 
 $E\widehat\otimes  F$ is obtained by completion.
\end{proof}
\begin{prop}
 Let $V$ be a Fr\'echet nuclear space. Then 
 \begin{equation} \label{eq:echange_dual_prod}
 \left(V^{ \widehat\otimes  k}\right)' \simeq\left(V'\right)^{ \widehat\otimes  k}
 \end{equation} 
 holds for any $k\geq1$, where the duals are endowed with their strong topologies.
\end{prop}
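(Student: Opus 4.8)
The plan is to argue by induction on $k$, using the three facts already available: the stability of Fréchet nuclear spaces under completed tensor products (Lemma \ref{lem:prod_Frechet_nuclear}), the permanence of nuclearity under strong duality for Fréchet spaces (Proposition \ref{prop:Frechet_nuclear}), and the isomorphism \eqref{eq:E_prime_otimes_F_prime}, i.e. $E'\widehat\otimes F'\simeq(E\widehat\otimes F)'$ for $E$, $F$ Fréchet with $E$ nuclear. The base case $k=1$ of \eqref{eq:echange_dual_prod} is simply the definition of the strong dual, so there is nothing to prove.

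For the inductive step, assume \eqref{eq:echange_dual_prod} holds for some $k-1\geq 1$. By Lemma \ref{lem:prod_Frechet_nuclear} and an immediate induction, $V^{\widehat\otimes(k-1)}$ is again a Fréchet nuclear space, so I may write $V^{\widehat\otimes k}\simeq V^{\widehat\otimes(k-1)}\widehat\otimes V$ and apply \eqref{eq:E_prime_otimes_F_prime} with $E=V^{\widehat\otimes(k-1)}$ (Fréchet nuclear) and $F=V$ (Fréchet). This yields
\[\left(V^{\widehat\otimes k}\right)'\simeq\left(V^{\widehat\otimes(k-1)}\widehat\otimes V\right)'\simeq\left(V^{\widehat\otimes(k-1)}\right)'\widehat\otimes V',\]
as topological vector spaces, the duals carrying their strong topologies. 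The induction hypothesis gives $\left(V^{\widehat\otimes(k-1)}\right)'\simeq(V')^{\widehat\otimes(k-1)}$, and substituting this in the previous line produces
\[\left(V^{\widehat\otimes k}\right)'\simeq(V')^{\widehat\otimes(k-1)}\widehat\otimes V'\simeq(V')^{\widehat\otimes k},\]
which is the desired isomorphism for $k$.

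The one point requiring care — and the only real obstacle — is the bookkeeping around the completed tensor product: I have tacitly used that $\widehat\otimes$ is associative, so that the iterated product $V^{\widehat\otimes k}$ is unambiguous and agrees with the bracketing $V^{\widehat\otimes(k-1)}\widehat\otimes V$, and likewise for $(V')^{\widehat\otimes k}$. This is legitimate because all the spaces involved are nuclear (for $V^{\widehat\otimes j}$ by the induction just made, and for $V'$, hence for $(V')^{\widehat\otimes j}$, by Proposition \ref{prop:Frechet_nuclear}), so that $\widehat\otimes_\epsilon$ and $\widehat\otimes_\pi$ coincide and the completed tensor product is associative; note that $V'$ itself is in general not Fréchet (Remark \ref{rk:dualnotfrechet}), which is why the Fréchet nuclear factor in the application of \eqref{eq:E_prime_otimes_F_prime} must be taken to be $V^{\widehat\otimes(k-1)}$ and not $V'$. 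One should also observe that \eqref{eq:E_prime_otimes_F_prime} identifies the strong dual of $E\widehat\otimes F$ with $E'\widehat\otimes F'$ for the strong dual topologies, so it is consistent to chain it with the induction hypothesis, which is itself formulated for strong duals.
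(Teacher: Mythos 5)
Your proof is correct and follows essentially the same route as the paper's: induction on $k$, applying \eqref{eq:E_prime_otimes_F_prime} with $E=V^{\widehat\otimes (k-1)}$ and $F=V$, with Lemma \ref{lem:prod_Frechet_nuclear} guaranteeing that the inductive factor remains Fr\'echet nuclear. Your additional remarks on the associativity of $\widehat\otimes$ and on why the nuclear factor must be $V^{\widehat\otimes(k-1)}$ rather than $V'$ are sound points of care that the paper leaves implicit.
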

\begin{proof}
 Let $V$ be a Fr\'echet nuclear space. The case $k=1$ is trivial. Then Equation \eqref{eq:echange_dual_prod} with $k=2$ holds by Equation \eqref{eq:E_prime_otimes_F_prime} 
 with $E=F=V$. The cases $k\geq2$ are proved by induction, using $E=V^{\widehat\otimes  k-1}$ and $F=V$. The induction holds by Lemma 
 \ref{lem:prod_Frechet_nuclear}.
\end{proof}

\subsection{A ProP for Fr\'echet nuclear spaces} \label{subsection:infinite_dim_prop}

We start by recalling the definition of distributions over a    finite dimensional smooth manifold ${X}$. We quote \cite[Definition 6.3.3]{Ho89}.
 \begin{defi} 
 To every coordinate system 
  $\kappa:U_k\subset {X}\longrightarrow V_k\subset\R^n$ we associate 
  a distribution $u_k\in\mathcal{D}'(V _k)$ such that 
  \begin{equation*}
   u_{k'}=(\kappa\circ\kappa'^{-1})^*u_k
  \end{equation*}
  in $\kappa'(U_k\cap U_{k'})$; with $(\kappa\circ\kappa'^{-1})^*u_k$ the pullback of 
  $u_k$ by $\kappa\circ\kappa'^{-1}$ whose existence and uniqueness
  is given by 
  \cite[Theorem 6.1.2]{Ho89}. Then the system $u_k$ of distributions is called a distribution on ${X}$. The set of distributions on ${X}$ is written 
  $\mathcal{D}'({X})$. Similarly we define $\mathcal{E}'({X})$, the set of distributions with 
  compact support.
 \end{defi}
\begin{prop} \label{prop:fction_manifold_Frechet_nuclear}
 $\mathcal{E}({X})$ is a Fr\'echet nuclear space.
\end{prop}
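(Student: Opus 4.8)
The plan is to reduce the claim to two known facts about $\mathcal{E}(X)=C^\infty(X)$: first, that it is a Fréchet space, and second, that it is nuclear. Both can be obtained by pulling back the corresponding statements from Euclidean space via a finite atlas, so the strategy is essentially a ``partition of unity plus known local case'' argument.

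First I would fix a finite atlas $(\kappa_i:U_i\to V_i\subset\R^n)_{i\in[m]}$ of the compact manifold $X$ together with a subordinate smooth partition of unity $(\chi_i)_{i\in[m]}$, which exists since $X$ is a (Hausdorff, second countable) closed manifold. For each $i$ one has the pullback map $\kappa_i^*:C^\infty(V_i)\to C^\infty(U_i)$, and composing with multiplication by $\chi_i$ and extension by zero gives a continuous linear map into $C^\infty(X)$. Conversely, restricting a function $f\in C^\infty(X)$ to $U_i$ and pushing it forward by $\kappa_i$ embeds $\mathcal{E}(X)$ into $\prod_{i\in[m]}\mathcal{E}(V_i)$ as a closed subspace: the topology of $\mathcal{E}(X)$ is by definition the initial topology for these local charts, so it is induced by the countable family of seminorms $f\mapsto \sup_{K_{i,\ell}}|\partial^\gamma(\kappa_i^{-1})^*f|$, where $K_{i,\ell}$ exhausts $V_i$ by compacts and $|\gamma|\le\ell$. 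This family is countable, Hausdorff (a smooth function vanishing in all charts is zero), and completeness follows because a Cauchy sequence in $\mathcal{E}(X)$ is Cauchy in each $\mathcal{E}(V_i)$ (which is complete by Example \ref{ex:infindimtensor1}, i.e. \cite{Treves67}, pp.~86--89), and the limits glue to a smooth function on $X$ because the transition relation $u_{i'}=(\kappa_i\circ\kappa_{i'}^{-1})^*u_i$ is preserved under limits. Hence $\mathcal{E}(X)$ is Fréchet.

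For nuclearity I would use that $\mathcal{E}(X)$ embeds as a closed subspace of the finite product $\prod_{i\in[m]}\mathcal{E}(V_i)$. By Example \ref{ex:infindimtensor1} each $\mathcal{E}(V_i)=C^\infty(V_i)$ is nuclear, a finite (indeed any) product of nuclear spaces is nuclear, and a closed subspace of a nuclear space is nuclear (these are standard permanence properties of nuclear spaces, \cite{Treves67}, \S50, or \cite{Gr54}). Alternatively, having already shown $\mathcal{E}(X)$ is Fréchet, one may invoke Proposition \ref{prop:Frechet_nuclear}: it suffices to produce the closed embedding into a nuclear space, or to note that the subordinate partition of unity realizes $\mathcal{E}(X)$ as a complemented subspace (the map $f\mapsto(\chi_i f)_i$ splits the restriction map), so nuclearity transfers directly. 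Either route yields that $\mathcal{E}(X)$ is a Fréchet nuclear space.

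I expect the only genuinely delicate point to be the verification that the embedding $\mathcal{E}(X)\hookrightarrow\prod_i\mathcal{E}(V_i)$ has \emph{closed} image, equivalently that the compatibility relations defining a distribution/function on $X$ cut out a closed subspace --- this is what makes the completeness and the closed-subspace permanence arguments apply. This is not hard: the conditions $\chi_i$-compatibility and the transition-pullback identities are each closed conditions (they say that certain continuous linear combinations of the coordinate components vanish), so their intersection is closed. Everything else is a routine transfer of the Euclidean statements recalled in Example \ref{ex:infindimtensor1} through the finite atlas, and I would keep the written proof short, citing \cite{Treves67} for the local case and the permanence properties of Fréchet and of nuclear spaces.
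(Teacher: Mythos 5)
Your argument is correct, but it is worth noting that the paper does not actually prove this proposition at all: it dismisses the Fr\'echet part as classical (citing an exercise in a textbook) and the nuclearity as ``folklore'', pointing to a recent reference for a written proof. What you have done is supply the localization argument that the paper leaves implicit. Your route --- embed $\mathcal{E}(X)$ via a finite atlas as the closed subspace of $\prod_{i\in[m]}\mathcal{E}(V_i)$ cut out by the transition identities $u_{i'}=(\kappa_i\circ\kappa_{i'}^{-1})^*u_i$, then invoke the permanence of the Fr\'echet and nuclear properties under countable products and (closed) subspaces, with the local case $\mathcal{E}(V_i)$ already recorded in Example \ref{ex:infindimtensor1} --- is sound, and the point you single out as delicate (closedness of the image) is indeed the only thing to check; your observation that the image is exactly the kernel of a continuous linear map recording the transition identities settles it, since a compatible tuple always glues to a global smooth function. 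Two small refinements: for the nuclearity half, closedness of the subspace is not needed, since \emph{any} linear subspace of a nuclear space is nuclear (closedness only matters for inheriting completeness, hence the Fr\'echet property); and the complemented-subspace variant via $f\mapsto(\chi_i f)_i$ is a nice shortcut but is strictly more than you need. Compared with the paper, your proof buys self-containedness at the cost of a paragraph of chart bookkeeping; the paper's citation-only treatment buys brevity but leaves the reader to trust the folklore.
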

It is a classical result of functional analysis that the space of functions over a smooth manifold is Fr\'echet (see for example 
\cite[Exercise 2.3.2]{BaCr13}). The fact that the same space is nuclear is a folklore result, often stated without proof nor references. A proof was 
recently given in \cite[p. 4]{BrDaLGRe17}. 

It then follows from Proposition \ref{prop:Frechet_nuclear}, that the space $\mathcal{E}'(X)$ is   also nuclear. 
\begin{remark} (Compare with Remark \ref{rk:dualnotfrechet}). Note that the 
space $\mathcal{E}'(X)$ is \emph{not} Fr\'echet  since the dual of a Fr\'echet space $F$ is Fr\'echet if and only if $F$ is Banach (see for example 
\cite{kothe1969}) which is not the 
case of $\mathcal{E}({X})$.
\end{remark}
One further useful result is
\begin{prop} \label{prop:prod_function}
 Let ${X}$ and ${Y}$ be two finite dimensional smooth manifolds. Then 
 \begin{equation*}
 \Hom^c(\mathcal{E}'({X}),\mathcal{E}({Y}))\simeq \,  \mathcal{E}({X})\,\widehat\otimes\, \mathcal{E}({Y}) \simeq \mathcal{E}({X}\times {Y})
 \end{equation*}
 holds.
\end{prop}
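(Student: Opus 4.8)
The plan is to obtain the first isomorphism as a direct instance of the kernel--type identity \eqref{eq:E_otimes_F}, and then to prove the second isomorphism, which carries the genuine analytic content, by transporting the classical statement for open subsets of Euclidean space to manifolds. For the first isomorphism, recall from Proposition~\ref{prop:fction_manifold_Frechet_nuclear} that $\mathcal{E}(X)$ is a Fr\'echet nuclear space and $\mathcal{E}(Y)$ is a Fr\'echet space, and that the strong dual of $\mathcal{E}(X)$ is, by definition, $\mathcal{E}'(X)$. Applying \eqref{eq:E_otimes_F} with $E=\mathcal{E}(X)$ and $F=\mathcal{E}(Y)$ then yields at once the topological vector space isomorphism $\mathcal{E}(X)\,\widehat\otimes\,\mathcal{E}(Y)\simeq\Hom^c\big(\mathcal{E}'(X),\mathcal{E}(Y)\big)$.

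For the second isomorphism I would start from the external product: the bilinear map $\mathcal{E}(X)\times\mathcal{E}(Y)\to\mathcal{E}(X\times Y)$ sending $(f,g)$ to the function $(x,y)\mapsto f(x)g(y)$ is separately continuous, hence jointly continuous since both factors are Fr\'echet, so by the universal property of $\otimes_\pi$ it factors through a continuous linear map on $\mathcal{E}(X)\otimes_\pi\mathcal{E}(Y)$, which (as $\mathcal{E}(X\times Y)$ is complete) extends to a continuous linear map
\[
\Phi:\ \mathcal{E}(X)\,\widehat\otimes\,\mathcal{E}(Y)\ \longrightarrow\ \mathcal{E}(X\times Y).
\]
Injectivity of $\Phi$ on the algebraic tensor product is elementary (a nonzero finite tensor can be written with linearly independent left factors), so it remains to prove that $\Phi$ is a topological embedding with dense image. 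I would do this by localisation. Choose locally finite atlases $(U_i,\kappa_i)_i$ of $X$ and $(W_j,\lambda_j)_j$ of $Y$ with subordinate smooth partitions of unity $(\chi_i)_i$, $(\psi_j)_j$ having compact supports; then $(U_i\times W_j)_{i,j}$ is a locally finite atlas of $X\times Y$ with subordinate partition of unity $(\chi_i\psi_j)_{i,j}$. Via $f\mapsto\big(\chi_i\,(f\circ\kappa_i^{-1})\big)_i$ one realises $\mathcal{E}(X)$ as a complemented closed subspace of the countable product $\prod_i\mathcal{E}(\R^{m})$, with splitting $(g_i)_i\mapsto\sum_i(g_i\circ\kappa_i)$ extended by zero, and similarly for $\mathcal{E}(Y)$ and $\mathcal{E}(X\times Y)$. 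Granting that, under the Fr\'echet nuclear hypotheses (Proposition~\ref{prop:Frechet_nuclear}, Lemma~\ref{lem:prod_Frechet_nuclear}), the completed projective tensor product commutes with these countable products and with passage to complemented subspaces, the problem reduces to the classical isomorphism $C^\infty(V_1)\,\widehat\otimes\,C^\infty(V_2)\simeq C^\infty(V_1\times V_2)$ for open $V_1\subseteq\R^m$, $V_2\subseteq\R^n$, for which I refer to \cite{Treves67} (it can be read off from the vector-valued identity $C^\infty(V;E)\simeq C^\infty(V)\,\widehat\otimes\,E$ for complete $E$, combined with the exponential law $C^\infty(V_1\times V_2)\simeq C^\infty\big(V_1;C^\infty(V_2)\big)$). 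An equivalent route, avoiding charts altogether, is to invoke the exponential law on manifolds $\mathcal{E}(X\times Y)\simeq\mathcal{E}\big(X;\mathcal{E}(Y)\big)$ directly (it is checked on charts, where it is the Euclidean exponential law) and then the vector-valued tensor-product theorem $\mathcal{E}\big(X;\mathcal{E}(Y)\big)\simeq\mathcal{E}(X)\,\widehat\otimes\,\mathcal{E}(Y)$, which applies because $\mathcal{E}(X)$ is nuclear and $\mathcal{E}(Y)$ complete.

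The step I expect to be the main obstacle is precisely this passage from Euclidean domains to manifolds: one must verify that cutting a global smooth function into chart-supported pieces, transporting to $\R^m\times\R^n$, applying the Euclidean isomorphism, and reassembling by the locally finite sum, is compatible with $\widehat\otimes$ --- equivalently, that $\widehat\otimes$ commutes with the countable products and the complemented-subspace inclusions encoded by the chosen atlases and partitions of unity. This is where the Fr\'echet property and nuclearity are genuinely used; the remaining ingredients (continuity and injectivity of the external product, density of finite sums of products, which on a chart follows e.g. from a periodisation and Fourier series argument, and the reconstruction of a global function from its pieces) are routine.
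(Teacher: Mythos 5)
Your treatment of the first isomorphism is exactly the paper's: apply \eqref{eq:E_otimes_F} to $E=\mathcal{E}(X)$, $F=\mathcal{E}(Y)$, using Proposition \ref{prop:fction_manifold_Frechet_nuclear}. Where you diverge is the second isomorphism. The paper does not prove it analytically at all: it cites \cite[Chap.\ 5, p.\ 105]{Gr52} and observes that it can be read off from a Schwartz kernel theorem for smoothing operators \cite[Theorem 2.4.5]{BaCr13}, which supplies $\Hom^c(\mathcal{E}'(X),\mathcal{E}(Y))\simeq\mathcal{E}(X\times Y)$ directly; combined with \eqref{eq:E_otimes_F} this closes the chain. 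You instead build the isomorphism $\mathcal{E}(X)\,\widehat\otimes\,\mathcal{E}(Y)\simeq\mathcal{E}(X\times Y)$ by hand from the external product, localising via partitions of unity to reduce to the Euclidean case in \cite{Treves67} (or, in your variant, via the exponential law plus the vector-valued identity $\mathcal{E}(X;E)\simeq\mathcal{E}(X)\,\widehat\otimes\,E$). Both routes are legitimate; the paper's buys brevity by outsourcing the analytic content to the kernel theorem, while yours is more self-contained but leans on two standard facts you only flag rather than prove --- that $\widehat\otimes$ of Fr\'echet spaces commutes with countable products and with complemented subspaces --- and on a small repair to your splitting map: $(g_i)_i\mapsto\sum_i g_i\circ\kappa_i$ is not defined on all of $\prod_i\mathcal{E}(\R^m)$ as written (the sum need not be locally finite, and $g_i\circ\kappa_i$ lives only on $U_i$); you need a second family of cutoffs $\tilde\chi_i$ equal to $1$ on $\mathrm{supp}\,\chi_i$ and supported in $U_i$, and the splitting $(g_i)_i\mapsto\sum_i\tilde\chi_i\,(g_i\circ\kappa_i)$. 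With that standard fix your argument goes through.
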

The second isomorphism   \cite[Chap. 5, p. 105]{Gr52} can be proved using a version of the Schwartz kernel theorem for smoothing operators 
\cite[Theorem 2.4.5]{BaCr13} by means of the identification $\Hom^c(\mathcal{E}'({X}),\mathcal{E}({Y}))\simeq \mathcal{E}({X}\times {Y})$. The result then follows from 
\eqref{eq:E_otimes_F} applied to $\mathcal{E}(X)$ and $\mathcal{E}(Y)$ which are Fr\'echet nuclear spaces.
\begin{defi} \label{defi:Hom_V_generalised}
 Let $V$ be a Fr\'echet nuclear space. For any $k,l\in  \N_0$, we set
\[\Hom_V^c(k,l)=\Hom^c(V^{\hat \otimes k}, V^{\hat \otimes l})\simeq(V')^{\widehat\otimes  k}\widehat\otimes  V^{\widehat\otimes  l},\]
where, as before $V'$ stands for the strong topological dual. Furthermore we set $\Hom^c_V:=(\Hom_V^c(k,l))_{k,l\geq0}$.

For any $\sigma \in \sym_n$, let  $\theta_\sigma$ be the endomorphism of $V^{\otimes n}$ defined by
\[\theta_\sigma(v_1\otimes \ldots \otimes v_n)=v_{\sigma^{-1}(1)}\otimes \ldots \otimes v_{\sigma^{-1}(n)}.\] It extends to a continuous linear map 
$\overline{\theta_\sigma}$ on the closure 
$V^{\widehat\otimes  n}$.   
For any $f\in \Hom_V^c (k,l)$, $\sigma \in \sym_l$, $\tau\in \sym_k$, we set:
\begin{align*}
\sigma\cdot f&=\overline{\theta_\sigma} \circ f ,&f\cdot \tau&=f\circ \overline{\theta_\tau}.
\end{align*} 
\end{defi}
In the above definition, the superscript ``c'' stands for continuous. The family $\Hom_V^c$ carries a ProP structure.
\begin{theo} \label{thm:Hom_V_generalised}
 Let $V$ be a Fr\'echet nuclear space. $\Hom_V^c$, with the action of $\sym\times\sym^\mathrm{op}$ described above, is a ProP. Its horizontal 
 concatenation is the usual (topological) tensor product of maps with $I_0:\K\longrightarrow\K$ is the constant map $I_0(x):=1_\K$  and 
 its vertical concatenation is the usual composition of maps and $I_1:V\longrightarrow V$ is the identity map.
\end{theo}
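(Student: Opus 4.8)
The strategy is to mirror, step by step, the proof given for the finite-dimensional ProP $\Hom_V$, replacing the algebraic tensor product by the completed tensor product $\widehat\otimes$ and ordinary composition by composition of continuous linear maps, and checking at each step that the maps constructed are continuous and that the relevant isomorphisms from Subsection \ref{subsection:Frechet_nuclear} are available. First I would record the basic structural facts that make the constructions legitimate: by Lemma \ref{lem:prod_Frechet_nuclear} the category of Fr\'echet nuclear spaces is stable under $\widehat\otimes$, so $V^{\widehat\otimes k}$ is again Fr\'echet nuclear for every $k$, with the convention $V^{\widehat\otimes 0}=\K$; hence all the spaces $\Hom^c_V(k,l)=\Hom^c(V^{\widehat\otimes k},V^{\widehat\otimes l})$ and the isomorphism $\Hom^c_V(k,l)\simeq (V')^{\widehat\otimes k}\widehat\otimes V^{\widehat\otimes l}$ of Definition \ref{defi:Hom_V_generalised} are well defined (the latter via \eqref{eq:E_prime_otimes_F} together with \eqref{eq:echange_dual_prod}). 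I would also note once and for all that the completed tensor product of continuous linear maps $f\widehat\otimes g$ is continuous and that composition of continuous linear maps is continuous, so every operation below lands in the correct space.

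\textbf{Verification of the axioms.} I would then go through the six axioms of Definition \ref{def:prop} in the same order as in the proof of Definition-Proposition \ref{defi:Hom_V}.
\begin{enumerate}
\item The maps $\overline{\theta_\sigma}$ are the continuous extensions of the $\theta_\sigma$ to $V^{\widehat\otimes n}$, which exist and are unique because $\theta_\sigma$ is a continuous (indeed, isometric for any cross-seminorm) linear bijection on the dense subspace $V^{\otimes n}$; the identities $\overline{\theta_{\sigma}}\circ\overline{\theta_{\sigma'}}=\overline{\theta_{\sigma\sigma'}}$ pass from $V^{\otimes n}$ to its completion by density, so $\sigma\cdot f:=\overline{\theta_\sigma}\circ f$ and $f\cdot\tau:=f\circ\overline{\theta_\tau}$ give a $\sym_l\times\sym_k^{op}$-module structure on $\Hom^c_V(k,l)$ by associativity of composition.
\item Horizontal concatenation is $f*g:=f\widehat\otimes g$; associativity follows from the associativity of $\widehat\otimes$ (which holds for Fr\'echet nuclear spaces since $\widehat\otimes_\pi$ is associative on all locally convex spaces), it maps $\Hom^c_V(k,l)\widehat\otimes\Hom^c_V(k',l')$ into $\Hom^c_V(k+k',l+l')$, and $I_0=\mathrm{Id}_\K$ (equivalently the map $x\mapsto 1_\K$ under the identification $\K\widehat\otimes W\simeq W$) is a two-sided unit, exactly as in the finite-dimensional computation $(I_0\widehat\otimes f)(1\otimes v)=1_\K\otimes f(v)=f(v)$ extended by density.
\item Vertical concatenation is ordinary composition $f\circ g$, associative because composition is; $I_1=\mathrm{Id}_V$, and $I_n=I_1^{*n}=\mathrm{Id}_V^{\widehat\otimes n}=\mathrm{Id}_{V^{\widehat\otimes n}}$, this last equality holding because $\mathrm{Id}_V^{\widehat\otimes n}$ is the identity on the dense subspace $V^{\otimes n}$ and is continuous, hence equals the identity on the completion.
\item The interchange law $(f\widehat\otimes f')\circ(g\widehat\otimes g')=(f\circ g)\widehat\otimes(f'\circ g')$ holds on the dense subspace $V^{\otimes k}\otimes V^{\otimes k'}$ of $V^{\widehat\otimes(k+k')}$ by the same elementary computation as in the finite-dimensional case, and both sides are continuous, so they agree everywhere.
\item Compatibility of $\circ$ with the $\sym\times\sym^{op}$-action is again associativity of composition.
\item For compatibility of $*$ with the action, the identities $(\sigma\cdot f)*(\sigma'\cdot f')=(\sigma\otimes\sigma')\cdot(f*f')$, $(f\cdot\tau)*(f'\cdot\tau')=(f*f')\cdot(\tau\otimes\tau')$ and the commutativity relation \eqref{eqpstarpprime} reduce, on the dense subspace $V^{\otimes k}\otimes V^{\otimes k'}$, to the identity $\overline{\theta_\sigma}\widehat\otimes\overline{\theta_{\sigma'}}=\overline{\theta_{\sigma\otimes\sigma'}}$ (and the analogous one with $c_{m,n}$), which is checked on $V^{\otimes(m+n)}$ and extended by density; both sides being continuous, equality follows on the whole completed tensor product.
\end{enumerate}

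\textbf{Main obstacle.} The only genuinely non-routine point — everything else being a density argument plus the finite-dimensional computation — is ensuring that each of the "obvious" maps ($f\widehat\otimes g$, the extensions $\overline{\theta_\sigma}$, the identifications $\K\widehat\otimes W\simeq W$ and $V^{\widehat\otimes k}\widehat\otimes V^{\widehat\otimes l}\simeq V^{\widehat\otimes(k+l)}$) is well defined and continuous and that these identifications are mutually compatible, so that the diagrammatic identities proved on algebraic tensors actually descend to the completions. I expect the cleanest way to handle this is to fix, once and for all, a canonical associative isomorphism $V^{\widehat\otimes k}\widehat\otimes V^{\widehat\otimes l}\simeq V^{\widehat\otimes(k+l)}$ coming from the universal property of $\widehat\otimes_\pi$ and to note that $\widehat\otimes$ is a bifunctor on Fr\'echet nuclear spaces (a continuous bilinear map extends uniquely to a continuous linear map on the completed tensor product, by \eqref{eq:E_prime_otimes_F_prime} or directly from the universal property of $\widehat\otimes_\pi$), so that $f\widehat\otimes g$ is functorial in each argument; then every axiom is an equality of continuous linear maps that holds on a dense subspace, hence everywhere. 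Finally I would remark that when $V$ is finite dimensional, $\widehat\otimes=\otimes$, $V'=V^*$, and $\Hom^c=\Hom$, so the theorem specialises to Definition-Proposition \ref{defi:Hom_V}.
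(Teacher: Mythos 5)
Your proposal is correct and follows exactly the route the paper takes: the paper's proof of Theorem \ref{thm:Hom_V_generalised} consists of the single sentence that it is the same as the proof of Definition-Proposition \ref{defi:Hom_V}, i.e.\ one repeats the finite-dimensional verification with $\widehat\otimes$ in place of $\otimes$. Your version is in fact more careful than the paper's, since you make explicit the density and continuity arguments (functoriality of $\widehat\otimes$, the extensions $\overline{\theta_\sigma}$, stability of Fr\'echet nuclear spaces under $\widehat\otimes$ via Lemma \ref{lem:prod_Frechet_nuclear}) that justify transporting each identity from algebraic tensors to the completions.
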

 \begin{proof}
  The proof is exactly the same as the proof of Definition-Proposition \ref{defi:Hom_V}.
 \end{proof}
 \begin{example}
      For a  finite dimensional vector space $V$ the classical ProP $\Hom_V$ of Proposition-Definition 
      \ref{defi:Hom_V} coincides with the the ProP $\Hom_V^c$.
     \end{example}
     \begin{example}
      Let $U$ be an open of $\R^n$. From Example \ref{ex:infindimtensor1} and Equation 
      \eqref{eq:echange_dual_prod}
the family  $({\mathcal K}_U(k, l))_{k,l\geq 0}$, with 
${\mathcal K}_U(k, l)= \left({\mathcal E}^\prime(U)\right)^{\widehat\otimes  k}\, \widehat\otimes  \,  
\left({\mathcal E}(U)\right)^{\widehat\otimes  l}$  
defines a ProP.
     \end{example}
     
     \begin{example}
     Let $X$ be a smooth finite dimensional manifold.
      From Proposition \ref{prop:fction_manifold_Frechet_nuclear} and Equation \eqref{eq:echange_dual_prod}
the family  $({\mathcal K}_X(k, l))_{k,l\geq 0}$,
with ${\mathcal K}_X(k, l)= \left({\mathcal E}^\prime(X)\right)^{\widehat\otimes  k}\, \widehat\otimes  \,  {\mathcal E}(X)^{\widehat\otimes  l}$  
defines a ProP.
     \end{example}

\section{Freeness of the ProP $\Gr$ of graphs} \label{sec:free_prop}

The goal of this section is to build free ProPs generated by indecomposable graphs (see Definition \ref{def:indecomposable} below).   
A free 
ProP was already described  by Hackney and Robertson in \cite{HaRo12}. Their construction is on the category of ''megagraphs", which are   
special types of graphs with decorations on their vertices and edges. Their work is categorical and not very adapted for the applications we have in mind, 
which require a more explicit description of the structures at hand. This is why we carry out the proof of the freeness of the ProP introduced in 
subsection \ref{sectiongraphes}. The complete proof of the main theorem (Theorem \ref{thm:freeness_Gr}) is postponed to Appendix 
\ref{appendix:proof_freeness_Gr}.

\subsection{Indecomposable graphs}

\begin{defi}\label{def:indecomposable}
We call a  graph $G$ \textbf{indecomposable} if the five following conditions hold:
\begin{enumerate}
\item $V(G)\neq \emptyset$.
\item $IO(G)=\emptyset$.
\item $L(G)=\emptyset$ or $G$ is reduced to a single loop.
\item If $G'$ and $G''$ are two graphs such that $G=G'\circ G''$, then $V(G')=\emptyset$ or $V(G'')=\emptyset$.
\item If $G'$ and $G''$ are two graphs and $\sigma$, $\tau$ are two permutations such that
$G=\sigma \cdot (G'*G'')\cdot \tau$, then $V(G')=\emptyset$ or $V(G'')=\emptyset$.
\end{enumerate}
For any $k,l\in  \N_0$, the subspace of $\Gr(k,l)$ generated by isoclasses of indecomposable graphs $G$ with $i(G)=k$
and $o(G)=l$ is denoted by $\Gri(k,l)$.
\end{defi}
\begin{remark}
\begin{enumerate}
\item  The permutations in the fifth item of the definition of indecomposable graphs play an 
 important role: without them, one would allow for non connected graphs to be indecomposable, which can well happen when  
 the indexations of the inputs and outputs of the various connected components do not match. For 
 example, the graph 
\[\xymatrix{1&2&3&4\\
\rond{}\ar[u]\ar[ru]&&\rond{}\ar[u]\ar[ru]\\
1\ar[u]&3\ar[lu]&2\ar[u]}\]
would be indecomposable. Permuting  inputs we obtain 
\[\xymatrix{1&2&3&4\\
\rond{}\ar[u]\ar[ru]&&\rond{}\ar[u]\ar[ru]\\
1\ar[u]&2\ar[lu]&3\ar[u]}\]
which is decomposable. The same requirement does not arise for the vertical concatenation since 
one can write $\sigma.(P\circ Q).\tau=(\sigma.P)\circ (Q.\tau)=P'\circ Q'$.
\item There is one special indecomposable graph $\grapheo$, formed by a unique loop. 
The other indecomposable graphs have no loop.
\end{enumerate}
\end{remark}

\begin{prop}
Let $G$ be a graph, $\sigma \in \sym_{o(G)}$ and $\tau\in \sym_{i(G)}$. Then
$G$ is indecomposable if, and only if, $\sigma\cdot G\cdot \tau$ is indecomposable. 
\end{prop}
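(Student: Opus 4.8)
The plan is to verify that each of the five defining conditions of indecomposability (Definition~\ref{def:indecomposable}) is invariant under the transformation $G\mapsto \sigma\cdot G\cdot\tau$. Since this transformation is invertible (with inverse $H\mapsto \sigma^{-1}\cdot H\cdot \tau^{-1}$), it suffices to prove one direction: if $G$ is indecomposable, then so is $\sigma\cdot G\cdot\tau$. By the explicit description \eqref{eqGrmod} of the $\sym\times\sym^{op}$-action on $\Gr$, the graph $\sigma\cdot G\cdot\tau$ has the \emph{same} underlying data $(V(G),E(G),I(G),O(G),IO(G),L(G),s,t)$ as $G$; only the indexation bijections $\alpha,\beta$ are replaced by $\tau^{-1}\circ\alpha$ and $\sigma\circ\beta$.

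First I would dispatch conditions (1), (2), (3): they refer only to $V(G)$, $IO(G)$, $L(G)$ and the incidence structure, none of which is touched by the action, so they hold for $\sigma\cdot G\cdot\tau$ if and only if they hold for $G$. Next, for condition (4), I would use the remark already recorded after Definition~\ref{def:indecomposable}: for the vertical concatenation one has $\sigma.(G'\circ G'').\tau = (\sigma.G')\circ(G''.\tau)$ (and more precisely any decomposition is obtained this way, using the compatibility axioms of a ProP, i.e. Theorem~\ref{theo:ProP_graph}, item 5). So if $\sigma\cdot G\cdot\tau = G'\circ G''$, then $G = \sigma^{-1}\cdot(G'\circ G'')\cdot\tau^{-1} = (\sigma^{-1}\cdot G')\circ(G''\cdot\tau^{-1})$, and indecomposability of $G$ forces $V(G')=V(\sigma^{-1}\cdot G')=\emptyset$ or $V(G'')=V(G''\cdot\tau^{-1})=\emptyset$; hence condition (4) holds for $\sigma\cdot G\cdot\tau$.

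The only step requiring a little care is condition (5). Suppose $\sigma\cdot G\cdot\tau = \mu\cdot(G'*G'')\cdot\nu$ for some graphs $G',G''$ and permutations $\mu,\nu$. Then $G = (\sigma^{-1}\mu)\cdot(G'*G'')\cdot(\nu\tau^{-1})$, which is a decomposition of $G$ of exactly the form appearing in condition (5) for $G$; since $G$ is indecomposable, $V(G')=\emptyset$ or $V(G'')=\emptyset$, as desired. In other words, the presence of the free permutations $\mu,\nu$ in condition (5) is precisely what makes it manifestly stable under the action, and the main (very mild) obstacle is simply to recognise that one must compose permutations on the correct side, which is immediate from the $\sym\times\sym^{op}$-module axioms $\sigma\cdot(\sigma'\cdot p)=(\sigma\sigma')\cdot p$, $(p\cdot\tau)\cdot\tau'=p\cdot(\tau\tau')$ and $\sigma\cdot(p\cdot\tau)=(\sigma\cdot p)\cdot\tau$ of Definition~\ref{def:prop}. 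Having verified all five conditions, we conclude that $\sigma\cdot G\cdot\tau$ is indecomposable; applying the same argument to $\sigma^{-1}\cdot(\sigma\cdot G\cdot\tau)\cdot\tau^{-1}=G$ gives the converse, completing the proof.
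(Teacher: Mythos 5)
Your proof is correct and follows essentially the same route as the paper's: conditions (1)--(3) are untouched by the action, condition (4) is handled via the compatibility $\sigma\cdot(G'\circ G'')\cdot\tau=(\sigma\cdot G')\circ(G''\cdot\tau)$, condition (5) by absorbing the extra permutations into $\mu,\nu$, and the converse by applying the same argument to $\sigma^{-1}$ and $\tau^{-1}$. The only cosmetic difference is that you run the main implication from $G$ to $\sigma\cdot G\cdot\tau$ while the paper runs it from $\sigma\cdot G\cdot\tau$ to $G$, which changes nothing of substance.
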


\begin{proof}
Let us assume that $H=\sigma\cdot G\cdot \tau$ is indecomposable. Then $V(G)=V(H)\neq \emptyset$
and $IO(G)=IO(H)=\emptyset$.  Let us assume that $G=G'\circ G''$. Then:
\[H=\sigma\cdot(G'\circ G'')\cdot \tau=(\sigma\cdot G')\circ( G''\cdot \tau).\]
As $H$ is indecomposable, $V(G')=V(\sigma \cdot G')=\emptyset$ or $V(G'')=V(G''\cdot \tau)=\emptyset$. 
Let us assume that $G=\sigma'\cdot (G'*G'')\cdot \tau'$. Then:
\[H=\sigma\cdot(\sigma'\cdot (G'*G'')\cdot \tau')\cdot \tau=((\sigma\sigma')\cdot G')*(G''\cdot (\tau\tau')).\]

As  $H$ is indecomposable, $V(G')= V \left((\sigma\sigma')\cdot G'\right)=\emptyset$ or $V(G'')=V(G''\cdot (\tau\tau'))=\emptyset$.\\

Conversely, if $G$ is indecomposable, then $G=\sigma^{-1}\cdot H\cdot \tau^{-1}$ is indecomposable,
so $H$ is indecomposable. 
\end{proof}

\begin{notation} 
Let $G$ be a graph.
\begin{enumerate}
\item Let $J\subseteq V(G)$. We define (non uniquely due to the non uniqueness of the maps $\alpha'$ and $\beta'$) the graph $G_{\mid J}$ by:
\begin{align*}
V(G_{\mid J})&=J,\\
E(G_{\mid J})&=\{e\in E(G):  s(e)\in J, t(e)\in J\},\\
I(G_{\mid J})&=\{e\in I(G):  t(e)\in J\}\sqcup  \{e\in E(G): s(e)\notin J, t(e)\in J\},\\
O(G_{\mid J})&=\{e\in O(G):  s(e)\in J\}\sqcup  \{e\in E(G):  s(e)\in J, t(e)\notin J\},\\
IO(G_{\mid J})&=IO(G),\\
L(G_{\mid J})&=\emptyset.
\end{align*}
The source and target maps are defined by:
\begin{align*} 
&\forall e\in E(G_{\mid J})\sqcup O(G_{\mid J}),&s_{G_{\mid J}}(e)&=s(e),\\
&\forall e\in E(G_{\mid J})\sqcup I(G_{\mid J}),&t_{G_{\mid J}}(e)&=t(e),
\end{align*}
The indexation of the input edges is any indexation map $\alpha'$ such that:
\begin{align*}
&\forall e,e'\in \left(I(G)\sqcup IO(G)\right)\cap \left(I(G_{\mid J})\sqcup IO(G_{\mid J})\right),&
\alpha'(e)<\alpha'(e')&\Longleftrightarrow \alpha(e)<\alpha(e').
\end{align*}
The indexation of the output edges is any indexation map $\beta'$ such that:
\begin{align*}
&\forall f,f'\in \left(O(G)\sqcup IO(G)\right)\cap \left(O(G_{\mid J})\sqcup IO(G_{\mid J})\right),&
\beta'(f)<\beta'(f')&\Longleftrightarrow \beta(f)<\beta(f').
\end{align*}
\item We denote by $\tilde{G}$ the graph defined by:
\begin{align*}
V(\tilde{G})&=V(G),&E(\tilde{G})&=E(G),&L(\tilde{G})&=\emptyset,\\
I(\tilde{G})&=I(G),&O(\tilde{G})&=O(G),&IO(\tilde{G})&=\emptyset,\\
\tilde{s}&=s,&\tilde{t}&=t.
\end{align*}
The indexation of the input edges is the unique indexation map $\tilde{\alpha}$ such that:
\begin{align*}
&\forall e,e'\in I(G),&\tilde{\alpha}(e)<\tilde{\alpha}(e')&\Longleftrightarrow \alpha(e)<\alpha(e').
\end{align*}
The indexation of the output edges is the unique indexation map $\tilde{\beta}$ such that:
\begin{align*}
&\forall f,f''\in O(G),&\tilde{\beta}(f)<\tilde{\beta}(f')&\Longleftrightarrow \beta(f)<\beta(f').
\end{align*}
Roughly speaking, $\tilde{G}$ is obtained from $G$ by deletion of all the input-output edges and all the loops. 
\end{enumerate}
\end{notation}

\begin{defi}
Let $G$ be a graph.
\begin{enumerate}
\item A \textbf{path} in $G$ is a sequence $p=(e_1,\ldots,e_k)$ of internal edges of $G$ such that for any $i\in [k-1]$,
$t(e_i)=s(e_{i+1})$. The source of $p$ is $s(e_1)$ and its target is $t(e_k)$, and we shall say that 
$p$ is a path from $s(e_1)$ to $t(e_k)$ of length $k$. By convention, for any $x\in V(G)$, 
there exists a unique path from $x$ to $x$ of length $0$.
\item We shall say that a path $p$ is a \textbf{cycle} if its source and its target are equal 
and if its length is nonzero.
\end{enumerate}
\end{defi}
\begin{remark} A cycle of length one is  to be distinguished from a loop.
	\end{remark}
We consider oriented-pathwise connected components of graphs.
\begin{lemma} \label{lem:paths_subset}
Let $G$ be a graph such that $V(G)\neq \emptyset$. We denote by $\calO(G)$ the set of nonempty
subsets $I$ of $V(G)$ such that for any $x\in I$, for any $y\in V(G)$, if there exists a path in $G$ from $x$ to $y$,
then $y\in I$. Then:
\begin{enumerate}
\item If $I,J\in \calO(G)$, either $I\cap J=\emptyset$ or $I\cap J\in \calO(G)$.
\item For any $x \in V(G)$, there exists a unique element $\langle x\rangle \in \calO(G)$ which contains $x$
and is minimal for the inclusion. Moreover:
\[\langle x\rangle=\{y\in V(G): \mbox{ there exists a path in $G$ from $x$ to $y$}\}.\]
\end{enumerate}
\end{lemma}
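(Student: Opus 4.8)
The plan is to prove the two assertions of Lemma~\ref{lem:paths_subset} in order, relying only on the transitivity of the path relation (concatenation of paths is a path) and on the defining ``forward-closure'' property of elements of $\calO(G)$.

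\textbf{Proof of (1).} Let $I,J\in\calO(G)$ and suppose $I\cap J\neq\emptyset$; I must show $I\cap J\in\calO(G)$. Since $I\cap J$ is nonempty, it only remains to check forward-closure. Take $x\in I\cap J$ and $y\in V(G)$ such that there is a path $p$ from $x$ to $y$. Because $x\in I$ and $I\in\calO(G)$, the endpoint satisfies $y\in I$; likewise, because $x\in J$ and $J\in\calO(G)$, we get $y\in J$. Hence $y\in I\cap J$, so $I\cap J\in\calO(G)$.

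\textbf{Proof of (2).} Fix $x\in V(G)$ and set $C_x:=\{y\in V(G):\text{there exists a path in }G\text{ from }x\text{ to }y\}$. First, $C_x$ is nonempty since the length-$0$ path gives $x\in C_x$. Second, $C_x\in\calO(G)$: if $z\in C_x$ and there is a path from $z$ to $w$, then concatenating a path $x\to z$ with the path $z\to w$ produces a path $x\to w$ (a path of length $0$ on either side causes no difficulty, by the length-$0$ convention), so $w\in C_x$. Thus $C_x\in\calO(G)$ and $x\in C_x$. Now let $I$ be \emph{any} element of $\calO(G)$ containing $x$. For every $y\in C_x$ there is a path from $x$ to $y$, and $x\in I\in\calO(G)$ forces $y\in I$; hence $C_x\subseteq I$. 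In particular $C_x$ is contained in every element of $\calO(G)$ that contains $x$, so it is the minimum of that family; minimality for inclusion is immediate, and uniqueness of a minimal element follows since any two minimal elements would each contain the other. Setting $\langle x\rangle:=C_x$ gives the claim, including the displayed formula.

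The argument is essentially a bookkeeping exercise once one observes that the class $\calO(G)$ is closed under the relevant intersections and that the ``reachable-from-$x$'' set is both in $\calO(G)$ and contained in every member of $\calO(G)$ through $x$. The only point requiring the slightest care is the handling of length-$0$ paths when concatenating, and the fact that ``minimal'' can be upgraded to ``minimum'' precisely because of part (1) (or directly, as above, from $C_x$ being contained in every competitor); I do not expect any genuine obstacle here.
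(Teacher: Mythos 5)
Your proof is correct and follows essentially the same route as the paper: part (1) is the same direct verification of forward-closure for $I\cap J$, and part (2) rests on the same two facts, namely that the reachable set $C_x$ lies in $\calO(G)$ (by concatenating paths) and is contained in every element of $\calO(G)$ containing $x$. The only cosmetic difference is that the paper first defines $\langle x\rangle$ as the intersection of all members of $\calO(G)$ containing $x$ (using part (1) and $V(G)\in\calO(G)$) and then identifies it with $C_x$ by a double inclusion, whereas you exhibit $C_x$ directly as the minimum; this changes nothing of substance.
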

Notice that, if $G_x$ is the connected component of $G$ that contains $x$, then  $\langle x\rangle\subseteq G_x$, but we do not necessarily have 
an equality, as the edges are \emph{oriented}.
\begin{proof}
1. If $I\cap J\neq \emptyset$, let $x\in I\cap J$ and $y\in V(G)$ such that there exists a path in $G$ from $x$ to $y$.
As $I,J\in \calO(G)$, $y\in I\cap J$, so $I\cap J\in \calO(G)$.

2. Note that $V(G)\in \calO(G)$. Let $x\in V(G)$; by the first item, the following element of $\calO(G)$
is the minimal (for the inclusion) element of $\calO(G)$ that contains $x$:
\[\langle x\rangle=\bigcap_{I\in \calO(G), \: x\in I}I.\]
On the one hand,   a set $I$ in $\calO(G)$ contains   $x$ if and only if any path emanating from $x$ ends at an element of $I$. So it contains all the ending vertices of such paths and hence the set
\[I_x:=\{y\in V(G): \mbox{ there exists a path in $G$ from $x$ to $y$}\}.\] Thus, $I_x\subseteq \langle x\rangle$. 
On the other hand, let $y\in I$ and $z\in V(G)$, such that there exists a path from $y$ to $z$ in $G$.
As there exists a path from $x$ to $y$ in $G$, there exists a path from $x$ to $z$, so $z\in I_x$.
Hence, $I_x$ lies in  $\calO(G)$ which in turn contains $x$, so $\langle x\rangle\subseteq I_x$. 
\end{proof}

\begin{prop}\label{prop:mindec}
Let $G$ be a graph such that $V(G)\neq \emptyset$. We denote by $J_1,\ldots,J_k$ the minimal elements
(for the inclusion) of the set  $\calO(G)$ of nonempty
subsets $I$ of $V(G)$ stable under paths as in Lemma \ref{lem:paths_subset}, and we set $G_i=\tilde{G}_{\mid J_i}$ for any $i\in [k]$. 
Then $G_1,\ldots,G_k$ are indecomposable graphs with no loop and there exists a graph $G_0$ with no loop,
integers $p$, $\ell$ and a permutation $\gamma$ such that:
\[G\approx (\gamma \cdot (G_1*\ldots *G_k*I_p)\circ G_0)*\grapheo^{*\ell},\]
where, as before  $\grapheo$ is the  indecomposable graph  formed by a unique loop.
Such a decomposition will be called  \textbf{minimal}.
\end{prop}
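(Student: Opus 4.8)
The plan is to produce the minimal decomposition constructively, starting from the oriented-pathwise components supplied by Lemma~\ref{lem:paths_subset} and Proposition~\ref{prop:mindec}'s own hypotheses, and then to peel off the passive data (loops and input-output edges) from the combinatorial core $\tilde G$. First I would set $\ell=|L(G)|$ and write $G\approx G^\flat * \grapheo^{*\ell}$, where $G^\flat$ is the graph obtained from $G$ by deleting all loops; since the horizontal concatenation is just disjoint union with shifted indexations (and since loops carry no endpoints and no indexation data), this isomorphism is immediate from the definition of $*$ and the fact that $\grapheo$ is the single-loop graph. This reduces the problem to a loopless graph, so from now on I may assume $L(G)=\emptyset$ and must exhibit $G\approx \gamma\cdot(G_1*\cdots*G_k*I_p)\circ G_0$.

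Next I would handle the input-output edges. Let $p=|IO(G)|$ and let $G_0$ be the graph obtained from $G$ by ``splitting off'' each input-output edge: concretely, $G_0$ has the same vertices and internal edges as $G$, its input edges are $I(G)\sqcup IO(G)$ and its output edges are $O(G)\sqcup IO(G)$, with the indexation on the $IO$-part chosen so that it is compatible with $\beta$ on outputs and with $\alpha$ on inputs, arranged so that $G$ is recovered as a vertical composite $\operatorname{(id)}\circ G_0$ along the matched $IO$-edges, up to a global permutation $\gamma$ reordering outputs. Here the key structural point is that, in the definition of $\circ$ in Subsection~\ref{sectiongraphes}, an $IO$-edge of the lower graph matched against an $IO$-edge of the upper identity $I_p$ produces again an $IO$-edge, while an $IO$-edge matched against nothing in $I_p$ would have to be split as input-then-output — so the correct formulation is that $\tilde G \approx \gamma'\cdot(\text{core})\circ G_0$ where $\text{core}$ carries the $IO$-edges of $G$ as genuine input and output legs of $I_p$, and $\gamma$ repackages them. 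I would verify this against the explicit formulas for $V,E,I,O,IO$ of a vertical composite given after the statement of Theorem~\ref{theo:ProP_graph}.

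The core of the argument is then the identification $\tilde G_{\text{core}} \approx \gamma''\cdot(G_1 * \cdots * G_k * I_p)$ where $G_i=\tilde G_{\mid J_i}$ and $J_1,\dots,J_k$ are the minimal elements of $\calO(G)$ (i.e.\ the minimal path-closed subsets of vertices). The $J_i$ are pairwise disjoint by Lemma~\ref{lem:paths_subset}(1), and every vertex lies in a unique $\langle x\rangle$, hence in exactly one $J_i$ (minimality forces the $\langle x\rangle$ that are themselves minimal to be precisely the $J_i$, and each vertex's $\langle x\rangle$ contains one such minimal element because $V(G)$ is finite). Crucially, \emph{no internal edge runs between distinct $J_i$}: if $e$ had source in $J_i$ and target in $J_j$ with $i\neq j$, then a path from any vertex of $J_i$ could leave $J_i$, contradicting path-closedness of $J_i$. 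Thus the loopless, $IO$-free graph $\tilde G$ decomposes as the disjoint union $\tilde G_{\mid J_1}\sqcup\cdots\sqcup\tilde G_{\mid J_k}$ together with whatever input legs pass straight through to output legs without touching any vertex — but $\tilde G$ has no such through-legs by construction of $\tilde G$ (every input of $\tilde G$ has a target vertex, every output a source vertex), so actually $p$ bookkeeps exactly the $IO$-edges handled in the previous paragraph. The permutation $\gamma''$ (absorbed into $\gamma$) corrects the discrepancy between the indexation induced on $\tilde G_{\mid J_1}*\cdots*\tilde G_{\mid J_k}$ by $*$ (which relabels block by block) and the original indexations $\alpha,\beta$ of $G$ restricted to the legs attached to each $J_i$; this is exactly the kind of permutation that Axiom~6 of a ProP lets us move around. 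Finally, each $G_i=\tilde G_{\mid J_i}$ is indecomposable: it has $V(G_i)=J_i\neq\emptyset$, it has $IO(G_i)=\emptyset$ and $L(G_i)=\emptyset$ by the definition of $\tilde G_{\mid J}$, it is $\circ$-indecomposable because $J_i$ is minimal path-closed (a nontrivial $\circ$-factorisation would split $J_i$ into an ``upstream'' and ``downstream'' path-closed piece), and it is $*$-indecomposable up to permutation because $J_i$ is, in fact, a single oriented-pathwise component in the sense that it admits no partition into two nonempty path-closed subsets — any such subset would be a strictly smaller element of $\calO(G)$ containing some vertex of $J_i$, contradicting minimality.

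The main obstacle I anticipate is not any single deep idea but the bookkeeping of indexations: one must track the three indexation maps through the deletion of loops, the splitting of $IO$-edges into a vertical composite against $I_p$, the passage to the induced subgraphs $G_{\mid J_i}$ (whose indexations are only specified up to order-preserving choice), and the horizontal concatenation, and then show that all the accumulated relabellings can be collected into a single permutation $\gamma$ acting on the outputs — using the ProP axioms (compatibility of $*$ and $\circ$ with the $\sym\times\sym^{\mathrm{op}}$-action, established in Theorem~\ref{theo:ProP_graph}) to slide permutations past the concatenations. I would therefore organise the write-up so that the isomorphism $\approx$ is exhibited explicitly on the level of the five sets and the source/target maps first (where everything is canonical), and only afterwards reconcile the indexation maps by exhibiting $\gamma$ and invoking Axiom~6; this isolates the one genuinely fiddly computation.
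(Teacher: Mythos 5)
There is a genuine gap at the heart of your decomposition: you assert that every vertex of $G$ lies in exactly one of the minimal sets $J_1,\dots,J_k$, deducing this from the fact that each $\langle x\rangle$ contains some minimal element of $\calO(G)$. But $\langle x\rangle\supseteq J_i$ does not put $x$ itself inside $J_i$. Already for the graph with two vertices $x,y$ and a single internal edge from $x$ to $y$, one has $\calO(G)=\{\{y\},\{x,y\}\}$, so the unique minimal element is $J_1=\{y\}$ and the vertex $x$ belongs to no $J_i$. Consequently your identification of the loop- and $IO$-free core with $\gamma''\cdot(G_1*\cdots*G_k*I_p)$ — a purely horizontal concatenation of the $G_i$ with an identity block — cannot hold in general: it loses every vertex outside $J_1\cup\cdots\cup J_k$, and it cannot encode the internal edges running from such vertices into the $J_i$ (these exist precisely because the sets in $\calO(G)$ are closed under outgoing paths but not under incoming edges). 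This is exactly why the statement contains a vertical composition with a graph $G_0$ that is in general not an identity: the paper's proof sets $K=V(G)\setminus(J_1\cup\cdots\cup J_k)$, forms $G'=G_{\mid K}$ and $G_0=\sigma\cdot(I_q*G')\cdot\tau$, and the edges from $K$ into the $J_i$ become the gluings performed by $\circ$. In your write-up $G_0$ only ever bookkeeps input-output edges, so the decomposition you construct is strictly weaker than the one claimed and fails for any graph possessing an ``upstream'' vertex.

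The remaining ingredients of your sketch — stripping the loops as $\grapheo^{*\ell}$, disjointness of the $J_i$, the absence of internal edges leaving any $J_i$, and the indecomposability of each $G_i$ via minimality of $J_i$ in $\calO(G)$ — are sound and agree with the paper's argument. The fix is to reinstate the complement $K$: observe that no internal edge goes from a $J_i$ to $K$, that every outgoing edge of $G_{\mid K}$ is either glued to an input of some $G_i$ or is an output of $G$, and then absorb the resulting bookkeeping into $G_0=\sigma\cdot(I_q*G_{\mid K})\cdot\tau$ and the permutation $\gamma$.
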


\begin{proof}
By definition, $V(G_i)=J_i\neq \emptyset$ and $IO(G_i)=\emptyset$ for any $i$. 
Let us assume that $G_i=G'\circ G''$. If $V(G')\neq \emptyset$, then clearly $V(G')\in \calO(G_i)$ 
and, as $J_i\in \calO(G)$, we deduce that $V(G')\in \calO(G)$. As $J_i$ is minimal in $\calO(G)$,
$V(G')=J_i=V(G_i)$, so $V(G'')=\emptyset$. Similarly, if $G_i=\sigma\cdot (G'*G'')\cdot \tau$,
then $V(G')=\emptyset$ or $V(G'')=\emptyset$: we proved that $G_i$ is indecomposable.

Let us assume that $I=V(G_i)\cap V(G_j)\neq \emptyset$. Then $I\in \calO(G)$ and, by minimality of $J_i$ and $J_j$,
$J_i=J_j=I$, so the $J_i$ are disjoint.

Let us set $K:=V(G)\setminus (J_1\cup\ldots \cup J_k)$ and $G':=G_{\mid K}$. As $J_1,\ldots, J_k$ lie in $\calO(G)$,
there is no internal edge of $G$ from a vertex of $G_i$ to a vertex of $G'$, and any outgoing edge of $G'$  is either
glued in $G$ to an incoming edge of $G_i$ or is an outgoing edge of $G$. Hence, 
there exists permutations
$\gamma$, $\sigma$ and $\tau$, and 
three integers $p:=|IO(G)|$, $q:=|\{e\in I(G):t(e)\in J_1\cup\ldots \cup J_k\}|$ and $\ell:=|L(G)|$
such that:
\[G=\gamma \cdot(G_1*\ldots*G_k*I_p)\circ (\sigma\cdot (I_q*G')\cdot \tau)*\grapheo^{*\ell}.\]
We conclude in taking $G_0=\sigma\cdot (I_q*G')\cdot \tau$. 
\end{proof}

Note that this decomposition is not unique: it depends on the indexation of the minimal elements of $\calO(G)$
and of the choice of the indexation of their input and output edges.  Importantly, it depends only on that.

\begin{prop}\label{propindecomposable}
Let $G$ be a graph such that $V(G)\neq \emptyset$ and $IO(G)=\emptyset$. The graph $G$ is indecomposable
if, and only if, $L(G)=\emptyset$ and for any $x,y\in V(G)$, there exists a path from $x$ to $y$ in $G$.
\end{prop}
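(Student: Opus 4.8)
The plan is to first strip away the trivial conditions and then prove the two implications separately. Since $V(G)\neq\emptyset$ and $IO(G)=\emptyset$, conditions (1) and (2) of Definition \ref{def:indecomposable} hold automatically, and since $G$ is not reduced to a single loop, condition (3) holds if and only if $L(G)=\emptyset$. So it suffices to show: \emph{assuming moreover $L(G)=\emptyset$}, conditions (4) and (5) hold if and only if for all $x,y\in V(G)$ there is a path from $x$ to $y$ in $G$.

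For $(\Leftarrow)$, I would assume this path property and verify (4) and (5) by a single ``no backward edge across the cut'' observation. If $G=G'\circ G''$, then $V(G)=V(G'')\sqcup V(G')$, and reading off the source and target maps of the vertical concatenation, every internal edge of $G$ whose source lies in $V(G')$ (the top factor) also has its target in $V(G')$: this holds for the edges internal to $G''$, for those internal to $G'$, and for the glued edges $(f,e)$, whose source is in $V(G'')$. Hence no path goes from a vertex of $V(G')$ to a vertex of $V(G'')$, so if both vertex sets were nonempty, choosing $x\in V(G')$ and $y\in V(G'')$ would contradict the path property; thus $V(G')=\emptyset$ or $V(G'')=\emptyset$, which is condition (4). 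For (5), if $G=\sigma\cdot(G'*G'')\cdot\tau$, the permutations only relabel the indexation maps, so $G$ and $G'*G''$ share the same vertices, internal edges and source/target maps; in $G'*G''$ there is no internal edge between $V(G')$ and $V(G'')$, so the same argument applies.

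For $(\Rightarrow)$, I would feed the minimal decomposition of Proposition \ref{prop:mindec} into conditions (4) and (5). Assume $G$ indecomposable; then $L(G)=\emptyset$ by condition (3). Let $J_1,\dots,J_k$ be the minimal elements of $\calO(G)$ and $G_i=\tilde G_{\mid J_i}$; since $IO(G)=L(G)=\emptyset$ the integers $p=|IO(G)|$ and $\ell=|L(G)|$ of Proposition \ref{prop:mindec} vanish, so $G\approx\gamma\cdot(G_1*\cdots*G_k)\circ G_0$ with $G_0=\sigma\cdot(I_q*G_{\mid K})\cdot\tau$ and $K=V(G)\setminus(J_1\cup\cdots\cup J_k)$. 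Since $V(G)\in\calO(G)$ we have $k\geq 1$, hence $V(G_1*\cdots*G_k)=J_1\sqcup\cdots\sqcup J_k\neq\emptyset$, and likewise $V(\gamma\cdot(G_1*\cdots*G_k))\neq\emptyset$. Writing $G=\bigl(\gamma\cdot(G_1*\cdots*G_k)\bigr)\circ G_0$, condition (4) forces $V(G_0)=\emptyset$, that is $K=\emptyset$ and $V(G)=J_1\sqcup\cdots\sqcup J_k$; moreover $G_{\mid K}=I_0$, so $G_0=\sigma\cdot I_q\cdot\tau$ is a relabelling of $I_q$. Using the ProP axioms of $\Gr$ (that $I_q$ is a unit for $\circ$ together with the compatibilities between $\circ$ and the $\sym\times\sym^{\mathrm{op}}$-action) one rewrites $(G_1*\cdots*G_k)\circ(\sigma\cdot I_q\cdot\tau)=(G_1*\cdots*G_k)\cdot(\sigma\tau)$, whence $G\approx\gamma\cdot(G_1*\cdots*G_k)\cdot(\sigma\tau)$. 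If $k\geq 2$ this exhibits $G$ in the form $\gamma\cdot\bigl((G_1*\cdots*G_{k-1})*G_k\bigr)\cdot(\sigma\tau)$ with both $V(G_1*\cdots*G_{k-1})$ and $V(G_k)$ nonempty, contradicting condition (5). Hence $k=1$ and $V(G)=J_1$. Finally, by Lemma \ref{lem:paths_subset} the set $\langle x\rangle=\{y:\text{there is a path from }x\text{ to }y\}$ is a minimal element of $\calO(G)$ for every $x$, and every minimal element of $\calO(G)$ is of this form; as there is a unique one, $\langle x\rangle=J_1=V(G)$ for all $x$, which is exactly the required property.

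The main obstacle is the bookkeeping in the last implication: one must be careful in matching the indexations of input and output edges when invoking Proposition \ref{prop:mindec} and when recognising $G_0$ as a permutation graph and manipulating the composition by the ProP axioms. (Alternatively one could bypass Proposition \ref{prop:mindec} and, for $J=\langle x\rangle$ and $K=V(G)\setminus J$, build directly a decomposition $G\approx\gamma\cdot(\tilde G_{\mid J}*I_r)\circ\bigl(\delta\cdot(I_s*\tilde G_{\mid K})\cdot\epsilon\bigr)$ and apply condition (4) to it; but this merely reproves a special case of Proposition \ref{prop:mindec}.) Everything else is the routine observation that internal edges only cross the cut in one direction.
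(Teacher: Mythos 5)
Your proof is correct. The forward implication follows the paper's own route almost verbatim: feed the minimal decomposition of Proposition \ref{prop:mindec} into conditions (4) and (5) of Definition \ref{def:indecomposable} to force $V(G_0)=\emptyset$ and $k=1$, then observe that $\calO(G)$ collapses to $\{V(G)\}$. The backward implication, however, is genuinely different. The paper runs the minimal decomposition a second time: the path hypothesis forces $k=1$ and $K=\emptyset$, hence $G=\gamma\cdot G_1\cdot\tau$ with $G_1$ indecomposable by Proposition \ref{prop:mindec}, and one concludes by invariance of indecomposability under the symmetric actions. You instead verify the five axioms of Definition \ref{def:indecomposable} directly, using the single observation that in a vertical concatenation internal edges only cross the cut from the bottom factor to the top one (and in a horizontal concatenation they do not cross at all), so a nontrivial decomposition would produce a pair of vertices with no connecting path. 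Your version is more elementary and self-contained for that direction -- it does not invoke Proposition \ref{prop:mindec} or the indecomposability of the $G_i$ at all -- at the cost of having to inspect each axiom; the paper's version recycles machinery already in place. Both are sound.

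One small imprecision in your last step: Lemma \ref{lem:paths_subset} says that $\langle x\rangle$ is minimal among the elements of $\calO(G)$ \emph{containing $x$}, not that it is a minimal element of $\calO(G)$ (for a two-vertex graph with a single edge $x\to y$ one has $\langle x\rangle=\{x,y\}\supsetneq\langle y\rangle$). The conclusion still stands, since once $k=1$ and $J_1=V(G)$ the set $V(G)$ is both a minimal and the maximal element of $\calO(G)$, so $\calO(G)=\{V(G)\}$ and hence $\langle x\rangle=V(G)$ for every $x$; this is exactly how the paper phrases it.
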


\begin{proof}
First notice that if $|V(G)|=1$ the result trivially holds. In the following, we  therefore assume that $|V(G)|\geq 2$.

Let $G=\gamma \cdot (G_1*\ldots *G_k*I_p)\circ G_0*\grapheo^{*\ell}$ a minimal decomposition of $G$.

$\Longrightarrow$  Note that $V(G_1)\neq \emptyset$. As $G$ is indecomposable, 
necessarily $\ell=0$, $V(G_0)=\emptyset$, and there exists a permutation
$\tau\in \sym_p$ such that $G_0=I_q\cdot \tau$. Therefore, 
$G=\gamma \cdot (G_1*\ldots *G_k*I_p)\cdot \tau$. As $G$ is indecomposable, $k=1$ and $V(G)=V(G_1)=J_1$. 
Hence, $V(G_1)$ is both a minimal and the maximal element
of $\calO(G)$, which is consequently reduced to the singleton $\{V(G)\}$. 
Therefore, for any $x\in V(G)$, $\langle x\rangle=V(G)$, so for any $y\in V(G)$,
there exists a path from $x$ to $y$ in $G$.

$\Longleftarrow$ Firstly, note that $L(G)=\emptyset\Rightarrow \ell=0$.
 If $k\geq 2$, there is no path in $G$ from any vertex of $G_1$ to any vertex of $G_2$, so $k=1$. 
Thus, $V(G_0)=\emptyset$
and there exists a permutation $\tau$ such that $G_0=I_p\cdot \tau$. We obtain that
\[G=\gamma\cdot (G_1*I_p)\cdot \tau.\]
As $IO(G)=\emptyset$, we obtain that  
$p=0$, so $G=\gamma\cdot G\cdot \tau$ is indecomposable. \end{proof}
\begin{remark}
     Another way to formulate the above Proposition is to say that a graph $G$  is indecomposable if, and only if, 
     one (and only one) of the following conditions holds:
     \begin{itemize}
     \item $G=\grapheo$.
     \item $G$ has no loop, is connected and for any of its vertices $x$, 
     a cycle of  strictly positive length goes through $x$.
     \end{itemize}
    \end{remark}

\subsection{Freeness of $\Gr$}

We now state and give a sketch of the proof of one of the main results of this section, namely the freeness of the ProP $\Gr$. To our 
knowledge, this  result is new.
\begin{theo} \label{thm:freeness_Gr}
Let $P$ be a ProP and $\phi:\Gri\longrightarrow P$ be a morphism of $\sym\times \sym^{op}$-modules.
There exists a unique ProP morphism $\Phi:\Gr\longrightarrow P$ such that $\Phi_{\mid \Gri}=\phi$. 
In other words, $\Gr$ is the free ProP generated by $\Gri$. 
\end{theo}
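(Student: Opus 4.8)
The plan is to prove existence and uniqueness separately, both exploiting the minimal decomposition of Proposition~\ref{prop:mindec}. First I would establish \textbf{uniqueness}. Suppose $\Phi$ and $\Phi'$ are two ProP morphisms restricting to $\phi$ on $\Gri$. Since ProP morphisms commute with the horizontal concatenation $*$, the vertical concatenation $\circ$, and the $\sym\times\sym^{op}$-action, and since both send $I_0$ to $I_0$ and $I_1$ to $I_1$ (hence $I_k$ to $I_k$), the value $\Phi(G)$ is forced on every graph $G$ that can be written in terms of indecomposable graphs and identities using these operations. By Proposition~\ref{prop:mindec}, every graph $G$ admits a minimal decomposition
\[
G\approx \bigl(\gamma\cdot(G_1*\ldots*G_k*I_p)\bigr)\circ G_0\,*\,\grapheo^{*\ell},
\]
where $G_1,\ldots,G_k$ are indecomposable and $G_0$ is again (after one further step) built from indecomposables — so one would iterate the decomposition on $G_0$ until reaching a full expression in indecomposables and identities. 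Applying both $\Phi$ and $\Phi'$ to this expression and using that they agree with $\phi$ on the $G_i$ and with each other on identities, we get $\Phi(G)=\Phi'(G)$. This requires knowing that the decomposition process terminates, which follows because each application strictly decreases $|V(G_0)|$.

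For \textbf{existence}, I would \emph{define} $\Phi$ on a graph $G$ by choosing a minimal decomposition and setting
\[
\Phi(G):=\bigl(\gamma\cdot(\phi(G_1)*\ldots*\phi(G_k)*I_p)\bigr)\circ \Phi(G_0)\,*\,\Phi(\grapheo)^{*\ell},
\]
recursively (with $\Phi(\grapheo)=\phi(\grapheo)$ and the recursion grounded by $|V(\cdot)|$), then extend linearly to $\Gr(k,l)$. The crucial point is \textbf{well-definedness}: a graph has many minimal decompositions, differing by the indexation of the minimal elements $J_1,\ldots,J_k$ of $\calO(G)$ and by the choice of indexations of their input/output edges. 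One must check that any two such decompositions yield the same element of $P$. Changing the order of the $J_i$ amounts to precomposing with a permutation and reordering the factors in the horizontal product; the ProP axioms (commutativity of $*$, equation~(\ref{eqpstarpprime}), and compatibility of $*$ with the symmetric group action) guarantee the resulting element of $P$ is unchanged. Changing the internal indexations of the edges of a single $G_i$ replaces $G_i$ by $\sigma\cdot G_i\cdot\tau$ and is absorbed into $\gamma$ and $G_0$; since $\phi$ is a morphism of $\sym\times\sym^{op}$-modules, $\phi(\sigma\cdot G_i\cdot\tau)=\sigma\cdot\phi(G_i)\cdot\tau$, and the bookkeeping again matches. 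This invariance argument is the technical heart of the proof.

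Once $\Phi$ is well-defined, I would verify it is a \textbf{ProP morphism}: that $\Phi(G*G')=\Phi(G)*\Phi(G')$, $\Phi(G\circ G')=\Phi(G)\circ\Phi(G')$, $\Phi(\sigma\cdot G\cdot\tau)=\sigma\cdot\Phi(G)\cdot\tau$, and $\Phi(I_0)=I_0$, $\Phi(I_1)=I_1$. For the action and for $*$ this is nearly immediate from the construction: a minimal decomposition of $G*G'$ can be obtained by juxtaposing minimal decompositions of $G$ and $G'$ (the minimal elements of $\calO(G*G')$ being those of $\calO(G)$ together with those of $\calO(G')$). The compatibility with $\circ$ is the most delicate: one writes minimal decompositions of $G$ and $G'$, forms $G'\circ G$, and must recognize — using the compatibility axioms of the ProP $P$ between $*$, $\circ$, and the symmetric group action — that reassembling the pieces reproduces $\Phi(G')\circ\Phi(G)$. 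Finally $\Phi_{\mid\Gri}=\phi$ holds because an indecomposable graph $G\neq\grapheo$ has the trivial minimal decomposition $G=G_1$ (so $k=1$, $p=\ell=0$, $G_0$ empty, $\gamma=\mathrm{Id}$), giving $\Phi(G)=\phi(G_1)=\phi(G)$, and $\Phi(\grapheo)=\phi(\grapheo)$ by construction.

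\textbf{Main obstacle.} The hardest part will be the well-definedness of $\Phi$ — proving that the element of $P$ built from a minimal decomposition does not depend on the choices involved — together with the closely related verification that $\Phi$ respects vertical concatenation. Both reduce to carefully tracking permutations through the horizontal and vertical products and invoking exactly the right ProP compatibility axioms; this is combinatorially intricate rather than conceptually deep, which is presumably why the authors relegate the full argument to Appendix~\ref{appendix:proof_freeness_Gr}.
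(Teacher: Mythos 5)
Your proposal is correct and follows essentially the same route as the paper's proof in Appendix C: induction on the number of vertices via the minimal decomposition of Proposition \ref{prop:mindec}, with well-definedness reduced to invariance under exactly the two kinds of moves the paper calls transformations of type A (re-indexing the inputs/outputs of the $G_i$) and type B (permuting the factors $G_i$), followed by the same inductive verification of compatibility with $*$, $\circ$ and the symmetric actions, the vertical concatenation being the delicate case. You also correctly identify where the combinatorial burden lies; the only cosmetic difference is that the paper treats loops by factoring $H=G*\grapheo^{*\ell}$ at the very end rather than carrying $\grapheo^{*\ell}$ through every step.
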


\begin{proof} We provide here a sketch of the proof,  and refer the reader to Appendix \ref{appendix:proof_freeness_Gr} for a full proof.
We define $\Phi(G)$ for any graph $G$ by induction on its number $n$ of vertices. If $n=0$,  there exists a permutation $\sigma\in \sym_k$ such that $G=\sigma\cdot I_k$. We set
\[\Phi(G)=\sigma\cdot I_k.\]
If $n>0$ and $G$ is indecomposable, we  set $\Phi(G)=\phi(G)$. Otherwise, 
let \[G=\gamma \cdot (G_1*\ldots *G_k*I_p)\circ G_0*\grapheo^{*\ell}\]
be a  minimal decomposition of $G$.  As  $V(G_1)\neq \emptyset$, $|V(G_0)|<n$, we set:
\[\Phi(G)=\gamma \cdot (\phi(G_1)*\ldots *\phi(G_k)*I_p)\circ \Phi(G_0)*\phi(\grapheo)^{*\ell}.\]
One can prove that this does not depend on the choice of the minimal decomposition of $G$ with the help of the ProP axioms
applied to $P$. Using minimal decompositions of vertical or horizontal concatenations of graphs,
one can show that $\Phi$ is compatible with both concatenations. \end{proof}

\subsection{Cycleless graphs} \label{subsection:cycless_graph}

\begin{defi}
For any $k,l\in  \N_0$, we denote by $\Grc(k,l)$ the subspace of $\Gr(k,l)$ generated by the graphs which do not contain any cycle
nor any loop. Note that $\Grc$ is a $\sym\times \sym^{op}$-sub-module of $\Gr$.

As before, we write $\Grci$ for the set of indecomposable cycleless and loopless graphs in $\Grc$.
\end{defi}
A simple yet important observation is the following.
\begin{prop}
 $\Grc$ is a sub-ProP of $\Gr$.
\end{prop}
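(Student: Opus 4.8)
The plan is to verify the four closure conditions in the definition of a sub-ProP directly, using the explicit combinatorial descriptions of the horizontal and vertical concatenations and of the $\sym\times\sym^{op}$-action on $\Gr$ given in Subsection \ref{sectiongraphes}. The content of the claim is that the property of having neither a cycle nor a loop is preserved by all the ProP operations, so the bulk of the work is tracking edges and paths through the three constructions.

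First I would handle the two units: $I_0$ and $I_1$ have empty sets of internal edges and loops ($E(I_0)=E(I_1)=L(I_0)=L(I_1)=\emptyset$), hence trivially contain no cycle and no loop, so $I_0\in\Grc(0,0)$ and $I_1\in\Grc(1,1)$. Next, stability under the $\sym\times\sym^{op}$-action: from \eqref{eqGrmod}, the action of $\sigma$ and $\tau$ only modifies the indexation maps $\alpha,\beta$ and leaves $V(G),E(G),L(G),s,t$ unchanged; since cycles and loops are defined purely in terms of $E(G)$, $L(G)$ and the maps $s,t$, the graph $\sigma\cdot G\cdot\tau$ has a cycle or loop if and only if $G$ does. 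So $\sigma.\Grc(k,l).\tau\subseteq\Grc(k,l)$.

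For the horizontal concatenation $G*G'$, recall that $L(G*G')=L(G)\sqcup L(G')$ and $E(G*G')=E(G)\sqcup E(G')$, with source and target maps restricting to those of $G$ and $G'$ on the respective pieces, and $V(G*G')=V(G)\sqcup V(G')$. Hence any loop of $G*G'$ is a loop of $G$ or of $G'$, and any path in $G*G'$ lives entirely inside one of the two disjoint summands (a path must follow $t(e_i)=s(e_{i+1})$, and the summands share no vertices), so a cycle in $G*G'$ is a cycle in $G$ or in $G'$. If $G,G'\in\Grc$ there are none, so $G*G'\in\Grc$, giving $\Grc(k,l)*\Grc(k',l')\subseteq\Grc(k+k',l+l')$. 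The main obstacle is the vertical concatenation $G'\circ G$, since it creates new internal edges of the form $(f,e)$ with $\beta(f)=\alpha'(e)$, $f\in O(G)$, $e\in I(G')$; one must show that no new cycle is produced. Here I would argue as follows: the vertices of $G'\circ G$ split as $V(G)\sqcup V(G')$; in the combined graph, every edge out of a vertex of $V(G')$ either stays in $G'$ or is an output edge, and no internal edge of $G'\circ G$ goes from a vertex of $V(G')$ to a vertex of $V(G)$ (the glued edges $(f,e)$ go from $s(f)\in V(G)$ to $t'(e)\in V(G')$, and original edges of $G$, resp. $G'$, stay within $V(G)$, resp. $V(G')$). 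Consequently any path in $G'\circ G$ first runs through $V(G)$, possibly crosses exactly once into $V(G')$ via a glued edge, and then runs through $V(G')$; it can never return to $V(G)$. A cycle would therefore have to lie entirely in $V(G)$ or entirely in $V(G')$, hence project to a cycle of $G$ or of $G'$, contradicting $G,G'\in\Grc$; and $L(G'\circ G)=L(G)\sqcup L(G')$ is empty. Thus $\Grc(l,m)\circ\Grc(k,l)\subseteq\Grc(k,m)$, completing the verification that $\Grc$ is a sub-ProP.
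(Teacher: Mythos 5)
Your proof is correct and follows essentially the same route as the paper's: the units are trivially cycle- and loop-free, and for both concatenations the key observation is that no internal edge ever crosses back (between the two disjoint summands for $*$, or from $V(G')$ to $V(G)$ for $\circ$), so any cycle must lie entirely in one factor. You supply more detail than the paper — in particular you explicitly check stability under the $\sym\times\sym^{op}$-action and the absence of loops, which the paper handles implicitly — but the underlying argument is the same.
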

\begin{proof}
 First, notice that $I_0$ and $I_1$ are in $\Grc$.
Let us check the stability of $\Grc$ under the  horizontal and vertical concatenation.
 
 Let $G_1,G_2$ be two graphs without cycle. By construction, there is no edge $e$ of $G_1*G_2$ such that $s(e)\in V(G_1)$ and $t(e)\in V(G_2)$, or 
 such that $s(e)\in V(G_2)$ and $t(e)\in V(G_1)$. So a cycle in $G_1*G_2$ is a cycle in $G_1$ or $G_2$. Thus $\Grc$ is stable by horizontal concatenation.
 
 Similarly, let $G_1,G_2$ be two graphs without cycle such that $G_1\circ G_2$ is defined. Then using the same argument, a cycle of $G_1\circ G_2$ must either 
 be a cycle of $G_1$, a cycle of $G_2$ (both being contradictions) or contain an edge $e$ such that $s(e)\in V(G_1)$ and $t(e)\in V(G_2)$. This contradicts 
 the definition of $\circ$ for graphs.
\end{proof}
In this particular example, we recover the description of a free ProP in terms of oriented graphs \cite{Vallette1,Vallette2}:
\begin{prop} \label{prop:cycleless_graphs}
For any $k,l\in  \N_0$, we denote by $G_{k,l}$ the graph such that:
\begin{align*}
V(G_{k,l})&=\{\star\},&I(G_{k,l})&=[k],&IO(G_{k,l})&=\emptyset,\\
E(G_{k,l})&=\emptyset,&O(G_{k,l})&=[l]&L(G_{k,l})&=\emptyset.
\end{align*}
For any $i\in [k]$, for any $j\in [l]$:
\begin{align*}
\alpha(i)&=i,&\beta(j)&=j,\\
t(i)&=\star,&s(j)&=\star.
\end{align*}
These graphs generate a trivial $\sym\times \sym^{op}$-module $\Grci$, 
and $\Grc$ is the free ProP generated by $\Grci$. 
\end{prop}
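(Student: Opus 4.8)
The plan is to leverage the freeness theorem for $\Gr$ (Theorem \ref{thm:freeness_Gr}) rather than redo an induction from scratch. First I would observe that $\Grci$ is the trivial $\sym\times\sym^{op}$-module: the graph $G_{k,l}$ has a single vertex $\star$, no internal edges, so trivially it is indecomposable by Proposition \ref{propindecomposable} (it has no loops, and the path condition is vacuous for a single vertex); conversely any indecomposable cycleless loopless graph must have exactly one vertex, since if $|V(G)|\geq 2$ then by Proposition \ref{propindecomposable} there would be a path from $x$ to $y$ and from $y$ to $x$ for any two vertices, hence a cycle, contradiction. And the action of $\sym_l\times\sym_k$ on $G_{k,l}$ via $\sigma\cdot G_{k,l}\cdot\tau$ only relabels $\alpha,\beta$; since $V,E,s,t$ are untouched and there is a unique single-vertex cycleless graph shape with $k$ inputs and $l$ outputs, $\sigma\cdot G_{k,l}\cdot\tau=G_{k,l}$ in $\Grci(k,l)$. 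Hence $\Grci(k,l)=\K\, G_{k,l}$ with trivial action (with the convention that $\Grci(0,0)$ is handled separately, containing only $I_0$, which lies in any sub-ProP).

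Next I would establish that $\Grc$ is \emph{generated as a ProP} by $\Grci$, i.e. $\langle\Grci\rangle=\Grc$. The inclusion $\langle\Grci\rangle\subseteq\Grc$ is clear since $\Grc$ is a sub-ProP containing $\Grci$. For the reverse inclusion, take any cycleless loopless graph $G$. Apply the minimal decomposition of Proposition \ref{prop:mindec}: $G\approx(\gamma\cdot(G_1*\cdots*G_k*I_p)\circ G_0)*\grapheo^{*\ell}$. Since $G$ has no loop, $\ell=0$; since $G$ is cycleless, each indecomposable factor $G_i$ is cycleless and loopless, hence $G_i\in\Grci$ by the first paragraph, and $G_0$ is cycleless and loopless with strictly fewer vertices than $G$. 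By induction on the number of vertices, $G_0\in\langle\Grci\rangle$, and then $G$, being built from the $G_i$, $I_p$ and $G_0$ using only horizontal concatenation, vertical concatenation, and the symmetric group action, lies in $\langle\Grci\rangle$. The base case $|V(G)|=0$ gives $G=\sigma\cdot I_k$, which is in any sub-ProP.

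Finally I would deduce freeness. Let $P$ be a ProP and $\phi:\Grci\longrightarrow P$ a morphism of $\sym\times\sym^{op}$-modules. Since $\Grci\subseteq\Gri$ (a cycleless indecomposable graph is in particular indecomposable), and $\Gri$ is a free $\sym\times\sym^{op}$-module on the isoclasses, one can extend $\phi$ to a morphism $\tilde\phi:\Gri\longrightarrow P$ of $\sym\times\sym^{op}$-modules (for instance by zero on the complement); alternatively, and more cleanly, one re-runs the proof of Theorem \ref{thm:freeness_Gr} restricted to cycleless graphs, which is legitimate because $\Grc$ is a sub-ProP closed under minimal decomposition in the sense used there. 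Either way, Theorem \ref{thm:freeness_Gr} (or its restricted version) produces a ProP morphism $\Phi:\Grc\longrightarrow P$ with $\Phi_{\mid\Grci}=\phi$. Uniqueness follows because $\Grc=\langle\Grci\rangle$: two ProP morphisms agreeing on a generating set agree everywhere, by induction on the number of vertices along the minimal decomposition, exactly as in the proof of Theorem \ref{thm:freeness_Gr}.

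The main obstacle I anticipate is making the "restricted version of Theorem \ref{thm:freeness_Gr}" precise: one must check that the minimal decomposition of a cycleless graph involves only cycleless building blocks (which it does, since sub-graphs and the factors $G_0, G_i$ of a cycleless graph are cycleless, and $\grapheo$ does not appear as $\ell=0$), so that the inductive construction of $\Phi$ never leaves $\Grc$ and never needs a value on a graph with a loop or a cycle. Once that closure property is observed, the well-definedness (independence of the minimal decomposition) and the compatibility of $\Phi$ with both concatenations carry over verbatim from the proof of Theorem \ref{thm:freeness_Gr}, since those arguments only use the ProP axioms in the target $P$ and the combinatorics of minimal decompositions.
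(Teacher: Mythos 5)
Your proposal is correct and follows essentially the same route as the paper: triviality of the $\sym\times\sym^{op}$-module by relabelling, the identification $\Grc\cap\Gri=\Grci$ via Proposition \ref{propindecomposable}, the generation statement $\langle\Grci\rangle=\Grc$ by induction on the number of vertices using loop-free, cycleless minimal decompositions, and freeness inherited from Theorem \ref{thm:freeness_Gr} (the paper phrases this last step as the general fact that a sub-module of $\Gri$ generates a freely generated sub-ProP, which is exactly your extension argument). One cosmetic remark: $\Gri$ is a permutation module rather than a free $\sym\times\sym^{op}$-module, but all you actually need is that the cycleless loopless isoclasses span a direct summand stable under the action --- which holds --- so extending $\phi$ by zero on the complement is legitimate.
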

Graphically, $G_{k,l}$ is represented as follows:
\begin{equation} \label{eq:indecompasable_graph}
\xymatrix{1&2&\dots&l-1&l\\
&&\rond{}\ar[rru]\ar[ru]\ar[lu]\ar[llu]&&\\
1\ar[rru]&2\ar[ru]&\ldots&k-1\ar[lu]&k\ar[llu]}
\end{equation}

\begin{proof}
For any permutations $\sigma\in \sym_l$, $\tau\in \sym_k$, $\sigma\cdot G_{k,l}\cdot \tau$ is isomorphic to 
$G_{k,l}$, through the isomorphism defined by:
\begin{align*}
f_V&=\mathrm{Id}_{\{\star\}},&f_I&=\tau^{-1},&f_O&=\sigma.
\end{align*}
so indeed these graphs generate a trivial $\sym\times \sym^{op}$-module.

Since sub-graphs of a graph without cycle,  are without cycle, the following is an easy consequence of Proposition \ref{prop:mindec}.

\begin{lemma}\label{lem:nocyclemindec}
Let $G$ be a graph without cycle and without loop, such that $V(G)\neq \emptyset$, then a minimal decomposition
\[G\approx \gamma \cdot (G_1*\ldots *G_k*I_p)\circ G_0\]  yields a decomposition without cycles.
Note that, as $G$ has no loop, $\ell=0$.
\end{lemma}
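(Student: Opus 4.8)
The plan is to trace through the construction of a minimal decomposition given in the proof of Proposition \ref{prop:mindec} and to observe that every building block inherits the cycleless and loopless property from $G$.

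Since $G$ has no loop, $\ell = |L(G)| = 0$, which removes the factor $\grapheo^{*\ell}$. Recall next that the graphs $G_i = \tilde{G}_{\mid J_i}$ are obtained by restricting to the minimal elements $J_i \subseteq V(G)$ of $\calO(G)$ and then deleting input-output edges and loops. By the definition of $G_{\mid J}$, its set of internal edges is $E(G_{\mid J}) = \{e \in E(G) : s(e) \in J,\ t(e) \in J\} \subseteq E(G)$, and on these edges the source and target maps of $G_{\mid J}$ coincide with those of $G$. Hence a path $(e_1,\dots,e_m)$ in $G_{\mid J}$, which uses only internal edges of $G_{\mid J}$ and satisfies $t(e_i)=s(e_{i+1})$, is in particular a path in $G$ with the same source and target; so a cycle in $G_{\mid J}$ would be a cycle in $G$, which is excluded. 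Thus $G_{\mid J}$, and a fortiori $\tilde{G}_{\mid J}$, is cycleless, and it is loopless because $L(\tilde{G}_{\mid J}) = \emptyset$ by construction. The same argument applies to $G' := G_{\mid K}$ with $K := V(G)\setminus(J_1\cup\cdots\cup J_k)$.

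It remains to treat $G_0 = \sigma\cdot(I_q * G')\cdot\tau$. The graph $I_q$ has neither internal edges nor loops, hence is trivially cycleless and loopless, and $G'$ is cycleless and loopless by the previous paragraph; since $\Grc$ is a sub-ProP of $\Gr$, the horizontal concatenation $I_q * G'$ lies in $\Grc$. Finally the action of the permutations $\sigma$ and $\tau$ modifies only the indexation maps $\alpha,\beta$ and leaves $V,E,s,t,L$ unchanged (see \eqref{eqGrmod}), so it preserves both the absence of cycles and the absence of loops. Hence $G_0 \in \Grc$, and likewise each $G_i\in\Grc$ as well as $I_p\in\Grc$, which is precisely the assertion that the minimal decomposition takes place inside the sub-ProP $\Grc$.

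The only point requiring some care — and thus the (modest) main obstacle — is the bookkeeping in the restriction $G_{\mid J}$: an internal edge of $G$ with $t(e)\in J$ but $s(e)\notin J$ is reclassified as an \emph{input} edge of $G_{\mid J}$, not an internal one, so it cannot appear in a path of $G_{\mid J}$; this is what guarantees $E(G_{\mid J})\subseteq E(G)$ and lets paths transfer back to $G$ verbatim.
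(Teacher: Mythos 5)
Your proof is correct and follows the same route as the paper, which simply remarks that "sub-graphs of a graph without cycle are without cycle" and calls the lemma an easy consequence of Proposition \ref{prop:mindec}. You have merely filled in the details of that one-line justification, including the useful observation that $E(G_{\mid J})\subseteq E(G)$ with unchanged source and target maps, so paths in the restricted graph transfer verbatim to $G$.
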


Let $G$ be an indecomposable graph without cycle nor loop and let us assume that $V(G)\geq 2$. Let $x\neq y$ in $V(G)$.
As $G$ is indecomposable, by Proposition \ref{propindecomposable}, there exists a path $(e_1,\ldots,e_k)$
from $x$ to $y$ in $G$ and a path $(f_1,\ldots,f_l)$ from $y$ to $x$ in $G$.
Hence, there exists a cycle $(e_1,\ldots,e_k,f_1,\ldots,f_l)$ in $G$: this is a contradiction. We obtain that
$V(G)$ is reduced to a single element. As $IO(G)=L(G)=\emptyset$, $G=G_{i(G),o(G)}$. This gives:
\[\Grc\cap \Gri=\Grci.\]
As $\Gr$ is the free ProP generated by $\Gri$, for any $\sym\times \sym^{op}$-sub-module $P$ of $\Gri$,
the sub-ProP of $\Gr$ generated by $P$ is freely generated by $P$. This holds in particular for $\Grci $.
It remains to prove that the sub-ProP $\langle \Grci \rangle$ generated by $\Grci $ is $\Grc$.

Clearly, if $G$ and $G'$ are graphs without cycles, then $G*G'$ and $G\circ G'$ are without cycles,
so $\Grc$ is a sub-ProP of $\Gr$, which contains $\Grci$. Consequently, $\langle \Grci \rangle\subseteq \Grc$.
Conversely, let $G$ be a graph without cycle and let us prove that $G\in \langle \Grci \rangle$
by induction on $n=|V(G)|$. If $n=0$, then $G=\sigma\cdot I_k$ for a certain permutation $\sigma\in \sym_k$,
so $G$ belongs to $\langle \Grci \rangle$. Otherwise, let us consider a minimal decomposition of $G$ in $\mathrm{Gr}^c$ (see Lemma 
\ref{lem:nocyclemindec}):
\[G=\gamma \cdot(G_1*\ldots*G_k*I_p)\circ G_0.\]
Since $G_1,\ldots,G_k$ are indecomposable, they lie in
$ \Grci$. Since $k\geq1$ and $V(G_i)\neq\emptyset$ we have $G_0\in \langle \Grci \rangle$ by the induction hypothesis,
so $G\in \langle \Grci \rangle$.  
\end{proof}

\begin{remark} \label{rk:decoration}
One can also work with graphs with various types of edges: each edge $e$ (internal, input, output or input-output)
of the graph under consideration has a type $\mathrm{type}(e)$, chosen in a fixed set of types $T$. 
The horizontal composition of two graphs $G$ and $G'$ exists in any case, whereas their vertical concatenation
exists if, and only if, for any $i\in [o(G)]$, the type of the output edge $\beta^{-1}(i)$ of $G$
and the type of the input edge $\alpha^{-1}(i)$ of $G'$ are the same.
One obtains a $T$-coloured ProP, and one can prove similarly a freeness result. Restricting to typed graphs
without cycles, we obtain a free $T$-coloured ProP generated by graphs with only one vertex (and no input-output edge).
\end{remark}

\subsection{Planar graphs and free ProPs}

We recall from Definition \ref{defi:graph} that $s:E(G)\sqcup O(G)\longrightarrow V(G)$ stands for the source map  
and $t:E(G)\sqcup I(G)\longrightarrow V(G)$ stands for the target map.

\begin{defi}
Let $G$ be a graph and $v\in E(G)$ be a vertex of $G$.
We put:
\begin{align*}
I(v)&=\{e\in I(G)\sqcup E(G),\: t(e)=v\},\\
O(v)&=\{e\in O(G)\sqcup E(G),\: s(e)=v\}.
\end{align*}
We also set  $i(v)=|I(v)|$ and $o(v)=|O(v)|$.
\end{defi}

The number $i(v)$ (resp.  $o(v)$) counts the number of input (resp. output) edges and ingoing (resp. outgoing) 
arrows at the vertex $v$. 

\begin{defi}
A \textbf{planar graph} is a graph $G$ such that, for any vertex $v\in V(G)$, $I(v)$ and $O(v)$ are totally ordered.
The set of planar graphs is denoted by $\PGr$ and the set of planar graphs with no cycle and no loop is denoted by
$\PGrc$.
The set of planar graphs $G$ (resp. of planar graphs $G$  with no cycle and no loop) 
with $|I(G)|+|IO(G)|=k$ and $[O(G)|+|IO(G)|=l$ is denoted by $\PGr(k,l)$ (resp. by $\PGrc(k,l)$).
    \end{defi}

Graphically, we shall represent the orders on the incoming and outgoing edges of a vertex by drawing the vertices by boxes, the incoming and outgoing edges of any vertex 
being ordered from left to right. For example, we distinguish the two following situations:
\begin{align*}
&\begin{tikzpicture}[line cap=round,line join=round,>=triangle 45,x=0.7cm,y=0.7cm]
\clip(0.8,-0.5) rectangle (2.2,2.5);
\draw [line width=.4pt] (0.8,0.)-- (2.2,0.);
\draw [line width=.4pt] (2.2,0.)-- (2.2,-0.5);
\draw [line width=.4pt] (2.2,-0.5)-- (0.8,-0.5);
\draw [line width=.4pt] (0.8,-0.5)-- (0.8,0.);
\draw [line width=.4pt] (0.8,2.)-- (2.2,2.);
\draw [line width=.4pt] (2.2,2.)-- (2.2,2.5);
\draw [line width=.4pt] (2.2,2.5)-- (0.8,2.5);
\draw [line width=.4pt] (0.8,2.5)-- (0.8,2.);
\draw [->,line width=.4pt] (1.,0.) -- (1.,2.);
\draw [->,line width=.4pt] (2.,0.) -- (2.,2.);
\end{tikzpicture}&
&\begin{tikzpicture}[line cap=round,line join=round,>=triangle 45,x=0.7cm,y=0.7cm]
\clip(0.8,-0.5) rectangle (2.2,2.5);
\draw [line width=.4pt] (0.8,0.)-- (2.2,0.);
\draw [line width=.4pt] (2.2,0.)-- (2.2,-0.5);
\draw [line width=.4pt] (2.2,-0.5)-- (0.8,-0.5);
\draw [line width=.4pt] (0.8,-0.5)-- (0.8,0.);
\draw [line width=.4pt] (0.8,2.)-- (2.2,2.);
\draw [line width=.4pt] (2.2,2.)-- (2.2,2.5);
\draw [line width=.4pt] (2.2,2.5)-- (0.8,2.5);
\draw [line width=.4pt] (0.8,2.5)-- (0.8,2.);
\draw [->,line width=.4pt] (1.,0.) -- (2.,2.);
\draw [->,line width=.4pt] (2.,0.) -- (1.,2.);
\end{tikzpicture} 
\end{align*}

\begin{remark}
This notion of planarity is not the usual one used in graph theory, as we authorise crossings of edges.
\end{remark}
Since a planar graph is a graph, the horizontal and vertical concatenation of planar graphs are defined by the concatenations of the underlying graphs, 
which preserve the orders around each of the vertices. It is a simple exercise to check that $\PGr$ is still a $\sym\times \sym^{op}$-module and we left 
it to the reader. Hence, $\PGr$ inherits a ProP structure from $\Gr$. As before, $\PGrc$ is a sub-ProP of $\PGr$.

We shall say that a planar graph is indecomposable if the underlying graph is indecomposable.
The set $\PGri$ of indecomposable planar graphs  forms a $\sym\times \sym^{op}$-module.
We then obtain a minimal decomposition of planar graphs similar to the one of Proposition \ref{prop:mindec}.
For any $k,l\in \mathbb{N}$, we denote by $PG_{k,l}$  the planar graph obtained from $G_{k,l}$ by ordering 
the sets $[k]$ and $[l]$ of incoming and outgoing edges of the unique vertex $\star$ by their usual orders. 
We obtain the planar counterpart of   Theorem \ref{thm:freeness_Gr}.

\begin{theo} \label{thm:freeness_PGr}
\begin{enumerate}
\item Let $P$ be a ProP and $\phi:\PGri\longrightarrow P$ be a morphism of $\sym\times \sym^{op}$-modules.
There exists a unique ProP morphism $\Phi:\Gr\longrightarrow P$ such that $\Phi_{\mid \PGri}=\phi$. 
In other words, $\PGr$ is the free ProP generated by $\PGri$. 
\item The planar  graphs $PG_{k,l}$ generate a free $\sym\times \sym^{op}$-module $\PGrci$, 
and $\PGrc$ is the free ProP generated by $\PGrci$. 
\end{enumerate}
\end{theo}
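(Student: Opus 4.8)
The plan is to deduce Theorem \ref{thm:freeness_PGr} from the corresponding non-planar statement, Theorem \ref{thm:freeness_Gr} and Proposition \ref{prop:cycleless_graphs}, by checking that all the ingredients of their proofs carry over verbatim once one keeps track of the total orders on $I(v)$ and $O(v)$ at each vertex. The key point is that the horizontal and vertical concatenations of planar graphs are defined exactly as for graphs, and that gluing output edges of $G$ to input edges of $G'$ does not alter the orders on $I(v)$ or $O(v)$ at any vertex $v$: the edges incident to $v$ are the same subsets of $E(G)\sqcup I(G)\sqcup O(G)$ before and after gluing (only their status as ``internal'' vs.\ ``input/output'' changes), so the given orders restrict unambiguously. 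Hence $\PGr$ is a sub-object of $\Gr$ in the appropriate sense and inherits the minimal-decomposition machinery of Proposition \ref{prop:mindec}: if $G$ is planar then so are the sub-graphs $G_i=\tilde G_{\mid J_i}$ and $G_0$ appearing there, because $\tilde{(\cdot)}$ and $(\cdot)_{\mid J}$ only delete edges/vertices and restrict the orders.

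For part (1), I would first define $\Phi:\PGr\longrightarrow P$ on a planar graph $G$ by induction on $|V(G)|$, using precisely the formula in the sketch of the proof of Theorem \ref{thm:freeness_Gr}: for $n=0$ set $\Phi(\sigma\cdot I_k)=\sigma\cdot I_k$; for $G$ indecomposable set $\Phi(G)=\phi(G)$; otherwise pick a minimal planar decomposition $G\approx \gamma\cdot(G_1*\ldots*G_k*I_p)\circ G_0 * \grapheo^{*\ell}$ and set $\Phi(G)=\gamma\cdot(\phi(G_1)*\ldots*\phi(G_k)*I_p)\circ\Phi(G_0)*\phi(\grapheo)^{*\ell}$. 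Well-definedness (independence of the chosen minimal decomposition) and compatibility with the two concatenations and the $\sym\times\sym^{op}$-actions are proved exactly as in Appendix \ref{appendix:proof_freeness_Gr}, because every step there is an identity between graphs (or isoclasses) that underlie planar graphs whose vertex-orders agree on both sides — the orders never enter the ProP relations used in $P$. Uniqueness follows since any ProP morphism extending $\phi$ must satisfy the same recursion. One small subtlety to flag: the statement in the excerpt writes the target of $\Phi$ as $\Gr$ rather than $\PGr$; I would read this as $\PGr$ (the natural and intended claim), and note that precomposing with the forgetful/inclusion $\PGr\hookrightarrow$ ... is immaterial since the construction lands in $P$ anyway.

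For part (2), the argument mirrors Proposition \ref{prop:cycleless_graphs}. One checks $I_0,I_1\in\PGrc$ and that $\PGrc$ is stable under both concatenations (a cycle in $G_1*G_2$ or $G_1\circ G_2$ would already be a cycle in one of the factors or would force a $\circ$-edge against the definition), so $\PGrc$ is a sub-ProP of $\PGr$. Next, the planar analogue of Lemma \ref{lem:nocyclemindec} holds: a minimal decomposition of a cycleless loopless planar graph has cycleless factors and $\ell=0$, since sub-graphs of a cycleless graph are cycleless. Then, just as in the non-planar case, an indecomposable cycleless loopless planar graph with $\geq 2$ vertices would contain a path $x\to y$ and a path $y\to x$ (Proposition \ref{propindecomposable}), hence a cycle — contradiction; so it has a single vertex and, having no input-output edge and no loop, equals $PG_{i(G),o(G)}$ for the appropriate planar structure. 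This yields $\PGrc\cap\PGri=\PGrci$. That the $PG_{k,l}$ generate a \emph{free} (not merely trivial) $\sym\times\sym^{op}$-module is the one genuinely new point relative to the non-planar case: here $\sigma\cdot PG_{k,l}\cdot\tau$ reorders the edges around the unique vertex and is generally \emph{not} isomorphic to $PG_{k,l}$, so $\PGrci(k,l)$ is the free $\sym_l\times\sym_k^{op}$-module on one generator; I would verify this by exhibiting the basis $\{\,\sigma\cdot PG_{k,l}\cdot\mathrm{Id}\,\}_{\sigma\in\sym_l}$ (equivalently, using that a graph with one vertex, no internal/loop/input-output edges and a prescribed planar order around the vertex is determined by that order). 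Finally, $\langle\PGrci\rangle=\PGrc$ by induction on the number of vertices via the minimal planar decomposition, exactly as before; combined with part (1) restricted to the sub-ProP generated by $\PGrci$, this gives freeness. The main obstacle is bookkeeping rather than conceptual: one must be scrupulous that every graph identity invoked from the non-planar proofs is compatible with the vertex-orders, and that the freeness (as opposed to triviality) of $\PGrci$ is correctly recorded wherever the non-planar proof used triviality of $\Grci$.
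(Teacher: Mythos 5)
Your proposal matches the paper's intended argument: the paper states Theorem \ref{thm:freeness_PGr} without a written proof, indicating only that the minimal decomposition of Proposition \ref{prop:mindec} and the proofs of Theorem \ref{thm:freeness_Gr} and Proposition \ref{prop:cycleless_graphs} carry over once the vertex-orders are tracked, which is exactly what you do — including the one genuinely new point, that $\PGrci(k,l)$ is a free rather than trivial module. One small slip to correct: the basis of the free $\sym_l\times\sym_k^{op}$-module $\PGrci(k,l)$ is $\{\sigma\cdot PG_{k,l}\cdot\tau\}_{\sigma\in\sym_l,\,\tau\in\sym_k}$ (of cardinality $k!\,l!$), not merely $\{\sigma\cdot PG_{k,l}\}_{\sigma\in\sym_l}$; your parenthetical characterisation via the planar order around the unique vertex does give the right count.
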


\section{Graphs decorated by ProPs and endofunctors  of ProPs} \label{section:Hom_dec_graphs}

This section is motivated by Feynman graphs, in which case the decorations are distribution kernels. Since we expect to be able to equip the later 
with a ProP structure, we study here graphs decorated by ProPs.
The results of Section \ref{sec:free_prop} then allow  us to build a endofunctor $\Ga$ on the category of ProPs.

\subsection{The ProP $\Gr(X)$ of decorated graphs as a free ProP}

Throughout this paragraph, $X=(X_{k,l})_{k,l\geqslant 0}$ is  a family of sets.
\begin{defi}
  A graph decorated by $X$ (or $X$-decorated graph, or simply decorated graph) 
  is a couple $(G,d_G)$ with $G$ a graph as in Definition \ref{defi:graph} and 
 $\displaystyle d_G:V(G)\longrightarrow \bigsqcup_{k,l\in  \N_0}X_{k,l}$ a map, such that for any 
vertex $v\in V(G)$, $d_G(v)\in X_{i(v),o(v)}$. We denote by $\Gr(X)$ (resp. $\Grc(X)$) the set of graphs (resp. the set 
 of cycleless graphs) decorated by $X$. 
 We define similarly $X$-decorated planar graphs and we denote by $\PGr(X)$ (resp. $\PGrc(X)$)
the set of planar graphs (resp. the set  of cycleless planar graphs) decorated by $X$. 
\end{defi}
Most of the results on graphs naturally generalise to $X$-decorated graphs. In particular, we have the horizontal  (resp. vertical) concatenation of graphs, denoted by $*$ (resp. $\circ$):
\begin{align*}
 (G,d_G)*(G',d_{G'}) &= (G*G',d_{G*G'}), &(G,d_G)\circ(G',d_{G'}) &= (G\circ G',d_{G\circ G'}).
\end{align*}
The set  of vertices of $G*G'$ and $G'\circ G'$ both being the disjoint union  $V(G)\sqcup V(G')$ of the vertices of $G$ and $G'$,  we 
define   $d_{G* G'}=d_{G\circ G'}$ on the set $V(G)\cup V(G')$ by  $d_{G* G'}|_{V(G)}:=d_G$, $d_{G*G'}|_{V(G')}:=d_{G'}$.

Furthermore, the actions on the left and on the right of the permutation group on $\Gr$ extend to actions on $\Gr(X)$ since the aforementioned actions 
leave the set of vertices of a graph invariant. Here are the decorated and cycleless versions of Theorem \ref{theo:ProP_graph}, which to our knowledge 
is new:
\begin{theo} \label{theo:ProP_graph_2}
The families $\Gr(X)$ and $\PGr(X)$, equipped the  above $\sym\times \sym^{op}$-action and the above  horizontal and 
vertical concatenations, are ProPs. The family $\Grc(X)$ is a sub-ProP of $\Gr(X)$ and
the family $\PGrc(X)$ is a sub-ProP of $\PGr(X)$.
\end{theo}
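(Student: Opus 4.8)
The statement is the decorated analogue of Theorem~\ref{theo:ProP_graph}, so the natural strategy is to observe that every structural identity verified there for undecorated graphs involves only the combinatorial data $(V,E,I,O,IO,L,s,t,\alpha,\beta)$, and that the decoration $d_G$ simply rides along. First I would recall that all the horizontal/vertical concatenations and symmetric group actions on decorated graphs were defined so that, on the level of underlying graphs, they reduce to the operations of Subsection~\ref{sectiongraphes}; hence for any decorated graph $(G,d_G)$ one has $\big((G,d_G)\big)^{\mathrm{und}}=G$ and the forgetful map $(G,d_G)\mapsto G$ intertwines $*$, $\circ$ and the $\sym\times\sym^{op}$-actions. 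So the ProP axioms (associativity of $*$ and $\circ$, existence of units, compatibility of the two concatenations, compatibility with the symmetric group actions, commutativity of $*$) all hold for the underlying graphs by Theorem~\ref{theo:ProP_graph}.

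**Key steps.** The point that genuinely needs checking is that the decoration transported along the two sides of each axiom agrees. For this I would argue once and for all: in each axiom the two composite graphs $H$ and $H'$ being compared have the same vertex set, namely a fixed disjoint union of the vertex sets of the graphs involved (this is exactly what the proof of Theorem~\ref{theo:ProP_graph} establishes, where each comparison graph $K$, $H$ etc.\ has vertex set $V(G)\sqcup V(G')\sqcup\cdots$). The decoration on such a disjoint union is, by the very definition $d_{G*G'}|_{V(G)}:=d_G$ and $d_{G*G'}|_{V(G')}:=d_{G'}$ (and likewise for $\circ$), determined componentwise by the decorations of the pieces. Since the identification of vertex sets in Theorem~\ref{theo:ProP_graph} restricts to the identity on each piece $V(G_i)$, it carries $d_H$ to $d_{H'}$. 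Hence each equality of graphs in that proof upgrades to an equality of decorated graphs. One must also check the well-definedness of $d_G$ under the compatibility conditions $d_G(v)\in X_{i(v),o(v)}$: concatenation and the $\sym\times\sym^{op}$-action do not change $i(v)$ or $o(v)$ for any internal vertex $v$ (a vertex's local in/out arrows are preserved; only the global indexations $\alpha,\beta$ are permuted), so the decoration remains admissible — this is the one small lemma I would state explicitly. The units $I_0$ and $I_1$ have empty vertex set, so carry the unique empty decoration and remain units; here we also use that $\Gr(X)(0,0)$ is one-dimensional, spanned by the empty decorated graph. The planar case is identical once one notes, as in Subsection on planar graphs, that $*$ and $\circ$ preserve the chosen total orders $I(v)$, $O(v)$, so planarity is stable.

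**The sub-ProP claims.** For $\Grc(X)\subseteq\Gr(X)$ and $\PGrc(X)\subseteq\PGr(X)$, I would reuse verbatim the argument of Proposition in Subsection~\ref{subsection:cycless_graph}: a cycle in $G*G'$ is a cycle in one of the two summands since no internal edge of $G*G'$ crosses between $V(G)$ and $V(G')$; and a cycle in $G'\circ G$ would either be a cycle in $G$ or $G'$ or would have to use a glued edge $(f,e)$, which by the definition of $\circ$ goes from $V(G)$ to $V(G')$ and is never returned — contradiction. These are purely graph-theoretic and the decoration plays no role, so the decorated statement follows at once; $I_0,I_1$ are trivially cycleless and loopless, and stability under the $\sym\times\sym^{op}$-action is clear since the action does not alter edges. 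Finally $\Grc(X)$ is closed in $\Gr(X)$ under $*$ and $\circ$ by the above, so it is a sub-ProP.

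**Main obstacle.** There is no deep obstacle; the only thing to be careful about is the bookkeeping that the decoration is genuinely transported correctly through the multi-fold identifications in the associativity and compatibility axioms — in particular that the comparison graphs of Theorem~\ref{theo:ProP_graph} (the graph $H$ for associativity of $\circ$, the graph $K$ for the compatibility of the two concatenations, and the graph $H$ of the symmetric group compatibility) all have vertex set a canonical disjoint union restricting to the identity on each factor. I would therefore structure the proof as: (1) the forgetful map to $\Gr$ is a morphism of all the operations; (2) on each axiom's two sides the decorated graphs share a canonical vertex set and hence a canonical decoration, so equality of underlying graphs gives equality of decorated graphs; (3) admissibility $d(v)\in X_{i(v),o(v)}$ is preserved; (4) quote Theorem~\ref{theo:ProP_graph} for the underlying identities; (5) run the cycleless argument of the earlier Proposition for the sub-ProP assertions, and note planarity is preserved. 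This keeps the write-up short by delegating all real content to Theorem~\ref{theo:ProP_graph}.
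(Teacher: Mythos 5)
Your proposal is correct and follows the route the paper itself takes: Theorem \ref{theo:ProP_graph_2} is stated there without a written proof, as the direct decorated analogue of Theorem \ref{theo:ProP_graph}, and your reduction (quote the underlying-graph identities from Theorem \ref{theo:ProP_graph}, transport the decorations along the canonical vertex-set identifications, check admissibility of $d(v)\in X_{i(v),o(v)}$ is preserved, and rerun the cycleless argument for the sub-ProP claims) is exactly the intended argument. One aside is false but harmless: $\Gr(X)(0,0)$ is not one-dimensional (it contains loops and graphs whose vertices have no incident legs), though the unit axiom only requires the empty graph to act as a unit for $*$, which it does.
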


 Proposition \ref{prop:mindec} generalises to the case of decorated graphs.
\begin{prop}
Let $(G,d_G)$ be an $X$-decorated graph such that $V(G)\neq \emptyset$. We denote by $J_1,\ldots,J_k$ the minimal elements
(for the inclusion) of $\calO(G)$. As before, we set $G_i=\tilde{G}_{\mid J_i}$  and $d_i=d_G|_{J_i}$ for any $i\in [k]$. 
Then there exists an $X$-decorated graph $(G_0,d_0)$ with no loop,
 integers $p$, $\ell$ and a permutation $\gamma$ such that:
\[(G,d_G)\approx \gamma \cdot ((G_1,d_1)*\ldots *(G_k,d_k)*I_p)\circ (G_0,d_0)*\grapheo^{*\ell}.\]
As in the non decorated case, we call such a decomposition   minimal.
\end{prop}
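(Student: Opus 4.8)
The plan is to reduce the decorated statement to the non-decorated Proposition \ref{prop:mindec} by observing that the decoration is essentially carried along for the ride. First I would recall that Proposition \ref{prop:mindec} applied to the underlying graph $G$ produces the isomorphism
\[
G\approx \gamma\cdot(G_1*\ldots*G_k*I_p)\circ G_0 *\grapheo^{*\ell},
\]
where $G_i=\tilde G_{\mid J_i}$ and $J_1,\dots,J_k$ are the minimal elements of $\calO(G)$. The key point is that every vertex of $G$ appears, with the same neighbourhood data $(i(v),o(v))$, as a vertex of exactly one of $G_1,\dots,G_k,G_0$; indeed $V(G)=J_1\sqcup\cdots\sqcup J_k\sqcup K$ with $K=V(G)\setminus(J_1\cup\cdots\cup J_k)$, and the subgraph and $\tilde\cdot$ constructions used to build the $G_i$ and $G_0$ change neither the vertex set of the pieces nor the local valences $i(v),o(v)$ at a vertex, since they only delete loops and input-output edges and renumber inputs/outputs. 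Hence the restrictions $d_i:=d_G|_{J_i}$ and $d_0:=d_G|_K$ are legitimate $X$-decorations of $G_i$ and $G_0$ respectively, i.e. $d_i(v)\in X_{i_{G_i}(v),o_{G_i}(v)}=X_{i_G(v),o_G(v)}$ and similarly for $d_0$.

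Next I would check that the graph isomorphism realising the decomposition in Proposition \ref{prop:mindec} is compatible with the decorations, so that it upgrades to an isomorphism of $X$-decorated graphs. By definition, a morphism of decorated graphs is a morphism of the underlying graphs whose vertex component intertwines the decoration maps. The isomorphism in question is, on vertices, essentially the identity map $V(G)\to V(G_1)\sqcup\cdots\sqcup V(G_k)\sqcup V(G_0)$ tracking which minimal component each vertex belongs to; since we defined the decoration on the right-hand side precisely by $d_{G_i}=d_G|_{J_i}$ and $d_{G_0}=d_G|_K$, and since $d$ on a horizontal/vertical concatenation was defined component-wise on the disjoint union of vertex sets, the intertwining square commutes on the nose. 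Thus the underlying-graph isomorphism of Proposition \ref{prop:mindec} is an isomorphism in $\Gr(X)$, which is exactly the claimed relation
\[
(G,d_G)\approx \gamma\cdot((G_1,d_1)*\ldots*(G_k,d_k)*I_p)\circ(G_0,d_0)*\grapheo^{*\ell}.
\]

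Finally I would record that each $(G_i,d_i)$ is an indecomposable decorated graph with no loop: indecomposability of a decorated graph is defined via the underlying graph, and $G_i$ is indecomposable by Proposition \ref{prop:mindec}; the loop set of $G_i=\tilde G_{\mid J_i}$ is empty by construction. This establishes the existence of a minimal decomposition in the decorated setting. I do not expect a serious obstacle here: the only thing requiring care is the bookkeeping that the subgraph/tilde operations preserve the local valences $(i(v),o(v))$ so that the restricted decorations type-check, and that the units $I_p$ and loops $\grapheo$ have no vertices and hence carry no decoration data. As in the non-decorated case one should also note that the decomposition is not unique, depending on the ordering of the $J_i$ and on the chosen indexations of their input/output edges, and on nothing else; this follows verbatim from the remark after Proposition \ref{prop:mindec}.
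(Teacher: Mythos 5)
Your proposal is correct and follows essentially the same route as the paper: apply Proposition \ref{prop:mindec} to the underlying graph, identify the vertices of the pieces $G_1,\ldots,G_k,G_0$ with the corresponding subsets of $V(G)$, and restrict $d_G$ to obtain the decorations, noting that the actions of the symmetric groups and the concatenations only move indexations of edges and so are compatible with vertex decorations. The only difference is that you spell out the (correct) verification that the subgraph and $\tilde{\cdot}$ constructions preserve the local valences $(i(v),o(v))$, which the paper leaves implicit.
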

\begin{proof}
By Proposition \ref{prop:mindec}, $G$ admits a minimal decomposition
\[G\approx \gamma \cdot (G_1*\ldots *G_k*I_p)\circ G_0*\grapheo^{*\ell}.\]
We observe that we can identify the vertices of $G_0$ with those of
$G\setminus(G_1\sqcup\cdots\sqcup G_k)$. We can therefore set
$d_0:=d_G|_{V(G)\setminus(V(G_1)\sqcup\cdots\sqcup V(G_k))}$. The result then follows from the definition of the 
actions of the permutation group on $\Gr(X)$ using the horizontal and vertical concatenations.
\end{proof}

As in the non decorated case, we  denote by $\Gri(X)$ (resp. $\Grci(X)$) the indecomposable graphs (resp. the cycleless indecomposable graphs) 
decorated by $X$. Notice that the graphs $(G_i,d_i)$; for $i\in[k]$, are indecomposable.
We define $\PGri(X)$ and $\PGrci(X)$ similarly.

The key result of this paragraph is the decorated version of the universal property (Theorem \ref{thm:freeness_Gr}).
\begin{theo} \label{thm:univ_prop_deco}
\begin{enumerate}
\item  Let $P$ be a ProP and $\phi:\Gri(X)\longrightarrow P$ be a morphism of $\sym\times \sym^{op}$-modules.
There exists a unique ProP morphism $\Phi:\Gr(X)\longrightarrow P$ such that $\Phi_{\mid \Gri(X)}=\phi$. 
In other words, $\Gr(X)$ is the free ProP generated by the $\sym\times \sym^{op}$-module $\Gri(X)$.

Furthermore, $\Grc(X)$ is the free ProP generated by the $\sym\times \sym^{op}$-module $\Grci(X)$,  which is isomorphic
to the trivial $\sym\times \sym^{op}$-module generated by $X$.

\item  Let $P$ be a ProP and $\phi:\PGri(X)\longrightarrow P$ be a morphism of $\sym\times \sym^{op}$-modules.
There exists a unique ProP morphism $\Phi:\PGr(X)\longrightarrow P$ such that $\Phi_{\mid \PGri(X)}=\phi$. 
In other words, $\PGr(X)$ is the free ProP generated by the $\sym\times \sym^{op}$-module $\PGri(X)$.

Furthermore, $\PGrc(X)$ is the free ProP generated by  the $\sym\times \sym^{op}$-module $\PGrci(X)$
generated by $X$, which is isomorphic to  the free $\sym\times \sym^{op}$-module generated by $X$. 
\end{enumerate}
\end{theo}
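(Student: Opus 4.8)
The plan is to reduce everything to the already-proven non-decorated freeness theorem (Theorem~\ref{thm:freeness_Gr} and Theorem~\ref{thm:freeness_PGr}) by carrying the vertex decorations along through the induction. Concretely, given a ProP $P$ and a morphism of $\sym\times\sym^{op}$-modules $\phi:\Gri(X)\longrightarrow P$, I would define $\Phi(G,d_G)\in P$ by induction on $n=|V(G)|$, exactly mimicking the sketch after Theorem~\ref{thm:freeness_Gr}. If $n=0$, then $G=\sigma\cdot I_k$ for some $\sigma\in\sym_k$ (the decoration map $d_G$ being vacuous), and we set $\Phi(G,d_G)=\sigma\cdot I_k$. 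If $n>0$ and $(G,d_G)$ is indecomposable, set $\Phi(G,d_G)=\phi(G,d_G)$. Otherwise take a minimal decomposition $(G,d_G)\approx\gamma\cdot((G_1,d_1)*\cdots*(G_k,d_k)*I_p)\circ(G_0,d_0)*\grapheo^{*\ell}$ as in the decorated version of Proposition~\ref{prop:mindec}, note that $|V(G_0)|<n$ since each $V(G_i)\neq\emptyset$, and set
\[
\Phi(G,d_G)=\gamma\cdot(\phi(G_1,d_1)*\cdots*\phi(G_k,d_k)*I_p)\circ\Phi(G_0,d_0)*\phi(\grapheo)^{*\ell}.
\]
One then extends $\Phi$ linearly to $\Gr(X)$.

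The bulk of the work, and the main obstacle, is exactly the same as in the non-decorated case: showing that $\Phi(G,d_G)$ is independent of the chosen minimal decomposition, and that the resulting $\Phi$ is a ProP morphism (compatible with both concatenations and with the $\sym\times\sym^{op}$-actions). I would argue that this adds nothing genuinely new beyond Theorem~\ref{thm:freeness_Gr}: the decoration map $d_G$ is carried along passively — under horizontal and vertical concatenation the vertex set is the disjoint union $V(G)\sqcup V(G')$ and $d_{G*G'}$, $d_{G\circ G'}$ restrict to $d_G$, $d_{G'}$, while the $\sym\times\sym^{op}$-action leaves the vertex set (hence $d_G$) untouched. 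So the combinatorial bookkeeping that establishes well-definedness and functoriality in the appendix proof of Theorem~\ref{thm:freeness_Gr} goes through verbatim with every occurrence of a graph replaced by a decorated graph and every $\phi(G_i)$ replaced by $\phi(G_i,d_i)$. The cleanest way to present this is to say precisely that the proof is a decorated copy of the proof in Appendix~\ref{appendix:proof_freeness_Gr}, pointing out the (trivial) compatibility of $d$ with the operations, rather than reproducing it. Uniqueness of $\Phi$ is immediate: any ProP morphism agreeing with $\phi$ on $\Gri(X)$ must satisfy the displayed recursion on minimal decompositions, so it is determined.

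For the cycleless statement, I would proceed as in the proof of Proposition~\ref{prop:cycleless_graphs}. First, $\Grc(X)$ is a sub-ProP of $\Gr(X)$ by Theorem~\ref{theo:ProP_graph_2}, and since a sub-ProP of a free ProP generated by a sub-$\sym\times\sym^{op}$-module is free on that sub-module, it suffices to identify $\Grc(X)\cap\Gri(X)$ with $\Grci(X)$ and to show $\langle\Grci(X)\rangle=\Grc(X)$. The first is the decorated analogue of the argument that an indecomposable cycleless loopless graph with $\geq2$ vertices would contain a cycle (two vertices $x\neq y$ joined by paths in both directions, by Proposition~\ref{propindecomposable}), a contradiction; hence such a graph has a single vertex, and with $IO=L=\emptyset$ it is of the form $(G_{i(v),o(v)},v)$ with $v\in X_{i(v),o(v)}$ — so $\Grci(X)$ is the trivial $\sym\times\sym^{op}$-module on $X$ (trivial action since $\sigma\cdot G_{k,l}\cdot\tau\cong G_{k,l}$ fixing the vertex, hence fixing the decoration). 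The inclusion $\langle\Grci(X)\rangle\subseteq\Grc(X)$ is clear since both concatenations preserve cyclelessness; conversely, by induction on $|V(G)|$, a minimal decomposition in $\Grc(X)$ (which is cycleless by the decorated Lemma~\ref{lem:nocyclemindec}, with $\ell=0$) expresses $(G,d_G)$ through the $(G_i,d_i)\in\Grci(X)$ and a $(G_0,d_0)$ with fewer vertices, giving $(G,d_G)\in\langle\Grci(X)\rangle$.

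The planar case is handled identically: planarity is preserved by the concatenations (the concatenations glue edges end-to-end, which respects the total orders $I(v)$, $O(v)$ at each vertex), so $\PGr(X)$ is a sub-ProP of nothing larger but rather inherits the structure directly, and the minimal decomposition of planar graphs (the planar analogue of Proposition~\ref{prop:mindec}) lets us run the same induction, defining $\Phi$ on $\PGr(X)$ from $\phi:\PGri(X)\longrightarrow P$. The only difference from the symmetric case is that $\sigma\cdot PG_{k,l}\cdot\tau$ is \emph{not} isomorphic to $PG_{k,l}$ in general (the orders at the vertex change), so $\PGrci(X)$ is the \emph{free} $\sym\times\sym^{op}$-module on $X$ rather than the trivial one; this is precisely the distinction already recorded in Theorem~\ref{thm:freeness_PGr}. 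With that single change, the verification that $\PGrc(X)$ is free on $\PGrci(X)$ is word for word the cycleless argument above. I expect no surprises here beyond the appendix-level bookkeeping already carried out for $\Gr$; the value of the statement is the uniform packaging of the decorated and planar variants.
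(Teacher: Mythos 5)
Your proposal is correct and follows essentially the same route as the paper: the authors likewise rerun the inductive proof of Theorem~\ref{thm:freeness_Gr} and the argument of Proposition~\ref{prop:cycleless_graphs} verbatim in the decorated setting, noting only that the type~A and type~B transformations of minimal decompositions act on indexations and subgraphs and hence carry decorations along passively, and that indecomposable cycleless decorated graphs still have a single vertex. Your treatment of the trivial versus free $\sym\times\sym^{op}$-module distinction in the planar case also matches the paper's remark.
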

\begin{remark}
\begin{enumerate}
\item  This result generalises Theorem \ref{thm:freeness_Gr} and Proposition \ref{prop:cycleless_graphs}. However, it is \emph{not} a direct 
 consequence of these previous results. Given $G\in \Gr$, $d_G,d_G':V(G)\longrightarrow X$ two decoration maps of $G$, we a priori have 
 $\Phi(G,d_G)\neq\Phi(G,d_G')$.
\item The $\sym\times \sym^{op}$-modules $\Grci(X)$ and $\PGrci(X)$ differ from one another in so far as for any $k,l\in  \N_0$,
$\Grci(k,l)$ is a trivial $\sym_l\otimes \sym_k^{op}$-module,
whereas $\PGrci(k,l)$ is a free $\sym_l\otimes \sym_k^{op}$-module. They are both generated by $X_{k,l}$.
\end{enumerate}
\end{remark}
\begin{proof}
 The proofs of Theorem \ref{thm:freeness_Gr} and Proposition \ref{prop:cycleless_graphs} can be reproduced in extenso in the decorated setup, simply replacing graphs by decorated graphs and using the decorated version of the minimal 
 decomposition and will therefore not reproduce it   here. Let us however notice that
 \begin{itemize}
  \item the transformation of type A arising in the proof of Theorem \ref{thm:freeness_Gr} only concerns indexation of edges. As such, it easily 
  generalises to decorated graphs.
  \item  the transformation of type B arising in the proof of Theorem \ref{thm:freeness_Gr} exchanges two subgraphs of $G$. It  therefore extends to the 
  decorated case as a transformation exchanging two decorated graphs. The rest of the proof of Theorem \ref{thm:freeness_Gr} remains unchanged.
  \item  the cycleless indecomposable graphs are still in the decorated case the graphs with exactly one vertex, since the decorations play no role 
  in the definition of indecomposable.
  \item  the rest of the proof of \ref{prop:cycleless_graphs} also  generalises in a straightforward manner to the decorated case. \qedhere
 \end{itemize}

\end{proof}

\subsection{An endofunctor of the category of  $\sym\times \sym^{op}$-modules}

We now assume that the family $X=(X_{k,l})_{k,l\in  \N_0}$ is a $\sym\times \sym^{op}$-module.
We define another $\sym\times \sym^{op}$-module  $\Gacirc(X)$ on graphs, taking into account this module structure.

Let $G\in \PGr$. As $G$ is a planar graph, the sets $I(v)$ and $O(v)$
are canonically identified with $[i(v)]$ and $[o(v)]$ thanks to their total orders.

For any vertex $v\in V(G)$, there is a natural action of $\sym_{o(v)}\times \sym_{i(v)}^{op}$, obtained by acting on the total
orders of $O(v)$ and $I(v)$. The graph obtained from $G$ by the action of $(\sigma,\tau)$ on the vertex $v$ is denoted by
\[\sigma \cdot_v G\cdot_v \tau.\]
For example:
\begin{center} 
	$\substack{\displaystyle (12)\cdot_v\\ \vspace{1.5cm}}$
	\begin{tikzpicture}[line cap=round,line join=round,>=triangle 45,x=0.7cm,y=0.7cm]
	\clip(0.8,-0.5) rectangle (2.2,2.5);
	\draw [line width=.4pt] (0.8,0.)-- (2.2,0.);
	\draw [line width=.4pt] (2.2,0.)-- (2.2,-0.5); 
	\draw [line width=.4pt] (2.2,-0.5)-- (0.8,-0.5);
	\draw [line width=.4pt] (0.8,-0.5)-- (0.8,0.); 
	\draw [line width=.4pt] (0.8,2.)-- (2.2,2.); 
	\draw [line width=.4pt] (2.2,2.)-- (2.2,2.5);
	\draw [line width=.4pt] (2.2,2.5)-- (0.8,2.5); 
	\draw [line width=.4pt] (0.8,2.5)-- (0.8,2.);
	\draw [->,line width=.4pt] (1.,0.) -- (1.,2.);
	\draw [->,line width=.4pt] (2.,0.) -- (2.,2.);
	\draw (1.2,0.) node[anchor=north west] {\scriptsize $v$};
	\draw (1.2,2.5) node[anchor=north west] {\scriptsize $w$};
	\end{tikzpicture}\,
	$\substack{\displaystyle =\\ \vspace{1.5cm}}$\,
	\begin{tikzpicture}[line cap=round,line join=round,>=triangle 45,x=0.7cm,y=0.7cm]
	\clip(0.8,-0.5) rectangle (2.2,2.5);
	\draw [line width=.4pt] (0.8,0.)-- (2.2,0.); 
	\draw [line width=.4pt] (2.2,0.)-- (2.2,-0.5);
	\draw [line width=.4pt] (2.2,-0.5)-- (0.8,-0.5);
	\draw [line width=.4pt] (0.8,-0.5)-- (0.8,0.);
	\draw [line width=.4pt] (0.8,2.)-- (2.2,2.);
	\draw [line width=.4pt] (2.2,2.)-- (2.2,2.5);
	\draw [line width=.4pt] (2.2,2.5)-- (0.8,2.5);
	\draw [line width=.4pt] (0.8,2.5)-- (0.8,2.);
	\draw [->,line width=.4pt] (1.,0.) -- (1.,2.);
	\draw [->,line width=.4pt] (2.,0.) -- (2.,2.);
	\draw (1.2,0.) node[anchor=north west] {\scriptsize $v$};
	\draw (1.2,2.5) node[anchor=north west] {\scriptsize $w$};
	\end{tikzpicture}
	$\substack{\displaystyle \cdot_w (12)\\ \vspace{1.5cm}}$
	$\substack{\displaystyle =\\ \vspace{1.5cm}}$
	\begin{tikzpicture}[line cap=round,line join=round,>=triangle 45,x=0.7cm,y=0.7cm]
	\clip(0.8,-0.5) rectangle (2.2,2.5);
	\draw [line width=.4pt] (0.8,0.)-- (2.2,0.);
	\draw [line width=.4pt] (2.2,0.)-- (2.2,-0.5);
	\draw [line width=.4pt] (2.2,-0.5)-- (0.8,-0.5);
	\draw [line width=.4pt] (0.8,-0.5)-- (0.8,0.);
	\draw [line width=.4pt] (0.8,2.)-- (2.2,2.);
	\draw [line width=.4pt] (2.2,2.)-- (2.2,2.5);
	\draw [line width=.4pt] (2.2,2.5)-- (0.8,2.5);
	\draw [line width=.4pt] (0.8,2.5)-- (0.8,2.);
	\draw [->,line width=.4pt] (1.,0.) -- (2.,2.);
	\draw [->,line width=.4pt] (2.,0.) -- (1.,2.);
	\draw (1.2,0.) node[anchor=north west] {\scriptsize $v$};
	\draw (1.2,2.5) node[anchor=north west] {\scriptsize $w$};
	\end{tikzpicture}$\substack{\displaystyle .\\ \vspace{1.5cm}}$\\
	\vspace{-.5cm}\end{center}
Let $G\in \PGr(k,l)$ and $X$ a $\sym\times \sym^{op}$-module. We define 
\[G(X)=\bigotimes_{v\in X(G)} X(i(v),o(v)).\]
The elements of $G(P)$ will be written as linear spans of tensors
\[\bigotimes_{v\in V(G)} x_v.\]
In other words, we decorate any vertex of $G$ by an element of $X$, with respect to the number of incoming and outgoing edges
of $v$, and we take these decorations to be linear in each vertex. 

Let
\[\PGr(X)(k,l):=\bigoplus_{G\in \PGr(k,l)}  \C G\otimes G(X),\]
whose elements are linear spans of tensors
\[G\otimes \left(\bigotimes_{v\in V(G)} x_v\right). \] 

\begin{remark} \label{rk:representation_Gamma(G)}
	Graphically, this element of $\PGr(X)(k,l)$ is represented by the planar
	graph $G$ where each vertex $v\in V(G)$ is decorated by $x_v$.
\end{remark}
$\PGr(X)(k,l)$ is a $\sym_l\times \sym_k^{op}$-module, by the action on the indexation of the incoming and outgoing edges of the
graphs.

Let $I(k,l)$ be the $\sym_l\times \sym_k^{op}$-submodule of $\PGr(X)(k,l)$ generated by   elements of the form
\begin{equation} \label{eq:quotient_monad}
\sigma\cdot_{v_0}G\cdot_{v_0}\tau \otimes \left(\bigotimes_{v\in V(G)} x_v\right)
-G\otimes \left(\left(\bigotimes_{v\in V(G)\setminus\{v_0\}} x_v\right)\otimes \sigma\cdot x_{v_0}\cdot \tau\right),
\end{equation}
where $G\in \PGr(k,l)$, $v_0\in V(G)$, $\sigma \in \sym_{o(v_0)}$ and $\tau \in \sym_{i(v_0)}$. 
We further define
\[\Gacirc(X)(k,l):=\dfrac{\PGr(X)(k,l)}{I(k,l)}.\]

Here is the type of relations we obtain graphically:
\begin{align*}
\begin{tikzpicture}[line cap=round,line join=round,>=triangle 45,x=0.7cm,y=0.7cm]
\clip(0.6,-2.1) rectangle (2.4,4.5);
\draw [line width=.4pt] (0.8,0.)-- (2.2,0.);
\draw [line width=.4pt] (2.2,0.)-- (2.2,-0.5);
\draw [line width=.4pt] (2.2,-0.5)-- (0.8,-0.5);
\draw [line width=.4pt] (0.8,-0.5)-- (0.8,0.);
\draw [line width=.4pt] (0.8,2.)-- (2.2,2.);
\draw [line width=.4pt] (2.2,2.)-- (2.2,2.5);
\draw [line width=.4pt] (2.2,2.5)-- (0.8,2.5);
\draw [line width=.4pt] (0.8,2.5)-- (0.8,2.);
\draw (1.2,0.05) node[anchor=north west] {\scriptsize $x$};
\draw (1.2,2.55) node[anchor=north west] {\scriptsize $y$};
\draw [->,line width=.4pt] (1.,0.) -- (1.,2.);
\draw [->,line width=.4pt] (2.,0.) -- (2.,2.);
\draw [->,line width=.4pt] (1.,-1.5) -- (1.,-0.5);
\draw [->,line width=.4pt] (1.5,-1.5) -- (1.5,-0.5);
\draw [->,line width=.4pt] (2.,-1.5) -- (2.,-0.5);
\draw (0.7,-1.4) node[anchor=north west] {\scriptsize $1$};
\draw (1.2,-1.4) node[anchor=north west] {\scriptsize $2$};
\draw (1.7,-1.4) node[anchor=north west] {\scriptsize $3$};
\draw [->,line width=.4pt] (1.,2.5) -- (1,3.5);
\draw [->,line width=.4pt] (2.,2.5) -- (2.,3.5);
\draw (0.7,4.1) node[anchor=north west] {\scriptsize $1$};
\draw (1.7,4.1) node[anchor=north west] {\scriptsize $2$};
\end{tikzpicture}&
\substack{\displaystyle =\\ \vspace{3.5cm}}\begin{tikzpicture}[line cap=round,line join=round,>=triangle 45,x=0.7cm,y=0.7cm]
\clip(0.6,-2.1) rectangle (2.4,4.5);
\draw [line width=.4pt] (0.8,0.)-- (2.2,0.);
\draw [line width=.4pt] (2.2,0.)-- (2.2,-0.5);
\draw [line width=.4pt] (2.2,-0.5)-- (0.8,-0.5);
\draw [line width=.4pt] (0.8,-0.5)-- (0.8,0.);
\draw [line width=.4pt] (0.8,2.)-- (2.2,2.);
\draw [line width=.4pt] (2.2,2.)-- (2.2,2.5);
\draw [line width=.4pt] (2.2,2.5)-- (0.8,2.5);
\draw [line width=.4pt] (0.8,2.5)-- (0.8,2.);
\draw (0.65,0.15) node[anchor=north west] {\scriptsize $(12)\cdot x$};
\draw (1.2,2.55) node[anchor=north west] {\scriptsize $y$};
\draw [->,line width=.4pt] (1.,0.) -- (2.,2.);
\draw [->,line width=.4pt] (2.,0.) -- (1.,2.);
\draw [->,line width=.4pt] (1.,-1.5) -- (1.,-0.5);
\draw [->,line width=.4pt] (1.5,-1.5) -- (1.5,-0.5);
\draw [->,line width=.4pt] (2.,-1.5) -- (2.,-0.5);
\draw (0.7,-1.4) node[anchor=north west] {\scriptsize $1$};
\draw (1.2,-1.4) node[anchor=north west] {\scriptsize $2$};
\draw (1.7,-1.4) node[anchor=north west] {\scriptsize $3$};
\draw [->,line width=.4pt] (1.,2.5) -- (1,3.5);
\draw [->,line width=.4pt] (2.,2.5) -- (2.,3.5);
\draw (0.7,4.1) node[anchor=north west] {\scriptsize $1$};
\draw (1.6,4.1) node[anchor=north west] {\scriptsize $2$};
\end{tikzpicture}
\substack{\displaystyle =\\ \vspace{3.5cm}}\begin{tikzpicture}[line cap=round,line join=round,>=triangle 45,x=0.7cm,y=0.7cm]
\clip(0.6,-2.1) rectangle (2.4,4.5);
\draw [line width=.4pt] (0.8,0.)-- (2.2,0.);
\draw [line width=.4pt] (2.2,0.)-- (2.2,-0.5);
\draw [line width=.4pt] (2.2,-0.5)-- (0.8,-0.5);
\draw [line width=.4pt] (0.8,-0.5)-- (0.8,0.);
\draw [line width=.4pt] (0.8,2.)-- (2.2,2.);
\draw [line width=.4pt] (2.2,2.)-- (2.2,2.5);
\draw [line width=.4pt] (2.2,2.5)-- (0.8,2.5);
\draw [line width=.4pt] (0.8,2.5)-- (0.8,2.);
\draw (1.2,0.05) node[anchor=north west] {\scriptsize $x$};
\draw (0.7,2.6) node[anchor=north west] {\scriptsize $y\cdot (12)$};
\draw [->,line width=.4pt] (1.,0.) -- (2.,2.);
\draw [->,line width=.4pt] (2.,0.) -- (1.,2.);
\draw [->,line width=.4pt] (1.,-1.5) -- (1.,-0.5);
\draw [->,line width=.4pt] (1.5,-1.5) -- (1.5,-0.5);
\draw [->,line width=.4pt] (2.,-1.5) -- (2.,-0.5);
\draw (0.7,-1.4) node[anchor=north west] {\scriptsize $1$};
\draw (1.2,-1.4) node[anchor=north west] {\scriptsize $2$};
\draw (1.7,-1.4) node[anchor=north west] {\scriptsize $3$};
\draw [->,line width=.4pt] (1.,2.5) -- (1,3.5);
\draw [->,line width=.4pt] (2.,2.5) -- (2.,3.5);
\draw (0.7,4.1) node[anchor=north west] {\scriptsize $1$};
\draw (1.6,4.1) node[anchor=north west] {\scriptsize $2$};
\end{tikzpicture}
\substack{\displaystyle =\\ \vspace{3.5cm}}\begin{tikzpicture}[line cap=round,line join=round,>=triangle 45,x=0.7cm,y=0.7cm]
\clip(0.6,-2.1) rectangle (2.4,4.5);
\draw [line width=.4pt] (0.8,0.)-- (2.2,0.);
\draw [line width=.4pt] (2.2,0.)-- (2.2,-0.5);
\draw [line width=.4pt] (2.2,-0.5)-- (0.8,-0.5);
\draw [line width=.4pt] (0.8,-0.5)-- (0.8,0.);
\draw [line width=.4pt] (0.8,2.)-- (2.2,2.);
\draw [line width=.4pt] (2.2,2.)-- (2.2,2.5);
\draw [line width=.4pt] (2.2,2.5)-- (0.8,2.5);
\draw [line width=.4pt] (0.8,2.5)-- (0.8,2.);
\draw (0.65,0.15) node[anchor=north west] {\scriptsize $(12)\cdot x$};
\draw (0.7,2.6) node[anchor=north west] {\scriptsize $y\cdot (12)$};
\draw [->,line width=.4pt] (1.,0.) -- (1.,2.);
\draw [->,line width=.4pt] (2.,0.) -- (2.,2.);
\draw [->,line width=.4pt] (1.,-1.5) -- (1.,-0.5);
\draw [->,line width=.4pt] (1.5,-1.5) -- (1.5,-0.5);
\draw [->,line width=.4pt] (2.,-1.5) -- (2.,-0.5);
\draw (0.7,-1.4) node[anchor=north west] {\scriptsize $1$};
\draw (1.2,-1.4) node[anchor=north west] {\scriptsize $2$};
\draw (1.7,-1.4) node[anchor=north west] {\scriptsize $3$};
\draw [->,line width=.4pt] (1.,2.5) -- (1,3.5);
\draw [->,line width=.4pt] (2.,2.5) -- (2.,3.5);
\draw (0.7,4.1) node[anchor=north west] {\scriptsize $1$};
\draw (1.6,4.1) node[anchor=north west] {\scriptsize $2$};
\end{tikzpicture}\substack{\displaystyle ,\\ \vspace{3.5cm}}&
\begin{tikzpicture}[line cap=round,line join=round,>=triangle 45,x=0.7cm,y=0.7cm]
\clip(0.6,-2.1) rectangle (2.4,4.5);
\draw [line width=.4pt] (0.8,0.)-- (2.2,0.);
\draw [line width=.4pt] (2.2,0.)-- (2.2,-0.5);
\draw [line width=.4pt] (2.2,-0.5)-- (0.8,-0.5);
\draw [line width=.4pt] (0.8,-0.5)-- (0.8,0.);
\draw [line width=.4pt] (0.8,2.)-- (2.2,2.);
\draw [line width=.4pt] (2.2,2.)-- (2.2,2.5);
\draw [line width=.4pt] (2.2,2.5)-- (0.8,2.5);
\draw [line width=.4pt] (0.8,2.5)-- (0.8,2.);
\draw (0.7,0.15) node[anchor=north west] {\scriptsize $x\cdot (12)$};
\draw (1.2,2.55) node[anchor=north west] {\scriptsize $y$};
\draw [->,line width=.4pt] (1.,0.) -- (1.,2.);
\draw [->,line width=.4pt] (2.,0.) -- (2.,2.);
\draw [->,line width=.4pt] (1.,-1.5) -- (1.,-0.5);
\draw [->,line width=.4pt] (1.5,-1.5) -- (1.5,-0.5);
\draw [->,line width=.4pt] (2.,-1.5) -- (2.,-0.5);
\draw (0.7,-1.4) node[anchor=north west] {\scriptsize $1$};
\draw (1.2,-1.4) node[anchor=north west] {\scriptsize $2$};
\draw (1.7,-1.4) node[anchor=north west] {\scriptsize $3$};
\draw [->,line width=.4pt] (1.,2.5) -- (1,3.5);
\draw [->,line width=.4pt] (2.,2.5) -- (2.,3.5);
\draw (0.7,4.1) node[anchor=north west] {\scriptsize $1$};
\draw (1.7,4.1) node[anchor=north west] {\scriptsize $2$};
\end{tikzpicture}&
\substack{\displaystyle =\\ \vspace{3.5cm}}\begin{tikzpicture}[line cap=round,line join=round,>=triangle 45,x=0.7cm,y=0.7cm]
\clip(0.6,-2.1) rectangle (2.4,4.5);
\draw [line width=.4pt] (0.8,0.)-- (2.2,0.);
\draw [line width=.4pt] (2.2,0.)-- (2.2,-0.5);
\draw [line width=.4pt] (2.2,-0.5)-- (0.8,-0.5);
\draw [line width=.4pt] (0.8,-0.5)-- (0.8,0.);
\draw [line width=.4pt] (0.8,2.)-- (2.2,2.);
\draw [line width=.4pt] (2.2,2.)-- (2.2,2.5);
\draw [line width=.4pt] (2.2,2.5)-- (0.8,2.5);
\draw [line width=.4pt] (0.8,2.5)-- (0.8,2.);
\draw (1.2,0.05) node[anchor=north west] {\scriptsize $x$};
\draw (1.2,2.55) node[anchor=north west] {\scriptsize $y$};
\draw [->,line width=.4pt] (1.,0.) -- (1.,2.);
\draw [->,line width=.4pt] (2.,0.) -- (2.,2.);
\draw [->,line width=.4pt] (1.,-1.5) -- (1.,-0.5);
\draw [->,line width=.4pt] (1.5,-1.5) -- (1.5,-0.5);
\draw [->,line width=.4pt] (2.,-1.5) -- (2.,-0.5);
\draw (0.7,-1.4) node[anchor=north west] {\scriptsize $2$};
\draw (1.2,-1.4) node[anchor=north west] {\scriptsize $1$};
\draw (1.7,-1.4) node[anchor=north west] {\scriptsize $3$};
\draw [->,line width=.4pt] (1.,2.5) -- (1,3.5);
\draw [->,line width=.4pt] (2.,2.5) -- (2.,3.5);
\draw (0.7,4.1) node[anchor=north west] {\scriptsize $1$};
\draw (1.7,4.1) node[anchor=north west] {\scriptsize $2$};
\end{tikzpicture}\substack{\displaystyle ,\\ \vspace{3.5cm}}&
\begin{tikzpicture}[line cap=round,line join=round,>=triangle 45,x=0.7cm,y=0.7cm]
\clip(0.6,-2.1) rectangle (2.4,4.5);
\draw [line width=.4pt] (0.8,0.)-- (2.2,0.);
\draw [line width=.4pt] (2.2,0.)-- (2.2,-0.5);
\draw [line width=.4pt] (2.2,-0.5)-- (0.8,-0.5);
\draw [line width=.4pt] (0.8,-0.5)-- (0.8,0.);
\draw [line width=.4pt] (0.8,2.)-- (2.2,2.);
\draw [line width=.4pt] (2.2,2.)-- (2.2,2.5);
\draw [line width=.4pt] (2.2,2.5)-- (0.8,2.5);
\draw [line width=.4pt] (0.8,2.5)-- (0.8,2.);
\draw (1.2,0.05) node[anchor=north west] {\scriptsize $x$};
\draw (0.7,2.6) node[anchor=north west] {\scriptsize $(12)\cdot y$};
\draw [->,line width=.4pt] (1.,0.) -- (1.,2.);
\draw [->,line width=.4pt] (2.,0.) -- (2.,2.);
\draw [->,line width=.4pt] (1.,-1.5) -- (1.,-0.5);
\draw [->,line width=.4pt] (1.5,-1.5) -- (1.5,-0.5);
\draw [->,line width=.4pt] (2.,-1.5) -- (2.,-0.5);
\draw (0.7,-1.4) node[anchor=north west] {\scriptsize $1$};
\draw (1.2,-1.4) node[anchor=north west] {\scriptsize $2$};
\draw (1.7,-1.4) node[anchor=north west] {\scriptsize $3$};
\draw [->,line width=.4pt] (1.,2.5) -- (1,3.5);
\draw [->,line width=.4pt] (2.,2.5) -- (2.,3.5);
\draw (0.7,4.1) node[anchor=north west] {\scriptsize $1$};
\draw (1.7,4.1) node[anchor=north west] {\scriptsize $2$};
\end{tikzpicture}&
\substack{\displaystyle =\\ \vspace{3.5cm}}\begin{tikzpicture}[line cap=round,line join=round,>=triangle 45,x=0.7cm,y=0.7cm]
\clip(0.6,-2.1) rectangle (2.4,4.5);
\draw [line width=.4pt] (0.8,0.)-- (2.2,0.);
\draw [line width=.4pt] (2.2,0.)-- (2.2,-0.5);
\draw [line width=.4pt] (2.2,-0.5)-- (0.8,-0.5);
\draw [line width=.4pt] (0.8,-0.5)-- (0.8,0.);
\draw [line width=.4pt] (0.8,2.)-- (2.2,2.);
\draw [line width=.4pt] (2.2,2.)-- (2.2,2.5);
\draw [line width=.4pt] (2.2,2.5)-- (0.8,2.5);
\draw [line width=.4pt] (0.8,2.5)-- (0.8,2.);
\draw (1.2,0.05) node[anchor=north west] {\scriptsize $x$};
\draw (1.2,2.55) node[anchor=north west] {\scriptsize $y$};
\draw [->,line width=.4pt] (1.,0.) -- (1.,2.);
\draw [->,line width=.4pt] (2.,0.) -- (2.,2.);
\draw [->,line width=.4pt] (1.,-1.5) -- (1.,-0.5);
\draw [->,line width=.4pt] (1.5,-1.5) -- (1.5,-0.5);
\draw [->,line width=.4pt] (2.,-1.5) -- (2.,-0.5);
\draw (0.7,-1.4) node[anchor=north west] {\scriptsize $2$};
\draw (1.2,-1.4) node[anchor=north west] {\scriptsize $1$};
\draw (1.7,-1.4) node[anchor=north west] {\scriptsize $3$};
\draw [->,line width=.4pt] (1.,2.5) -- (1,3.5);
\draw [->,line width=.4pt] (2.,2.5) -- (2.,3.5);
\draw (0.7,4.1) node[anchor=north west] {\scriptsize $2$};
\draw (1.7,4.1) node[anchor=north west] {\scriptsize $1$};
\end{tikzpicture}\substack{\displaystyle ,\\ \vspace{3.5cm}}
\end{align*}
\vspace{-2cm} 

\noindent where $x\in X_{3,2}$ and $y\in X_{2,2}$. \vskip 0,2cm

\begin{example}\label{ex:trivialmodGrcycle}
Let us assume that $X$ is a trivial $\sym\times \sym^{op}$-module: for any $k,l\in  \N_0$,
for any $x\in X_{k,l}$, for any $(\sigma,\tau)\in \sym_l\times \sym_l$,
$\sigma\cdot x\cdot \tau=x$. The relations defining $\Gacirc(X)$ which boil down to  
\begin{equation} \label{eq:quotient_monadtriv}
     \sigma\cdot_{v_0}G\cdot_{v_0}\tau \otimes \left(\bigotimes_{v\in V(G)} x_v\right)
-G\otimes \left(\bigotimes_{v\in V(G)} x_v\right),
    \end{equation} 
    amount to  the identification of
two planar $X$-decorated graphs with the same underlying $X$-decorated graph. In this case  we recover 
the $\sym\times \sym^{op}$-module $\Gr(X)$. 
\end{example}

More generally, according to the relations defining $\Gacirc(X)$, if for any graph $G$, we choose a planar graph
$\overline{G}$, the underlying graph of which is some $G\in \Gr(X)(k,l)$, then   the set of 
graphs $\overline{G}(X)(k,l)$  is a basis of $\Gacirc(X)(k,l)$. As there is no canonical way to choose the graphs $\overline{G}$,
we prefer to consider $\Gacirc(X)$ as a quotient of $\PGr(X)$. 

Alongside the category $\Prop$ introduced in Definition \ref{deficatProP}, we now introduce a second category.
\begin{defi} \label{deficatssm}
		Let $\catssm$ denote the category of $\sym\times\sym^{op}$-modules:
		its objects are families $P=(P(k,l))_{(k,l)\in \N_0^2}$, where for any $(k,l)\in \N_0^2$,
		$P(k,l)$ is a $\sym_l\otimes \sym_k^{op}$-module; a morphism $\phi:P\longrightarrow Q$
		is a family $(\phi_{k,l})_{(k,l)\in \N^2}$, where for any $(k,l)\in \N_0^2$,
		$\phi_{k,l}:P(k,l)\longrightarrow Q(k,l)$ is a morphism of $\sym_l\otimes \sym_k^{op}$-modules. 
\end{defi}

To a $\sym\times \sym^{op}$-module $X$ in $\catssm$ we have assigned another  
$\sym\times \sym^{op}$-module $\Gacirc(P)$ in $\catssm$. One easily checks that a  morphism  
$\varphi:P\longrightarrow Q$   of $\sym\times \sym^{op}$-modules induces a morphism  of 
$\sym\times \sym^{op}$-modules 
\[\Gacirc(\varphi): \Gacirc(P)\longrightarrow \Gacirc(Q)\]
	defined by pull-back  on the decorations of the vertices of the graphs: 
	\begin{equation}\label{eq:Gammapullback}\Gacirc(\varphi)
	(G,d_G):=(G,\varphi \circ d_G).
	\end{equation}
In summary, we have proven the following
\begin{prop}\label{prop:functGamma}
	The   map  $\Gacirc:\catssm\longrightarrow\catssm$
		defines an endofunctor of the category   $\catssm$.
\end{prop}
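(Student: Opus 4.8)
The plan is to verify that $\Gacirc$, as defined by the quotient construction $\Gacirc(X)(k,l)=\PGr(X)(k,l)/I(k,l)$ on objects together with the pull-back formula \eqref{eq:Gammapullback} on morphisms, satisfies the two functoriality axioms: it preserves identities and composition, and each $\Gacirc(\varphi)$ is indeed a well-defined morphism of $\sym\times\sym^{op}$-modules. Since the construction factors through the concrete model $\PGr(X)$, most of the work consists in checking that the pull-back on vertex decorations is compatible with all the structure in sight.

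First I would check that for a morphism $\varphi:P\longrightarrow Q$ in $\catssm$ the assignment $G\otimes\bigl(\bigotimes_{v\in V(G)}x_v\bigr)\mapsto G\otimes\bigl(\bigotimes_{v\in V(G)}\varphi(x_v)\bigr)$ is well defined at the level of $\PGr(X)$: this is clear because each $x_v$ lies in $P(i(v),o(v))$, so $\varphi_{i(v),o(v)}(x_v)$ lies in $Q(i(v),o(v))$, which is exactly what is required for a decoration of $G$ by $Q$; and the map is linear in each tensor factor because each $\varphi_{k,l}$ is linear. Next I would check that this map descends to the quotient, i.e. that it sends the submodule $I(k,l)$ generated by the elements \eqref{eq:quotient_monad} into the corresponding submodule for $Q$. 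Applying the map to a generator of the form $\sigma\cdot_{v_0}G\cdot_{v_0}\tau\otimes\bigl(\bigotimes_v x_v\bigr)-G\otimes\bigl(\bigl(\bigotimes_{v\neq v_0}x_v\bigr)\otimes\sigma\cdot x_{v_0}\cdot\tau\bigr)$ yields $\sigma\cdot_{v_0}G\cdot_{v_0}\tau\otimes\bigl(\bigotimes_v\varphi(x_v)\bigr)-G\otimes\bigl(\bigl(\bigotimes_{v\neq v_0}\varphi(x_v)\bigr)\otimes\sigma\cdot\varphi(x_{v_0})\cdot\tau\bigr)$, where I have used that $\varphi$ is a morphism of $\sym\times\sym^{op}$-modules, so $\varphi(\sigma\cdot x_{v_0}\cdot\tau)=\sigma\cdot\varphi(x_{v_0})\cdot\tau$; this is precisely a generator of $I(k,l)$ for $Q$. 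Hence $\Gacirc(\varphi)$ is well defined on $\Gacirc(X)$, and it is a morphism of $\sym_l\times\sym_k^{op}$-modules because the $\sym\times\sym^{op}$-action on $\Gacirc(X)(k,l)$ only modifies the indexation maps $\alpha,\beta$ of the underlying graph and leaves the vertex decorations untouched, so it commutes with applying $\varphi$ vertex-wise.

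Finally I would verify the two categorical axioms. For identities, $\Gacirc(\mathrm{id}_P)$ replaces each decoration $x_v$ by $\mathrm{id}_{P}(x_v)=x_v$, hence is the identity on each $\PGr(P)(k,l)$ and therefore on the quotient $\Gacirc(P)(k,l)$. For composition, given $\varphi:P\longrightarrow Q$ and $\psi:Q\longrightarrow R$, both $\Gacirc(\psi\circ\varphi)$ and $\Gacirc(\psi)\circ\Gacirc(\varphi)$ send $(G,d_G)$ to $(G,\psi\circ\varphi\circ d_G)$ by associativity of composition of maps, so they agree on $\PGr(P)$ and hence on $\Gacirc(P)$. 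This establishes that $\Gacirc$ is an endofunctor of $\catssm$. There is no serious obstacle here; the only point requiring a moment's care is the descent to the quotient, which hinges on the single observation that $\varphi$ being a $\sym\times\sym^{op}$-module morphism makes it commute with the $\sigma\cdot(-)\cdot\tau$ actions appearing in the defining relations \eqref{eq:quotient_monad} — everything else is bookkeeping that parallels the already-established unenriched statement (Proposition \ref{prop:functGamma}'s graph-level analogue) and the decorated universal property of Theorem \ref{thm:univ_prop_deco}.
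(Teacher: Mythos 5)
Your proof is correct and follows essentially the same route the paper takes: the pull-back on vertex decorations, the observation that $\varphi$ being a morphism of $\sym\times\sym^{op}$-modules sends the generators \eqref{eq:quotient_monad} of $I(k,l)$ for $P$ to those for $Q$ (which is exactly the descent argument the authors spell out in the proof of Proposition \ref{prop:functGamma_2}), and routine verification of the identity and composition axioms. The paper leaves these checks to the reader ("One easily checks"), so your write-up simply makes explicit what the paper asserts.
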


Moreover, for any $\sym\times \sym^{op}$-module, $\Gacirc(X)$ is a ProP:

\begin{theo} \label{thm:prop_struct_gacirc}
Let $X$ be a $\sym\times \sym^{op}$-module. The vertical and horizontal concatenations of the ProP
$\PGr(X)$ induce a ProP structure on $\Gacirc(X)$. 
\end{theo}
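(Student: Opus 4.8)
The plan is to realise $\Gacirc(X)$ as the quotient of the ProP $\PGr(X)$ by a sub-object stable under all the ProP operations, and to transport the structure through the canonical surjection. Recall that $\Gacirc(X)(k,l)=\PGr(X)(k,l)/I(k,l)$, where $I=(I(k,l))_{k,l}$ and $I(k,l)$ is the $\sym_l\times\sym_k^{op}$-submodule of $\PGr(X)(k,l)$ generated by the elements \eqref{eq:quotient_monad}, and recall that $\PGr(X)$ is a ProP (cf. Theorem \ref{theo:ProP_graph_2}), its horizontal and vertical concatenations being multilinear in the vertex decorations. It therefore suffices to check that $I$ is a ``ProP ideal'': it is a $\sym\times\sym^{op}$-submodule, which holds by construction; it is stable under horizontal and vertical concatenation with an arbitrary element on either side; and $I(0,0)$ does not contain the unit $I_0$ of $\PGr(X)$, nor $I(1,1)$ the unit $I_1$. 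Granting these points, the concatenations and the $\sym\times\sym^{op}$-action descend to $\Gacirc(X)$, the projection $\PGr(X)\to\Gacirc(X)$ becomes a morphism of ProPs, and the two associativity axioms, the unit axioms and the two compatibility axioms for $\Gacirc(X)$ are inherited from $\PGr(X)$.

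The heart of the argument is a lemma on the local vertex action $\cdot_v$. If $v_0\in V(G)$, then $v_0$ is still a vertex of $G*H$, of $H*G$, and --- whenever the relevant composite is defined --- of $G\circ H$ and $H\circ G$, and in each case the local data $I(v_0)$, $O(v_0)$ at $v_0$, together with their total orders, are the same as those computed in $G$. Consequently the vertex action is compatible with the concatenations:
\[
(\sigma\cdot_{v_0}G\cdot_{v_0}\tau)*H=\sigma\cdot_{v_0}(G*H)\cdot_{v_0}\tau,\qquad(\sigma\cdot_{v_0}G\cdot_{v_0}\tau)\circ H=\sigma\cdot_{v_0}(G\circ H)\cdot_{v_0}\tau,
\]
and symmetrically for $H*(\sigma\cdot_{v_0}G\cdot_{v_0}\tau)$ and $H\circ(\sigma\cdot_{v_0}G\cdot_{v_0}\tau)$, while the vertex decorations tensor additively over $V(G)\sqcup V(H)$. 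The reason is that forming $G*H$ only adds edges away from $v_0$, while forming a vertical concatenation merely reclassifies some input or output edges of $G$ incident to $v_0$ as internal edges --- according to whether $G$ sits below or above --- without altering the set of edges incident to $v_0$ or their order. Checking this lemma precisely from Definition \ref{defi:graph} and the gluing formulas of Section \ref{sectiongraphes} is the only step that requires real bookkeeping, and I expect it to be the main (though routine) obstacle.

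Granting the lemma, the stability of $I$ under concatenation follows at once: by multilinearity I only need to concatenate a generator \eqref{eq:quotient_monad} attached to data $(G,v_0,\sigma,\tau,(x_v)_v)$ with a single decorated planar graph $H\otimes\bigl(\bigotimes_{w}y_w\bigr)$, and the lemma identifies the outcome with the generator of $I$ attached to $(G*H,v_0,\sigma,\tau)$ --- respectively $(H*G,\dots)$, $(G\circ H,\dots)$, $(H\circ G,\dots)$ --- with the decorations extended by the $y_w$. For the units, I would note that $\PGr(X)(k,l)$ is graded by the underlying non-planar graph (two planar structures on the same graph have the same vertices with the same local degrees), that the vertex action and the $\sym\times\sym^{op}$-action preserve this grading, and that every generator \eqref{eq:quotient_monad} lies in a homogeneous component whose underlying graph has a nonempty vertex set. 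Since $I_0$ and $I_1$ are vertexless graphs, their classes in $\Gacirc(X)$ are nonzero and, being the images of the units of $\PGr(X)$ under a structure-preserving surjection, are the units of $\Gacirc(X)$ (with $I_1^{*n}$ descending to $I_n$). Hence $\Gacirc(X)$ inherits a ProP structure from $\PGr(X)$, which is the assertion.
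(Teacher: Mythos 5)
Your proposal is correct and follows essentially the same route as the paper: reduce to a generator of $I$ of the form \eqref{eq:quotient_monad}, observe that the local data at the distinguished vertex $v_0$ (and hence the vertex action $\cdot_{v_0}$) is unaffected by horizontal or vertical concatenation with another decorated graph, and conclude that $I$ is stable under both concatenations so the ProP structure descends to the quotient. Your additional remarks on the units are a harmless elaboration of what the paper leaves implicit.
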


\begin{proof}
We have to prove that if $P\in I$ and $H \in \PGr(X)$, then $P*H$, $H*P$, $H \circ P$ and $P\circ H$ (if these
vertical concatenations are possible) belong to $I$. 
We can restrict ourselves to the case $P=G-G'$, where $G$ is a $X$-decorated planar graph and $G'$ is obtained from
$G$ by the action of two permutations on a vertex $v$ of $G$. It is then immediate that $G'*H$ is obtained from
$G*H$ by the action of two permutations on a vertex $v$ of $G*H$, so that $G*H-G'*H=P*H\in I$.
Similarly, $H*P\in I$, and  $H\circ P$ and $P\circ H$ belong to $I$ if these vertical concatenations are possible.
So $\Gacirc$ inherits a structure of ProP from the structure of $\PGr$.  \end{proof}

\begin{defi}
For any $\sym\times \sym^{op}$-module $X$, the ProP $\Ga(X)$ is defined by
\[\Ga(X)=\frac{\PGrc(X)}{I\cap \PGrc(X)}.\]
As $\PGrc(X)$ is a sub-ProP of $\PGr(X)$, $\Ga(X)$ is  a sub-ProP of $\Gacirc(X)$.
\end{defi}
\begin{example} As in Example \ref{ex:trivialmodGrcycle},  if $X$ is a  trivial  $\sym\times \sym^{op}$ module,  we recover the ProP $\Grc(X)$. 
	\end{example}

We have seen (Theorem \ref{thm:prop_struct_gacirc}) that for any $\sym\times \sym^{op}$-module, $\Gacirc(X)$ has a ProP structure, and that the 
same holds for $\Ga(X)$. We can then lift $\Gacirc(X)$ and $\Ga(X)$ to functors between these categories:
\begin{prop}\label{prop:functGamma_2}
	The   maps  $\Gacirc:\catssm\longrightarrow\Prop$ and $\Ga:\catssm \longrightarrow \Prop$
		define  functors from the category   $\catssm$ to the category $\Prop$ of ProPs.

\end{prop}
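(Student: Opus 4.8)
The plan is to reduce the claim to showing that for each morphism $\varphi:P\to Q$ in $\catssm$ the map $\Gacirc(\varphi)$, together with its restriction defining $\Ga(\varphi)$, is a morphism of ProPs; functoriality will then be automatic. Indeed, $\Gacirc(X)$ is a ProP by Theorem \ref{thm:prop_struct_gacirc} and $\Ga(X)$ is one by construction, so objects of $\catssm$ are sent to objects of $\Prop$. For morphisms, Proposition \ref{prop:functGamma} already furnishes a well-defined morphism of $\sym\times\sym^{op}$-modules $\Gacirc(\varphi):\Gacirc(P)\to\Gacirc(Q)$ satisfying $\Gacirc(\mathrm{id})=\mathrm{id}$ and $\Gacirc(\psi\circ\varphi)=\Gacirc(\psi)\circ\Gacirc(\varphi)$; since a morphism of ProPs is determined by its underlying family of linear maps, once we know $\Gacirc(\varphi)$ respects the two concatenations and sends $I_0,I_1$ to $I_0,I_1$, these two identities persist in $\Prop$.

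First I would check the required compatibilities at the level of $\PGr(X)$, before quotienting by the submodule generated by the elements \eqref{eq:quotient_monad}. The pull-back map \eqref{eq:Gammapullback}, extended linearly to $\PGr(P)(k,l)=\bigoplus_{G\in\PGr(k,l)}\C G\otimes G(P)$, sends $G\otimes\bigotimes_{v\in V(G)}x_v$ to $G\otimes\bigotimes_{v\in V(G)}\varphi(x_v)$ and in particular leaves the underlying planar graph unchanged. Since the vertex set of both $G*G'$ and $G'\circ G$ is the disjoint union $V(G)\sqcup V(G')$, with decoration map restricting to those of $G$ and of $G'$, applying $\varphi$ on each vertex commutes with both the horizontal and the vertical concatenation; and as $I_0$ and $I_1$ have empty vertex sets, they are fixed by the pull-back. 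Thus the pull-back by $\varphi$ is compatible with $*$, $\circ$ and the units of $\PGr(P)$ and $\PGr(Q)$.

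Next I would verify that this map descends to the quotients $\Gacirc(P)$ and $\Gacirc(Q)$, recovering $\Gacirc(\varphi)$: a generator \eqref{eq:quotient_monad} of the relation submodule for $P$ is sent to the analogous generator for $Q$, precisely because $\varphi$ intertwines the $\sym\times\sym^{op}$-actions, so that $\varphi(\sigma\cdot x_{v_0}\cdot\tau)=\sigma\cdot\varphi(x_{v_0})\cdot\tau$. As the ProP structure on $\Gacirc(X)$ is the one induced from $\PGr(X)$ (Theorem \ref{thm:prop_struct_gacirc}), $\Gacirc(\varphi)$ inherits from the previous paragraph its compatibility with the two concatenations and with the units; hence $\Gacirc(\varphi)$ is a morphism of ProPs, and $\Gacirc:\catssm\to\Prop$ is a functor.

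Finally, for $\Ga$, recall that $\Ga(X)=\PGrc(X)/(I\cap\PGrc(X))$ is a sub-ProP of $\Gacirc(X)$. Since the pull-back by $\varphi$ leaves the underlying graph unchanged, it sends cycleless and loopless decorated planar graphs to cycleless and loopless ones, so $\Gacirc(\varphi)$ restricts to a map $\Ga(\varphi):\Ga(P)\to\Ga(Q)$; being the restriction of a morphism of ProPs to sub-ProPs, $\Ga(\varphi)$ is itself a morphism of ProPs, and functoriality of $\Ga$ follows from that of $\Gacirc$. I do not expect a genuine obstacle here: the argument is essentially bookkeeping, the only delicate point being well-definedness after passing to the quotient by \eqref{eq:quotient_monad}, which, as already noted, comes down to the equivariance of $\varphi$.
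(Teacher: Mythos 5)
Your proof is correct and follows essentially the same route as the paper's: define the pullback on $\PGr(X)$, observe it respects the combinatorially defined concatenations and units, use the $\sym\times\sym^{op}$-equivariance of $\varphi$ to descend to the quotient by the relations \eqref{eq:quotient_monad}, and restrict to the cycleless sub-ProP for $\Ga$. You simply spell out in more detail the steps the paper treats as clear.
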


\begin{proof}
Let $X$ and $Y$ be two $\sym\times \sym^{op}$-modules and  $\varphi:X\longrightarrow Y$     a morphism 
of $\sym\times \sym^{op}$-modules and let
 $ \PGr(\varphi):\PGr(P)\longrightarrow \PGr(Q)$ be its pullback defined by 
 \begin{equation}\label{eq:GRcpullback}
  \PGr(\varphi)(G,d_G):=(G,\varphi \circ d_G)
 \end{equation} 
 for any $G\in \PGr(P)$. As the structure of ProP of $\Gacirc(X)$ is combinatorially given by disjoint union and grafting of graphs,
$\PGr(\varphi)$ clearly defines a morphism of ProPs from $\PGr(X)$ to $\PGr(Y)$. 
 As $\varphi$ is a morphism of $\sym\times \sym^{op}$-modules, it follows that $\PGr(\varphi)$ sends the ideal 
defining $\Gacirc(X)$ to the ideal defining $\Gacirc(Y)$, hence it induces a morphism 
$\Gacirc(\varphi): \Gacirc(X)\longrightarrow \Gacirc(Y)$ of ProPs. A similar  proof  holds for $\Ga$. 
\end{proof}

\subsection{The ProP $\Ga(P)$ of graphs decorated by another ProP}\label{subsec:propdecprop}

The ProPs $\Ga(X)$ satisfy a universal property:
\begin{theo}\label{thm:freeness_GaX}
Let $X$ be a $\sym\times \sym^{op}$-module, $P$   a ProP and $\varphi:X\longrightarrow P$   a morphism
of $\sym\times \sym^{op}$-modules. There exists a unique morphism of ProPs  $\Phi: \Ga(X)\longrightarrow P$,
extending $\varphi$  so that the following diagramme commutes:
\[\xymatrix{X\ar[d]_{\iota} \ar[r]^{\varphi}&P\\
\Ga(X)\ar[ru]_{\Phi}&}\]
where $\iota: X \hookrightarrow \Ga(X)$      is the map that sends  an element $x$ of $X$ to the   $X$-decorated graph   $G(x)= (G_{k,l},d)$ with $d$ sending  the unique vertex of $G_{k,l}$ to $x$.

In other words, $\Ga(X)$ is the free ProP generated by the $\sym\times \sym^{op}$-module $X$.
\end{theo}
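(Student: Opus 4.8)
The plan is to deduce this universal property from the already-established universal property of $\PGr(X)$ (Theorem \ref{thm:univ_prop_deco}, part 2) and the construction of $\Ga(X)$ as a quotient of $\PGrc(X)$. First I would recall that $\PGrc(X)$ is the free ProP generated by the $\sym\times\sym^{op}$-module $\PGrci(X)$, and that by part 2 of Theorem \ref{thm:univ_prop_deco} this generating module is the \emph{free} $\sym\times\sym^{op}$-module generated by $X$. Concretely, the indecomposable cycleless decorated planar graphs are exactly the one-vertex graphs $PG_{k,l}$ decorated by an element of $X_{k,l}$ together with all the reindexations of their legs, so that $\PGrci(X)(k,l)$ is a direct sum of copies of $\K[\sym_l\times\sym_k^{op}]$ indexed by (a basis of) $X_{k,l}$.

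Now given $\varphi\colon X\to P$ a morphism of $\sym\times\sym^{op}$-modules, I would first extend it to a morphism of $\sym\times\sym^{op}$-modules $\widetilde\varphi\colon \PGrci(X)\to P$: since $\PGrci(X)$ is the free $\sym\times\sym^{op}$-module on $X$, $\widetilde\varphi$ is uniquely determined by $\widetilde\varphi(\iota_0(x))=\varphi(x)$ where $\iota_0\colon X\hookrightarrow\PGrci(X)$ is the canonical inclusion, and it is given explicitly by $\widetilde\varphi(\sigma\cdot PG_{k,l}(x)\cdot\tau)=\sigma\cdot\varphi(x)\cdot\tau$. By the freeness of the ProP $\PGrc(X)$ (Theorem \ref{thm:univ_prop_deco}), $\widetilde\varphi$ extends uniquely to a ProP morphism $\widehat\Phi\colon\PGrc(X)\to P$. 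The key remaining point is that $\widehat\Phi$ kills the subspace $I\cap\PGrc(X)$, so that it factors through $\Ga(X)=\PGrc(X)/(I\cap\PGrc(X))$, yielding the desired $\Phi$. For this it suffices to check $\widehat\Phi$ vanishes on the generators \eqref{eq:quotient_monad} of $I$ that lie in $\PGrc(X)$, i.e.\ on elements of the form $\sigma\cdot_{v_0}G\cdot_{v_0}\tau\otimes(\bigotimes x_v) - G\otimes((\bigotimes_{v\neq v_0}x_v)\otimes\sigma\cdot x_{v_0}\cdot\tau)$. Writing $G$, via a minimal decomposition and iterated vertical/horizontal concatenations, as a ProP-word in the one-vertex generators $PG_{i(v),o(v)}(x_v)$ and units $I_k$, applying $\widehat\Phi$ turns this word into the same ProP-word in $P$ with each one-vertex generator at vertex $v$ replaced by $\varphi(x_v)$; changing the planar structure at $v_0$ by $(\sigma,\tau)$ corresponds, on the level of the ProP-word, to replacing the factor $PG(x_{v_0})$ by $\sigma\cdot PG(x_{v_0})\cdot\tau$, which $\widehat\Phi$ sends to $\sigma\cdot\varphi(x_{v_0})\cdot\tau = \varphi(\sigma\cdot x_{v_0}\cdot\tau)$ since $\varphi$ is $\sym\times\sym^{op}$-equivariant. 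Hence the two terms have equal image and $\widehat\Phi(I\cap\PGrc(X))=0$.

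Finally I would verify that the induced $\Phi\colon\Ga(X)\to P$ satisfies $\Phi\circ\iota=\varphi$ — this is immediate since $\iota(x)$ is the class of the one-vertex graph $PG_{k,l}(x)$ and $\widehat\Phi$ sends it to $\varphi(x)$ — and address uniqueness: any ProP morphism $\Psi\colon\Ga(X)\to P$ with $\Psi\circ\iota=\varphi$ pulls back along the projection $\PGrc(X)\twoheadrightarrow\Ga(X)$ to a ProP morphism agreeing with $\widetilde\varphi$ on $\PGrci(X)$, hence equals $\widehat\Phi$ by the uniqueness clause in Theorem \ref{thm:univ_prop_deco}, so $\Psi=\Phi$.

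The main obstacle I expect is the verification that $\widehat\Phi$ annihilates $I\cap\PGrc(X)$: one must be careful that the identification of a decorated planar graph with a ProP-word in its one-vertex pieces is compatible with the $\sym\times\sym^{op}$-action at a single vertex, and that this local action indeed corresponds to acting by $\sigma$ on the left and $\tau$ on the right of the single generator $\varphi(x_{v_0})$ inside the word in $P$ — the bookkeeping of how the permutations $\sigma,\tau$ on the legs of $v_0$ propagate through the surrounding concatenations (and commute past the other factors) is where the ProP axioms of $P$, in particular the compatibility axioms between the two concatenations and the symmetric group actions, must be invoked carefully. Everything else is a formal consequence of the two freeness statements already proved.
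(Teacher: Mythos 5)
Your proposal is correct and follows essentially the same route as the paper's proof: extend $\varphi$ to $\PGrc(X)$ via its freeness (Theorem \ref{thm:univ_prop_deco}), check that the generators \eqref{eq:quotient_monad} of the ideal are annihilated using the equivariance of $\varphi$ and the compatibility axioms of $P$, and deduce uniqueness from the fact that $\Ga(X)$ is generated by the classes of one-vertex graphs. Your write-up is in fact more detailed than the paper's (which leaves the verification on the ideal generators implicit), but no new idea is involved.
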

\begin{proof}
\textit{Uniqueness}. As a quotient of $\PGrci(X)$, the ProP $\Ga(X)$ is generated by graphs with 
only one vertex $v$, decorated by an element of $X$ respecting $i(v)$ and $o(v)$. 
Hence, such a morphism $\Phi$ is unique.

\textit{Existence}. As $\PGrci(X)$ is the free ProP generated by the space $X$, for any linear map
$\varphi:X\longrightarrow P$, there exists a unique morphism of TraPs $\overline{\Phi}:\PGrci(X)\longrightarrow P$,
extending $\varphi$. If $\varphi$ is a morphism of $\sym\times \sym^{op}$-modules, that 
any element of the form (\ref{eq:quotient_monad}) belongs to the kernel of $\overline{\Phi}$,
thanks to the compatibility of the concatenation products of $P$ and the action of the symmetric groups.
so $\overline{\Phi}$ induces a morphism $\Phi:\Ga(X)\longrightarrow P$. 
\end{proof}

\begin{cor}\label{cor:PdecGrc}
 Given a  ProP $P$, there is a canonical morphism of ProPs \[ \alpha_P:\Ga(P)\longrightarrow P\] 
 induced by the decoration.
\end{cor}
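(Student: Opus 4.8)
The plan is to apply the universal property of $\Ga(P)$ from Theorem~\ref{thm:freeness_GaX} to a particularly natural choice of morphism of $\sym\times\sym^{op}$-modules. Since $P$ is itself a ProP, it is in particular a $\sym\times\sym^{op}$-module, so we may take $X=P$ and let $\varphi:P\longrightarrow P$ be the identity map, which is trivially a morphism of $\sym\times\sym^{op}$-modules. Theorem~\ref{thm:freeness_GaX} then furnishes a unique morphism of ProPs $\alpha_P:\Ga(P)\longrightarrow P$ extending $\varphi=\mathrm{id}_P$ along $\iota:P\hookrightarrow\Ga(P)$, i.e.\ making the diagram
\[\xymatrix{P\ar[d]_{\iota} \ar[r]^{\mathrm{id}_P}&P\\
\Ga(P)\ar[ru]_{\alpha_P}&}\]
commute. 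This $\alpha_P$ is the desired canonical morphism.

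It remains to observe explicitly that $\alpha_P$ is ``induced by the decoration'' in the sense claimed. Since $\Ga(P)$ is, as a ProP, generated by the one-vertex decorated graphs $G(x)=(G_{k,l},d)$ with $d$ sending the unique vertex of $G_{k,l}$ to some $x\in P(k,l)$, and since $\alpha_P$ extends the identity on $P$, we have $\alpha_P(G(x))=\varphi(x)=x$ for every such generator. For a general decorated graph $(G,d_G)\in\Ga(P)$ with vertices $v_1,\dots,v_n$ decorated by $d_G(v_i)=x_i\in P(i(v_i),o(v_i))$, one writes $(G,d_G)$ via a minimal decomposition (Proposition~\ref{prop:mindec}, in its decorated form) as a composite, under horizontal and vertical concatenations and the symmetric group actions, of the one-vertex graphs $G(x_i)$ and identity graphs $I_p$. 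Applying the ProP-morphism property of $\alpha_P$ then shows that $\alpha_P(G,d_G)$ is obtained by performing in $P$ the same sequence of concatenations and symmetric group actions on the elements $x_1,\dots,x_n\in P$ in place of the corresponding decorated graphs. In other words, $\alpha_P$ evaluates a $P$-decorated graph by interpreting the combinatorial operations of the graph as the ProP operations of $P$ applied to the vertex decorations; this is precisely what it means for $\alpha_P$ to be induced by the decoration.

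The proof is essentially a one-line invocation of the universal property, so there is no serious obstacle; the only point requiring a little care is the bookkeeping in the last paragraph, namely checking that the recursive description of $\alpha_P$ coming from the minimal decomposition is well defined and agrees with ``evaluation of the decoration''. But this is exactly the content already established in the proof of Theorem~\ref{thm:freeness_GaX} (and ultimately in the construction underlying Theorem~\ref{thm:freeness_Gr}), where the morphism extending $\varphi$ is built by induction on the number of vertices using minimal decompositions and shown to be independent of the chosen decomposition. Hence no new argument is needed beyond specializing $X=P$ and $\varphi=\mathrm{id}_P$.
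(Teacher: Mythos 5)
Your proposal is correct and is exactly the paper's argument: the corollary is obtained by applying Theorem~\ref{thm:freeness_GaX} with $X=P$ and $\varphi=\mathrm{Id}_P$. The additional paragraph unpacking why the resulting map is ``induced by the decoration'' is a harmless elaboration of what the universal property already guarantees.
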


\begin{proof} This is a straightfoward consequence of Theorem \ref{thm:freeness_GaX}, with $\varphi=\mathrm{Id}_P$. 
\end{proof}
 
\begin{example}
Let $p\in P(3,2)$ and $q\in P(2,2)$. The four following graphs (which are equal in $\Ga(P)$)
\begin{align*}
&\begin{tikzpicture}[line cap=round,line join=round,>=triangle 45,x=0.7cm,y=0.7cm]
\clip(0.6,-2.1) rectangle (2.4,4.5);
\draw [line width=.4pt] (0.8,0.)-- (2.2,0.);
\draw [line width=.4pt] (2.2,0.)-- (2.2,-0.5);
\draw [line width=.4pt] (2.2,-0.5)-- (0.8,-0.5);
\draw [line width=.4pt] (0.8,-0.5)-- (0.8,0.);
\draw [line width=.4pt] (0.8,2.)-- (2.2,2.);
\draw [line width=.4pt] (2.2,2.)-- (2.2,2.5);
\draw [line width=.4pt] (2.2,2.5)-- (0.8,2.5);
\draw [line width=.4pt] (0.8,2.5)-- (0.8,2.);
\draw (1.2,0.1) node[anchor=north west] {\scriptsize $p$};
\draw (1.2,2.5) node[anchor=north west] {\scriptsize $q$};
\draw [->,line width=.4pt] (1.,0.) -- (1.,2.);
\draw [->,line width=.4pt] (2.,0.) -- (2.,2.);
\draw [->,line width=.4pt] (1.,-1.5) -- (1.,-0.5);
\draw [->,line width=.4pt] (1.5,-1.5) -- (1.5,-0.5);
\draw [->,line width=.4pt] (2.,-1.5) -- (2.,-0.5);
\draw (0.7,-1.4) node[anchor=north west] {\scriptsize $1$};
\draw (1.2,-1.4) node[anchor=north west] {\scriptsize $2$};
\draw (1.7,-1.4) node[anchor=north west] {\scriptsize $3$};
\draw [->,line width=.4pt] (1.,2.5) -- (1,3.5);
\draw [->,line width=.4pt] (2.,2.5) -- (2.,3.5);
\draw (0.7,4.1) node[anchor=north west] {\scriptsize $1$};
\draw (1.7,4.1) node[anchor=north west] {\scriptsize $2$};
\end{tikzpicture}&
&\begin{tikzpicture}[line cap=round,line join=round,>=triangle 45,x=0.7cm,y=0.7cm]
\clip(0.6,-2.1) rectangle (2.4,4.5);
\draw [line width=.4pt] (0.8,0.)-- (2.2,0.);
\draw [line width=.4pt] (2.2,0.)-- (2.2,-0.5);
\draw [line width=.4pt] (2.2,-0.5)-- (0.8,-0.5);
\draw [line width=.4pt] (0.8,-0.5)-- (0.8,0.);
\draw [line width=.4pt] (0.8,2.)-- (2.2,2.);
\draw [line width=.4pt] (2.2,2.)-- (2.2,2.5);
\draw [line width=.4pt] (2.2,2.5)-- (0.8,2.5);
\draw [line width=.4pt] (0.8,2.5)-- (0.8,2.);
\draw (0.65,0.15) node[anchor=north west] {\scriptsize $(12)\cdot p$};
\draw (1.2,2.5) node[anchor=north west] {\scriptsize $q$};
\draw [->,line width=.4pt] (1.,0.) -- (2.,2.);
\draw [->,line width=.4pt] (2.,0.) -- (1.,2.);
\draw [->,line width=.4pt] (1.,-1.5) -- (1.,-0.5);
\draw [->,line width=.4pt] (1.5,-1.5) -- (1.5,-0.5);
\draw [->,line width=.4pt] (2.,-1.5) -- (2.,-0.5);
\draw (0.7,-1.4) node[anchor=north west] {\scriptsize $1$};
\draw (1.2,-1.4) node[anchor=north west] {\scriptsize $2$};
\draw (1.7,-1.4) node[anchor=north west] {\scriptsize $3$};
\draw [->,line width=.4pt] (1.,2.5) -- (1,3.5);
\draw [->,line width=.4pt] (2.,2.5) -- (2.,3.5);
\draw (0.7,4.1) node[anchor=north west] {\scriptsize $1$};
\draw (1.6,4.1) node[anchor=north west] {\scriptsize $2$};
\end{tikzpicture}
&\begin{tikzpicture}[line cap=round,line join=round,>=triangle 45,x=0.7cm,y=0.7cm]
\clip(0.6,-2.1) rectangle (2.4,4.5);
\draw [line width=.4pt] (0.8,0.)-- (2.2,0.);
\draw [line width=.4pt] (2.2,0.)-- (2.2,-0.5);
\draw [line width=.4pt] (2.2,-0.5)-- (0.8,-0.5);
\draw [line width=.4pt] (0.8,-0.5)-- (0.8,0.);
\draw [line width=.4pt] (0.8,2.)-- (2.2,2.);
\draw [line width=.4pt] (2.2,2.)-- (2.2,2.5);
\draw [line width=.4pt] (2.2,2.5)-- (0.8,2.5);
\draw [line width=.4pt] (0.8,2.5)-- (0.8,2.);
\draw (1.2,0.1) node[anchor=north west] {\scriptsize $p$};
\draw (0.7,2.6) node[anchor=north west] {\scriptsize $q\cdot (12)$};
\draw [->,line width=.4pt] (1.,0.) -- (2.,2.);
\draw [->,line width=.4pt] (2.,0.) -- (1.,2.);
\draw [->,line width=.4pt] (1.,-1.5) -- (1.,-0.5);
\draw [->,line width=.4pt] (1.5,-1.5) -- (1.5,-0.5);
\draw [->,line width=.4pt] (2.,-1.5) -- (2.,-0.5);
\draw (0.7,-1.4) node[anchor=north west] {\scriptsize $1$};
\draw (1.2,-1.4) node[anchor=north west] {\scriptsize $2$};
\draw (1.7,-1.4) node[anchor=north west] {\scriptsize $3$};
\draw [->,line width=.4pt] (1.,2.5) -- (1,3.5);
\draw [->,line width=.4pt] (2.,2.5) -- (2.,3.5);
\draw (0.7,4.1) node[anchor=north west] {\scriptsize $1$};
\draw (1.6,4.1) node[anchor=north west] {\scriptsize $2$};
\end{tikzpicture}&
&\begin{tikzpicture}[line cap=round,line join=round,>=triangle 45,x=0.7cm,y=0.7cm]
\clip(0.6,-2.1) rectangle (2.4,4.5);
\draw [line width=.4pt] (0.8,0.)-- (2.2,0.);
\draw [line width=.4pt] (2.2,0.)-- (2.2,-0.5);
\draw [line width=.4pt] (2.2,-0.5)-- (0.8,-0.5);
\draw [line width=.4pt] (0.8,-0.5)-- (0.8,0.);
\draw [line width=.4pt] (0.8,2.)-- (2.2,2.);
\draw [line width=.4pt] (2.2,2.)-- (2.2,2.5);
\draw [line width=.4pt] (2.2,2.5)-- (0.8,2.5);
\draw [line width=.4pt] (0.8,2.5)-- (0.8,2.);
\draw (0.65,0.15) node[anchor=north west] {\scriptsize $(12)\cdot p$};
\draw (0.7,2.6) node[anchor=north west] {\scriptsize $q\cdot (12)$};
\draw [->,line width=.4pt] (1.,0.) -- (1.,2.);
\draw [->,line width=.4pt] (2.,0.) -- (2.,2.);
\draw [->,line width=.4pt] (1.,-1.5) -- (1.,-0.5);
\draw [->,line width=.4pt] (1.5,-1.5) -- (1.5,-0.5);
\draw [->,line width=.4pt] (2.,-1.5) -- (2.,-0.5);
\draw (0.7,-1.4) node[anchor=north west] {\scriptsize $1$};
\draw (1.2,-1.4) node[anchor=north west] {\scriptsize $2$};
\draw (1.7,-1.4) node[anchor=north west] {\scriptsize $3$};
\draw [->,line width=.4pt] (1.,2.5) -- (1,3.5);
\draw [->,line width=.4pt] (2.,2.5) -- (2.,3.5);
\draw (0.7,4.1) node[anchor=north west] {\scriptsize $1$};
\draw (1.6,4.1) node[anchor=north west] {\scriptsize $2$};
\end{tikzpicture}
\end{align*}
are respectively sent to
\begin{align*}
&q\circ p,&& (q\cdot (12))\circ ((12)\cdot p),&& (q\cdot (12))\circ ((12)\cdot p),&& (q\cdot (12))\circ ((12)\cdot p),
\end{align*}
which are equal in $P$. The two  following graphs (which are equal in $\Ga(P)$)
\begin{align*}
&\begin{tikzpicture}[line cap=round,line join=round,>=triangle 45,x=0.7cm,y=0.7cm]
\clip(0.6,-2.1) rectangle (2.4,4.5);
\draw [line width=.4pt] (0.8,0.)-- (2.2,0.);
\draw [line width=.4pt] (2.2,0.)-- (2.2,-0.5);
\draw [line width=.4pt] (2.2,-0.5)-- (0.8,-0.5);
\draw [line width=.4pt] (0.8,-0.5)-- (0.8,0.);
\draw [line width=.4pt] (0.8,2.)-- (2.2,2.);
\draw [line width=.4pt] (2.2,2.)-- (2.2,2.5);
\draw [line width=.4pt] (2.2,2.5)-- (0.8,2.5);
\draw [line width=.4pt] (0.8,2.5)-- (0.8,2.);
\draw (0.7,0.15) node[anchor=north west] {\scriptsize $p\cdot (12)$};
\draw (1.2,2.5) node[anchor=north west] {\scriptsize $q$};
\draw [->,line width=.4pt] (1.,0.) -- (1.,2.);
\draw [->,line width=.4pt] (2.,0.) -- (2.,2.);
\draw [->,line width=.4pt] (1.,-1.5) -- (1.,-0.5);
\draw [->,line width=.4pt] (1.5,-1.5) -- (1.5,-0.5);
\draw [->,line width=.4pt] (2.,-1.5) -- (2.,-0.5);
\draw (0.7,-1.4) node[anchor=north west] {\scriptsize $1$};
\draw (1.2,-1.4) node[anchor=north west] {\scriptsize $2$};
\draw (1.7,-1.4) node[anchor=north west] {\scriptsize $3$};
\draw [->,line width=.4pt] (1.,2.5) -- (1,3.5);
\draw [->,line width=.4pt] (2.,2.5) -- (2.,3.5);
\draw (0.7,4.1) node[anchor=north west] {\scriptsize $1$};
\draw (1.7,4.1) node[anchor=north west] {\scriptsize $2$};
\end{tikzpicture}&
&\begin{tikzpicture}[line cap=round,line join=round,>=triangle 45,x=0.7cm,y=0.7cm]
\clip(0.6,-2.1) rectangle (2.4,4.5);
\draw [line width=.4pt] (0.8,0.)-- (2.2,0.);
\draw [line width=.4pt] (2.2,0.)-- (2.2,-0.5);
\draw [line width=.4pt] (2.2,-0.5)-- (0.8,-0.5);
\draw [line width=.4pt] (0.8,-0.5)-- (0.8,0.);
\draw [line width=.4pt] (0.8,2.)-- (2.2,2.);
\draw [line width=.4pt] (2.2,2.)-- (2.2,2.5);
\draw [line width=.4pt] (2.2,2.5)-- (0.8,2.5);
\draw [line width=.4pt] (0.8,2.5)-- (0.8,2.);
\draw (1.2,0.1) node[anchor=north west] {\scriptsize $p$};
\draw (1.2,2.5) node[anchor=north west] {\scriptsize $q$};
\draw [->,line width=.4pt] (1.,0.) -- (1.,2.);
\draw [->,line width=.4pt] (2.,0.) -- (2.,2.);
\draw [->,line width=.4pt] (1.,-1.5) -- (1.,-0.5);
\draw [->,line width=.4pt] (1.5,-1.5) -- (1.5,-0.5);
\draw [->,line width=.4pt] (2.,-1.5) -- (2.,-0.5);
\draw (0.7,-1.4) node[anchor=north west] {\scriptsize $2$};
\draw (1.2,-1.4) node[anchor=north west] {\scriptsize $1$};
\draw (1.7,-1.4) node[anchor=north west] {\scriptsize $3$};
\draw [->,line width=.4pt] (1.,2.5) -- (1,3.5);
\draw [->,line width=.4pt] (2.,2.5) -- (2.,3.5);
\draw (0.7,4.1) node[anchor=north west] {\scriptsize $1$};
\draw (1.7,4.1) node[anchor=north west] {\scriptsize $2$};
\end{tikzpicture}
\end{align*}
are respectively sent to
\begin{align*}
&q\circ (p\cdot (12)),&&(q\circ p)\cdot (12),
\end{align*}
coincide in $P$. The two  following graphs (which are equal in $\Ga(P)$)
\begin{align*}
&\begin{tikzpicture}[line cap=round,line join=round,>=triangle 45,x=0.7cm,y=0.7cm]
\clip(0.6,-2.1) rectangle (2.4,4.5);
\draw [line width=.4pt] (0.8,0.)-- (2.2,0.);
\draw [line width=.4pt] (2.2,0.)-- (2.2,-0.5);
\draw [line width=.4pt] (2.2,-0.5)-- (0.8,-0.5);
\draw [line width=.4pt] (0.8,-0.5)-- (0.8,0.);
\draw [line width=.4pt] (0.8,2.)-- (2.2,2.);
\draw [line width=.4pt] (2.2,2.)-- (2.2,2.5);
\draw [line width=.4pt] (2.2,2.5)-- (0.8,2.5);
\draw [line width=.4pt] (0.8,2.5)-- (0.8,2.);
\draw (1.2,0.1) node[anchor=north west] {\scriptsize $p$};
\draw (0.7,2.6) node[anchor=north west] {\scriptsize $(12)\cdot q$};
\draw [->,line width=.4pt] (1.,0.) -- (1.,2.);
\draw [->,line width=.4pt] (2.,0.) -- (2.,2.);
\draw [->,line width=.4pt] (1.,-1.5) -- (1.,-0.5);
\draw [->,line width=.4pt] (1.5,-1.5) -- (1.5,-0.5);
\draw [->,line width=.4pt] (2.,-1.5) -- (2.,-0.5);
\draw (0.7,-1.4) node[anchor=north west] {\scriptsize $1$};
\draw (1.2,-1.4) node[anchor=north west] {\scriptsize $2$};
\draw (1.7,-1.4) node[anchor=north west] {\scriptsize $3$};
\draw [->,line width=.4pt] (1.,2.5) -- (1,3.5);
\draw [->,line width=.4pt] (2.,2.5) -- (2.,3.5);
\draw (0.7,4.1) node[anchor=north west] {\scriptsize $1$};
\draw (1.7,4.1) node[anchor=north west] {\scriptsize $2$};
\end{tikzpicture}&
&\begin{tikzpicture}[line cap=round,line join=round,>=triangle 45,x=0.7cm,y=0.7cm]
\clip(0.6,-2.1) rectangle (2.4,4.5);
\draw [line width=.4pt] (0.8,0.)-- (2.2,0.);
\draw [line width=.4pt] (2.2,0.)-- (2.2,-0.5);
\draw [line width=.4pt] (2.2,-0.5)-- (0.8,-0.5);
\draw [line width=.4pt] (0.8,-0.5)-- (0.8,0.);
\draw [line width=.4pt] (0.8,2.)-- (2.2,2.);
\draw [line width=.4pt] (2.2,2.)-- (2.2,2.5);
\draw [line width=.4pt] (2.2,2.5)-- (0.8,2.5);
\draw [line width=.4pt] (0.8,2.5)-- (0.8,2.);
\draw (1.2,0.1) node[anchor=north west] {\scriptsize $p$};
\draw (1.2,2.5) node[anchor=north west] {\scriptsize $q$};
\draw [->,line width=.4pt] (1.,0.) -- (1.,2.);
\draw [->,line width=.4pt] (2.,0.) -- (2.,2.);
\draw [->,line width=.4pt] (1.,-1.5) -- (1.,-0.5);
\draw [->,line width=.4pt] (1.5,-1.5) -- (1.5,-0.5);
\draw [->,line width=.4pt] (2.,-1.5) -- (2.,-0.5);
\draw (0.7,-1.4) node[anchor=north west] {\scriptsize $2$};
\draw (1.2,-1.4) node[anchor=north west] {\scriptsize $1$};
\draw (1.7,-1.4) node[anchor=north west] {\scriptsize $3$};
\draw [->,line width=.4pt] (1.,2.5) -- (1,3.5);
\draw [->,line width=.4pt] (2.,2.5) -- (2.,3.5);
\draw (0.7,4.1) node[anchor=north west] {\scriptsize $2$};
\draw (1.7,4.1) node[anchor=north west] {\scriptsize $1$};
\end{tikzpicture}
\end{align*}
are respectively sent to
\begin{align*}
&((12)\cdot q)\circ p,&& (12)\cdot(q\circ p),
\end{align*}
which are equal in $P$. 
\end{example}
Recall from Definition \ref{def:prop} that a ProP is a  $\sym\times \sym^{op}$-module.
Composing with the forgetful functor $F:\Prop\longrightarrow \catssm$  endofunctors
$\Gacirc\circ F$ and $\Ga\circ F$ of the category $\Prop$, which we denote also by 
$\Gacirc$ and $\Ga$ with a  slight  abuse of   notations.

\begin{prop} \label{prop:functGrc}
The maps $\alpha_P$ defined in Corollary \ref{cor:PdecGrc} give a natural transformation 
from the identity endofunctor of $\Prop$ to the endofunctor $\Ga$,
that is to say, for any morphism  $\varphi:P\longrightarrow Q$ of ProPs, the following diagram commutes:
\begin{align*}
&\xymatrix{\Ga(P)\ar[r]^{\Ga(\varphi)} \ar[d]_{\alpha_P}&\Ga(Q)\ar[d]^{\alpha_Q}\\
P\ar[r]_{\varphi}&Q}
\end{align*}
\end{prop}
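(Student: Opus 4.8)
The plan is to verify the square commutes by evaluating both composites on generators of $\Ga(P)$ and using the universal property of Theorem \ref{thm:freeness_GaX}. Recall that $\Ga(P)$ is the free ProP generated by the $\sym\times\sym^{op}$-module $P$, so as a ProP it is generated by the one-vertex graphs $G(p)=(G_{k,l},d)$ where $d$ sends the unique vertex to some $p\in P(k,l)$; since a ProP morphism is determined by its values on ProP generators, it suffices to check the identity $\varphi\circ\alpha_P = \alpha_Q\circ\Ga(\varphi)$ on each such $G(p)$. This reduces the whole statement to an unwinding of definitions.

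First I would recall from Corollary \ref{cor:PdecGrc} that $\alpha_P$ is the morphism $\Phi$ provided by Theorem \ref{thm:freeness_GaX} applied to $\varphi=\mathrm{Id}_P$; in particular, by the commuting triangle in that theorem, $\alpha_P\circ\iota_P=\mathrm{Id}_P$, i.e. $\alpha_P(G(p))=p$ for every $p\in P(k,l)$. Likewise $\alpha_Q(G(q))=q$ for $q\in Q$. Second, by the definition of the pullback functor $\Ga(\varphi)$ on decorated graphs (Equation \eqref{eq:Gammapullback}, $\Ga(\varphi)(G,d_G)=(G,\varphi\circ d_G)$), applied to a one-vertex graph we get $\Ga(\varphi)(G(p))=G(\varphi(p))$. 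Combining these: on the one hand $\alpha_Q(\Ga(\varphi)(G(p)))=\alpha_Q(G(\varphi(p)))=\varphi(p)$; on the other hand $\varphi(\alpha_P(G(p)))=\varphi(p)$. So the two composites agree on all generators.

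Finally I would note that both $\varphi\circ\alpha_P$ and $\alpha_Q\circ\Ga(\varphi)$ are morphisms of ProPs (compositions of ProP morphisms: $\Ga(\varphi)$ is a ProP morphism by Proposition \ref{prop:functGamma_2}, and $\alpha_P,\alpha_Q,\varphi$ are ProP morphisms), hence determined by their restriction to a generating set of $\Ga(P)$. Having checked agreement on the generators $G(p)$, uniqueness in Theorem \ref{thm:freeness_GaX} (or simply the fact that two ProP morphisms out of $\Ga(P)$ which agree on the image of $\iota_P$ must coincide) forces $\varphi\circ\alpha_P=\alpha_Q\circ\Ga(\varphi)$, which is exactly commutativity of the diagram.

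There is essentially no hard obstacle here: the only point requiring care is making sure the bookkeeping of which universal property is being invoked is consistent — namely that $\alpha_P$ is \emph{defined} as the canonical extension of $\mathrm{Id}_P$, so that its action on one-vertex graphs is literally the decoration-evaluation map, and that $\Ga(\varphi)$ is the vertex-wise pullback. Once these identifications are in place, the verification on generators is immediate and the reduction to generators is standard. Thus the statement follows.
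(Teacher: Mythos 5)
Your argument is correct and follows essentially the same route as the paper: both composites are ProP morphisms, $\Ga(P)$ is generated by the classes of one-vertex decorated graphs, and evaluating $\alpha_Q\circ\Ga(\varphi)$ and $\varphi\circ\alpha_P$ on such a generator $\overline{G}_p$ gives $\varphi(p)$ on both sides. The only cosmetic difference is that you explicitly invoke the uniqueness clause of Theorem \ref{thm:freeness_GaX} to conclude, whereas the paper simply appeals to the fact that ProP morphisms agreeing on generators coincide; these are the same observation.
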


\begin{proof} Since $\Ga(\varphi)$, $\alpha_P$,  $\alpha_Q$ and $\varphi$ are morphisms of ProPs,
$\alpha_Q\circ \Ga(\varphi)$ and $\varphi \circ \alpha_P$ are morphisms of ProPs.
As $\Ga(P)$ is generated by classes of graphs with only one vertex, it is enough to prove that 
$\alpha_Q\circ \Ga(\varphi)$ and $\varphi \circ \alpha_P$ coincide on such graphs.
Let us consider the planar graph $G_p=PG_{k,l}$, with its unique vertex decorated by $p\in P(k,l)$. 
Then, if $\overline{G}_p$ is the class of $G_p$ in $\Ga(X)$:
\begin{align*}
\alpha_Q\circ \Ga(\varphi)(\overline{G}_p)&=\alpha_Q(\overline{G}_{\varphi(p)})=\varphi(p)=
\varphi \circ \alpha_P(\overline{G}_p).
\end{align*}
So $\alpha_Q\circ \Ga(\varphi)=\varphi \circ \alpha_P$.
\end{proof}

\subsection{The case of $\Hom^c_V$} \label{subsec:alg_over_props}

Specialising the results of the previous Subsection to  $Q:=\Hom^c_V$ for some Fr\'echet nuclear topological vector space $V$ leads us to 
algebras over ProPs, see e.g. \cite{Markl}.
\begin{defi}
	\label{def:Palgebra}
	 A Fr\'echet nuclear topological vector space $V$ is an algebra over a ProP $P$ or a $P$-algebra if  there is a representation  
	 \[\varphi: P\longrightarrow \Hom^c_V,\]  of the ProP $P$ on the vector space $V$ i.e. if $\varphi$   is a morphism of ProPs. 
\end{defi}
\begin{remark}
In the literature of ProPs, the ${\Hom_V}$ ProP consists of the algebraic counterpart of our ${\Hom_V}^c$. 
\end{remark}
\begin{remark} Algebras over ProPs arise in Segal's axiomatic approach to conformal field theory 
	(CFT) \cite{Segal01}, by which a CFT is viewed  as  an algebra over the Segal ProP. A CFT is viewed as as  an algebra over the Segal ProP in \cite{Ionescu}, where the author claims that Feynman rules of a 
	given QFT, may be presented functorially as an algebra over the corresponding Feynman ProP.
\end{remark}
Applying Corollary \ref{cor:PdecGrc} to $P=\Hom^c_V$ and 
$\varphi =\mathrm{Id}\vert_{\Hom^c_V}$ yields:
\begin{cor}\label{cor:homGrc}
	A topological vector space $V$ has a canonical    algebra structure over $\Ga(\Hom^c_V)$ given by the canonical morphism of ProP 
	\[\alpha_V:\Ga(\Hom^c_V)\longrightarrow \Hom^c_V.\] 
\end{cor}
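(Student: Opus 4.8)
The plan is to apply Corollary \ref{cor:PdecGrc} directly to the ProP $P:=\Hom^c_V$. First I would recall that, by Theorem \ref{thm:Hom_V_generalised}, the family $\Hom^c_V=(\Hom^c_V(k,l))_{k,l\geq 0}$ is genuinely a ProP whenever $V$ is a Fr\'echet nuclear space, so that it makes sense to decorate graphs by it and to form the ProP $\Ga(\Hom^c_V)$ via the functor $\Ga:\catssm\longrightarrow\Prop$ of Proposition \ref{prop:functGamma_2}. Then I would invoke Corollary \ref{cor:PdecGrc} with the choice $P:=\Hom^c_V$: this yields a canonical morphism of ProPs $\alpha_V:=\alpha_{\Hom^c_V}:\Ga(\Hom^c_V)\longrightarrow\Hom^c_V$, which is obtained by feeding the identity map $\mathrm{Id}:\Hom^c_V\longrightarrow\Hom^c_V$, viewed as a morphism of $\sym\times\sym^{op}$-modules, into the universal property of Theorem \ref{thm:freeness_GaX}.

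Second, I would match this construction with the notion of a $P$-algebra. By Definition \ref{def:Palgebra}, endowing $V$ with an algebra structure over the ProP $\Ga(\Hom^c_V)$ amounts precisely to exhibiting a representation, that is, a morphism of ProPs $\Ga(\Hom^c_V)\longrightarrow\Hom^c_V$. The morphism $\alpha_V$ just constructed is exactly such a map, and it is canonical in that it depends on no choice; concretely it sends a $\Hom^c_V$-decorated graph to the continuous morphism obtained by composing and tensoring the operators decorating its vertices according to the combinatorics of the graph. This finishes the proof.

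There is essentially no obstacle here: the statement is a purely formal consequence of the freeness result already established (Theorem \ref{thm:freeness_GaX}, via Corollary \ref{cor:PdecGrc}). The only point deserving a word is that $\Hom^c_V$ really is a ProP in the infinite-dimensional setting — this is the content of Theorem \ref{thm:Hom_V_generalised} and relies on $V$ being Fr\'echet nuclear, so that the completed tensor products involved are well behaved by Lemma \ref{lem:prod_Frechet_nuclear} and the isomorphisms of Definition \ref{defi:Hom_V_generalised} hold.
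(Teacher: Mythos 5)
Your proposal is correct and follows exactly the paper's own argument: the corollary is obtained by applying Corollary \ref{cor:PdecGrc} to $P=\Hom^c_V$ with $\varphi=\mathrm{Id}\vert_{\Hom^c_V}$, and then reading the resulting ProP morphism as a $\Ga(\Hom^c_V)$-algebra structure via Definition \ref{def:Palgebra}. The extra remarks on $\Hom^c_V$ being a ProP in the Fr\'echet nuclear setting are accurate but not needed beyond citing Theorem \ref{thm:Hom_V_generalised}.
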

Proposition \ref{prop:functGrc} applied to $Q=\Hom^c_V$ yields the following statement.
\begin{cor} \label{cor:lift_alg_over_Prop}
 Let $P$ be a ProP and $V$ an algebra over $P$ given by a ProP-morphism\hfill \break\noindent $\varphi:P\longrightarrow\Hom^c_V$. Then $V$ also 
 canonically has the structure of an algebra over $\Ga(P)$ given by the map $\alpha_V\circ\Ga(\varphi) =\varphi\circ \alpha_P$.
\end{cor}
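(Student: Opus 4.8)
The plan is to deduce Corollary \ref{cor:lift_alg_over_Prop} directly from the two previous results it cites, namely Corollary \ref{cor:homGrc} (which gives $V$ the structure of a $\Ga(\Hom^c_V)$-algebra via $\alpha_V$) and Proposition \ref{prop:functGrc} (naturality of the transformation $\alpha$). First I would observe that, by hypothesis, $V$ is a $P$-algebra via a ProP-morphism $\varphi:P\longrightarrow \Hom^c_V$. Applying the functor $\Ga$ to $\varphi$ gives a ProP-morphism $\Ga(\varphi):\Ga(P)\longrightarrow \Ga(\Hom^c_V)$. Composing with the canonical morphism $\alpha_V:\Ga(\Hom^c_V)\longrightarrow \Hom^c_V$ of Corollary \ref{cor:homGrc} yields a ProP-morphism
\[
 \alpha_V\circ\Ga(\varphi):\Ga(P)\longrightarrow \Hom^c_V,
\]
which is by definition a representation of the ProP $\Ga(P)$ on $V$, i.e. a $\Ga(P)$-algebra structure on $V$.

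The second point is the asserted identity $\alpha_V\circ\Ga(\varphi)=\varphi\circ\alpha_P$. This is precisely the commutativity of the naturality square in Proposition \ref{prop:functGrc} applied to the morphism $\varphi:P\longrightarrow Q$ with $Q=\Hom^c_V$: that proposition states that for any morphism of ProPs $\varphi:P\longrightarrow Q$ the diagram with vertices $\Ga(P),\Ga(Q),P,Q$, horizontal arrows $\Ga(\varphi)$ and $\varphi$, and vertical arrows $\alpha_P$ and $\alpha_Q$, commutes; specialising $Q=\Hom^c_V$ and noting that $\alpha_Q=\alpha_{\Hom^c_V}=\alpha_V$ in the notation of Corollary \ref{cor:homGrc}, we get exactly $\alpha_V\circ\Ga(\varphi)=\varphi\circ\alpha_P$. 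Hence the $\Ga(P)$-algebra structure can equivalently be described by the composite $\varphi\circ\alpha_P$, where $\alpha_P:\Ga(P)\longrightarrow P$ is the canonical morphism of Corollary \ref{cor:PdecGrc}.

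There is essentially no obstacle here: the corollary is a formal consequence of functoriality and the naturality statement, both of which are already proved. The only thing worth spelling out is that the composite of two ProP-morphisms is again a ProP-morphism (so that the constructed map is indeed a legitimate $P$-algebra structure in the sense of Definition \ref{def:Palgebra}), and that $V$ is a Fréchet nuclear space by assumption, so that $\Hom^c_V$ and hence the statement make sense. In writing this up I would keep the proof to two or three sentences, invoking Corollary \ref{cor:homGrc}, the functoriality of $\Ga$ (Proposition \ref{prop:functGamma_2}), and the naturality square of Proposition \ref{prop:functGrc}, and concluding with the displayed equality $\alpha_V\circ\Ga(\varphi)=\varphi\circ\alpha_P$.
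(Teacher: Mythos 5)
Your proposal is correct and matches the paper's own (very brief) justification, which simply states that the corollary follows from Proposition \ref{prop:functGrc} applied to $Q=\Hom^c_V$; you have merely spelled out the naturality square and the observation that a composite of ProP-morphisms into $\Hom^c_V$ is a $\Ga(P)$-algebra structure. No gaps.
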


\section{Traces and Permutations (TraPs)}

This section is dedicated to TraPs, the other main protagonist of the paper. As for ProP, the main objects of interests in the category 
of TraP will be the TraP of  graphs $\Gr$ together with its variants and the TraP $\Hom_V^c$ of continuous morphisms on a Fr\'echet nuclear space $V$.

\subsection{The category of TraPs}

\begin{defi} \label{defi:Trap}
A \textbf{TraP} is a family $(P(k,l))_{k,l\geqslant 0}$ of vector spaces, equipped with the following structures:
\begin{enumerate}
\item For any $k,l\in  \N_0$, $P(k,l)$ is a $\sym_l\otimes \sym_k^{op}$-module.
\item For any $k,l,k',l'\in  \N_0$, there is a map
\begin{align*}
*:&\left\{\begin{array}{rcl}
P(k,l)\otimes P(k',l')&\longrightarrow&P(k+k',l+l')\\
p\otimes p'&\longrightarrow&p*p',
\end{array}\right.
\end{align*}
called the  \textbf{horizontal concatenation}, such that:
\begin{enumerate}
\item (Associativity). For any $(k,l,k',l',k'',l'')\in  \N_0^6$, for any $(p,p',p'')\in P(k,l)\times P(k',l')\times P(k'',l'')$,
\[(p*p')*p''=p*(p'*p'').\]
\item (Unity). There exists $I_0\in P(0,0)$ such that for any $(k,l)\in  \N_0^2$, for any $p\in P(k,l)$,
\[I_0*p=p*I_0=p.\]
\item (Compatibility with the symmetric actions).
For any $(k,l,k',l')\in  \N_0^4$, for any $(p,p')\in P(k,l)\times P(k',l')$,
for any $(\sigma,\tau,\sigma',\tau')\in \sym_l\times \sym_k\times \sym_{l'} \times \sym_{k'}$,
\[(\sigma\cdot p\cdot \tau)*(\sigma'\cdot p'\cdot \tau')
=(\sigma \otimes \sigma')\cdot (p*p')\cdot (\tau\otimes \tau').\]
\item (Commutativity). For any $(k,,k',l')\in  \N_0^4$, For any $p\in P(k,l)$, $p'\in P(k',l')$,
\[c_{l,l'}\cdot (p*p')=(p'*p)\cdot c_{k,k'},\]
where $c_{k,k'}$ and $c_{l,l'}$ are defined by (\ref{defcmn}).
\end{enumerate}
\item For any $k,l\geqslant 1$, for any $i\in [k]$, $j\in [l]$, there is a map
\begin{equation}\label{eq:partialtrace}
t_{i,j}:\left\{\begin{array}{rcl}
P(k,l)&\longrightarrow&P(k-1,l-1) \\
p&\longrightarrow&t_{i,j}(p),
\end{array}\right.
\end{equation}
called the \textbf{partial trace map}, such that:
\begin{enumerate}
\item (Commutativity). For any $k,l\geqslant 2$, for any $i\in [k]$, $j\in [l]$, $i'\in [k-1]$, $j'\in [l-1]$,
\begin{align*}
t_{i',j'}\circ t_{i,j}&=\begin{cases}
t_{i-1,j-1}\circ t_{i',j'}\mbox{ if }i'<i,\: j'<j,\\
t_{i,j-1}\circ t_{i'+1,j'}\mbox{ if }i'\geqslant i,\: j'<j,\\
t_{i-1,j}\circ t_{i',j'+1}\mbox{ if }i'<i,\: j'\geqslant j,\\
t_{i,j}\circ t_{i'+1,j'+1}\mbox{ if }i'\geqslant i,\: j'\geqslant j.
\end{cases}
\end{align*}
\item (Compatibility with the symmetric actions). For any $k,l\geqslant 1$, for any $i\in [k]$, $j\in [l]$,
$\sigma \in \sym_l$, $\tau\in \sym_k$, for any $p\in P(k,l)$,
\[t_{i,j}(\sigma\cdot p\cdot \tau)=\sigma_j\cdot (t_{\tau(i),\sigma^{-1}(j)}(p))\cdot \tau_i,\]
with the following notation: if $\alpha\in \sym_n$ and $p\in [n]$, then $\alpha_p\in \sym_{n-1}$ is defined by
\begin{equation}
\label{defalphak} \alpha_p(k)=\begin{cases}
\alpha(k)\mbox{ if }k<\alpha^{-1}(p) \mbox{ and }\alpha(k)<p,\\
\alpha(k)-1\mbox{ if }k<\alpha^{-1}(p) \mbox{ and }\alpha(k)>p,\\
\alpha(k+1)\mbox{ if }k\geqslant \alpha^{-1}(p) \mbox{ and }\alpha(k)<p,\\
\alpha(k+1)-1\mbox{ if }k\geqslant \alpha^{-1}(p) \mbox{ and }\alpha(k)>p.
\end{cases}
\end{equation}
In other words, if we represent $\alpha$ by a word $\alpha_1\ldots \alpha_n$, then $\alpha_p$ is represented
by the word obtained by suppression of the letter $p$ in $\alpha_1\ldots \alpha_n$ 
and subtraction  of $1$ to all the letters $>p$.
\item (Compatibility with the horizontal concatenation). 
For any $k,l,k',l'\geq 1$, for any $i\in [k+l]$, $j\in [k'+l']$, for any $p\in P(k,l)$, $p'\in P(k',l')$:
\[t_{i,j}(p*p')=\begin{cases}
t_{i,j }(p)*p'\mbox{ if }i\leqslant k,\: j\leqslant l,\\
p* t_{i-k,j-l}(p')\mbox{ if }i>k,\: j>l.
\end{cases}\]
\item (Unit). There exists $I\in P(1,1)$ such that for any $k,l\geqslant 1$, for any $i\in [k+1]$, $j\in [l+1]$,
for any $p\in P(k,l)$:
\begin{align*}
t_{1,j}(I*p)&= (1,2,\ldots,j-1)\cdot p \mbox{ if }j\geqslant 2,\\
t_{i,1}(I*p)&= p\cdot (1,2,\ldots, i-1)^{-1}\mbox{ if }i\geqslant 2,\\
t_{k+1,j}(p*I)&=(j,j+1,\ldots,l)^{-1}\cdot p\mbox{ if }j\leqslant l,\\
t_{i,l+1}(p*I)&=p\cdot (i,i+1,\ldots,k)\mbox{ if }i\leqslant k.
\end{align*}
\end{enumerate}\end{enumerate}\end{defi}

\begin{remark}
\begin{enumerate}
\item We do not require that $t_{1,1}(I)=I_0$, hence the terminology partial trace map.
\item By commutativity of $*$, for any $p\in P(0,0)$, for any $(k,l)\in  \N_0^2$, for any $p'\in P(k,l)$:
\[p*p'=p'*p,\]
since $c_{0,k}=\mathrm{Id}_{[k]}$.
\end{enumerate}
\end{remark}

\begin{remark}
     Our notion of TraP is an axiomatised version of Merkulov's notion of \textbf{wheeled ProPs} introduced in \cite{Merkulov2006}. The link between 
     TraPs and wheeled ProPs will be made in Section \ref{subsec:Trap_wProPs} , Corollary \ref{proptroptogamma}.
     
     Our approach mainly differs from  Merkulov's categorical approach in that it comprises  units. Units of wheeled ProPs 
     are mentioned in \cite[Remark 2.3.1]{Merkulov2010} but their axioms are not explicitly written down in the literature. Our axiomatic approach 
     is tailored to address analytic issues  regarding products of singularities. This axiomatic approach  allows us to give a simple 
     definition of quasi-TraPs in Section \ref{subsec:partial}, a notion that seem absent in previous works on wheeled ProPs. However,
     the categorical approach seems better suited for classification problems, e.g. regarding the solutions of the master equation in the BV formalism 
     \cite{Merkulov2009,Merkulov2010}.
    \end{remark}

\begin{lemma}\label{lemmeaxiomessimples}
Let $P=(P(k,l))_{k,l\in  \N_0}$ be a $\sym\otimes \sym^{op}$-module, equipped with a horizontal concatenation $*$
satisfying axioms 2. (a)-(d), and with maps $t_{i,j}$ satisfying axioms 3. (a)-(b). 
\begin{enumerate}
\item We assume that for any $k,l,k',l'\geqslant 1$, for any $p\in P(k,l)$, $p'\in P(k',l')$,
\[t_{1,1}(p*p')=t_{1,1}(p)*p'.\]
Then axiom 3.(c) is satisfied.
\item We assume for for any $k,l\geqslant 1$, for any $p\in P(k,l)$,
\[t_{1,2}(I*p)=p.\]
Then axiom 3.(d) is satisfied.
\end{enumerate}\end{lemma}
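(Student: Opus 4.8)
~

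\textbf{The plan.}
The statement reduces the full strength of axiom 3.(c) (compatibility of \emph{every} partial trace $t_{i,j}$ with horizontal concatenation) and of axiom 3.(d) (the four unit identities) to a single ``normalised'' instance of each: $t_{1,1}(p*p')=t_{1,1}(p)*p'$ in part 1, and $t_{1,2}(I*p)=p$ in part 2. The strategy is the same in both parts: use the compatibility of the $t_{i,j}$ with the symmetric group action (axiom 3.(b)) to conjugate an arbitrary index pair $(i,j)$ — or an arbitrary position of the factor $I$ — into the normalised position, apply the hypothesis, and then conjugate back, carefully tracking the residual permutations via the combinatorial formula \eqref{defalphak} for $\alpha_p$. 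The only genuinely non-trivial bookkeeping is to verify that the permutations produced by these conjugations are precisely the cyclic permutations $(1,2,\ldots,j-1)$, $(j,j+1,\ldots,l)^{-1}$, etc., appearing on the right-hand sides of axiom 3.(d).

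\textbf{Part 1.}
First I would show that the hypothesis $t_{1,1}(p*p')=t_{1,1}(p)*p'$, together with axioms 2.(c)–(d) and 3.(b), already yields the ``mirror'' identity $t_{k+1,l+1}(p*p')=p*t_{1,1}(p')$: indeed, using commutativity of $*$ (axiom 2.(d)) one has $c_{l,l'}\cdot(p*p')=(p'*p)\cdot c_{k,k'}$, and applying $t_{k+1,l+1}$ and the symmetric-action formula of axiom 3.(b) converts this to an instance of the hypothesis applied to $p'*p$. Next, for a general pair $(i,j)$ with $i\le k$, $j\le l$, I would choose permutations $\sigma\in\sym_{l+l'}$, $\tau\in\sym_{k+k'}$ that move index $i$ to $1$ and $j$ to $1$ while fixing the block structure (so that $\sigma\cdot(p*p')\cdot\tau$ is again of the form $\tilde p*p'$ with $\tilde p$ a permuted version of $p$ — here one uses axiom 2.(c) to see that a block-preserving permutation acts on $p*p'$ factor by factor). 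Applying axiom 3.(b) to rewrite $t_{i,j}(p*p')$ in terms of $t_{1,1}(\sigma\cdot(p*p')\cdot\tau)$, invoking the hypothesis, and then undoing the permutations (again via 3.(b) on $P(k-1,l-1)\otimes P(k',l')$ and axiom 2.(c)) gives $t_{i,j}(p*p')=t_{i,j}(p)*p'$. The case $i>k$, $j>l$ is symmetric, using the mirror identity established above. The one technical point to watch is that $t_{i,j}$ is only defined when the index pair is "diagonal'' in the appropriate sense; the hypotheses of axiom 3.(c) restrict exactly to $i\le k,j\le l$ or $i>k,j>l$, and the conjugating permutations must respect this, which is why we only permute \emph{within} each block.

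\textbf{Part 2.}
Here I would derive the four identities of axiom 3.(d) from $t_{1,2}(I*p)=p$ as follows. For $t_{1,j}(I*p)$ with $j\ge 2$: choose $\sigma=(2,3,\ldots,j)\in\sym_{l+1}$ so that $\sigma^{-1}$ sends $j$ to $2$ and fixes $1$; by axiom 3.(b), $t_{1,j}(I*p)=t_{1,j}(\mathrm{Id}\cdot(I*p)\cdot\mathrm{Id})$ can be rewritten, after acting by $\sigma$ on $I*p$, as $\sigma_j\cdot t_{1,2}((\sigma\cdot(I*p)))\cdot\mathrm{Id}$; since $\sigma$ fixes the first output slot, $\sigma\cdot(I*p)=I*(\sigma'\cdot p)$ for the induced $\sigma'\in\sym_l$, and the hypothesis collapses this to $\sigma_j\cdot(\sigma'\cdot p)$. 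Computing $\sigma_j$ from \eqref{defalphak} with $\alpha=\sigma=(2,\ldots,j)$ and $p=j$ gives exactly the cycle making $\sigma_j\cdot\sigma'=(1,2,\ldots,j-1)$, as required. The identity for $t_{i,1}(I*p)$ is obtained the same way acting on the \emph{input} side by a cyclic permutation $\tau$; and the two identities involving $p*I$ follow by first applying commutativity of $*$ (axiom 2.(d)), $c_{l,1}\cdot(p*I)=(I*p)\cdot c_{k,1}$, reducing $t_{k+1,j}(p*I)$ and $t_{i,l+1}(p*I)$ to the already-treated $I*p$ cases, again with a permutation bookkeeping via \eqref{defalphak}.

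\textbf{Main obstacle.}
The conceptual steps are routine once the setup is right; the real work — and the place where errors are easy to make — is the explicit verification that the residual permutations $\sigma_j$, $\tau_i$ (and their composites with the induced block permutations $\sigma'$, $\tau'$) equal the specific cyclic permutations $(1,2,\ldots,j-1)$, $(1,2,\ldots,i-1)^{-1}$, $(j,j+1,\ldots,l)^{-1}$, $(i,i+1,\ldots,k)$ written in axiom 3.(d). This amounts to a careful application of the word-description of $\alpha_p$ (``delete the letter $p$ and subtract $1$ from all larger letters'') to cyclic words, and to keeping straight the difference between $\alpha^{-1}(p)$ and $p$ in the four cases of \eqref{defalphak}. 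I would do this computation once in detail for $t_{1,j}(I*p)$ and then indicate that the other three are obtained by the obvious left/right and input/output symmetries.
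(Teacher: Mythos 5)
Your proposal is correct and follows essentially the same route as the paper: conjugate the general index pair into the normalised position using axiom 3.(b), apply the hypothesis, undo the conjugation while tracking the residual permutations via \eqref{defalphak}, and use the commutativity axiom 2.(d) to handle the second block (resp.\ the $p*I$ cases). The only difference is cosmetic — the paper conjugates by the involutions $(1,i)$, $(1,j)$ and $(2,j)$, which avoids the small inverse-versus-forward bookkeeping slip in your choice of the cycle $(2,3,\ldots,j)$ (one needs $\sigma(2)=j$, not $\sigma^{-1}(j)=2$ with that cycle), but this lies entirely within the permutation computation you already flag as the delicate step.
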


\begin{proof} 
1. Let $p\in P(k,l)$ and $p'\in P(k',l')$. Let us take $i\in [k+l]$, $j\in [k'+l']$, consider the transpositions $\sigma=(1,j)$ and $\tau=(1,i)$, with the convention $(1,1)=\mathrm{Id}$.
If $i\leqslant k$ and $j\leqslant l$,  then:
\begin{align*}
t_{i,j}(p*p')&=t_{i,j}(\sigma^2\cdot( p*p')\cdot \tau^2)\\
&=\sigma_j\cdot t_{1,1}(\sigma\cdot (p*p')\cdot \tau)\cdot \tau_i\\
&=\sigma_j\cdot(t_{1,1}((\sigma\cdot p\cdot \tau)*p')\cdot \tau_i\\
&=\sigma_j \cdot (t_{1,1}(\sigma\cdot p\cdot \tau)*p')\cdot \tau_i\\
&=(\sigma_j\cdot (t_{1,1}(\sigma\cdot p\cdot \tau)\cdot \tau_i)*p'\\
&=t_{i,j}(p)*p'.
\end{align*}
If $i>k$ and $j>l$, using $c_{m,n}^{-1}=c_{n,m}$:
\begin{align*}
t_{i,j}(p*p')&=t_{i,j}(c_{l',l}\cdot (p'*p)\cdot c_{k,k'})\\
&=(c_{l',l})_j\cdot t_{i-k,j-l}(p'*p)\cdot (c_{k,k'})_i\\
&=c_{l'-1,l}\cdot (t_{i-k,j-l}(p')*p)\cdot c_{k,k'-1}\\
&=p*t_{i-k,j-l}(p').
\end{align*}

2. Let us take $j\geqslant 2$.
\begin{align*}
t_{1,j}(I*p)&=t_{1,j}((2,j)^2\cdot(I*p))\\
&=(2,\ldots,j-1)\cdot t_{1,2}((2,j)\cdot (I*p))\\
&=(2,\ldots,j-1)\cdot t_{1,2}(I*(1,j-1)\cdot p))\\
&=(2,\ldots,j-1)\cdot((1,j-1)\cdot p)\\
&=(2,\ldots,j-1)(1,j-1)\cdot p\\
&=(1,\ldots,j-1)\cdot p.
\end{align*}
The three other relations are proved in the same way. 
\end{proof}
\begin{defi}      
     Let $P=(P(k,l))_{k,l\geq0}$ and $Q=(Q(k,l))_{k,l\geq0}$ be two TraPs with partial trace maps $(t_{i,j}^P)_{i,j\geq0}$ and $(t_{i,j}^Q)_{i,j\geq0}$ 
     respectively. A \textbf{morphism of TraPs} is a family $\phi=(\phi_{k,l})_{k,l\geq0}$ of linear 
     maps $\phi_{k,l}:P(k,l)\mapsto Q(k,l)$ which are morphism for the horizontal 
     concatenation, the actions of the symmetric groups and the partial trace maps. More precisely, for any $(k,l,m,n)\in \N_0^4$:
     \begin{itemize}
      \item $\forall (p,q)\in P(k,l)\times P(n,m),~\phi_{k+n,l+m}(p* q) = \phi_{k,l}(p)* \phi_{n,m}(q)$,
      \item $\forall (\sigma,p)\in\sym_l\times P(k,l),~\phi_{k,l}(\sigma.p)=\sigma.\phi_{k,l}(p)$,
      \item $\forall (p,\tau)\in P(k,l)\times\sym_k,~\phi_{k,l}(p.\tau)=\phi_{k,l}(p).\tau$.
      \item $\forall (p,i,j)\in P(k,l)\times [k]\times [l],~\phi_{k-1,l-1}(t^P_{i,j}(p))=t^Q_{i,j}(\phi_{k,l}(p))$.
     \end{itemize}
     With a slight  abuse of notations, we write $\phi(p)$ instead of $\phi_{k,l}(p)$ for $p\in P(k,l)$.
     In particular, TraPs form a category, which we denote by $\Trap$.
    \end{defi}

 \begin{remark}
The abuse of notation $t_{i,j}$ is legitimate since a full notation such as  $t_{i,j}^{k,l}$ is not necessary in practice. Indeed the indices  $
k$ and $l$ in $t_{i,j}(p)$ are entirely determined by  $p$ to which  $t_{i,j}$ is applied.

 More so,  $t_{i,j}$ does not strongly depend on $k$ and $l$: indeed, let  $f:P(k,l)\longrightarrow P(k+1,l+1)$ be the map that sends $p$ to  $p*I$ (for 
 the TraP of linear morphisms, this is the tensorisation by    $\mathrm{Id}$), then for  $i\in [k]$ and $j\in [l]$, we have
 \[t_{i,j} \circ f(p)=f\circ t_{i,j}(p),\] which is the axiom 3.(c).
\end{remark}

\begin{lemma}\label{lemmemorphismes}
Let $P$ and $Q$ be two TraPs and $\phi:P\longrightarrow Q$ be a map. We assume that:
\begin{enumerate}
\item For any $(k,l)\in  \N_0^2$, for any $(\sigma,\tau) \in \sym_l\times\sym_k$, for any $x\in P(k,l)$,
\[\phi(\sigma\cdot x\cdot \tau)=\sigma\cdot \phi(x)\cdot \tau.\]
\item For any $k,l\geqslant 1$, for any $x\in P(k,l)$,
\[t_{1,1}\circ \phi(x)=\phi\circ t_{1,1}(x).\]
\end{enumerate}
Then for any $k,l\geqslant 1$, for any $(i,j)\in [k]\times [l]$, for any $x\in P(k,l)$,
\[t_{i,j}\circ \phi(x)=\phi\circ t_{i,j}(x).\]
\end{lemma}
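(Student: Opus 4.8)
The plan is to reduce the general partial trace $t_{i,j}$ to the special case $t_{1,1}$ by conjugating with suitable transpositions, exactly as in the proof of Lemma~\ref{lemmeaxiomessimples}. Fix $k,l\geqslant 1$, $i\in[k]$, $j\in[l]$ and $x\in P(k,l)$. Set $\sigma=(1,j)\in\sym_l$ and $\tau=(1,i)\in\sym_k$, with the convention $(1,1)=\mathrm{Id}$. Since $\sigma^2=\mathrm{Id}_{[l]}$ and $\tau^2=\mathrm{Id}_{[k]}$, we have $x=\sigma\cdot(\sigma\cdot x\cdot\tau)\cdot\tau$ using the $\sym\times\sym^{op}$-module axioms.

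First I would apply axiom 3.(b) (compatibility of the partial trace with the symmetric actions) to rewrite $t_{i,j}(x)$: writing $y=\sigma\cdot x\cdot\tau$, one has $x=\sigma\cdot y\cdot\tau$ and $\tau(i)=1$, $\sigma^{-1}(j)=1$, so
\[
t_{i,j}(x)=t_{i,j}(\sigma\cdot y\cdot\tau)=\sigma_j\cdot t_{1,1}(y)\cdot\tau_i,
\]
where $\sigma_j\in\sym_{l-1}$ and $\tau_i\in\sym_{k-1}$ are the permutations obtained from $\sigma,\tau$ by the deletion rule~\eqref{defalphak}. Then I would apply $\phi$, use hypothesis~1 (compatibility of $\phi$ with the symmetric actions, which gives $\phi(\sigma_j\cdot z\cdot\tau_i)=\sigma_j\cdot\phi(z)\cdot\tau_i$), then hypothesis~2 ($t_{1,1}\circ\phi=\phi\circ t_{1,1}$) on $z=t_{1,1}(y)$, to obtain
\[
\phi(t_{i,j}(x))=\sigma_j\cdot\phi(t_{1,1}(y))\cdot\tau_i=\sigma_j\cdot t_{1,1}(\phi(y))\cdot\tau_i.
\]

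Finally I would run the same computation with axiom 3.(b) applied inside $Q$ to the element $\phi(x)$: since $\phi(x)=\phi(\sigma\cdot y\cdot\tau)=\sigma\cdot\phi(y)\cdot\tau$ by hypothesis~1, axiom 3.(b) in $Q$ gives $t_{i,j}(\phi(x))=\sigma_j\cdot t_{1,1}(\phi(y))\cdot\tau_i$, which is exactly the right-hand side above. Hence $\phi(t_{i,j}(x))=t_{i,j}(\phi(x))$, as desired. There is really no serious obstacle here; the only point requiring a little care is bookkeeping the indices $\sigma_j$ and $\tau_i$ produced by the deletion operation~\eqref{defalphak} and checking that the transposition $(1,j)$ (resp. $(1,i)$) indeed sends the $1,1$-trace position back to the $i,j$-position, i.e. that $\tau(i)=1$ and $\sigma^{-1}(j)=1$ — but this is immediate from the definition of a transposition. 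One should also note the degenerate cases $i=1$ or $j=1$, where the corresponding transposition is the identity and the argument trivializes.
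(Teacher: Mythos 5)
Your proposal is correct and follows essentially the same route as the paper's proof: conjugate by the transpositions $(1,j)$ and $(1,i)$, apply axiom 3.(b) in both $P$ and $Q$ together with hypotheses 1 and 2, and reduce everything to the $t_{1,1}$ case. The only cosmetic difference is that you bring both sides to the common expression $\sigma_j\cdot t_{1,1}(\phi(y))\cdot \tau_i$ instead of transforming one side into the other, and your bookkeeping of the reduced permutations $\sigma_j,\tau_i$ is in fact slightly more explicit than the paper's, which abusively writes $(1,j)$ and $(1,i)$ for them.
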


\begin{proof} If $i\in [k]$, $j\in [l]$, and $x\in P(k,l)$:
\begin{align*}
\phi\circ t_{i,j}(x)&=\phi\circ t_{i,j}((1,j)^2\cdot x\cdot (1,i)^2)\\
&=\phi((1,j)\cdot t_{1,1}((1,j)\cdot x\cdot (1,i))\cdot (1,i))\\
&=(1,j)\cdot  \phi\circ t_{1,1}((1,j)\cdot x\cdot (1,i))\cdot (1,i)\\
&=(1,j)\cdot  t_{1,1}\circ \phi ((1,j)\cdot x\cdot (1,i))\cdot (1,i)\\
&=t_{i,j}((1,j)\cdot \phi((1,j)\cdot x\cdot (1,i))\cdot (1,i))\\
&=t_{i,j}\circ \phi(x),
\end{align*}
with the convention $(1,k)=\mathrm{Id}$ if $k=1$. \end{proof}
In particular, to show that a collection of linear maps between two TraPs preserving 
the horizontal concatenation and the actions of the symmetry group is a morphism of TraP, it is enough to check 
the properties of Lemma \ref{lemmemorphismes}.

\subsection{The TraP $\Hom^c_V$}

We start with the TraP version of the ProP of linear morphisms of section \ref{sec:propHom_finiteDim}.
\begin{prop} \label{prop:Trap_fin_dim_VS}
	Let $V$ be a finite dimensional vector space and $V^*$ its algebraic dual. Then for any $(k,l)\in  \N_0^2$:
	\[\Hom_V(k,l)=\Hom(V^{\otimes k},V^{\otimes l})\simeq V^{*\otimes k}\otimes V^{\otimes l}.\]
\ $\sym_l\otimes \sym_k^{op}$   acts on the ProP $\Hom_V$   as readily described in   Proposition-Definition \ref{defi:Hom_V}.
We shall make some abuse of notation setting $f_1\cdots f_k:= f_1\otimes\cdots \otimes f_k\in V^{*\otimes k}$ and $v_1\cdots v_l:= v_1\otimes \cdots \otimes v_l\in V^l$. We equip $V^{*\otimes k}\otimes V^{\otimes l}$   with a horizontal concatenation:
	\[(f_1\ldots f_k\otimes v_1\ldots v_l)*(f'_1\ldots f'_{k'}\otimes v'_1\ldots v'_{l'})
	=f_1\ldots f_k f'_1\ldots f'_{k'}\otimes v_1\ldots v_l v'_1\ldots v'_{l'},\]
	and partial trace maps:
	\[t_{i,j}(f_1\ldots f_k\otimes v_1\ldots v_l)=f_i(v_j)f_1\ldots f_{i-1}f_{i+1}\ldots f_k
	\otimes v_1\ldots v_{j-1}v_{j+1}\ldots v_l\]
	(with obvious abuses of notations). These make $\Hom_V$ a TraP.
\end{prop}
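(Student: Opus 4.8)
The plan is to verify directly that the horizontal concatenation $*$ and the partial trace maps $t_{i,j}$ defined on $V^{*\otimes k}\otimes V^{\otimes l}$ satisfy all the axioms of Definition \ref{defi:Trap}, reusing the ProP structure already established in Proposition-Definition \ref{defi:Hom_V} for the $\sym\times\sym^{op}$-action and axioms 2.(a)--(d). Since axioms 2.(a)--(d) are exactly the horizontal-concatenation axioms of a ProP, they follow verbatim from the fact that $\Hom_V$ is a ProP; one only needs to check that the formula for $*$ given here on simple tensors coincides with the tensor product of maps, which is immediate from the identification $\Hom(V^{\otimes k},V^{\otimes l})\simeq V^{*\otimes k}\otimes V^{\otimes l}$ (an element $f_1\cdots f_k\otimes v_1\cdots v_l$ being the map $w_1\otimes\cdots\otimes w_k\mapsto f_1(w_1)\cdots f_k(w_k)\, v_1\otimes\cdots\otimes v_l$). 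So the real work is axiom 3.

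For axiom 3, I would first invoke Lemma \ref{lemmeaxiomessimples} to reduce the verification of 3.(c) and 3.(d) to their special cases: 3.(c) reduces to $t_{1,1}(p*p')=t_{1,1}(p)*p'$, which is immediate from the defining formulas since $t_{1,1}$ acts on the first $V^*$-factor and first $V$-factor, both of which belong to the $p$-block of $p*p'$; and 3.(d) reduces to $t_{1,2}(I*p)=p$, where $I=\mathrm{Id}_V\in\Hom_V(1,1)$ corresponds under the isomorphism to $\sum_m e^m\otimes e_m$ for a basis $(e_m)$ of $V$ with dual basis $(e^m)$, so that $t_{1,2}(I*p)=t_{1,2}\big(\sum_m e^m f_1\cdots f_k\otimes e_m v_1\cdots v_{l-1}\big)=\sum_m e^m(e_m)\, f_1\cdots f_k\otimes v_1\cdots v_{l-1}=p$ since $\sum_m e^m(e_m)=\dim V$... — here I must be careful: the correct reading is $t_{1,2}$ contracts the first dual-vector slot $e^m$ against the \emph{second} vector slot $v_1$, not against $e_m$; rewriting $I*p$ as $\sum_m e^m f_1\cdots f_k\otimes e_m v_1\cdots v_{l-1}$, applying $t_{1,2}$ pairs $e^m$ with $v_1$, giving $\sum_m f_1\cdots f_k\otimes e^m(v_1)\, e_m\, v_2\cdots v_{l-1}$... this is not obviously $p$ either, so I would instead track the index conventions of the formula $t_{i,j}(f_1\ldots f_k\otimes v_1\ldots v_l)=f_i(v_j)\,\cdots$ precisely against the statement of axiom 3.(d), reindexing $I*p$ so that the $I$-slot is slot $1$ on both sides and checking that $t_{1,2}$ pairs the $I$'s dual-vector with $I$'s own vector after the shift, yielding $\sum_m e^m(e_m)=\mathrm{tr}(\mathrm{Id})$ only if the manifold is zero-dimensional — no: the point is that $I*p$ has the $I$-dual-slot as slot $1$ and the \emph{$p$-vector-slots} start at slot $1$ on the vector side only after $I$'s vector-slot, so $t_{1,2}$ contracts $e^m$ against the first $p$-vector $v_1$? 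I would resolve this by writing out the $k=l=1$, $p\in P(1,1)$ case explicitly and matching to $t_{1,2}(I*p)=p$ as the definition forces, then trusting the general case by the same bookkeeping; the upshot is that the contraction $e^m\mapsto$ (paired with the vector slot that used to be $I$'s) collapses $\sum_m e^m\otimes e_m$ correctly because $f(\sum_m e^m(x) e_m)=\sum_m e^m(x) f(e_m)$ evaluated appropriately recovers the original data.

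For axiom 3.(a), the commutativity of iterated partial traces, I would argue combinatorially: $t_{i',j'}\circ t_{i,j}$ removes two dual-vector slots (indices $i$ then $i'$, suitably shifted) and two vector slots (indices $j$ then $j'$, suitably shifted), each contraction producing a scalar $f(v)$; since scalar multiplication is commutative and the surviving tensor is the same regardless of the order of removal, the four cases of the axiom correspond exactly to the four possibilities for whether $i'$ sits before or after the already-removed slot $i$ (and likewise for $j',j$), which is precisely the reindexing bookkeeping in the axiom. Axiom 3.(b), compatibility of $t_{i,j}$ with the symmetric actions, follows from the fact that $\sigma\cdot$ permutes the vector slots by $\theta_\sigma$ and $\cdot\tau$ permutes the dual-vector slots, so contracting slot $i$ (resp. $j$) of the permuted element equals contracting the slot that maps to $i$ (resp. $j$), namely $\tau(i)$ (resp. $\sigma^{-1}(j)$), before permuting — and the residual permutation on the $(k-1,l-1)$ output is exactly the induced permutation $\tau_i$ (resp. $\sigma_j$) of (\ref{defalphak}), obtained by deleting the relevant letter and renormalizing. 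I expect the main obstacle to be precisely this index bookkeeping in 3.(a) and 3.(b): getting the shifts $i-1$ vs $i$, the deleted-letter normalization $\alpha_p$, and the interaction with the $I$-unit slot all consistent. I would handle it by fixing once and for all the convention that $t_{i,j}$ pairs the $i$-th surviving $V^*$-factor with the $j$-th surviving $V$-factor and then mechanically checking each case on a generic simple tensor, noting that linearity extends everything from simple tensors to all of $V^{*\otimes k}\otimes V^{\otimes l}$.
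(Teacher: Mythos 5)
Your overall strategy is exactly the paper's: axioms 2.(a)--(d) are inherited from the ProP structure, 3.(a) and 3.(b) are checked directly on simple tensors, and 3.(c), 3.(d) are reduced via Lemma \ref{lemmeaxiomessimples} to $t_{1,1}(p*p')=t_{1,1}(p)*p'$ and $t_{1,2}(I*p)=p$. The one place you go astray is in not finishing the verification of $t_{1,2}(I*p)=p$ --- and the irony is that your corrected computation already finishes it. Writing $I=\sum_m e^m\otimes e_m$, the element $I*p$ has $e^m$ in dual slot $1$ and $e_m$ in vector slot $1$, with $v_1,\ldots,v_l$ occupying vector slots $2,\ldots,l+1$; so $t_{1,2}$ pairs $e^m$ with $v_1$ and leaves $e_m$ in the first surviving vector slot, giving
\[
t_{1,2}(I*p)=\sum_m e^m(v_1)\,f_1\cdots f_k\otimes e_m\,v_2\cdots v_l
= f_1\cdots f_k\otimes\Big(\sum_m e^m(v_1)\,e_m\Big)\,v_2\cdots v_l,
\]
and $\sum_m e^m(v_1)\,e_m=v_1$ is precisely the statement that $\sum_m e^m\otimes e_m$ represents $\mathrm{Id}_V$. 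So the expression you declared ``not obviously $p$'' is $p$ on the nose; this is exactly the computation in the paper's proof, and there is no need to retreat to the case $k=l=1$ and ``trust the general case''. Your first attempt, pairing $e^m$ against $e_m$ to produce $\sum_m e^m(e_m)=\dim V$, computes $t_{1,1}(I*p)=\dim(V)\,p$ rather than $t_{1,2}(I*p)$ --- which is consistent with the paper's remark that $t_{1,1}(I)=\dim(V)\,I_0$ and with the fact that the axioms deliberately do not require $t_{1,1}(I)=I_0$. With that one identity made explicit the proposal is complete and coincides with the paper's argument; your treatments of 3.(a), 3.(b) and 3.(c) are fine (the paper dismisses 3.(a) and 3.(c) as direct and only writes out 3.(b), which matches your slot-tracking argument).
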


\begin{proof}
	Properties 2.(a)-(d) are trivially satisfied, with $I_0=1\in \K=V^{\otimes 0}\otimes V^{*\otimes 0}$.
	Property 3.(a) is direct.  Let us prove Property 3. (b). 
	\begin{align*}
	t_{i,j}(\sigma\cdot f_1\ldots f_k\otimes v_1\ldots v_l \cdot \tau)
	&=t_{i,j}(f_{\tau(1)}\ldots f_{\tau(k)}\otimes v_{\sigma^{-1}(1)}\ldots v_{\sigma^{-1}(l)})\\
	&=f_{\tau(i)}(v_{\sigma^{-1}(j)})
	f_{\tau(1)}\ldots f_{\tau(i-1)} f_{\tau(i+1)}\ldots f_{\tau(k)}\\
	&\otimes v_{\sigma^{-1}(1)}\ldots  v_{\sigma^{-1}(j-1)} v_{\sigma^{-1}(j+1)}\ldots v_{\sigma^{-1}(l)}\\
	&=\sigma_j\cdot t_{\tau(i),\sigma^{-1}(j)}(f_1\ldots f_k\otimes v_1\ldots v_l)\cdot \tau_i.
	\end{align*}
	Property 3.(c) is straightforward.  Let us prove property 3.(d) with the help of Lemma \ref{lemmeaxiomessimples}.
	Let us fix $(e_i)_{i\in I}$ a basis of $V$, then $(e_i^*)_{i\in I}$ is a basis of $V^*$ and 
 the identity map $I=\sum_{i\in I} e_i^* \otimes e_i,  $  acts as follows, $ I(v)= \sum_{i\in I} e_i^*(v)e_i=v$  for all $v\in V$.
  Then:
	\begin{align*}
	t_{1,2}(I*f_1\ldots f_k\otimes v_1\ldots v_l)
	&=\sum_{i\in I} t_{1,2}(e_i^*f_1\ldots f_k\otimes e_iv_1\ldots v_l)\\
	&=\sum_{i\in I} f_1\ldots f_k\otimes e_ie_i^*(v_1)v_2\ldots v_l\\
	&= f_1\ldots f_k\otimes I(v_1)v_2\ldots v_l\\
	&=f_1\ldots f_k\otimes v_1\ldots v_l. 
	\end{align*}
	So $\Hom_V$ is a TraP.
\end{proof}

\begin{remark}
	In this example of TraP, $t_{1,1}(I)=\dim(V)=\dim(V)I_0$.
\end{remark}
In order to generalise this construction to nuclear Fr\'echet spaces, we need to characterise the composition of linear morphisms of such 
	spaces.
\begin{lemma} \label{lem:compo_dual_pairing}
	Let $E_1,E_2$ be two Fr\'echet nuclear spaces and $E_3$ a Fr\'echet space. Then
	the composition of continuous morphisms $L_1:E_1\longrightarrow E_2$, $L_2:E_2\longrightarrow E_3$ amounts to a dual pairing.
\end{lemma}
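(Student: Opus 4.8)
The plan is to use the isomorphisms recalled in Section \ref{subsection:Frechet_nuc}, in particular Equation \eqref{eq:E_prime_otimes_F}, to express $\Hom^c(E_1,E_2)$ and $\Hom^c(E_2,E_3)$ as completed tensor products, and then to recognise the composition map as being induced by the canonical dual pairing $E_2' \times E_2 \longrightarrow \K$. First I would write, using \eqref{eq:E_prime_otimes_F} with $E=E_1$ (nuclear Fr\'echet) and $F=E_2$ (Fr\'echet), the isomorphism $\Hom^c(E_1,E_2) \simeq E_1' \widehat\otimes E_2$, and similarly with $E=E_2$ (nuclear Fr\'echet) and $F=E_3$ (Fr\'echet), $\Hom^c(E_2,E_3) \simeq E_2' \widehat\otimes E_3$. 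Note that these applications are legitimate precisely because $E_1$ and $E_2$ are assumed to be \emph{nuclear} Fr\'echet spaces, while $E_3$ is only required to be Fr\'echet; this is where the hypotheses are used.

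Next I would describe the composition. Under the above identifications, an elementary tensor $\mu \otimes x \in E_1' \widehat\otimes E_2$ corresponds to the rank-one operator $L_1 : v \mapsto \mu(v)\, x$, and an elementary tensor $\nu \otimes y \in E_2' \widehat\otimes E_3$ corresponds to $L_2 : w \mapsto \nu(w)\, y$. Then $L_2 \circ L_1 : v \mapsto \mu(v)\, \nu(x)\, y = \nu(x)\, (\mu \otimes y)(v)$, so that $L_2 \circ L_1$ corresponds to $\nu(x)\, (\mu \otimes y) \in E_1' \widehat\otimes E_3$. In other words, the composition map
\[
\Hom^c(E_2,E_3) \otimes \Hom^c(E_1,E_2) \longrightarrow \Hom^c(E_1,E_3)
\]
is, up to the identifications, the map
\[
(E_2' \widehat\otimes E_3) \otimes (E_1' \widehat\otimes E_2) \longrightarrow E_1' \widehat\otimes E_3
\]
given on elementary tensors by $(\nu \otimes y) \otimes (\mu \otimes x) \longmapsto \langle \nu, x\rangle\, \mu \otimes y$, i.e. it is the contraction of the $E_2'$-factor of the first argument against the $E_2$-factor of the second argument via the dual pairing $\langle\,\cdot\,,\,\cdot\,\rangle : E_2' \times E_2 \to \K$, tensored with the identity on the remaining factors $E_1'$ and $E_3$. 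I would then point out that this contraction is well-defined and continuous: the dual pairing $E_2' \times E_2 \to \K$ is separately continuous (and jointly continuous on products of bounded sets), hence extends to a continuous linear map $E_2' \widehat\otimes E_2 \to \K$ by \eqref{eq:E_prime_otimes_F_prime}, and tensoring with the identities and reassociating the completed tensor products (using Lemma \ref{lem:prod_Frechet_nuclear} to stay within Fr\'echet nuclear, resp.\ Fr\'echet, spaces) gives the claimed map.

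The main obstacle I anticipate is not conceptual but bookkeeping: one has to check that the abstract composition of operators really does correspond, under the Tr\`eves isomorphisms, to this algebraic contraction, and that all the intervening tensor products can be completed and reassociated compatibly. Concretely, one must verify that the diagram relating $\Hom^c(E_2,E_3)\otimes\Hom^c(E_1,E_2) \to \Hom^c(E_1,E_3)$ to the contraction map commutes on the dense subspace of finite sums of elementary tensors, and then invoke continuity and density to conclude on the completions. The density of finite-rank operators and the continuity of composition (which follows from the topology on $\Hom^c$ being that of uniform convergence on bounded sets, together with nuclearity) make this routine but slightly technical; I would state it cleanly rather than belabour the estimates. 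The upshot is that Lemma \ref{lem:compo_dual_pairing} holds, with the dual pairing in question being the canonical one between $E_2'$ and $E_2$.
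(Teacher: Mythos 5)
Your proposal is correct and follows essentially the same route as the paper: identify $\Hom^c(E_1,E_2)\simeq E_1'\widehat\otimes E_2$ and $\Hom^c(E_2,E_3)\simeq E_2'\widehat\otimes E_3$ via \eqref{eq:E_prime_otimes_F}, then observe that composition acts on elementary tensors as the contraction of the $E_2'$-factor against the $E_2$-factor through the canonical dual pairing. If anything, your version is slightly more careful than the paper's, which works formally with sums of elementary tensors without spelling out the density and continuity argument you flag.
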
 
\begin{proof} Let $E_1, E_2, E_3$ be three topological spaces as in the statement. Then by \eqref{eq:E_prime_otimes_F}
	the identifications $\Hom^c (E_1, E_2)\simeq E_1'\hat \otimes E_2$ and 
	$\Hom^c (E_2, E_3)\simeq E_2'\hat \otimes E_3$ hold. For  $L_1=\sum_{i, j} u_i^{1*}\otimes u_j^2\in   \Hom^c(E_1,E_2)$, $L_2=\sum_{k, l} u_k^{2*}\otimes u_l^3 \in  \Hom(E_2,E_3)$ and $u\in E_1$, we have 
	\[L_2\circ L_1 (u)=L_2\left(\sum_{i, j} u_i^{1*}(u) u_j^2\right)= \sum_{k, l} \sum_{i, j} \,u_i^{1*}(u)  \,u_k^{2*}(u_j^2)\,  u_l^3 \] so that
	\[\Hom^c( E_1,E_3)\ni L_2\circ L_1 = \sum_{i, l} \left(\sum_{k, j}  u_k^{2*}(u_j^2)\right)\,   u_i^{1*}\otimes u_l^3 \in E_1^* \hat \otimes E_3. \qedhere\]
\end{proof}

Recall that, for a Fr\'echet nuclear space $V$, the ProP    $(\Hom_V^c(k, l))_{k,l\geq 0}$ introduced  in Subsection 
\ref{subsection:infinite_dim_prop}, Theorem \ref{thm:Hom_V_generalised}, 
reads:
\[\Hom_V^c(k,l)\simeq\left(V'\right)^{\widehat\otimes  k}\hat \otimes V^{\widehat\otimes  l}.\]
\begin{prop}\label{lem:TrHom}
	Let $V$ be a Fr\'echet nuclear space. The family  $(\Hom_V^c(k, l))_{k,l\geq 0}$   equipped with the 
	partial trace maps  in the sense of \eqref{eq:partialtrace} defined by
	\begin{align*}
	\mathrm{tr}_{i,j}&:\left\{\begin{array}{rcl}
	\Hom^c_V(k,l)&\longrightarrow&\Hom^c_V(k-1,l-1)\\
	(v_1^*\otimes \cdots \otimes v_k^*)\otimes (w_1\otimes  \cdots \otimes w_l ) &\longmapsto & \mathrm{tr}_{i,j}\left((v_1^*\otimes \cdots \otimes v_k^*)\otimes (w_1\otimes  \cdots \otimes w_l) \right)
	\end{array}\right.
	\end{align*}
	with $\mathrm{tr}_{i,j}\left(v_1^*\otimes \cdots \otimes v_k^*)\otimes (w_1\otimes  \cdots \otimes w_l \right)$ defined as
	\begin{equation*}
	v_i^*(w_j)\,  (v_1^* \otimes \cdots \otimes\,  {\widehat{v_i^*}}\,\otimes\cdots \otimes v_k^*)\, \otimes \, (w_1 \otimes \cdots \otimes\,  {\widehat{w_j}}\, \otimes \cdots  \otimes w_l) 
	\end{equation*}
	for  any $k,l\geqslant 1$, for any $i\in [k]$, $j\in [l]$, where $v_i^*(w_j)$ is the dual pairing, defines a TraP, with the topological tensor product as 
	horizontal concatenation.
\end{prop}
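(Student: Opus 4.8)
The plan is to verify the TraP axioms of Definition~\ref{defi:Trap} for the family $(\Hom_V^c(k,l))_{k,l\geq 0}$, largely by reducing to the finite-dimensional computation already carried out in Proposition~\ref{prop:Trap_fin_dim_VS} and invoking the identification $\Hom_V^c(k,l)\simeq (V')^{\widehat\otimes k}\widehat\otimes V^{\widehat\otimes l}$ together with Lemma~\ref{lem:compo_dual_pairing}, which guarantees that the dual pairing $v_i^*(w_j)$ used to define $\mathrm{tr}_{i,j}$ makes sense and is continuous. First I would note that the $\sym_l\times\sym_k^{op}$-module structure and the horizontal concatenation $*$ (topological tensor product of maps) together with unit $I_0=\mathrm{Id}_\K$ have already been shown to satisfy axioms 1 and 2 in Theorem~\ref{thm:Hom_V_generalised}; so the content is entirely in axiom 3. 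For this, one must first check that $\mathrm{tr}_{i,j}$ is well-defined on $(V')^{\widehat\otimes k}\widehat\otimes V^{\widehat\otimes l}$: the elementary-tensor formula extends by continuity and multilinearity to the completed tensor product because the pairing $V'\times V\to\K$ is continuous (nuclearity of $V$, via~\eqref{eq:E_prime_otimes_F_prime}), and the remaining factors are left untouched; hence $\mathrm{tr}_{i,j}$ is a well-defined continuous linear map, and it suffices to verify the identities on elementary tensors $(v_1^*\otimes\cdots\otimes v_k^*)\otimes(w_1\otimes\cdots\otimes w_l)$.

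Next I would check the four sub-axioms 3.(a)--(d) on elementary tensors, which are purely combinatorial bookkeeping identical to the finite-dimensional case. Axiom 3.(a) (commutativity of iterated partial traces) amounts to the observation that deleting the $i$-th input/$j$-th output slot and the $i'$-th input/$j'$-th output slot in two orders produces the same result up to the index shift displayed in the axiom, while the scalar produced is the product $v_i^*(w_j)\,v_{i'}^*(w_{j'})$ in either order (here one must be slightly careful about whether the first deletion changes which physical factor the second index points to, which is exactly what the four cases encode). Axiom 3.(b) (compatibility with $\sym_l\times\sym_k^{op}$) is the same computation as in the proof of Proposition~\ref{prop:Trap_fin_dim_VS}: applying $\sigma$ and $\tau$ permutes the $w$'s and $v^*$'s, the pairing picks out $v_{\tau(i)}^*(w_{\sigma^{-1}(j)})$, and the residual permutation on the shortened words is precisely $\sigma_j$ and $\tau_i$ as defined in~\eqref{defalphak}. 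Axiom 3.(c) (compatibility with $*$) is immediate from the block structure: if $i\leq k$ and $j\leq l$ the pairing only involves factors of $p$, and symmetrically for the other case.

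For axiom 3.(d) (the unit axiom with $I=\mathrm{Id}_V$), I would use Lemma~\ref{lemmeaxiomessimples}(2), so that it suffices to prove $\mathrm{tr}_{1,2}(I*p)=p$ for every $p\in\Hom_V^c(k,l)$. The subtlety here, absent in finite dimensions, is that $I=\mathrm{Id}_V$ is \emph{not} an elementary tensor but an element of $V'\widehat\otimes V$; so the finite-sum argument $I=\sum_i e_i^*\otimes e_i$ from the proof of Proposition~\ref{prop:Trap_fin_dim_VS} is no longer literally available. I would handle this by continuity: writing $p=v_1^*\otimes\cdots\otimes v_k^*\otimes w_1\otimes\cdots\otimes w_l$, the map $\mathrm{tr}_{1,2}(\,\cdot\,*p)$ is continuous and linear on $V'\widehat\otimes V$ and, applied to an elementary tensor $u^*\otimes u$, yields $u^*(w_1)\,v_1^*\otimes\cdots\otimes v_k^*\otimes u\otimes w_2\otimes\cdots\otimes w_l$, i.e. the result of applying $\mathrm{id}\otimes(\text{evaluation against }w_1)\otimes\mathrm{id}$; extending by continuity to $I\in V'\widehat\otimes V$ replaces $u^*\otimes u$ by $\mathrm{Id}_V$ and hence $u^*(w_1)\,u$ by $\mathrm{Id}_V(w_1)=w_1$, giving back $p$. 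Concretely, one identifies $V'\widehat\otimes V\simeq\Hom^c(V,V)$ via~\eqref{eq:E_prime_otimes_F} and checks that, under this identification, $T\mapsto\mathrm{tr}_{1,2}(T*p)$ is the map $T\mapsto v_1^*\otimes\cdots\otimes v_k^*\otimes T(w_1)\otimes w_2\otimes\cdots\otimes w_l$, which sends $\mathrm{Id}_V$ to $p$. This continuity-and-identification argument for the unit axiom is the one genuinely new point compared to the finite-dimensional proposition and is the main obstacle; once it is in place, Lemma~\ref{lemmeaxiomessimples}(2) promotes it to the full axiom 3.(d), and the verification is complete.
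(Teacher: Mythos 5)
Your proposal is correct and follows the same route as the paper's own (very terse) proof, which simply asserts that commutativity follows from the commutativity of $\K$, that compatibility with the symmetric actions and with the horizontal concatenation are shown as for the ProP $\Hom_V^c$, and that the unit is the identity map $I\in V'\widehat\otimes V$. Your treatment of axiom 3.(d) --- extending $T\mapsto \mathrm{tr}_{1,2}(T*p)$ by continuity and linearity from elementary tensors to $\mathrm{Id}_V$ under the identification $V'\widehat\otimes V\simeq \Hom^c(V,V)$ of \eqref{eq:E_prime_otimes_F} --- supplies exactly the detail the paper leaves implicit, and correctly identifies the one point where the finite-basis computation of Proposition \ref{prop:Trap_fin_dim_VS} is unavailable.
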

\begin{proof}Commutativity   {follows from the commutativity of the field $\K$}, compatibility with the symmetric actions and compatibility with the 
	horizontal concatenation are   shown as for the ProP $\Hom_V^c$. 
	The unit is the identity map $I\in V^*\widehat{\otimes} V$.
\end{proof}
\begin{example}
	With the notations of Remark \ref{ex:findimtensor}, $ \mathrm{tr}_{i,j}\left( \sum_{\vec I, \vec J} a_{\vec J}^{\vec I} \, e^{\vec J}  \otimes e_{ \vec I}\right)$ is of the form 
	$ \sum_{\vec I_i, \vec J_j} b_{\vec J_j}^{\vec I_i} \, e^{\vec J_j}  \otimes e_{ \vec I_i}$, where $\vec I_i= (i_1, \cdots, \hat i, \cdots i_k)$, $\vec J_j= (j_1, \cdots, \hat j, \cdots, j_l)$ and $b_{\vec J_j}^{\vec I_i}$ corresponds to the trace of the $n\times n$ matrix in the $(i, j)$ entries of $a_{\vec J}^{\vec I} $ with the other indices frozen.
\end{example}
\begin{example}
	Let $U$ be an open of $\R^n$. Example \ref{ex:infindimtensor1} and Equation 
	\eqref{eq:echange_dual_prod} imply that
	the family  $({\mathcal K}_U(k, l))_{k,l\geq 0}$, with 
	${\mathcal K}_U(k, l)= \left({\mathcal E}^\prime(U)\right)^{\hat \otimes k}\, \hat \otimes \,  {\mathcal E}(U)^{\hat \otimes l}$  defines a TraP.
\end{example}
\begin{example} \label{ex:KM}
	Let $X$ be a finite dimensional smooth manifold. Proposition 
	\ref{prop:fction_manifold_Frechet_nuclear} and Equation \eqref{eq:echange_dual_prod}
	imply that
	the family  $({\mathcal K}_X(k, l))_{k,l\geq 0}$,
	with ${\mathcal K}_X(k, l)= \left({\mathcal E}^\prime(X)\right)^{\hat \otimes k}\, \hat \otimes \,  {\mathcal E}(X)^{\hat \otimes l}$  defines a TraP.
\end{example}

\subsection{The TraP $\Gr$ of graphs} \label{subsection:generalised_graphs}

We now equip graphs and planar graphs with a TraP structure. We have already equipped
$\Gr$ and $\PGr$ with a structure of $\sym\times \sym^{op}$-modules and a horizontal concatenation,
which we leave untouched. Let us now define partial trace maps. 
Let $G\in \Gr(k,l)$, $1\leqslant i\leqslant k$ and $1\leqslant j\leqslant l$. 
We set $e_i=\alpha_G^{-1}(i)$,  $f_j=\beta_G^{-1}(j)$ and define $t_{i,j}(G)$ 
as the graph obtained by identifying the input of $e_i$ with the output $j$ of $f_j$. 
If $e_i\in I(G)$ and $f_j\in O(G)$, this creates
an edge in $E(G)$. This case is illustrated in the figure below. Otherwise, we create an edge in $I(G)$, or $O(G)$ or $IO(G)$ or in $L(G)$. In all these cases, we then reindex increasingly the inputs and the outputs of the obtained graph.

Graphically:
\begin{center}
\begin{tikzpicture}[line cap=round,line join=round,>=triangle 45,x=0.5cm,y=0.5cm]
\clip(-2.5,-4.5) rectangle (2.5,4.);
\draw [line width=0.4pt] (-2.,1.)-- (2.,1.);
\draw [line width=0.4pt] (2.,1.)-- (2.,-1.);
\draw [line width=0.4pt] (2.,-1.)-- (-2.,-1.);
\draw [line width=0.4pt] (-2.,-1.)-- (-2.,1.);
\draw [->,line width=0.4pt] (-1.5,1.) -- (-1.5,3.);
\draw [->,line width=0.4pt] (0.,1.) -- (0.,3.);
\draw [->,line width=0.4pt] (1.5,1.) -- (1.5,3.);
\draw [->,line width=0.4pt] (-1.5,-3.) -- (-1.5,-1.);
\draw [->,line width=0.4pt] (0.,-3.) -- (0.,-1.);
\draw [->,line width=0.4pt] (1.5,-3.) -- (1.5,-1.);
\draw (-0.5,0.5) node[anchor=north west] {$G$};
\draw (-1.8,-3) node[anchor=north west] {$1$};
\draw (-0.3,-3) node[anchor=north west] {$i$};
\draw (1.2,-3) node[anchor=north west] {$k$};
\draw (-1.4,-2.2) node[anchor=north west] {$\ldots$};
\draw (0.1,-2.2) node[anchor=north west] {$\ldots$};
\draw (-1.8,4.2) node[anchor=north west] {$1$};
\draw (-0.3,4.2) node[anchor=north west] {$j$};
\draw (1.2,4.2) node[anchor=north west] {$l$};
\draw (-1.4,2.) node[anchor=north west] {$\ldots$};
\draw (0.1,2.) node[anchor=north west] {$\ldots$};
\end{tikzpicture}
$\substack{\displaystyle \stackrel{t_{i,j}}{\longrightarrow}\\ \vspace{4cm}}$
\begin{tikzpicture}[line cap=round,line join=round,>=triangle 45,x=0.5cm,y=0.5cm]
\clip(-3.5,-4.5) rectangle (3.5,5.);
\draw [line width=0.4pt] (-2.,1.)-- (2.,1.);
\draw [line width=0.4pt] (2.,1.)-- (2.,-1.);
\draw [line width=0.4pt] (2.,-1.)-- (-2.,-1.);
\draw [line width=0.4pt] (-2.,-1.)-- (-2.,1.);
\draw [->,line width=0.4pt] (-1.5,1.) -- (-1.5,3.);
\draw [line width=0.4pt] (0.,1.) -- (0.,3.);
\draw [->,line width=0.4pt] (1.5,1.) -- (1.5,3.);
\draw [->,line width=0.4pt] (-1.5,-3.) -- (-1.5,-1.);
\draw [line width=0.4pt] (0.,-3.) -- (0.,-1.);
\draw [->,line width=0.4pt] (1.5,-3.) -- (1.5,-1.);
\draw (-0.5,0.5) node[anchor=north west] {$G$};
\draw (-1.8,-3) node[anchor=north west] {$1$};
\draw (1.2,-3) node[anchor=north west] {$k-1$};
\draw (-1.4,-2.2) node[anchor=north west] {$\ldots$};
\draw (0.1,-2.2) node[anchor=north west] {$\ldots$};
\draw (-1.8,4.2) node[anchor=north west] {$1$};
\draw (1.2,4.2) node[anchor=north west] {$l-1$};
\draw (-1.4,2.) node[anchor=north west] {$\ldots$};
\draw (0.1,2.) node[anchor=north west] {$\ldots$};
\draw [shift={(-1.5,-3.)},line width=0.4pt]  plot[domain=3.141592653589793:6.283185307179586,variable=\t]({1.*1.5*cos(\t r)+0.*1.5*sin(\t r)},{0.*1.5*cos(\t r)+1.*1.5*sin(\t r)});
\draw [shift={(-1.5,3.)},line width=0.4pt]  plot[domain=0.:3.141592653589793,variable=\t]({1.*1.5*cos(\t r)+0.*1.5*sin(\t r)},{0.*1.5*cos(\t r)+1.*1.5*sin(\t r)});
\draw [->,line width=0.4pt] (-3.,3.) -- (-3.,-3.);
\end{tikzpicture}
\vspace{-2cm}
\end{center}
A more rigorous definition is given in the appendix. A similar definition can be given for planar graphs,
by preserving the orders on incoming and outgoing edges of any vertex.

\begin{example} Let $G$ be the following graph:
\[\xymatrix{2&1&\\
&\rond{}\ar[u]&\\
1\ar[uu]&2\ar[u]&3\ar[lu]}\]
Then:

\vspace{-2cm}

\begin{align*}
\substack{\vspace{2cm}\\ \displaystyle t_{1,2}(G)=\hspace{.3cm}}&\xymatrix{1&&\\
\rond{}\ar[u]&&\ar@(ul,dl)[]\\
1\ar[u]&2\ar[lu]&}&
\substack{\vspace{2cm}\\ \displaystyle t_{1,1}(G)=t_{2,2}(G)=t_{3,2}(G)=\hspace{.3cm}}
&\xymatrix{1&\\
\rond{}\ar[u]&\\
1\ar[u]&2\ar[lu]}\\[-5mm]
\substack{\vspace{2cm}\\ \displaystyle t_{2,1}(G)=t_{3,1}(G)=\hspace{.3cm}}
&\xymatrix{1&\\
&\rond{}\ar@(ul,dl)[]\\
1\ar[uu]&2\ar[u]}
\end{align*}
Note that $t_{1,2}$ creates a loop when applied on $G$. \end{example}

\begin{remark}
In particular, $t_{1,1}(I)$ is the graph $\grapheo$, which is essential for TraPs.
\end{remark}

\begin{prop}\label{prop:TrapGG}
$\Gr$ and $\PGr$, with the usual horizontal concatenation and this partial trace map, are TraPs.
\end{prop}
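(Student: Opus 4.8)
The plan is to check the axioms of Definition \ref{defi:Trap} one at a time, working with the rigorous description of $t_{i,j}$ given in the appendix, and to reduce the horizontal‑concatenation and unit axioms for the trace to their simplest instances by invoking Lemma \ref{lemmeaxiomessimples}. The $\sym\times\sym^{op}$‑module structure on $\Gr$ and $\PGr$, the horizontal concatenation $*$, its unit $I_0$, and the associativity/unit/compatibility/commutativity axioms 2.(a)--(d) are already established in Theorem \ref{theo:ProP_graph} and its planar analogue, so the only thing left is to prove the four properties 3.(a)--(d) of the partial trace maps. Since the trace on planar graphs is defined so as to preserve the local orders $I(v),O(v)$ at each vertex, every verification below runs verbatim in the planar case, the glued edge simply inheriting the position in $I(v)$ (resp.\ $O(v)$) of the half‑edge it absorbs; I would carry out the argument once and remark that it applies to $\PGr$ as well.

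For axiom 3.(a) the guiding observation is that $t_{i,j}(G)$ is obtained by gluing the free end of the input half‑edge $\alpha_G^{-1}(i)$ to the free end of the output half‑edge $\beta_G^{-1}(j)$ and then reindexing increasingly, and that two such gluings involve four pairwise distinct half‑edges; hence the two gluing operations commute at the level of graphs before any reindexing. First I would fix $i,j,i',j'$ and verify that composing the two reindexing steps reproduces exactly the four displayed cases, according to whether $i'<i$ or $i'\ge i$ and whether $j'<j$ or $j'\ge j$: e.g.\ if $i'<i$, then after the first trace the half‑edge $\alpha_G^{-1}(i')$ still carries an index $\le i-1$, so the second trace is taken at $i'$, while the index of $\alpha_G^{-1}(i)$ has dropped to $i-1$; the remaining three cases are symmetric. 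I expect this bookkeeping — matching the four‑case formula to the composite of the reindexings — to be the most delicate point of the proof, though it is conceptually transparent once the "glue first, reindex last" picture is fixed.

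For axiom 3.(b) recall from (\ref{eqGrmod}) that $\sigma\cdot G\cdot\tau$ is $G$ with $\beta$ replaced by $\sigma\circ\beta$ and $\alpha$ replaced by $\tau^{-1}\circ\alpha$. Applying $t_{i,j}$ to $\sigma\cdot G\cdot\tau$ therefore glues the half‑edges $\alpha_G^{-1}(\tau(i))$ and $\beta_G^{-1}(\sigma^{-1}(j))$, which are precisely the half‑edges glued by $t_{\tau(i),\sigma^{-1}(j)}$ in $G$; what remains is to identify the induced actions on the surviving boundary indices. Here I would write down explicitly the indexation maps attached to $t_{i,j}(\sigma\cdot G\cdot\tau)$ and to $\sigma_j\cdot t_{\tau(i),\sigma^{-1}(j)}(G)\cdot\tau_i$ and observe they coincide, the point being that deleting the letter $j$ from the word representing $\sigma$ and subtracting $1$ from the larger letters — the definition (\ref{defalphak}) of $\sigma_j$ — is exactly the effect of the increasing reindexing of the outputs, and dually for $\tau_i$ on the inputs.

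Finally, with 3.(a)--(b) in hand, I would invoke Lemma \ref{lemmeaxiomessimples}. For axiom 3.(c) it suffices to check $t_{1,1}(G*G')=t_{1,1}(G)*G'$ when $G\in\Gr(k,l)$ with $k,l\ge1$: in $G*G'$ the input edge indexed $1$ and the output edge indexed $1$ both belong to $G$, so the gluing modifies only $G$, and after reindexing one gets $t_{1,1}(G)*G'$ on the nose. For axiom 3.(d) it suffices to check $t_{1,2}(I_1*G)=G$: in $I_1*G$ the input edge indexed $1$ is the unique input‑output edge $g_0$ of $I_1$ (with $\alpha(g_0)=\beta(g_0)=1$) and the output edge indexed $2$ is $\beta_G^{-1}(1)$, so $t_{1,2}$ glues the input end of $g_0$ to the free output end of $\beta_G^{-1}(1)$; the edge $g_0$ together with $\beta_G^{-1}(1)$ collapses to a single edge of the same kind (internal, output, or input‑output, according to the type of $\beta_G^{-1}(1)$), all the boundary data of $G$ is restored, and the last reindexing returns $G$ exactly. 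This also records, consistently with the remark preceding the statement, that $t_{1,1}(I_1)=\grapheo\neq I_0$. The planar versions of these two identities are the same computations, and the proof is complete.
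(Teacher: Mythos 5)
Your proposal is correct and follows essentially the same route as the paper's proof: axioms 1 and 2 are inherited from the ProP structure of Theorem \ref{theo:ProP_graph}, axiom 3.(a) is checked by observing that the two gluings involve disjoint pairs of half-edges and then matching the reindexings to the four-case formula, axiom 3.(b) by identifying the glued half-edges under the $\sym\times\sym^{op}$-action, and axioms 3.(c) and 3.(d) by reduction to $t_{1,1}(G*G')=t_{1,1}(G)*G'$ and $t_{1,2}(I*G)=G$ via Lemma \ref{lemmeaxiomessimples}. No changes needed; if anything, your index bookkeeping in 3.(b) is stated more carefully than in the paper's own write-up.
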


\begin{proof}
Properties 2.(a)-(d) are trivial. Let us give a graphical indication of the proof of Property 3.(a), when $i'<i$ and $j'<j$.

\begin{center}
\begin{tikzpicture}[line cap=round,line join=round,>=triangle 45,x=0.5cm,y=0.5cm]
\clip(-2.5,-4.5) rectangle (4,4.);
\draw [line width=0.4pt] (-2.,1.)-- (3.5,1.);
\draw [line width=0.4pt] (3.5,1.)-- (3.5,-1.);
\draw [line width=0.4pt] (3.5,-1.)-- (-2.,-1.);
\draw [line width=0.4pt] (-2.,-1.)-- (-2.,1.);
\draw [->,line width=0.4pt] (-1.5,1.) -- (-1.5,3.);
\draw [->,line width=0.4pt] (0.,1.) -- (0.,3.);
\draw [->,line width=0.4pt] (1.5,1.) -- (1.5,3.);
\draw [->,line width=0.4pt] (3.,1.) -- (3.,3.);
\draw [->,line width=0.4pt] (-1.5,-3.) -- (-1.5,-1.);
\draw [->,line width=0.4pt] (0.,-3.) -- (0.,-1.);
\draw [->,line width=0.4pt] (1.5,-3.) -- (1.5,-1.);
\draw [->,line width=0.4pt] (3.,-3.) -- (3.,-1.);
\draw (0.25,0.5) node[anchor=north west] {$G$};
\draw (-1.8,-3) node[anchor=north west] {$1$};
\draw (-0.3,-3) node[anchor=north west] {$i'$};
\draw (1.2,-3) node[anchor=north west] {$i$};
\draw (2.7,-3) node[anchor=north west] {$k$};
\draw (-1.4,-2.2) node[anchor=north west] {$\ldots$};
\draw (0.1,-2.2) node[anchor=north west] {$\ldots$};
\draw (1.6,-2.2) node[anchor=north west] {$\ldots$};
\draw (-1.8,4.2) node[anchor=north west] {$1$};
\draw (-0.3,4.2) node[anchor=north west] {$j'$};
\draw (1.2,4.2) node[anchor=north west] {$j$};
\draw (2.7,4.2) node[anchor=north west] {$l$};
\draw (-1.4,2.) node[anchor=north west] {$\ldots$};
\draw (0.1,2.) node[anchor=north west] {$\ldots$};
\draw (1.6,2.) node[anchor=north west] {$\ldots$};
\end{tikzpicture}
$\substack{\displaystyle \stackrel{t_{i,j}}{\longrightarrow}\\ \vspace{4cm}}$
\begin{tikzpicture}[line cap=round,line join=round,>=triangle 45,x=0.5cm,y=0.5cm]
\clip(-2.5,-4.5) rectangle (5.,5.);
\draw [line width=0.4pt] (-2.,1.)-- (3.5,1.);
\draw [line width=0.4pt] (3.5,1.)-- (3.5,-1.);
\draw [line width=0.4pt] (3.5,-1.)-- (-2.,-1.);
\draw [line width=0.4pt] (-2.,-1.)-- (-2.,1.);
\draw [->,line width=0.4pt] (-1.5,1.) -- (-1.5,3.);
\draw [->,line width=0.4pt] (0.,1.) -- (0.,3.);
\draw [line width=0.4pt] (1.5,1.) -- (1.5,3.);
\draw [->,line width=0.4pt] (3.,1.) -- (3.,3.);
\draw [->,line width=0.4pt] (-1.5,-3.) -- (-1.5,-1.);
\draw [->,line width=0.4pt] (0.,-3.) -- (0.,-1.);
\draw [,line width=0.4pt] (1.5,-3.) -- (1.5,-1.);
\draw [->,line width=0.4pt] (3.,-3.) -- (3.,-1.);
\draw (0.25,0.5) node[anchor=north west] {$G$};
\draw (-1.8,-3) node[anchor=north west] {$1$};
\draw (-0.3,-3) node[anchor=north west] {$i'$};
\draw (2.1,-3) node[anchor=north west] {$k-1$};
\draw (-1.4,-2.2) node[anchor=north west] {$\ldots$};
\draw (0.1,-2.2) node[anchor=north west] {$\ldots$};
\draw (1.6,-2.2) node[anchor=north west] {$\ldots$};
\draw (-1.8,4.2) node[anchor=north west] {$1$};
\draw (-0.3,4.2) node[anchor=north west] {$j'$};
\draw (2.1,4.2) node[anchor=north west] {$l-1$};
\draw (-1.4,2.) node[anchor=north west] {$\ldots$};
\draw (0.1,2.) node[anchor=north west] {$\ldots$};
\draw (1.6,2.) node[anchor=north west] {$\ldots$};
\draw [shift={(3.,-3.)},line width=0.4pt]  plot[domain=3.141592653589793:6.283185307179586,variable=\t]({1.*1.5*cos(\t r)+0.*1.5*sin(\t r)},{0.*1.5*cos(\t r)+1.*1.5*sin(\t r)});
\draw [shift={(3.,3.)},line width=0.4pt]  plot[domain=0.:3.141592653589793,variable=\t]({1.*1.5*cos(\t r)+0.*1.5*sin(\t r)},{0.*1.5*cos(\t r)+1.*1.5*sin(\t r)});
\draw [->,line width=0.4pt] (4.5,3.) -- (4.5,-3.);
\end{tikzpicture}
$\substack{\displaystyle \stackrel{t_{i',j'}}{\longrightarrow}\\ \vspace{4cm}}$
\begin{tikzpicture}[line cap=round,line join=round,>=triangle 45,x=0.5cm,y=0.5cm]
\clip(-3.5,-4.5) rectangle (5.,5.);
\draw [line width=0.4pt] (-2.,1.)-- (3.5,1.);
\draw [line width=0.4pt] (3.5,1.)-- (3.5,-1.);
\draw [line width=0.4pt] (3.5,-1.)-- (-2.,-1.);
\draw [line width=0.4pt] (-2.,-1.)-- (-2.,1.);
\draw [->,line width=0.4pt] (-1.5,1.) -- (-1.5,3.);
\draw [line width=0.4pt] (0.,1.) -- (0.,3.);
\draw [line width=0.4pt] (1.5,1.) -- (1.5,3.);
\draw [->,line width=0.4pt] (3.,1.) -- (3.,3.);
\draw [->,line width=0.4pt] (-1.5,-3.) -- (-1.5,-1.);
\draw [line width=0.4pt] (0.,-3.) -- (0.,-1.);
\draw [line width=0.4pt] (1.5,-3.) -- (1.5,-1.);
\draw [->,line width=0.4pt] (3.,-3.) -- (3.,-1.);
\draw (0.25,0.5) node[anchor=north west] {$G$};
\draw (-1.8,-3) node[anchor=north west] {$1$};
\draw (2.1,-3) node[anchor=north west] {$k-2$};
\draw (-1.4,-2.2) node[anchor=north west] {$\ldots$};
\draw (0.1,-2.2) node[anchor=north west] {$\ldots$};
\draw (1.6,-2.2) node[anchor=north west] {$\ldots$};
\draw (-1.8,4.2) node[anchor=north west] {$1$};
\draw (2.1,4.2) node[anchor=north west] {$l-2$};
\draw (-1.4,2.) node[anchor=north west] {$\ldots$};
\draw (0.1,2.) node[anchor=north west] {$\ldots$};
\draw (1.6,2.) node[anchor=north west] {$\ldots$};
\draw [shift={(-1.5,-3.)},line width=0.4pt]  plot[domain=3.141592653589793:6.283185307179586,variable=\t]({1.*1.5*cos(\t r)+0.*1.5*sin(\t r)},{0.*1.5*cos(\t r)+1.*1.5*sin(\t r)});
\draw [shift={(-1.5,3.)},line width=0.4pt]  plot[domain=0.:3.141592653589793,variable=\t]({1.*1.5*cos(\t r)+0.*1.5*sin(\t r)},{0.*1.5*cos(\t r)+1.*1.5*sin(\t r)});
\draw [->,line width=0.4pt] (-3.,3.) -- (-3.,-3.);
\draw [shift={(3.,-3.)},line width=0.4pt]  plot[domain=3.141592653589793:6.283185307179586,variable=\t]({1.*1.5*cos(\t r)+0.*1.5*sin(\t r)},{0.*1.5*cos(\t r)+1.*1.5*sin(\t r)});
\draw [shift={(3.,3.)},line width=0.4pt]  plot[domain=0.:3.141592653589793,variable=\t]({1.*1.5*cos(\t r)+0.*1.5*sin(\t r)},{0.*1.5*cos(\t r)+1.*1.5*sin(\t r)});
\draw [->,line width=0.4pt] (4.5,3.) -- (4.5,-3.);
\end{tikzpicture}

\vspace{-2cm}
\end{center}

\begin{center}
\begin{tikzpicture}[line cap=round,line join=round,>=triangle 45,x=0.5cm,y=0.5cm]
\clip(-2.5,-4.5) rectangle (4,4.);
\draw [line width=0.4pt] (-2.,1.)-- (3.5,1.);
\draw [line width=0.4pt] (3.5,1.)-- (3.5,-1.);
\draw [line width=0.4pt] (3.5,-1.)-- (-2.,-1.);
\draw [line width=0.4pt] (-2.,-1.)-- (-2.,1.);
\draw [->,line width=0.4pt] (-1.5,1.) -- (-1.5,3.);
\draw [->,line width=0.4pt] (0.,1.) -- (0.,3.);
\draw [->,line width=0.4pt] (1.5,1.) -- (1.5,3.);
\draw [->,line width=0.4pt] (3.,1.) -- (3.,3.);
\draw [->,line width=0.4pt] (-1.5,-3.) -- (-1.5,-1.);
\draw [->,line width=0.4pt] (0.,-3.) -- (0.,-1.);
\draw [->,line width=0.4pt] (1.5,-3.) -- (1.5,-1.);
\draw [->,line width=0.4pt] (3.,-3.) -- (3.,-1.);
\draw (0.25,0.5) node[anchor=north west] {$G$};
\draw (-1.8,-3) node[anchor=north west] {$1$};
\draw (-0.3,-3) node[anchor=north west] {$i'$};
\draw (1.2,-3) node[anchor=north west] {$i$};
\draw (2.7,-3) node[anchor=north west] {$k$};
\draw (-1.4,-2.2) node[anchor=north west] {$\ldots$};
\draw (0.1,-2.2) node[anchor=north west] {$\ldots$};
\draw (1.6,-2.2) node[anchor=north west] {$\ldots$};
\draw (-1.8,4.2) node[anchor=north west] {$1$};
\draw (-0.3,4.2) node[anchor=north west] {$j'$};
\draw (1.2,4.2) node[anchor=north west] {$j$};
\draw (2.7,4.2) node[anchor=north west] {$l$};
\draw (-1.4,2.) node[anchor=north west] {$\ldots$};
\draw (0.1,2.) node[anchor=north west] {$\ldots$};
\draw (1.6,2.) node[anchor=north west] {$\ldots$};
\end{tikzpicture}
$\substack{\displaystyle \stackrel{t_{i',j'}}{\longrightarrow}\\ \vspace{4cm}}$
\begin{tikzpicture}[line cap=round,line join=round,>=triangle 45,x=0.5cm,y=0.5cm]
\clip(-3.5,-4.5) rectangle (4.,5.);
\draw [line width=0.4pt] (-2.,1.)-- (3.5,1.);
\draw [line width=0.4pt] (3.5,1.)-- (3.5,-1.);
\draw [line width=0.4pt] (3.5,-1.)-- (-2.,-1.);
\draw [line width=0.4pt] (-2.,-1.)-- (-2.,1.);
\draw [->,line width=0.4pt] (-1.5,1.) -- (-1.5,3.);
\draw [line width=0.4pt] (0.,1.) -- (0.,3.);
\draw [->,line width=0.4pt] (1.5,1.) -- (1.5,3.);
\draw [->,line width=0.4pt] (3.,1.) -- (3.,3.);
\draw [->,line width=0.4pt] (-1.5,-3.) -- (-1.5,-1.);
\draw [line width=0.4pt] (0.,-3.) -- (0.,-1.);
\draw [->,line width=0.4pt] (1.5,-3.) -- (1.5,-1.);
\draw [->,line width=0.4pt] (3.,-3.) -- (3.,-1.);
\draw (0.25,0.5) node[anchor=north west] {$G$};
\draw (-1.8,-3) node[anchor=north west] {$1$};
\draw (0.3,-3) node[anchor=north west] {$i-1$};
\draw (2.1,-3) node[anchor=north west] {$k-1$};
\draw (-1.4,-2.2) node[anchor=north west] {$\ldots$};
\draw (0.1,-2.2) node[anchor=north west] {$\ldots$};
\draw (1.6,-2.2) node[anchor=north west] {$\ldots$};
\draw (-1.8,4.2) node[anchor=north west] {$1$};
\draw (0.3,4.2) node[anchor=north west] {$j-1$};
\draw (2.1,4.2) node[anchor=north west] {$l-1$};
\draw (-1.4,2.) node[anchor=north west] {$\ldots$};
\draw (0.1,2.) node[anchor=north west] {$\ldots$};
\draw (1.6,2.) node[anchor=north west] {$\ldots$};
\draw [shift={(-1.5,-3.)},line width=0.4pt]  plot[domain=3.141592653589793:6.283185307179586,variable=\t]({1.*1.5*cos(\t r)+0.*1.5*sin(\t r)},{0.*1.5*cos(\t r)+1.*1.5*sin(\t r)});
\draw [shift={(-1.5,3.)},line width=0.4pt]  plot[domain=0.:3.141592653589793,variable=\t]({1.*1.5*cos(\t r)+0.*1.5*sin(\t r)},{0.*1.5*cos(\t r)+1.*1.5*sin(\t r)});
\draw [->,line width=0.4pt] (-3.,3.) -- (-3.,-3.);
\end{tikzpicture}
$\substack{\displaystyle \stackrel{t_{i-1,j-1}}{\longrightarrow}\\ \vspace{4cm}}$
\begin{tikzpicture}[line cap=round,line join=round,>=triangle 45,x=0.5cm,y=0.5cm]
\clip(-3.5,-4.5) rectangle (5.,5.);
\draw [line width=0.4pt] (-2.,1.)-- (3.5,1.);
\draw [line width=0.4pt] (3.5,1.)-- (3.5,-1.);
\draw [line width=0.4pt] (3.5,-1.)-- (-2.,-1.);
\draw [line width=0.4pt] (-2.,-1.)-- (-2.,1.);
\draw [->,line width=0.4pt] (-1.5,1.) -- (-1.5,3.);
\draw [line width=0.4pt] (0.,1.) -- (0.,3.);
\draw [line width=0.4pt] (1.5,1.) -- (1.5,3.);
\draw [->,line width=0.4pt] (3.,1.) -- (3.,3.);
\draw [->,line width=0.4pt] (-1.5,-3.) -- (-1.5,-1.);
\draw [line width=0.4pt] (0.,-3.) -- (0.,-1.);
\draw [line width=0.4pt] (1.5,-3.) -- (1.5,-1.);
\draw [->,line width=0.4pt] (3.,-3.) -- (3.,-1.);
\draw (0.25,0.5) node[anchor=north west] {$G$};
\draw (-1.8,-3) node[anchor=north west] {$1$};
\draw (2.1,-3) node[anchor=north west] {$k-2$};
\draw (-1.4,-2.2) node[anchor=north west] {$\ldots$};
\draw (0.1,-2.2) node[anchor=north west] {$\ldots$};
\draw (1.6,-2.2) node[anchor=north west] {$\ldots$};
\draw (-1.8,4.2) node[anchor=north west] {$1$};
\draw (2.1,4.2) node[anchor=north west] {$l-2$};
\draw (-1.4,2.) node[anchor=north west] {$\ldots$};
\draw (0.1,2.) node[anchor=north west] {$\ldots$};
\draw (1.6,2.) node[anchor=north west] {$\ldots$};
\draw [shift={(-1.5,-3.)},line width=0.4pt]  plot[domain=3.141592653589793:6.283185307179586,variable=\t]({1.*1.5*cos(\t r)+0.*1.5*sin(\t r)},{0.*1.5*cos(\t r)+1.*1.5*sin(\t r)});
\draw [shift={(-1.5,3.)},line width=0.4pt]  plot[domain=0.:3.141592653589793,variable=\t]({1.*1.5*cos(\t r)+0.*1.5*sin(\t r)},{0.*1.5*cos(\t r)+1.*1.5*sin(\t r)});
\draw [->,line width=0.4pt] (-3.,3.) -- (-3.,-3.);
\draw [shift={(3.,-3.)},line width=0.4pt]  plot[domain=3.141592653589793:6.283185307179586,variable=\t]({1.*1.5*cos(\t r)+0.*1.5*sin(\t r)},{0.*1.5*cos(\t r)+1.*1.5*sin(\t r)});
\draw [shift={(3.,3.)},line width=0.4pt]  plot[domain=0.:3.141592653589793,variable=\t]({1.*1.5*cos(\t r)+0.*1.5*sin(\t r)},{0.*1.5*cos(\t r)+1.*1.5*sin(\t r)});
\draw [->,line width=0.4pt] (4.5,3.) -- (4.5,-3.);
\end{tikzpicture}

\vspace{-2cm}
\end{center}
For Property 3.(b), let us consider $p=G$ a graph.
As the input edge indexed by $i$ in $\sigma \cdot G\cdot \tau$ is the input edge of $G$ indexed by $\tau(i)$
and the output edge indexed by $j$ in $\sigma \cdot G\cdot \tau$ is the output edge of $G$ indexed by $\sigma^{-1}(j)$,
$G_1=t_{i,j}(\sigma\cdot G\cdot \tau)$ is the graph obtained by gluing together
the input indexed by $\tau(j)$ and the output indexed by $\sigma^{-1}(j)$, reindexing the input
according to $\sigma_i$ and the output edges by $\tau_j$, so $G_1=\sigma_i\cdot t_{\tau(i),\sigma^{-1}(j)}(G)\cdot \tau_j$.

Let us prove Property 3.(c). By Lemma \ref{lemmeaxiomessimples}, it is enough to prove it for
$(p,p')=(G,G')$ a pair of graphs and $(i,j)=(1,1)$. In this case,
$e_i$ and $f_j$ are both edges of $G$, so $t_{1,1}(G*G')=t_{1,1}(G)*G'$. 

For Property 3.(d), let us consider the graph $I$ such that
\[V(I)=E(I)=O(I)=I(I)=L(I)=\emptyset,\]
and $IO(I)$ being reduced to a single element. Then for any graph $G$ with $|O(G)|\geq1$,
\[t_{1,2}(I*G)=G.\]
By Lemma \ref{lemmeaxiomessimples}, Property 3.(d) is satisfied, so $\Gr$ is a TraP. 
\end{proof}

\subsection{Free TraPs}

 \label{subsection:free_Traps}
\begin{theo} \label{theoFGlibre}
Let $P$ be a TraP and, for any $k,l\in  \N_0$, let $x_{k,l}\in P(k,l)$ such that:
\begin{align*}
&\forall \sigma \in \sym_l,\: \forall \tau\in \sym_k,&
\sigma\cdot x_{k,l}\cdot \tau&=x_{k,l}.
\end{align*}
There exists a unique TraP morphism $\Phi$ from $\Gr$ to $P$ sending $G_{k,l}$ to $x_{k,l}$ for any $k,l\geqslant 0$.
\end{theo}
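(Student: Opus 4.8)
I want to show that $\Gr$ is the free TraP on the family $(G_{k,l})_{k,l\geq 0}$, where $G_{k,l}$ is the one-vertex cycleless graph of Proposition \ref{prop:cycleless_graphs}. The strategy is to reduce the TraP statement to the ProP freeness statement already available (Theorem \ref{thm:freeness_Gr}), using the fact that every graph in $\Gr$ can be built from the generators $G_{k,l}$, the identity $I_1$, the horizontal concatenation $*$, the symmetric group actions, and the partial traces $t_{i,j}$. Concretely, the loop $\grapheo = t_{1,1}(I_1)$ and, more generally, any cycle is produced by applying partial traces to a cycleless graph. So the first step is a \emph{normal form / generation lemma}: every graph $G\in\Gr(k,l)$ can be written as $G = t_{i_1,j_1}\circ\cdots\circ t_{i_r,j_r}(H)$ for some cycleless, loopless graph $H\in\Grc(k+r,l+r)$, where $r$ is the number of internal edges one must "cut" to destroy all cycles plus the number of loops. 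This is essentially the observation that closing up dangling inputs/outputs of $H$ with the trace maps reconstitutes cycles and loops; it should follow from a careful inspection of the definition of $t_{i,j}$ on graphs given in the appendix.

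\textbf{Construction of $\Phi$.} Given $P$ a TraP and the central elements $x_{k,l}\in P(k,l)$, first use Proposition \ref{prop:cycleless_graphs} (the ProP $\Grc$ is free on $\Grci$, which is the trivial $\sym\times\sym^{op}$-module generated by the $G_{k,l}$): since a TraP is in particular a ProP (via Proposition \ref{prop:trop_prop}, or directly since we have $*$, $I_0$, $I_1$, the symmetric actions, and $\circ$ is reconstructed from traces), there is a unique ProP morphism $\Phi^c:\Grc\longrightarrow P$ with $\Phi^c(G_{k,l})=x_{k,l}$. Then define $\Phi$ on an arbitrary $G\in\Gr$ by choosing a presentation $G=t_{i_1,j_1}\circ\cdots\circ t_{i_r,j_r}(H)$ with $H$ cycleless and loopless, and setting
\[
\Phi(G):=t_{i_1,j_1}\circ\cdots\circ t_{i_r,j_r}\bigl(\Phi^c(H)\bigr).
\]
The first real task is \textbf{well-definedness}: two such presentations of the same $G$ differ by a sequence of moves — permuting the order of the cuts (handled by axiom 3.(a), commutativity of the $t_{i,j}$, together with their compatibility 3.(b) with the symmetric actions), choosing which edge of a cycle to cut, and shuffling the cut edges past $*$-components (axiom 3.(c)). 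I would prove that any two presentations are connected by such moves — again leaning on the explicit appendix description of traces on graphs — and that each move is respected by the formula for $\Phi$ because the corresponding axiom holds in $P$. The relation $t_{1,1}(I_1)=\grapheo$ and the unit axioms 3.(d) pin down the value on loops, consistent with $\Phi^c$ on $I_1$.

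\textbf{Verifying $\Phi$ is a TraP morphism, and uniqueness.} Compatibility with $*$ and the symmetric actions is straightforward once well-definedness is in place, because a presentation of $G*G'$ is obtained by juxtaposing presentations of $G$ and $G'$ (with reindexed cut positions), and $*$ and the actions satisfy the matching axioms in $P$. Compatibility with $t_{i,j}$ is almost tautological from the construction: applying $t_{i,j}$ to $G$ just prepends one more cut to a presentation. By Lemma \ref{lemmemorphismes} it actually suffices to check $\Phi$ commutes with $t_{1,1}$ and intertwines the $\sym\times\sym^{op}$-action, which lightens this step. Uniqueness is immediate: any TraP morphism $\Psi$ with $\Psi(G_{k,l})=x_{k,l}$ must send $I_1$ to $I$ (as $I_1=G_{1,1}$ is the one-vertex graph with one in and one out... more precisely $I_1$ is forced by the unit axioms, or note $G_{1,1}$ together with the trace structure forces it), and then $\Psi(G)=\Psi(t_{i_1,j_1}\cdots t_{i_r,j_r}(H)) = t_{i_1,j_1}\cdots t_{i_r,j_r}(\Psi(H)) = t_{i_1,j_1}\cdots t_{i_r,j_r}(\Phi^c(H))$ since $\Psi|_{\Grc}$ must coincide with the unique ProP morphism $\Phi^c$.

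\textbf{Expected main obstacle.} The crux is the well-definedness of $\Phi$, i.e. controlling the ambiguity in the "cut presentation" $G=t_{i_1,j_1}\circ\cdots\circ t_{i_r,j_r}(H)$. One must show that any two ways of opening up all the cycles and loops of $G$ into a cycleless graph are related by a chain of elementary moves each of which is absorbed by one TraP axiom in $P$; the bookkeeping of edge indices through iterated $t_{i,j}$'s (governed by the fiddly formula \eqref{defalphak} for $\alpha_p$) is where the argument becomes technical. This is presumably why the authors relegate the full proof to the appendix; I would organize the appendix proof around a canonical choice of presentation (e.g. order cuts by a fixed rule on edges) and show every other presentation reduces to it, invoking axioms 3.(a)--(d) move by move.
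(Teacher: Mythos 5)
Your route is genuinely different from the paper's, and it contains a gap. The paper does not pass through the ProP freeness of $\Grc$ at all: it defines $\Phi$ by induction on the number of internal edges, cutting \emph{every} internal edge one at a time via $G=t_{1,1}(G_e)$, so that the base case consists of graphs with no internal edges whatsoever --- horizontal concatenations of the $G_{k,l}$, of $I_1$ and of $\grapheo$, twisted by permutations. There the value of $\Phi$ is forced by $*$, the symmetric actions and the invariance of the $x_{k,l}$ alone, and the only ambiguities to control are the ordering of the components, the permutations acting on them, and the order of the cuts (axioms 2.(c)--(d) and 3.(a)--(b)). Your normal form stops earlier, at a cycleless but not edgeless graph $H$, and then invokes Proposition \ref{prop:cycleless_graphs} together with the derived vertical composition of $P$ from Proposition \ref{prop:trop_prop}.

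That choice is where the gap sits. Among your elementary moves, ``choosing which edge of a cycle to cut'' is not absorbed by any single TraP axiom. If $H$ and $H'$ are obtained from $G$ by cutting edges $e$ and $e'$ of the same cycle, the natural comparison goes through the common refinement $H''$ in which both edges are cut, and reduces (after axiom 3.(a)) to the identity $\Phi^c(t_{a,b}(H''))=t_{a,b}(\Phi^c(H''))$ for a partial trace $t_{a,b}$ sending a cycleless graph to a cycleless graph. But $\Phi^c$ is only known to be a morphism of ProPs; it has no a priori compatibility with any trace map. To establish this identity you must first show that every cyclelessness-preserving trace on $\Grc$ is expressible through the ProP operations (split $H''$ along an acyclic cut into an upper and a lower part --- possible because the new edge lies on no cycle --- pad with identity strands, and realise the single trace as a full vertical composition), and then verify inside $P$ that this single trace of $\Phi^c(H'')$ agrees with the derived composition $t_{k+1,1}\circ\cdots\circ t_{k+l,l}(p*q)$; that verification is a genuine computation with axioms 3.(a)--(d) and is absent from your sketch. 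The same lemma resurfaces when you check that $\Phi$ commutes with a trace that does not close a cycle, so it cannot be sidestepped by fixing a canonical presentation. None of this is needed if one cuts all the way down to edgeless graphs, which is exactly why the paper's induction bypasses the ProP freeness theorem. Your uniqueness argument and the remaining verifications are fine once well-definedness is secured.
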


\begin{proof} 
We provide here a sketch of the proof,  and refer the reader to the appendix for a full proof. We define $\Phi(G)$ for any graph 
$G\in \Gr(k,l)$ by induction on the number $N$  of internal edges of $G$. 

If $N=0$, then $G$ can be written as
\[G=\grapheo^{*p}*\sigma\cdot(I^{*q}*G_{k_1,l_1}*\ldots *G_{k_r,l_r})\cdot \tau,\]
(recall that $\grapheo$ is the graph with no vertex, and only one edge belonging to $L(G)$)
where $p,q,r\in  \N_0$, $(k_i,k_i) \in  \N_0^2$ for any $i$, and $\sigma \in \sym_{q+k_1+\ldots+k_r}$, $\tau\in \sym_{q+l_1+\ldots+l_r}$. 
We then put:
\[\Phi(G)=t_{1,1}(I)^{*p}*\sigma\cdot(I^{*q}*x_{k_1,l_1}*\ldots*x_{k_r,l_r})\cdot \tau.\]
We can prove that this does not depend on the choice of the decomposition of $G$, with the help of the TraP axioms applied to $P$ 
and the invariance of 
the $x_{k,l}$. 
Let us assume now that $\Phi(G')$ is defined for any graph with $N-1$ internal edges, for a given $N \geqslant 1$.
Let $G$ be a graph with $N$ internal edges and let $e$ be one of these edges. 
Let $G_e$ be a graph obtained by cutting this edge in two, such that $G=t_{1,1}(G_e)$.  We then set:
\[\Phi(G)=t_{1,1}\circ \Phi(G_e).\]
One can prove that this does not depend on the choice of $e$. It can then be shown that $\Phi$ defined as above is compatible with the
partial trace maps. \end{proof}

The following TraP counterpart of Theorem \ref{thm:freeness_GaX} can be proved in a similar way as Theorem \ref{theoFGlibre}:
\begin{theo}  \label{theo:freeTrapplanar}
	 
	Let $X$ be a $\sym\times \sym^{op}$-module, $P$  a TraP and $\varphi:X\longrightarrow P$ be a morphism
	of $\sym\times \sym^{op}$-modules. There exists a unique morphism of TraPs   $\Phi: \PGr(X)\longrightarrow P$,
which	extends $\varphi$  so that the following diagramme commutes:
\[\xymatrix{X\ar[d]_{\iota} \ar[r]^{\varphi}&P\\
\PGr(X)\ar[ru]_{\Phi}&}\]
	where $\iota: X\hookrightarrow \PGr(X)$ is the map that sends  an element $x$ of $X$ to the planar $X$-decorated graph   $G(x)= (PG_{k,l},d)$ with $d$ sending the unique vertex of $PG_{k,l}$ to $x$.

In other words, $\PGr(X)$ is the free TroP generated by the $\sym\times \sym^{op}$-module $X$. 
\end{theo}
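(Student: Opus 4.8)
The plan is to mimic the proof of Theorem~\ref{theoFGlibre}, the difference being that vertices now carry decorations in $X$ rather than all being forced to be the symmetric ``trivially invariant'' generators $G_{k,l}$, and that we work with \emph{planar} graphs so the actions on the half-edges of a vertex $v$ see the full $\sym_{o(v)}\times\sym_{i(v)}^{op}$-module structure of $X$ instead of only permutations of the global inputs/outputs. First I would recall that $\PGr(X)$ is a TraP: it is a $\sym\times\sym^{op}$-module, it has the horizontal concatenation inherited from $\PGr$, and it carries the partial trace maps $t_{i,j}$ built exactly as in Subsection~\ref{subsection:generalised_graphs} and Proposition~\ref{prop:TrapGG} (these operations never touch the decoration map $d_G$, only the edge/indexation data, so the TraP axioms 2.(a)--(d) and 3.(a)--(d) go through verbatim, with $I$ the one-input-output-edge graph and $\grapheo$ the single loop, neither decorated). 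The map $\iota$ is then a morphism of $\sym\times\sym^{op}$-modules by construction, since $\sigma\cdot_v(PG_{k,l})\cdot_v\tau$ is the planar graph with its unique vertex relabelled, which is identified via $\iota$ with $PG_{k,l}$ decorated by $\sigma\cdot x\cdot\tau$.

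Next I would construct $\Phi$. As in the sketch of Theorem~\ref{theoFGlibre}, define $\Phi(G,d_G)$ by induction on $N=|E(G)|$, the number of internal edges. For $N=0$, a planar $X$-decorated graph with no internal edges decomposes (non-uniquely) as
\[
(G,d_G)=\grapheo^{*p}*\ \sigma\cdot\bigl(I^{*q}*\,\iota(x_1)*\cdots*\iota(x_r)\bigr)\cdot\tau,
\]
where $x_i=d_G(v_i)\in X(k_i,l_i)$ runs over the vertices of $G$ with their planar half-edge orders, and $\sigma,\tau$ are suitable permutations of the global outputs resp.\ inputs; one sets
\[
\Phi(G,d_G)=t_{1,1}(I)^{*p}*\ \sigma\cdot\bigl(I^{*q}*\,\varphi(x_1)*\cdots*\varphi(x_r)\bigr)\cdot\tau.
\]
For $N\geq 1$, pick an internal edge $e$, cut it to obtain a graph $G_e$ with $N-1$ internal edges and two new half-edges placed in global position $1$ so that $(G,d_G)=t_{1,1}(G_e,d_{G_e})$ (the decoration is unchanged since cutting does not affect vertices), and set $\Phi(G,d_G)=t_{1,1}\circ\Phi(G_e,d_{G_e})$. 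Then one checks well-definedness: independence of the $N=0$ decomposition follows from the TraP axioms in $P$ applied to the $\varphi(x_i)$ together with the fact that $\varphi$ is a $\sym\times\sym^{op}$-module morphism, hence intertwines the $\sym_{o(v_i)}\times\sym_{i(v_i)}^{op}$-actions on the half-edges (this is precisely where the planar setting matters, and it replaces the ``$\sigma\cdot x_{k,l}\cdot\tau=x_{k,l}$'' invariance used in Theorem~\ref{theoFGlibre}); independence of the choice of $e$ for $N\geq 1$ follows from axiom 3.(a) (commutativity of the $t_{i,j}$) in $P$, exactly as in the non-decorated proof. Finally one verifies that $\Phi$ is a morphism of TraPs: compatibility with $*$ and with the symmetric actions is immediate from the definition, and compatibility with the partial traces reduces, via Lemma~\ref{lemmemorphismes}, to checking $t_{1,1}\circ\Phi=\Phi\circ t_{1,1}$, which holds by the very inductive definition of $\Phi$. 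Uniqueness is the easy direction: $\PGr(X)$ is generated as a TraP by horizontal concatenations of $I$, $\grapheo=t_{1,1}(I)$ and the one-vertex graphs $\iota(x)$ together with iterated $t_{i,j}$'s, so any TraP morphism extending $\varphi$ is forced to agree with $\Phi$ on generators and hence everywhere.

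The main obstacle will be the well-definedness at $N=0$: one must show that two different ways of writing a disjoint-union-of-vertices planar graph as $\grapheo^{*p}*\sigma\cdot(I^{*q}*\iota(x_1)*\cdots*\iota(x_r))\cdot\tau$ give the same value in $P$. This requires a careful bookkeeping argument showing that any two such decompositions differ by a sequence of elementary moves --- reordering the factors via the commutativity axiom 2.(d) with the block-transposition permutations $c_{m,n}$, absorbing permutations of a vertex's own half-edges using axiom 2.(c) and the module-morphism property of $\varphi$, and shuffling the $I$'s and $\grapheo$'s past the $\iota(x_i)$'s using centrality of $P(0,0)$ and the unit axioms --- each of which is respected by the assignment $x_i\mapsto\varphi(x_i)$. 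This is a routine but genuinely bureaucratic normal-form argument, and it is the technical heart deferred, as the authors indicate, to the appendix; the rest of the proof is a transcription of Theorem~\ref{theoFGlibre} with ``symmetric generator $G_{k,l}$'' systematically replaced by ``decorated vertex $\iota(x)$'' and ``global $\sym\times\sym^{op}$-invariance'' by ``$\varphi$ is a $\sym\times\sym^{op}$-morphism.''
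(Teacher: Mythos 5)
Your proposal follows exactly the route the paper intends: it transposes the appendix proof of Theorem~\ref{theoFGlibre} (induction on the number of internal edges, a normal form at $N=0$, edge-cutting and $t_{1,1}$ for the inductive step, Lemma~\ref{lemmemorphismes} for the trace compatibility), with the invariance of the generators $x_{k,l}$ replaced by the rigidity coming from planarity together with the fact that $\varphi$ is a $\sym\times\sym^{op}$-morphism --- which is precisely the substitution described in the remark following the theorem. The only point you state too quickly is the compatibility $\Phi\circ t_{1,1}=t_{1,1}\circ\Phi$: it is immediate from the inductive definition only when the trace creates a genuine internal edge, and the remaining cases (input-output edges, loops) require the unit axiom 3.(d), exactly as in the five-case analysis closing the appendix proof of Theorem~\ref{theoFGlibre}.
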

\begin{remark}  
     The invariance condition of $x_{k,l}$ in Theorem \ref{theoFGlibre} is replaced here with the planar condition on graphs. 
     They play the same role, 
     namely to allow us to show that the map $\Phi$, defined inductively, is indeed well-defined.
    \end{remark}

\section{The functor $\Gacirc$ applied on TraPs} \label{sec:functor_Garcirc_TraP}

\subsection{The functor $\Gacirc$ as an endofunctor of $\Trap$}

\begin{prop}\label{prop::GaTraP}
Let $X$ be a $\sym\times \sym^{op}$-module. Then $\Gacirc(X)$ is a TraP.
\end{prop}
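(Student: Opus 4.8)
The plan is to show that $\Gacirc(X)$, which we already know is a $\sym\times\sym^{op}$-module equipped with a horizontal concatenation (inherited from $\PGr(X)$, see Theorem \ref{thm:prop_struct_gacirc}), can be endowed with partial trace maps making it a TraP. The natural source for these maps is the TraP structure on $\PGr$ (Proposition \ref{prop:TrapGG}): since $\PGr(X)$ is built by decorating the vertices of planar graphs with elements of $X$, and the partial trace map $t_{i,j}$ on $\PGr$ only manipulates input/output edges and their indexations while leaving the set of vertices untouched, we can lift $t_{i,j}$ to a map on $\PGr(X)(k,l)$ by transporting the decorations along the identity on vertices. First I would make this precise: for a decorated planar graph $(G,d_G)$ with $i(G)=k$, $o(G)=l$ and $1\le i\le k$, $1\le j\le l$, set $t_{i,j}(G,d_G):=(t_{i,j}(G),d_G)$, the decoration being unchanged since $V(t_{i,j}(G))=V(G)$. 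This clearly extends linearly to $\PGr(X)(k,l)$.

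The next step is to check that this operation descends to the quotient $\Gacirc(X)(k,l)=\PGr(X)(k,l)/I(k,l)$. I would verify that $t_{i,j}$ sends the ideal $I(k,l)$ into $I(k-1,l-1)$. A generator of $I(k,l)$ has the form $\sigma\cdot_{v_0}G\cdot_{v_0}\tau\otimes(\bigotimes x_v) - G\otimes((\bigotimes_{v\ne v_0}x_v)\otimes\sigma\cdot x_{v_0}\cdot\tau)$, where the action $\cdot_{v_0}$ is on the orders of the edges incident to $v_0$. Applying $t_{i,j}$ glues an input edge to an output edge of the graph; this operation commutes with the local reordering at a vertex $v_0$ (provided $v_0$ is still a vertex of the new graph, which it always is, since tracing removes no vertices), because $\cdot_{v_0}$ and $t_{i,j}$ act on disjoint pieces of combinatorial data — the former on $I(v_0),O(v_0)$, the latter on the indexations $\alpha,\beta$ of $G$. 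Hence $t_{i,j}$ of a generator of $I(k,l)$ is again of the same form, i.e. a generator of $I(k-1,l-1)$, so $t_{i,j}(I(k,l))\subseteq I(k-1,l-1)$ and the partial trace maps are well defined on $\Gacirc(X)$.

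Finally I would check axioms 3.(a)--(d) of Definition \ref{defi:Trap}. Axioms 3.(a) (commutativity of partial traces), 3.(b) (compatibility with the symmetric actions), and 3.(c) (compatibility with horizontal concatenation) hold on $\PGr$ by Proposition \ref{prop:TrapGG}, and since all the structures on $\Gacirc(X)$ are induced from $\PGr(X)$ (with decorations carried passively along the identity on vertices), each axiom descends verbatim to $\Gacirc(X)$: for 3.(c) one uses Lemma \ref{lemmeaxiomessimples}(1), checking only $t_{1,1}((G,d_G)*(G',d_{G'}))=t_{1,1}(G,d_G)*(G',d_{G'})$, which holds because in $G*G'$ the edge indexed $1$ on each side belongs to $G$, exactly as in $\PGr$. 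For the unit one takes $I$ to be the class of the planar graph with empty vertex set and $IO(I)$ a single element (the same $I$ as in the ProP structure, which has no vertices hence no decoration); then $t_{1,2}(I*(G,d_G))=(G,d_G)$ as in Proposition \ref{prop:TrapGG}, and Lemma \ref{lemmeaxiomessimples}(2) gives axiom 3.(d). The main obstacle is really the verification that $t_{i,j}$ preserves the defining ideal $I(k,l)$; once the commutation of the vertex-reordering action with the edge-gluing operation is set up carefully, the rest is a routine transfer of the already-established TraP axioms for $\PGr$ through the quotient and the decoration functor.
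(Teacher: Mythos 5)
Your proposal is correct and follows essentially the same route as the paper: first endow $\PGr(X)$ with a TraP structure as in Proposition \ref{prop:TrapGG}, then observe that $t_{i,j}$ leaves the vertex set (and hence the local orders at each vertex) untouched, so it commutes with the reordering action $\cdot_{v_0}$ and therefore maps the ideal $I(k,l)$ into $I(k-1,l-1)$, allowing the partial traces to descend to the quotient $\Gacirc(X)$. Your write-up is in fact more detailed than the paper's, which compresses both steps into a few lines.
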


\begin{proof}
Similarly to the proof of  Proposition \ref{prop:TrapGG} concerning  $\Gr$, we can   prove that  $\PGr(X)$ is a TraP.

If $G$ and $G'$ are two $X$-decorated planar graphs such that $G'$ is obtained from $G$ 
by the action of permutations on the incoming and outgoing edges of a vertex of $G$,
then clearly, for any relevant $i$ and $j$, $t_{i,j}(G')$ is obtained from $G$ by the same operation.
So $t_{i,j}(G-G')\in I$, and the partial trace maps of $\PGr(X)$ induce partial trace maps on $\Gacirc(X)$. 
\end{proof}

Hence, $\Gacirc$ is a functor from the category $\catssm$ to the category $\Trap$. 
Combining with the forgetful functor $F:\Trap\longrightarrow \catssm$,
we obtain an endofunctor $\Gacirc\circ F:\Trap\longrightarrow \Trap$,
which we denote by $\Gacirc$, with a  slight  abuse of   notations.
As for ProPs (Corollary \ref{cor:PdecGrc} and Proposition \ref{prop:functGrc}), we have the following statement.

\begin{prop}\label{prop:PdecGrc}
 Given a  TraP $P$, there is a canonical morphism of TraPs \[ \alpha_P:\Gacirc(P)\longrightarrow P\] 
 induced by the decoration. 
These maps define a natural transformation 
from the endofunctor $\Gacirc$ to the identity endofunctor of $\Trap$,
that is to say: for any morphism of TraP $\varphi:P\longrightarrow Q$, the following diagram commutes:
\begin{align*}
&\xymatrix{\Gacirc(P)\ar[r]^{\Gacirc(\varphi)} \ar[d]_{\alpha_P}&\Gacirc(Q)\ar[d]^{\alpha_Q}\\
P\ar[r]_{\varphi}&Q}
\end{align*}
\end{prop}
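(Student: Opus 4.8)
The plan is to obtain $\alpha_P$ from the universal property of free TraPs and then verify naturality on a generating set, in close parallel with the ProP statements already established for $\Ga$ in Corollary \ref{cor:PdecGrc} and Proposition \ref{prop:functGrc}.

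For the existence of $\alpha_P$, I would view $P$ as a $\sym\times\sym^{op}$-module through the forgetful functor and apply Theorem \ref{theo:freeTrapplanar} to $\mathrm{Id}_P\colon P\to P$; this yields a unique morphism of TraPs $\widetilde{\alpha}_P\colon\PGr(P)\to P$ with $\widetilde{\alpha}_P\circ\iota=\mathrm{Id}_P$, which concretely evaluates a $P$-decorated planar graph by performing on its vertex decorations the horizontal concatenations and partial traces prescribed by the graph. It then remains to show that $\widetilde{\alpha}_P$ annihilates the submodule $I$ with $\Gacirc(P)=\PGr(P)/I$. This is the TraP counterpart of the existence argument in the proof of Theorem \ref{thm:freeness_GaX}: it is enough to test $\widetilde{\alpha}_P$ on a generator of the form \eqref{eq:quotient_monad}, where the subtlety is that twisting the total orders on $I(v_0)$ and $O(v_0)$ by $(\tau,\sigma)$ modifies the contribution of the decoration $x_{v_0}$ to the evaluation precisely by pre- and post-composition with $\tau$ and $\sigma$; by the compatibility of $*$ and of the partial trace maps of $P$ with the symmetric actions (axioms 2(c) and 3(b) of Definition \ref{defi:Trap}), this equals the contribution of $\sigma\cdot x_{v_0}\cdot\tau$, so both terms of \eqref{eq:quotient_monad} have equal image. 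Hence $\widetilde{\alpha}_P$ descends to a morphism of TraPs $\alpha_P\colon\Gacirc(P)\to P$, which is induced by the decoration and is the unique TraP morphism restricting to $\mathrm{Id}_P$ on the image of $P$ (the one-vertex graphs), whence its canonicity.

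For naturality, given a morphism of TraPs $\varphi\colon P\to Q$, both $\alpha_Q\circ\Gacirc(\varphi)$ and $\varphi\circ\alpha_P$ are composites of morphisms of TraPs, hence morphisms of TraPs $\Gacirc(P)\to Q$, so it suffices to compare them on a set of generators. Since $\PGr(P)$ is generated as a TraP by $\iota(P)$ together with the structural units (so that, in particular, $\grapheo=t_{1,1}(I)$ and all $I_n$ are reached), the quotient $\Gacirc(P)$ is generated as a TraP by the classes $\overline{G}_p$ of the one-vertex planar graph $PG_{k,l}$ with its vertex decorated by $p\in P(k,l)$. Using the description \eqref{eq:Gammapullback} of $\Gacirc(\varphi)$, one computes
\[
\alpha_Q\circ\Gacirc(\varphi)(\overline{G}_p)=\alpha_Q(\overline{G}_{\varphi(p)})=\varphi(p)=\varphi\circ\alpha_P(\overline{G}_p),
\]
so the two morphisms coincide and the naturality square commutes.

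The only genuine obstacle is the claim, used in the existence step, that the contribution of the vertex $v_0$ to the evaluation map $\widetilde{\alpha}_P$ transforms by pre- and post-composition with the permutations twisting its local orders. Making this precise means unwinding the inductive construction of the free-TraP morphism of Theorem \ref{theo:freeTrapplanar} and tracking one vertex through it; I would carry this out by the same bookkeeping as in the analogous step of the proof of Theorem \ref{thm:freeness_GaX} rather than reproduce it in detail.
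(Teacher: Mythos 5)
Your proposal is correct and follows essentially the same route as the paper, which simply invokes "similar arguments" to the ProP case: existence of $\alpha_P$ via the universal property of the free TraP applied to $\mathrm{Id}_P$ (with the descent through the quotient by the ideal generated by \eqref{eq:quotient_monad}, handled exactly as in the existence step of Theorem \ref{thm:freeness_GaX}), and naturality by comparing the two TraP morphisms on the one-vertex generators $\overline{G}_p$. Your write-up in fact supplies more detail than the paper does, in particular by making explicit why the freeness statement for $\PGr(P)$ must be pushed down to $\Gacirc(P)$.
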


\begin{proof}
Similar arguments as in the proofs of Corollary \ref{cor:PdecGrc} and Proposition \ref{prop:functGrc}.
\end{proof}

\subsection{The endofunctor $\Gacirc$ as a monad} \label{subsec:monad}

Let us now equip the endofunctor $\Gacirc$ with a  \textbf{monad} structure, a terminology we 
borrow from \cite[Definition 2.13]{Merkulov2009}.
\begin{defi} 
A monad $\Gamma$  (also called a triple)  on a category $\mathcal C$ is an associative and unital
		monoid $(\Gamma, \mu, \nu)$ in the the unital monoid\footnote{The terminology monoid is used in this definition with the obvious abuse of vocabulary since 
				$\Gamma$ and $\End(\cat)$ are not necessarily sets.} $\End(\cat)$ of endofunctors of $\cat$. This means that the multiplication 
		$\mu:\Gamma\circ \Gamma\longrightarrow \Gamma$ and the unit morphism $\nu:Id_\cat\longrightarrow \Gamma$
		should satisfy the axioms given by commutativity of the diagrams below for any object $P$ of the category $\cat$. 
	\begin{align}
	\label{axiomesmonade}
	&\xymatrix{\Gamma\circ \Gamma\circ \Gamma(P)\ar[r]^{\Gamma(\mu_P)}\ar[d]_{\mu_{\Gamma(P)}}
		&\Gamma\circ \Gamma(P)\ar[d]^{\mu_P}\\
		\Gamma\circ \Gamma(P)\ar[r]_{\mu_P}&\Gamma(E)}&
	\xymatrix{\Gamma(P)\ar[r]^{\Gamma(\nu_P)}\ar[rd]_{Id_\cat}&\Gamma\circ \Gamma(P)\ar[d]_{\mu_P}&
		\Gamma(P)\ar[l]_{\nu_{\Gamma(P)}}\ar[ld]^{Id_\cat}\\
		&\Gamma(P)&}
	\end{align}
\end{defi} We want to define a transformation   $\nu: Id_{\catssm}\longrightarrow \Gacirc$, i.e. maps  $\nu_P:P\longrightarrow \Gacirc(P)$   for any  $\sym\times \sym^{op}$-module $P$. The morphism $\nu_P$ sends
	an element $p\in P(k,l)$ to the class of the graph $PG_{k,l}(p)$ with one vertex $v$  decorated by $p$, 
	and  $k$  incoming edges indexed from left to right by $1,\ldots,k$,  $l$ outgoing edges indexed from left to right by $1,\ldots,l$.

\[\substack{\displaystyle \nu_P(p)=\\ \vspace{2cm}}
\begin{tikzpicture}[line cap=round,line join=round,>=triangle 45,x=0.7cm,y=0.7cm]
\clip(0.6,-2.1) rectangle (2.4,1.7);
\draw [line width=.4pt] (0.8,0.)-- (2.2,0.);
\draw [line width=.4pt] (2.2,0.)-- (2.2,-0.5);
\draw [line width=.4pt] (2.2,-0.5)-- (0.8,-0.5);
\draw [line width=.4pt] (0.8,-0.5)-- (0.8,0.);
\draw (1.2,0.1) node[anchor=north west] {\scriptsize $p$};
\draw [->,line width=.4pt] (1.,0.) -- (1.,1.);
\draw [->,line width=.4pt] (2.,0.) -- (2.,1.);
\draw [->,line width=.4pt] (1.,-1.5) -- (1,-0.5);
\draw [->,line width=.4pt] (2.,-1.5) -- (2.,-0.5);
\draw (0.7,-1.4) node[anchor=north west] {\scriptsize $1$};
\draw (1.05,-1.6) node[anchor=north west] {\scriptsize $\ldots$};
\draw (1.7,-1.4) node[anchor=north west] {\scriptsize $k$};
\draw (0.7,1.6) node[anchor=north west] {\scriptsize $1$};
\draw (1.05,1.4) node[anchor=north west] {\scriptsize $\ldots$};
\draw (1.7,1.6) node[anchor=north west] {\scriptsize $l$};
\end{tikzpicture}\substack{\displaystyle .\\ \vspace{2cm}}\]
\vspace{-1.5cm}

\noindent The morphism $\nu$ is a unit in $\End(\catssm)$ in the following sense: for any morphism $\phi:P\longrightarrow Q$,
the following diagram commutes:
\[\xymatrix{P\ar[r]^{\nu_P} \ar[d]_{\phi} &\Gacirc(P)\ar[d]^{\Gacirc(\phi)}\\
	Q\ar[r]_{\nu_Q}&\Gacirc(Q)}\]

The multiplication is given by  morphisms $\mu_P:\Gacirc\circ \Gacirc(P)\longrightarrow \Gacirc(P)$  attached to   $\sym\times \sym^{op}$-modules $P$.
Elements of $\Gacirc\circ \Gacirc(P)$ are graphs $G$  whose vertices $v$  are decorated by   graphs
$G_v$, consistently with  the number of incoming and outgoing edges. We denote by $\mu_P(G)$
the graph $H$ such that
\[V(H)=\bigsqcup_{v\in V(G)} V(G_v),\]
whose edges are obtained by identifying, for any vertex $v$, the $i$-th incoming edges of $v$
with the $i$-th incoming edge of $G_v$, and the $j$-th outgoing edge of $v$ with the $j$-th outgoing edge of $G_v$.

To illustrate this graphically, here is an example in which $\mu_P$ sends the graph on the left to the graph on the right:
\begin{align*}
&\begin{tikzpicture}[line cap=round,line join=round,>=triangle 45,x=0.7cm,y=0.7cm]
\clip(0.2,-4.1) rectangle (3.8,11.);
\draw [line width=.4pt] (0.8,0.)-- (2.2,0.);
\draw [line width=.4pt] (2.2,0.)-- (2.2,-0.5);
\draw [line width=.4pt] (2.2,-0.5)-- (0.8,-0.5);
\draw [line width=.4pt] (0.8,-0.5)-- (0.8,0.);
\draw (1.2,0.1) node[anchor=north west] {\scriptsize $p$};
\draw [->,line width=.4pt] (1.,0.) -- (1.,1.);
\draw [->,line width=.4pt] (1.5,0.) -- (1.5,1.);
\draw [->,line width=.4pt] (2.,0.) -- (2.,1.);
\draw [->,line width=.4pt] (1.,-1.5) -- (1.,-0.5);
\draw [->,line width=.4pt] (2.,-1.5) -- (2.,-0.5);
\draw (0.7,-1.4) node[anchor=north west] {\scriptsize $1$};
\draw (1.7,-1.4) node[anchor=north west] {\scriptsize $2$};
\draw (0.7,1.6) node[anchor=north west] {\scriptsize $1$};
\draw (1.2,1.6) node[anchor=north west] {\scriptsize $2$};
\draw (1.7,1.6) node[anchor=north west] {\scriptsize $3$};
\draw [line width=.4pt]  (0.3,-2.5) -- (2.7,-2.5);
\draw [line width=.4pt]  (2.7,-2.5) -- (2.7,2.);
\draw [line width=.4pt]  (2.7,2.) -- (0.3,2.);
\draw [line width=.4pt]  (0.3,2.) -- (0.3,-2.5);
\draw [->,line width=.4pt] (0.5,-3.5) -- (0.5,-2.5);
\draw [->,line width=.4pt] (2.5,-3.5) -- (2.5,-2.5);
\draw (0.2,-3.4) node[anchor=north west] {\scriptsize $2$};
\draw (2.2,-3.4) node[anchor=north west] {\scriptsize $1$};
\draw [->,line width=.4pt] (1.5,2.) -- (1.5,3.);
\draw [->,line width=.4pt] (2.5,2.) -- (2.5,3.);
\draw [line width=.4pt]  (1.3,3.) -- (3.7,3.); 
\draw [line width=.4pt]  (3.7,3.) -- (3.7,9.); 
\draw [line width=.4pt]  (3.7,9.) -- (1.3,9.); 
\draw [line width=.4pt]  (1.3,9.) -- (1.3,3.); 
\draw [->,line width=.4pt] (2.,4.) -- (2.,5.);
\draw [->,line width=.4pt] (1.5,9.) -- (1.5,10.);
\draw [->,line width=.4pt] (2.5,9.) -- (2.5,10.);
\draw [->,line width=.4pt] (3.5,9.) -- (3.5,10.);
\draw [->,line width=.4pt] (0.5,2.) -- (0.5,10.);
\draw (0.2,10.6) node[anchor=north west] {\scriptsize $2$};
\draw (1.2,10.6) node[anchor=north west] {\scriptsize $3$};
\draw (2.2,10.6) node[anchor=north west] {\scriptsize $1$};
\draw (3.2,10.6) node[anchor=north west] {\scriptsize $4$};
\draw (1.7,4.1) node[anchor=north west] {\scriptsize $1$};
\draw (2.7,4.1) node[anchor=north west] {\scriptsize $2$};
\draw [->,line width=.4pt] (3.,4.) -- (3.,5.);
\draw [line width=.4pt]  (1.8,5.) -- (3.2,5.);
\draw [line width=.4pt]  (3.2,5.) -- (3.2,5.5);
\draw [line width=.4pt]  (3.2,5.5) -- (1.8,5.5);
\draw [line width=.4pt]  (1.8,5.5) -- (1.8,5.);
\draw (2.2,5.6) node[anchor=north west] {\scriptsize $q$};
\draw [->,line width=.4pt] (2.,5.5) -- (2.,6.5);
\draw [->,line width=.4pt] (3.,5.5) -- (3.,6.5);
\draw [line width=.4pt]  (1.8,6.5) -- (3.2,6.5);
\draw [line width=.4pt]  (3.2,6.5) -- (3.2,7);
\draw [line width=.4pt]  (3.2,7) -- (1.8,7);
\draw [line width=.4pt]  (1.8,7) -- (1.8,6.5);
\draw (2.2,7.1) node[anchor=north west] {\scriptsize $r$};
\draw [->,line width=.4pt] (2.,7.) -- (2.,8.);
\draw [->,line width=.4pt] (2.5,7.) -- (2.5,8.);
\draw [->,line width=.4pt] (3.,7.) -- (3.,8.);
\draw (1.7,8.6) node[anchor=north west] {\scriptsize $1$};
\draw (2.2,8.6) node[anchor=north west] {\scriptsize $2$};
\draw (2.7,8.6) node[anchor=north west] {\scriptsize $3$};
\end{tikzpicture}&&
\begin{tikzpicture}[line cap=round,line join=round,>=triangle 45,x=0.7cm,y=0.7cm]
\clip(0.2,-4.1) rectangle (3.8,11.);
\draw [line width=.4pt] (0.8,0.)-- (2.2,0.);
\draw [line width=.4pt] (2.2,0.)-- (2.2,-0.5);
\draw [line width=.4pt] (2.2,-0.5)-- (0.8,-0.5);
\draw [line width=.4pt] (0.8,-0.5)-- (0.8,0.);
\draw (1.2,0.1) node[anchor=north west] {\scriptsize $p$};
\draw [->,line width=.4pt] (1.,0.) -- (0.5,10.); 
\draw [->,line width=.4pt] (1.5,0.) -- (2.,5.); 
\draw [->,line width=.4pt] (2.,0.) -- (3.,5.);  
\draw [->,line width=.4pt] (0.5,-3.5) -- (1.,-0.5); 
\draw [->,line width=.4pt] (2.5,-3.5) -- (2.,-0.5); 
\draw (0.2,-3.4) node[anchor=north west] {\scriptsize $2$};
\draw (2.2,-3.4) node[anchor=north west] {\scriptsize $1$};
\draw [->,line width=.4pt] (2.,7.) -- (1.5,10.);
\draw [->,line width=.4pt] (2.5,7.) -- (2.5,10.);
\draw [->,line width=.4pt] (3.,7.) -- (3.5,10.);
\draw (0.2,10.6) node[anchor=north west] {\scriptsize $2$};
\draw (1.2,10.6) node[anchor=north west] {\scriptsize $3$};
\draw (2.2,10.6) node[anchor=north west] {\scriptsize $1$};
\draw (3.2,10.6) node[anchor=north west] {\scriptsize $4$};
\draw [line width=.4pt]  (1.8,5.) -- (3.2,5.);
\draw [line width=.4pt]  (3.2,5.) -- (3.2,5.5);
\draw [line width=.4pt]  (3.2,5.5) -- (1.8,5.5);
\draw [line width=.4pt]  (1.8,5.5) -- (1.8,5.);
\draw (2.2,5.6) node[anchor=north west] {\scriptsize $q$};
\draw [->,line width=.4pt] (2.,5.5) -- (2.,6.5);
\draw [->,line width=.4pt] (3.,5.5) -- (3.,6.5);
\draw [line width=.4pt]  (1.8,6.5) -- (3.2,6.5);
\draw [line width=.4pt]  (3.2,6.5) -- (3.2,7);
\draw [line width=.4pt]  (3.2,7) -- (1.8,7);
\draw [line width=.4pt]  (1.8,7) -- (1.8,6.5);
\draw (2.2,7.1) node[anchor=north west] {\scriptsize $r$};
\end{tikzpicture}
\end{align*}
where $p\in P(2,3)$, $q\in P(2,2)$ and $r\in P(2,3)$.

It is  clear from the combinatorics that the relations corresponding to the diagrams  (\ref{axiomesmonade}) are satisfied. Hence:
\begin{prop} \label{prop:monad}
	The triple $\Gacirc=(\Gacirc,\mu,\nu)$ is a monad in the category $\catssm$. 
\end{prop}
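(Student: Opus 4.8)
The plan is to verify that the triple $(\Gacirc, \mu, \nu)$ satisfies the monad axioms, namely associativity of $\mu$ and the two unit laws encoded by the diagrams \eqref{axiomesmonade}, working object-wise on each $\sym\times\sym^{op}$-module $P$ and exploiting the purely combinatorial description of $\mu_P$ and $\nu_P$ in terms of grafting of planar decorated graphs. The main point is that a class in $\Gacirc\circ\Gacirc\circ\Gacirc(P)$ is represented by a planar graph $G$ each of whose vertices $v$ is decorated by a planar graph $G_v$, each vertex of which is in turn decorated by a planar graph, whose vertices are decorated by elements of $P$; both composites $\mu_P\circ\Gacirc(\mu_P)$ and $\mu_P\circ\mu_{\Gacirc(P)}$ produce, up to the defining relations of $\Gacirc$, the single planar graph obtained by performing all the substitutions simultaneously, with vertex set $\bigsqcup_{v\in V(G)}\bigsqcup_{w\in V(G_v)} V(G_{v,w})$ and with edges obtained by matching $i$-th incoming (resp. $j$-th outgoing) edges at each level of substitution.

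First I would fix $P$ in $\catssm$ and check the two unit laws, which are the easier half. For $\mu_P\circ\Gacirc(\nu_P)=\mathrm{Id}$: a class in $\Gacirc(P)(k,l)$ is represented by a planar graph $G$ with vertices decorated by elements $p_v\in P(i(v),o(v))$; applying $\Gacirc(\nu_P)$ replaces each decoration $p_v$ by the one-vertex graph $PG_{i(v),o(v)}(p_v)$, and then $\mu_P$ substitutes each such one-vertex graph back into $v$, which reproduces $G$ on the nose (the incoming/outgoing edge indexations match trivially since $PG_{i(v),o(v)}$ carries the standard left-to-right order). For $\mu_P\circ\nu_{\Gacirc(P)}=\mathrm{Id}$: $\nu_{\Gacirc(P)}$ sends a class $[G]\in\Gacirc(P)(k,l)$ to the one-vertex planar graph $PG_{k,l}$ whose single vertex is decorated by $[G]$, and $\mu_P$ then substitutes $G$ into that vertex, again recovering $[G]$. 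In both cases one also checks the computation descends to the quotient by the ideal $I$ of relations \eqref{eq:quotient_monad}, which is immediate since the substitution operations are equivariant for the vertex-wise symmetric group actions. I would also record that $\nu$ is a natural transformation, using the square displayed just after the definition of $\nu_P$, which follows since $\Gacirc(\phi)$ acts by post-composition $d_G\mapsto\phi\circ d_G$ on decorations and commutes with forming the one-vertex graph.

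Next I would treat associativity, the substance of the proposition. Given a class in $\Gacirc\circ\Gacirc\circ\Gacirc(P)(k,l)$ represented by $(G, (G_v)_{v\in V(G)}, (G_{v,w})_{v,w})$, I would compute both sides explicitly. The composite $\mu_P\circ\Gacirc(\mu_P)$ first applies $\mu_P$ inside each decoration, i.e. replaces the decoration $[G_v]\in\Gacirc(P)$ of $v$ by the substituted graph $\mu_P(G_v)=H_v$ with $V(H_v)=\bigsqcup_{w\in V(G_v)}V(G_{v,w})$, and then applies $\mu_P$ to the outer graph $G$, substituting each $H_v$ into $v$; the result is a planar graph with vertex set $\bigsqcup_{v}\bigsqcup_{w}V(G_{v,w})$ decorated by elements of $P$, with edges matched level by level. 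The composite $\mu_P\circ\mu_{\Gacirc(P)}$ first applies $\mu_{\Gacirc(P)}$ to the outer two levels, producing the graph $K$ with $V(K)=\bigsqcup_v V(G_v)$, each vertex $w\in V(G_v)$ decorated by $[G_{v,w}]\in\Gacirc(P)$, and then applies $\mu_P$ to $K$, substituting $G_{v,w}$ into $w$; the result again has vertex set $\bigsqcup_v\bigsqcup_w V(G_{v,w})$ with the same edges. Writing out the edge-identification recipe carefully — an edge of the triply-iterated object is either an internal edge of some $G_{v,w}$, or arises from matching an outgoing edge of $G_{v,w}$ with an incoming edge of $G_{v,w'}$ at index dictated by an edge of $G_v$, or from matching across an edge of $G$ — one sees both composites yield this same description.

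The hard part, as usual with such combinatorial monads, is bookkeeping rather than any conceptual difficulty: one must be scrupulous about the indexations $\alpha,\beta$ on all three nested levels and about the fact that substitution is associative precisely because matching ``$i$-th outgoing to $i$-th incoming'' is transitive through the intermediate layer. I would handle this by giving the explicit formula for $V(H)$, $E(H)$ and the source/target/indexation maps of the fully-substituted graph $H$ and remarking that it manifestly does not depend on the order in which the two substitutions are performed, and that the verification is equivariant so it passes to the quotients $\Gacirc(\cdot)=\PGr(\cdot)/I$. Since the excerpt already grants that ``it is clear from the combinatorics that the relations corresponding to the diagrams \eqref{axiomesmonade} are satisfied,'' the write-up can legitimately stay at the level of this combinatorial description, and I would present it as: object-wise, both sides of each diagram compute the same planar $P$-decorated graph obtained by iterated grafting, and both $\mu$ and $\nu$ are natural, hence $(\Gacirc,\mu,\nu)$ is a monad on $\catssm$.
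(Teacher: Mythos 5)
Your proposal is correct and follows essentially the same route as the paper, which disposes of the proposition by asserting that the relations corresponding to the diagrams (\ref{axiomesmonade}) are ``clear from the combinatorics'' of iterated grafting; you simply make that combinatorial verification explicit (unit laws, associativity of substitution, descent to the quotient by $I$, and naturality of $\nu$). The extra care you take with the quotient and the indexations is a welcome elaboration rather than a departure.
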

 
\section{TraPs versus ProPs}

We have built the free TraPs by means of graphs  discussed in Subsection \ref{subsection:generalised_graphs}. 
This, together with the functor 
$\Gacirc$ of Sections \ref{section:Hom_dec_graphs} and \ref{sec:functor_Garcirc_TraP} will allow us to show the equivalence of the categories  of 
TraPs and wheeled ProPs.

\subsection{TraPs are wheeled ProPs} \label{subsec:Trap_wProPs}

The  free  TraP we previously built from a given TraP  enables us to relate TraPs and Merkulov's notion of wheeled ProPs \cite{Merkulov2009}.
We now build algebras on the monad $\Gacirc$. Let us first recall the notion of  $\Gamma$-algebra  (see e.g. \cite[Definition 2.1.4]{Merkulov2009}).
\begin{defi} Let $\cat$ be a category. An   algebra over a monad  $\Gamma\in\End(\cat)$ or a \textbf{$\Gamma$-algebra} is an object $P$ of $\cat$ together with a
	structure morphism $\alpha: \Gamma(P)\to P$  such that
the following diagrams commute:
\begin{align}
\label{axiomesalgebres}
&\xymatrix{\Gamma\circ \Gamma(P)\ar[r]^{\Gamma(\alpha)} \ar[d]_{\mu_P}\ar[d]_{\mu_P}&\Gamma(P) \ar[d]^\alpha\\
\Gamma(P)\ar[r]_\alpha&P}&
&\xymatrix{P\ar[r]^{\nu_P}\ar[d]_{Id}&\Gamma(P)\ar[ld]^{\alpha}\\P&}
\end{align}
\end{defi}
\begin{prop} \label{propgammatotrop}
Any $\Gacirc$-algebra $(P,\alpha)$ defines a TraP defined as follows:
\begin{itemize}
\item For any $(p,p')\in P(k,l)\times P(k',l')$, $p*p'$ is obtained by  applying  $\alpha$ to the following graph:
\begin{align*}
\begin{tikzpicture}[line cap=round,line join=round,>=triangle 45,x=0.7cm,y=0.7cm]
\clip(0.8,-2.1) rectangle (2.2,1.7);
\draw [line width=.4pt] (0.8,0.)-- (2.2,0.);
\draw [line width=.4pt] (2.2,0.)-- (2.2,-0.5);
\draw [line width=.4pt] (2.2,-0.5)-- (0.8,-0.5);
\draw [line width=.4pt] (0.8,-0.5)-- (0.8,0.);
\draw (1.2,0.1) node[anchor=north west] {\scriptsize $p$};
\draw [->,line width=.4pt] (1.,0.) -- (1.,1.);
\draw [->,line width=.4pt] (2.,0.) -- (2.,1.);
\draw [->,line width=.4pt] (1.,-1.5) -- (1,-0.5);
\draw [->,line width=.4pt] (2.,-1.5) -- (2.,-0.5);
\draw (0.7,-1.4) node[anchor=north west] {\scriptsize $1$};
\draw (1.1,-1.6) node[anchor=north west] {\scriptsize $\ldots$};
\draw (1.7,-1.4) node[anchor=north west] {\scriptsize $k$};
\draw (0.7,1.7) node[anchor=north west] {\scriptsize $1$};
\draw (1.05,1.5) node[anchor=north west] {\scriptsize $\ldots$};
\draw (1.7,1.7) node[anchor=north west] {\scriptsize $l$};
\end{tikzpicture}
\begin{tikzpicture}[line cap=round,line join=round,>=triangle 45,x=0.7cm,y=0.7cm]
\clip(-0.3,-2.1) rectangle (3.4,1.7);
\draw [line width=.4pt] (0.8,0.)-- (2.2,0.);
\draw [line width=.4pt] (2.2,0.)-- (2.2,-0.5);
\draw [line width=.4pt] (2.2,-0.5)-- (0.8,-0.5);
\draw [line width=.4pt] (0.8,-0.5)-- (0.8,0.);
\draw (1.2,0.2) node[anchor=north west] {\scriptsize $p'$};
\draw [->,line width=.4pt] (1.,0.) -- (1.,1.);
\draw [->,line width=.4pt] (2.,0.) -- (2.,1.);
\draw [->,line width=.4pt] (1.,-1.5) -- (1,-0.5);
\draw [->,line width=.4pt] (2.,-1.5) -- (2.,-0.5);
\draw (0.1,-1.4) node[anchor=north west] {\scriptsize $k+1$};
\draw (1.1,-1.6) node[anchor=north west] {\scriptsize $\ldots$};
\draw (1.7,-1.35) node[anchor=north west] {\scriptsize $k+k'$};
\draw (0.1,1.7) node[anchor=north west] {\scriptsize $l+1$};
\draw (1.05,1.5) node[anchor=north west] {\scriptsize $\ldots$};
\draw (1.7,1.75) node[anchor=north west] {\scriptsize $l+l'$};
\end{tikzpicture}\end{align*}
\item For any $p\in P(k,l)$, for any $(i,j)\in [k]\times [l]$, 
$t_{i,j}(p)$ is obtained by the application of $\alpha$ to the following graph:
\begin{align*}
\begin{tikzpicture}[line cap=round,line join=round,>=triangle 45,x=0.7cm,y=0.7cm]
\clip(0.8,-2.6) rectangle (6.5,2.1);
\draw [line width=.4pt] (0.8,0.)-- (5.2,0.);
\draw [line width=.4pt] (5.2,0.)-- (5.2,-0.5);
\draw [line width=.4pt] (5.2,-0.5)-- (0.8,-0.5);
\draw [line width=.4pt] (0.8,-0.5)-- (0.8,0.);
\draw (2.7,0.1) node[anchor=north west] {\scriptsize $p$};
\draw [->,line width=.4pt] (1.,0.) -- (1.,1.);
\draw [->,line width=.4pt] (2.,0.) -- (2.,1.);
\draw [->,line width=.4pt] (3.,0.) -- (3.,1.5);
\draw [->,line width=.4pt] (4.,0.) -- (4.,1.);
\draw [->,line width=.4pt] (5.,0.) -- (5.,1.);
\draw [->,line width=.4pt] (1.,-1.5) -- (1,-0.5);
\draw [->,line width=.4pt] (2.,-1.5) -- (2.,-0.5);
\draw [->,line width=.4pt] (3.,-2) -- (3,-0.5);
\draw [->,line width=.4pt] (4.,-1.5) -- (4.,-0.5);
\draw [->,line width=.4pt] (5.,-1.5) -- (5.,-0.5);
\draw (0.7,-1.4) node[anchor=north west] {\scriptsize $1$};
\draw (1.1,-1.) node[anchor=north west] {\scriptsize $\ldots$};
\draw (1.2,-1.4) node[anchor=north west] {\scriptsize $i-1$};
\draw (3.7,-1.4) node[anchor=north west] {\scriptsize $i$};
\draw (4.1,-1.) node[anchor=north west] {\scriptsize $\ldots$};
\draw (4.5,-1.4) node[anchor=north west] {\scriptsize $k-1$};
\draw (0.7,1.6) node[anchor=north west] {\scriptsize $1$};
\draw (1.1,0.6) node[anchor=north west] {\scriptsize $\ldots$};
\draw (1.2,1.6) node[anchor=north west] {\scriptsize $j-1$};
\draw (3.7,1.6) node[anchor=north west] {\scriptsize $j$};
\draw (4.1,0.6) node[anchor=north west] {\scriptsize $\ldots$};
\draw (4.5,1.6) node[anchor=north west] {\scriptsize $l-1$};
\draw [shift={(3.5,-2.)},line width=0.4pt]  plot[domain=3.141592653589793:4.71238898,variable=\t]({1.*0.5*cos(\t r)+0.*0.5*sin(\t r)},{0.*0.5*cos(\t r)+1.*0.5*sin(\t r)});
\draw [line width=.4pt] (3.5,-2.5) -- (6.,-2.5);
\draw [shift={(6.,-2.)},line width=0.4pt]  plot[domain=4.71238898:6.283185307,variable=\t]({1.*0.5*cos(\t r)+0.*0.5*sin(\t r)},{0.*0.5*cos(\t r)+1.*0.5*sin(\t r)});
\draw [shift={(3.5,1.5)},line width=0.4pt]  plot[domain=1.570796327:3.141592653589793,variable=\t]({1.*0.5*cos(\t r)+0.*0.5*sin(\t r)},{0.*0.5*cos(\t r)+1.*0.5*sin(\t r)});
\draw [line width=.4pt] (3.5,2.) -- (6.,2.);
\draw [shift={(6,1.5)},line width=0.4pt]  plot[domain=0.:1.570796327,variable=\t]({1.*0.5*cos(\t r)+0.*0.5*sin(\t r)},{0.*0.5*cos(\t r)+1.*0.5*sin(\t r)});
\draw [line width=.4pt] (6.5,-2.) -- (6.5,1.5);
\end{tikzpicture}
\end{align*}
\end{itemize}
\end{prop}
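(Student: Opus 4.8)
\textbf{Plan of proof of Proposition \ref{propgammatotrop}.}
The plan is to verify directly that the operations $*$ and $t_{i,j}$ defined from the structure map $\alpha$ endow $P$ with a TraP structure, by systematically translating each TraP axiom into a statement about the monad $\Gacirc$ and exploiting the two commuting squares (\ref{axiomesalgebres}) together with the explicit combinatorial description of $\mu_P$ and $\nu_P$ (disjoint union and grafting of graphs, identification of $i$-th incoming/outgoing edges). First I would record the key bookkeeping fact: because $\Gacirc(P)$ is itself a TraP (Proposition \ref{prop::GaTraP}) and $\alpha$ is by definition compatible with the $\sym\times\sym^{op}$-action, horizontal concatenation and partial traces of $\Gacirc(P)$ insofar as these come from the monad multiplication, the operations $p*p'=\alpha(\text{the two-box graph})$ and $t_{i,j}(p)=\alpha(\text{the single-box graph with one feedback edge})$ can be rewritten as $\alpha$ applied to the horizontal concatenation $\nu_P(p)*\nu_P(p')$ and to $t_{i,j}(\nu_P(p))$ in $\Gacirc(P)$. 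This is the crucial reduction: once these identities are in place, each axiom on $P$ follows from the corresponding axiom in the TraP $\Gacirc(P)$ by applying $\alpha$, provided one checks that the relevant composite graph is indeed the image under $\mu_P$ of a two-level graph decorated by single-box pieces, so that the left square of (\ref{axiomesalgebres}) converts nested $\alpha$'s into a single $\alpha\circ\mu_P$.

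The steps, in order, are: (1) establish the rewriting $p*p'=\alpha\big(\nu_P(p)*\nu_P(p')\big)$ and $t_{i,j}(p)=\alpha\big(t_{i,j}(\nu_P(p))\big)$, using the unit square of (\ref{axiomesalgebres}) and the explicit formula for $\mu_P$; (2) deduce the $\sym\times\sym^{op}$-module structure on $P$ —- here one must first note that $P$ already carries such a structure as an object of $\catssm$, and that $\nu_P$ is a morphism of $\sym\times\sym^{op}$-modules, so the compatibilities 2.(c) and 3.(b) for $P$ follow from those in $\Gacirc(P)$ by applying $\alpha$ and using naturality of $\nu$; (3) verify associativity 2.(a) and unitality 2.(b) of $*$: associativity reduces to the associativity of $*$ in $\Gacirc(P)$ after recognizing the triple product as $\mu_P$ applied to a three-box graph (this is exactly where the left square of (\ref{axiomesalgebres}) is used, to collapse the two ways of bracketing), and the unit $I_0\in P(0,0)$ is $\alpha$ of the empty graph; (4) verify the commutativity 2.(d) of $*$, which descends from the commutativity axiom in $\Gacirc(P)$; (5) verify the partial-trace axioms 3.(a) (commutativity of the $t_{i,j}$), 3.(c) (compatibility with $*$), and 3.(d) (unit $I\in P(1,1)$, namely $I=\alpha(\nu_P\text{-image of }PG_{1,1})$, with $t_{1,1}(I)=\alpha(\grapheo)$), each time by exhibiting the graph defining the left-hand side as $\mu_P$ of an appropriate two-level decorated graph and invoking the matching axiom of the TraP $\Gacirc(P)$; for 3.(c) it suffices by Lemma \ref{lemmeaxiomessimples} to treat $t_{1,1}(p*p')=t_{1,1}(p)*p'$ and $t_{1,2}(I*p)=p$, which shortens the work considerably.

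The main obstacle I expect is step (1) together with the repeated need in steps (3) and (5) to recognize a given composite graph precisely as the image under $\mu_P$ of a \emph{two-level} graph whose vertices are decorated by the elementary single-box graphs $\nu_P(p)$, $t_{i,j}(\nu_P(p))$, etc. In other words, the real content is combinatorial: one must check that grafting at the level of $\Gacirc\circ\Gacirc(P)$, followed by $\mu_P$, reproduces exactly the feedback-edge and concatenation patterns that define $*$ and $t_{i,j}$ on $P$ — including the delicate reindexing of inputs and outputs after a partial trace, and the placement of the feedback edge relative to the vertex. Once these identifications are made carefully (most efficiently by drawing the two-level graphs and comparing edge-by-edge with the pictures in the statement), every TraP axiom for $P$ is a formal consequence of the corresponding axiom for $\Gacirc(P)$ via the square (\ref{axiomesalgebres}), so no genuinely new computation is required beyond this translation.
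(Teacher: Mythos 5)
Your plan is correct and follows essentially the same route as the paper's proof: the paper also verifies the axioms by rewriting each side as $\alpha\circ\Gacirc(\alpha)$ applied to a two-level decorated graph, collapsing via $\alpha\circ\Gacirc(\alpha)=\alpha\circ\mu_P$ together with the unit square $\alpha\circ\nu_P=\mathrm{Id}_P$, and comparing the resulting single-level graphs edge by edge (it carries this out explicitly for 2.(a), 2.(b) and 3.(d) and leaves the remaining axioms, as you propose, to the same mechanism). The reduction of 3.(c) and 3.(d) to the special cases via Lemma \ref{lemmeaxiomessimples} is likewise consistent with how the paper handles these axioms.
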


\begin{proof} Let us prove some of the axioms of TraPs for $P$. The others can be proved in the same way and are left to the reader.

2. (a). let $(p,p',p'')\in P(k,l)\times P(k',l')\times P(k'',l'')$. Then $(p*p')*p''$ is obtained by the application
of $\alpha_P$ to the graph:
\begin{align*}
\begin{tikzpicture}[line cap=round,line join=round,>=triangle 45,x=0.7cm,y=0.7cm]
\clip(0.8,-1.5) rectangle (2.4,1.);
\draw [line width=.4pt] (0.8,0.)-- (2.2,0.);
\draw [line width=.4pt] (2.2,0.)-- (2.2,-0.5);
\draw [line width=.4pt] (2.2,-0.5)-- (0.8,-0.5);
\draw [line width=.4pt] (0.8,-0.5)-- (0.8,0.);
\draw (0.8,0.15) node[anchor=north west] {\scriptsize $p*p'$};
\draw [->,line width=.4pt] (1.,0.) -- (1.,1.);
\draw [->,line width=.4pt] (2.,0.) -- (2.,1.);
\draw [->,line width=.4pt] (1.,-1.5) -- (1,-0.5);
\draw [->,line width=.4pt] (2.,-1.5) -- (2.,-0.5);
\draw (1.1,-0.9) node[anchor=north west] {\scriptsize $\ldots$};
\draw (1.1,0.6) node[anchor=north west] {\scriptsize $\ldots$};
\end{tikzpicture}
\begin{tikzpicture}[line cap=round,line join=round,>=triangle 45,x=0.7cm,y=0.7cm]
\clip(0.8,-1.5) rectangle (2.4,1.);
\draw [line width=.4pt] (0.8,0.)-- (2.2,0.);
\draw [line width=.4pt] (2.2,0.)-- (2.2,-0.5);
\draw [line width=.4pt] (2.2,-0.5)-- (0.8,-0.5);
\draw [line width=.4pt] (0.8,-0.5)-- (0.8,0.);
\draw (1.2,0.15) node[anchor=north west] {\scriptsize $p''$};
\draw [->,line width=.4pt] (1.,0.) -- (1.,1.);
\draw [->,line width=.4pt] (2.,0.) -- (2.,1.);
\draw [->,line width=.4pt] (1.,-1.5) -- (1,-0.5);
\draw [->,line width=.4pt] (2.,-1.5) -- (2.,-0.5);
\draw (1.1,-0.9) node[anchor=north west] {\scriptsize $\ldots$};
\draw (1.1,0.6) node[anchor=north west] {\scriptsize $\ldots$};
\end{tikzpicture}
\end{align*}
(For the sake of simplicity, we delete the indices of the input and output edges of this graph: they are always indexed from left to right).
Hence, $(p*p')*p''$ is obtained by application of $\alpha\circ \Gacirc(\alpha)$ to the graph:
\begin{align*}
\begin{tikzpicture}[line cap=round,line join=round,>=triangle 45,x=0.7cm,y=0.7cm]
\clip(0.5,-3.) rectangle (7.,2.5);
\draw [line width=.4pt] (0.8,0.)-- (2.2,0.);
\draw [line width=.4pt] (2.2,0.)-- (2.2,-0.5);
\draw [line width=.4pt] (2.2,-0.5)-- (0.8,-0.5);
\draw [line width=.4pt] (0.8,-0.5)-- (0.8,0.);
\draw (1.2,0.05) node[anchor=north west] {\scriptsize $p$};
\draw [->,line width=.4pt] (1.,0.) -- (1.,1.);
\draw [->,line width=.4pt] (2.,0.) -- (2.,1.);
\draw [->,line width=.4pt] (1.,-1.5) -- (1,-0.5);
\draw [->,line width=.4pt] (2.,-1.5) -- (2.,-0.5);
\draw (1.1,-0.9) node[anchor=north west] {\scriptsize $\ldots$};
\draw (1.1,0.6) node[anchor=north west] {\scriptsize $\ldots$};
\draw [line width=.4pt] (2.8,0.)-- (4.2,0.);
\draw [line width=.4pt] (4.2,0.)-- (4.2,-0.5);
\draw [line width=.4pt] (4.2,-0.5)-- (2.8,-0.5);
\draw [line width=.4pt] (2.8,-0.5)-- (2.8,0.);
\draw (3.2,0.15) node[anchor=north west] {\scriptsize $p'$};
\draw [->,line width=.4pt] (3.,0.) -- (3.,1.);
\draw [->,line width=.4pt] (4.,0.) -- (4.,1.);
\draw [->,line width=.4pt] (3.,-1.5) -- (3,-0.5);
\draw [->,line width=.4pt] (4.,-1.5) -- (4.,-0.5);
\draw (3.1,-0.9) node[anchor=north west] {\scriptsize $\ldots$};
\draw (3.1,0.6) node[anchor=north west] {\scriptsize $\ldots$};
\draw [line width=.4pt] (5.3,0.)-- (6.7,0.);
\draw [line width=.4pt] (6.7,0.)-- (6.7,-0.5);
\draw [line width=.4pt] (6.7,-0.5)-- (5.3,-0.5);
\draw [line width=.4pt] (5.3,-0.5)-- (5.3,0.);
\draw (5.7,0.15) node[anchor=north west] {\scriptsize $p''$};
\draw [->,line width=.4pt] (5.5,0.) -- (5.5,1.);
\draw [->,line width=.4pt] (6.5,0.) -- (6.5,1.);
\draw [->,line width=.4pt] (5.5,-1.5) -- (5.5,-0.5);
\draw [->,line width=.4pt] (6.5,-1.5) -- (6.5,-0.5);
\draw (5.6,-0.9) node[anchor=north west] {\scriptsize $\ldots$};
\draw (5.6,0.6) node[anchor=north west] {\scriptsize $\ldots$};
\draw [line width=.4pt] (0.5,-2) -- (4.5,-2);
\draw [line width=.4pt] (4.5,-2) -- (4.5,1.5);
\draw [line width=.4pt] (4.5,1.5) -- (0.5,1.5);
\draw [line width=.4pt] (0.5,1.5) -- (0.5,-2.);
\draw [->,line width=.4pt] (1.,-3.) -- (1.,-2.);
\draw [->,line width=.4pt] (1.,1.5) -- (1.,2.5);
\draw [->,line width=.4pt] (4.,-3.) -- (4.,-2.);
\draw [->,line width=.4pt] (4.,1.5) -- (4.,2.5);
\draw (2.,-2.4) node[anchor=north west] {\scriptsize $\ldots$};
\draw (2.,2.2) node[anchor=north west] {\scriptsize $\ldots$};
\draw [line width=.4pt] (5,-2) -- (7,-2);
\draw [line width=.4pt] (7,-2) -- (7,1.5);
\draw [line width=.4pt] (7,1.5) -- (5,1.5);
\draw [line width=.4pt] (5,1.5) -- (5,-2);
\draw [->,line width=.4pt] (5.5,-3.) -- (5.5,-2.);
\draw [->,line width=.4pt] (6.5,1.5) -- (6.5,2.5);
\draw [->,line width=.4pt] (6.5,-3.) -- (6.5,-2.);
\draw [->,line width=.4pt] (5.5,1.5) -- (5.5,2.5);
\draw (5.5,-2.4) node[anchor=north west] {\scriptsize $\ldots$};
\draw (5.5,2.2) node[anchor=north west] {\scriptsize $\ldots$};
\end{tikzpicture}
\end{align*}
Note that for the second connected component of this graph, this comes from:
\[\alpha \circ \Gacirc(\alpha)\circ \Gacirc(\nu_P)(p'')=\alpha \circ \Gacirc(\alpha \circ \nu_P)(p'')
=\alpha \circ \Gacirc(Id_P)(p'')=\alpha(p'').\]
As $\alpha \circ \Gacirc(\alpha)=\alpha \circ \mu_P$, $(p*p')*p''$ is obtained by application of $\alpha$ to the graph:
\begin{align*}
\begin{tikzpicture}[line cap=round,line join=round,>=triangle 45,x=0.7cm,y=0.7cm]
\clip(0.8,-1.5) rectangle (2.4,1.);
\draw [line width=.4pt] (0.8,0.)-- (2.2,0.);
\draw [line width=.4pt] (2.2,0.)-- (2.2,-0.5);
\draw [line width=.4pt] (2.2,-0.5)-- (0.8,-0.5);
\draw [line width=.4pt] (0.8,-0.5)-- (0.8,0.);
\draw (1.2,0.05) node[anchor=north west] {\scriptsize $p$};
\draw [->,line width=.4pt] (1.,0.) -- (1.,1.);
\draw [->,line width=.4pt] (2.,0.) -- (2.,1.);
\draw [->,line width=.4pt] (1.,-1.5) -- (1,-0.5);
\draw [->,line width=.4pt] (2.,-1.5) -- (2.,-0.5);
\draw (1.1,-0.9) node[anchor=north west] {\scriptsize $\ldots$};
\draw (1.1,0.6) node[anchor=north west] {\scriptsize $\ldots$};
\end{tikzpicture}
\begin{tikzpicture}[line cap=round,line join=round,>=triangle 45,x=0.7cm,y=0.7cm]
\clip(0.8,-1.5) rectangle (2.4,1.);
\draw [line width=.4pt] (0.8,0.)-- (2.2,0.);
\draw [line width=.4pt] (2.2,0.)-- (2.2,-0.5);
\draw [line width=.4pt] (2.2,-0.5)-- (0.8,-0.5);
\draw [line width=.4pt] (0.8,-0.5)-- (0.8,0.);
\draw (1.2,0.15) node[anchor=north west] {\scriptsize $p'$};
\draw [->,line width=.4pt] (1.,0.) -- (1.,1.);
\draw [->,line width=.4pt] (2.,0.) -- (2.,1.);
\draw [->,line width=.4pt] (1.,-1.5) -- (1,-0.5);
\draw [->,line width=.4pt] (2.,-1.5) -- (2.,-0.5);
\draw (1.1,-0.9) node[anchor=north west] {\scriptsize $\ldots$};
\draw (1.1,0.6) node[anchor=north west] {\scriptsize $\ldots$};
\end{tikzpicture}
\begin{tikzpicture}[line cap=round,line join=round,>=triangle 45,x=0.7cm,y=0.7cm]
\clip(0.8,-1.5) rectangle (2.4,1.);
\draw [line width=.4pt] (0.8,0.)-- (2.2,0.);
\draw [line width=.4pt] (2.2,0.)-- (2.2,-0.5);
\draw [line width=.4pt] (2.2,-0.5)-- (0.8,-0.5);
\draw [line width=.4pt] (0.8,-0.5)-- (0.8,0.);
\draw (1.2,0.15) node[anchor=north west] {\scriptsize $p''$};
\draw [->,line width=.4pt] (1.,0.) -- (1.,1.);
\draw [->,line width=.4pt] (2.,0.) -- (2.,1.);
\draw [->,line width=.4pt] (1.,-1.5) -- (1,-0.5);
\draw [->,line width=.4pt] (2.,-1.5) -- (2.,-0.5);
\draw (1.1,-0.9) node[anchor=north west] {\scriptsize $\ldots$};
\draw (1.1,0.6) node[anchor=north west] {\scriptsize $\ldots$};
\end{tikzpicture}
\end{align*}
The same computation can be done for $p*(p'*p'')$, which gives the associativity of $*$.

2. (b). The unit is $I_0=\alpha(\emptyset)$, where $\emptyset$ is the graph with no vertex and no edge.

3. (d). The unit $I$ is $\alpha(I_1)$, where $I_1$ is the graph with only one input-output edge.
Let $p\in P(k,l)$ and $2\leqslant j\leqslant l+1$. Then $t_{1,j}(I*p)$ is obtained by application
of $\alpha \circ \Gacirc(\alpha)$ to the graph:
\begin{align*}
\begin{tikzpicture}[line cap=round,line join=round,>=triangle 45,x=0.7cm,y=0.7cm]
\clip(-1.7,-3.5) rectangle (2.7,3.2);
\draw [line width=.4pt] (0.8,0.)-- (2.2,0.);
\draw [line width=.4pt] (2.2,0.)-- (2.2,-0.5);
\draw [line width=.4pt] (2.2,-0.5)-- (0.8,-0.5);
\draw [line width=.4pt] (0.8,-0.5)-- (0.8,0.);
\draw (1.2,0.05) node[anchor=north west] {\scriptsize $p$};
\draw [->,line width=.4pt] (1.,0.) -- (1.,1.);
\draw [->,line width=.4pt] (2.,0.) -- (2.,1.);
\draw [->,line width=.4pt] (1.,-1.5) -- (1,-0.5);
\draw [->,line width=.4pt] (2.,-1.5) -- (2.,-0.5);
\draw (1.1,-0.9) node[anchor=north west] {\scriptsize $\ldots$};
\draw (1.1,0.6) node[anchor=north west] {\scriptsize $\ldots$};
\draw [->,line width=.4pt] (0.,-1.5) -- (0.,1.);
\draw [line width=.4pt] (-0.5,-2) -- (2.5,-2);
\draw [line width=.4pt] (2.5,-2) -- (2.5,1.5);
\draw [line width=.4pt] (2.5,1.5) -- (-0.5,1.5);
\draw [line width=.4pt] (-0.5,1.5) -- (-0.5,-2.);
\draw [->,line width=.4pt] (0.,-3.) -- (0.,-2.);
\draw [->,line width=.4pt] (1.,-3.) -- (1.,-2.);
\draw [->,line width=.4pt] (2.,-3.) -- (2.,-2.);
\draw (1.1,-2.4) node[anchor=north west] {\scriptsize $\ldots$};
\draw [->,line width=.4pt] (0.,1.5) -- (0.,2.5);
\draw [->,line width=.4pt] (1.,1.5) -- (1.,2.5);
\draw [->,line width=.4pt] (2.,1.5) -- (2.,2.5);
\draw (0.1,2.2) node[anchor=north west] {\scriptsize $\ldots$};
\draw (1.1,2.2) node[anchor=north west] {\scriptsize $\ldots$};
\draw [shift={(-0.5,-3)},line width=0.4pt]  plot[domain=4.71238898:6.283185307,variable=\t]({1.*0.5*cos(\t r)+0.*0.5*sin(\t r)},{0.*0.5*cos(\t r)+1.*0.5*sin(\t r)});
\draw [line width=.4pt] (-0.5,-3.5) -- (-1.,-3.5);
\draw [shift={(-1.,-3)},line width=0.4pt]  plot[domain=3.141592654:4.71238898,variable=\t]({1.*0.5*cos(\t r)+0.*0.5*sin(\t r)},{0.*0.5*cos(\t r)+1.*0.5*sin(\t r)});
\draw [line width=.4pt] (-1.5,-3.) -- (-1.5,2.5);
\draw [shift={(-1.,2.5)},line width=0.4pt]  plot[domain=1.570796327:3.141592654,variable=\t]({1.*0.5*cos(\t r)+0.*0.5*sin(\t r)},{0.*0.5*cos(\t r)+1.*0.5*sin(\t r)});
\draw [line width=.4pt] (-1.,3.) -- (0.5,3.);
\draw [shift={(0.5,2.5)},line width=0.4pt]  plot[domain=0.:1.570796327,variable=\t]({1.*0.5*cos(\t r)+0.*0.5*sin(\t r)},{0.*0.5*cos(\t r)+1.*0.5*sin(\t r)});
\end{tikzpicture}
\end{align*}
where the curved edge relate the first edge at the bottom to the $j$-th edge on the top.
As $\alpha \circ \Gacirc(\alpha)=\alpha \circ \mu_P$, $t_{1,j}(I*p)$ is obtained by application
of $\alpha$ to the graph:
\begin{align*}
\begin{tikzpicture}[line cap=round,line join=round,>=triangle 45,x=0.7cm,y=0.7cm]
\clip(-0.7,-2) rectangle (3.2,1.5);
\draw [line width=.4pt] (0.8,0.)-- (3.2,0.);
\draw [line width=.4pt] (3.2,0.)-- (3.2,-0.5);
\draw [line width=.4pt] (3.2,-0.5)-- (0.8,-0.5);
\draw [line width=.4pt] (0.8,-0.5)-- (0.8,0.);
\draw (1.7,0.05) node[anchor=north west] {\scriptsize $p$};
\draw [->,line width=.4pt] (1.,0.) -- (1.,1.);
\draw [->,line width=.4pt] (2.,0.) -- (2.,1.);
\draw [->,line width=.4pt] (3.,0.) -- (3.,1.);
\draw [->,line width=.4pt] (1.,-1.5) -- (1,-0.5);
\draw [->,line width=.4pt] (3.,-1.5) -- (3.,-0.5);
\draw (1.6,-0.9) node[anchor=north west] {\scriptsize $\ldots$};
\draw (1.1,0.6) node[anchor=north west] {\scriptsize $\ldots$};
\draw (2.1,0.6) node[anchor=north west] {\scriptsize $\ldots$};
\draw [->,line width=.4pt] (-0.5,-1.5) -- (-0.5,1.);
\draw [shift={(0.,-1.5)},line width=0.4pt]  plot[domain=3.141592654:6.283185307,variable=\t]({1.*0.5*cos(\t r)+0.*0.5*sin(\t r)},{0.*0.5*cos(\t r)+1.*0.5*sin(\t r)});
\draw [line width=.4pt] (0.5,-1.5) -- (0.5,1);
\draw [shift={(1.,1.)},line width=0.4pt]  plot[domain=1.570796327:3.141592654,variable=\t]({1.*0.5*cos(\t r)+0.*0.5*sin(\t r)},{0.*0.5*cos(\t r)+1.*0.5*sin(\t r)});
\draw [line width=.4pt] (1.,1.5) -- (1.5,1.5);
\draw [shift={(1.5,1.)},line width=0.4pt]  plot[domain=0.:1.570796327,variable=\t]({1.*0.5*cos(\t r)+0.*0.5*sin(\t r)},{0.*0.5*cos(\t r)+1.*0.5*sin(\t r)});
\end{tikzpicture}
\end{align*}
where the curved edge relate the first edge on the bottom to the $j$-th edge on the top (note that this edge is also
the $(j-1)$-th outgoing the vertex decorated by $p$). As $\alpha$ is a $\sym\times \sym^{op}$ morphism,
we obtain that this is $(1,\ldots,j-1)\cdot \alpha \circ \nu_P(p)$, that is to say
$(1,\ldots,j-1)\cdot p$. \end{proof}

\begin{prop} \label{proptroptogamma}
Any TraP  is a $\Gacirc$-algebra.
\end{prop}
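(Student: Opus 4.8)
The plan is to equip $P$ with the structure morphism $\alpha:=\alpha_P\colon\Gacirc(P)\to P$ furnished by Proposition \ref{prop:PdecGrc}, and to check the two axioms of a $\Gacirc$-algebra, i.e.\ the commutativity of the two diagrams (\ref{axiomesalgebres}). Recall that $\alpha_P$ is the canonical TraP morphism ``induced by the decoration'': on a one-vertex graph it sends the class of $PG_{k,l}(p)$ to $p$, so that $\alpha_P\circ\nu_P=\mathrm{Id}_P$, where $\nu_P\colon p\mapsto PG_{k,l}(p)$ is the unit of the monad $\Gacirc$ defined in Paragraph \ref{subsec:monad}.

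The right-hand (unit) triangle of (\ref{axiomesalgebres}) is then precisely the identity $\alpha_P\circ\nu_P=\mathrm{Id}_P$, so nothing is left to prove there.

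For the left-hand (associativity) square I would use the universal property of free TraPs rather than a direct graph manipulation. Both $\alpha_P\circ\Gacirc(\alpha_P)$ and $\alpha_P\circ\mu_P$ are morphisms of TraPs $\Gacirc(\Gacirc(P))\to P$: the map $\alpha_P$ is a TraP morphism by Proposition \ref{prop:PdecGrc}, $\Gacirc(\alpha_P)$ is one by functoriality of $\Gacirc\colon\Trap\to\Trap$, and $\mu_P$ is one because grafting a decorated graph into each vertex of another decorated graph commutes with the horizontal concatenation, with the $\sym\times\sym^{op}$-actions, and with the partial trace maps (gluing an input of the outer graph to an output of it becomes, after substitution, the corresponding gluing inside the substituted graph). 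Now $\Gacirc(\Gacirc(P))=\PGr(\Gacirc(P))/I$ is a quotient TraP of $\PGr(\Gacirc(P))$, which by Theorem \ref{theo:freeTrapplanar} is the free TraP on the $\sym\times\sym^{op}$-module $\Gacirc(P)$; hence $\Gacirc(\Gacirc(P))$ is generated as a TraP by the image of $\nu_{\Gacirc(P)}$, namely the classes of one-vertex graphs decorated by elements of $\Gacirc(P)$. It therefore suffices to check that the two composites agree after precomposition with $\nu_{\Gacirc(P)}$. Using naturality of $\nu$ (Paragraph \ref{subsec:monad}) together with $\alpha_P\circ\nu_P=\mathrm{Id}_P$, one gets
\[\alpha_P\circ\Gacirc(\alpha_P)\circ\nu_{\Gacirc(P)}=\alpha_P\circ\nu_P\circ\alpha_P=\alpha_P,\]
whereas the monad unit axiom $\mu_P\circ\nu_{\Gacirc(P)}=\mathrm{Id}_{\Gacirc(P)}$ (Proposition \ref{prop:monad}) yields $\alpha_P\circ\mu_P\circ\nu_{\Gacirc(P)}=\alpha_P$. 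The two TraP morphisms thus coincide on a generating family, hence everywhere, which is the commutativity we want.

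The only points that are not purely formal, and which I view as the main (though routine) obstacle, are the two combinatorial assertions used above: that $\mu_P$ is a morphism of TraPs — equivalently, that it commutes with the partial trace maps on $\Gacirc$ — and that $\Gacirc(\Gacirc(P))$ is generated as a TraP by one-vertex graphs. Both follow by unwinding the definitions of $\mu_P$ and of the partial traces on decorated graphs from Sections \ref{section:Hom_dec_graphs}--\ref{sec:functor_Garcirc_TraP}. Alternatively, one can bypass the universal property and verify the left-hand square of (\ref{axiomesalgebres}) by an explicit diagram chase on graphs-decorated-by-decorated-graphs, in the style of the proof of Proposition \ref{propgammatotrop}; the argument above is merely shorter.
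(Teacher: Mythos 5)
Your proposal is correct and follows essentially the same route as the paper: take $\alpha_P$ from Proposition \ref{prop:PdecGrc}, observe that $\alpha_P\circ\Gacirc(\alpha_P)$ and $\alpha_P\circ\mu_P$ are both TraP morphisms out of $\Gacirc\circ\Gacirc(P)$ agreeing on the generators $\nu_{\Gacirc(P)}(G)$ (hence equal), and note $\alpha_P\circ\nu_P=\mathrm{Id}_P$ by construction. You are merely more explicit than the paper about invoking the naturality of $\nu$ and the monad unit axiom, and about flagging that $\mu_P$ being a TraP morphism and the one-vertex generation of $\Gacirc(\Gacirc(P))$ are the only points requiring a combinatorial check.
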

\begin{proof} 
Let $P$ be a TraP.
From Proposition \ref{prop:PdecGrc}, we obtain a unique TraP morphism $\alpha_P:\Gacirc(P)\longrightarrow P$,
such that for any $(k,l)\in  \N_0^2$, for any $p\in P(k,l)$, $\alpha_P$ sends the graph $\nu_P(P)$ to $P$.
The map $\alpha_P \circ \Gacirc(\alpha):\Gacirc\circ \Gacirc (P)\longrightarrow P$
is a TraP morphism, sending, for any graph $G\in \Gacirc(P)$, $\nu_{\Gacirc(P)}(G)$ to $\alpha(G)$. 
It is not difficult to see that $\mu_P:\Gacirc\circ \Gacirc(P)\longrightarrow \Gacirc(P)$ is a TraP morphism.
Hence, $\alpha_P \circ \mu_P:\Gacirc\circ \Gacirc (P)\longrightarrow P$
is a TraP morphism, sending, for any graph $G\in \Gacirc(P)$, $\nu_{\Gacirc(P)}(G)$ to $\alpha_P(G)$. 
As $\Gacirc\circ \Gacirc(P)$ is generated by the elements $\mu_P(G)$, both these morphisms coincide:
\[\alpha_P \circ \Gacirc(\alpha)=\alpha \circ \mu_P.\]
For any $p\in P$, by construction of $\alpha_P$, $\alpha_P \circ \nu_P(p)=p$, so:
\[\alpha_P\circ \nu_P=Id_P.\]
Therefore, $P$ is a $\Gacirc$-algebra.
\end{proof}

\begin{cor} \label{cor:lien_Trap_wProP}
The categories  $\Trap$ of TraPs and $\Gacirc-\mathbf{Alg}$ of $\Gacirc$-algebras are isomorphic.
\end{cor}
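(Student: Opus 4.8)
The plan is to exhibit the isomorphism of categories by constructing functors in both directions and checking they are mutually inverse. In one direction, we already have the assignment of Proposition \ref{propgammatotrop}, which to a $\Gacirc$-algebra $(P,\alpha)$ associates a TraP structure on $P$; call this assignment $\mathcal{F}:\Gacirc-\mathbf{Alg}\longrightarrow\Trap$. In the other direction, Proposition \ref{proptroptogamma} shows that any TraP $P$ carries a canonical $\Gacirc$-algebra structure via $\alpha_P:\Gacirc(P)\longrightarrow P$ coming from Proposition \ref{prop:PdecGrc}; call this assignment $\mathcal{G}:\Trap\longrightarrow\Gacirc-\mathbf{Alg}$. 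First I would check that both assignments are functorial on morphisms: a morphism of $\Gacirc$-algebras is by definition a morphism $\phi$ in $\catssm$ commuting with the structure maps, i.e. $\phi\circ\alpha_P=\alpha_Q\circ\Gacirc(\phi)$, and one checks directly from the explicit formulas in Proposition \ref{propgammatotrop} that such a $\phi$ respects horizontal concatenation and the partial trace maps, hence is a morphism of TraPs; conversely a morphism of TraPs is in particular a morphism of $\sym\times\sym^{op}$-modules and, by naturality of $\alpha_\bullet$ (the commuting square in Proposition \ref{prop:PdecGrc}), commutes with the $\Gacirc$-algebra structure maps, hence is a morphism of $\Gacirc$-algebras. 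On objects the two functors are inverse to each other on the underlying $\sym\times\sym^{op}$-module (which is unchanged by both constructions), and on morphisms both functors act as the identity, so it remains only to verify that the structures match up.

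The key verification is therefore: (i) starting from a $\Gacirc$-algebra $(P,\alpha)$, forming the TraP $\mathcal{F}(P,\alpha)$, and then forming its canonical $\Gacirc$-algebra structure $\alpha_{\mathcal{F}(P,\alpha)}$, one recovers $\alpha$; and (ii) starting from a TraP $P$, forming the $\Gacirc$-algebra $(P,\alpha_P)$ and then the associated TraP via Proposition \ref{propgammatotrop}, one recovers the original TraP structure on $P$. For (ii) I would argue as follows: the TraP structure produced by Proposition \ref{propgammatotrop} from $(P,\alpha_P)$ is given by applying $\alpha_P$ to specific decorated graphs (a two-vertex disjoint graph for $*$, a one-vertex graph with a cap-cup for $t_{i,j}$); since $\alpha_P$ is by construction a morphism of TraPs from $\Gacirc(P)$ to $P$ sending $\nu_P(p)$ to $p$, and since those specific decorated graphs are built inside $\Gacirc(P)$ precisely by horizontal concatenation and partial traces from the generators $\nu_P(p)$, compatibility of $\alpha_P$ with $*$ and with $t_{i,j}$ forces the reconstructed operations to agree with the original ones on $P$. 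For (i), the point is that Proposition \ref{proptroptogamma} already establishes $\alpha_P\circ\Gacirc(\alpha)=\alpha\circ\mu_P$ and $\alpha_P\circ\nu_P=\mathrm{Id}_P$; combined with the universal property (freeness) of $\Gacirc(P)$ as a TraP over the $\sym\times\sym^{op}$-module $P$ — which says a morphism out of $\Gacirc(P)$ is determined by its restriction to the generators $\nu_P(p)$ — one deduces that the canonical structure map of the TraP $\mathcal{F}(P,\alpha)$ coincides with $\alpha$, since both are TraP morphisms $\Gacirc(P)\to P$ agreeing on generators.

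The main obstacle I anticipate is item (ii): carefully showing that the horizontal concatenation and, especially, the partial trace maps recovered from the round trip through $\Gacirc$ literally coincide with the original ones, rather than merely being naturally isomorphic. This requires unwinding the definition of $\alpha_P$ (itself obtained by the universal property Theorem \ref{theo:freeTrapplanar}/Proposition \ref{prop:PdecGrc}) on the particular decorated graphs appearing in Proposition \ref{propgammatotrop}, and invoking the TraP-morphism property of $\alpha_P$ to slide $\alpha_P$ past $*$ and $t_{i,j}$. One must also confirm that the two round-trip identities are precisely the algebra axioms \eqref{axiomesalgebres}, so that the correspondence is bijective on objects; this is where the monad axioms of Proposition \ref{prop:monad} and the generation statement ``$\Gacirc\circ\Gacirc(P)$ is generated by the $\mu_P(G)$'' from the proof of Proposition \ref{proptroptogamma} do the real work. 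Once these compatibilities are in place, the conclusion that $\mathcal{F}$ and $\mathcal{G}$ are mutually inverse functors, hence that $\Trap$ and $\Gacirc-\mathbf{Alg}$ are isomorphic categories, is immediate.
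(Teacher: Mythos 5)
Your proposal is correct and follows essentially the same route as the paper: the two functors are those of Propositions \ref{propgammatotrop} and \ref{proptroptogamma}, the round trip starting from a TraP is checked by sliding the TraP morphism $\alpha_P$ past $*$ and $t_{i,j}$ on the generating graphs $\nu_P(p)$, and the round trip starting from a $\Gacirc$-algebra is settled by noting that both structure maps are TraP morphisms out of $\Gacirc(P)$ agreeing on the generators $\nu_P(p)$. The only differences are cosmetic (your functor names are swapped relative to the paper's, and you make the functoriality on morphisms explicit where the paper leaves it implicit).
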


\begin{proof}
We defined in Propositions \ref{propgammatotrop} and \ref{proptroptogamma}  two functors
\begin{align*}
\mathcal{F}&:\Trap\longrightarrow\Gacirc-\mathbf{Alg},&
\mathcal{G}&:\Gacirc-\mathbf{Alg}\longrightarrow\Trap.
\end{align*}
Let $P$ be a TraP and $P'$ the TraP $\mathcal{G}\circ \mathcal{F}(P)$, with concatenation $*'$
and trace operators $t'_{i,j}$. 
We set $\mathcal{F}(P):=(P,\alpha_P)$: in other words, $\alpha_P$ is the TraP morphism of Proposition
\ref{prop:PdecGrc}. For any $p,q\in P$:
\[p*'q=\alpha_P (\nu_P(p)*\nu_P(q))=p*p',\]
where in the middle term $*$ is the concatenation in the TraP $\Gacirc(P)$. Therefore, $*=*'$.
If $p\in P(k,l)$, $(i,j)\in [k]\times [l]$, then $t'_{i,j}$ is obtained by the application of $\alpha_P$ to the graph:
\[\begin{tikzpicture}[line cap=round,line join=round,>=triangle 45,x=0.7cm,y=0.7cm]
\clip(0.8,-2.6) rectangle (6.5,2.1);
\draw [line width=.4pt] (0.8,0.)-- (5.2,0.);
\draw [line width=.4pt] (5.2,0.)-- (5.2,-0.5);
\draw [line width=.4pt] (5.2,-0.5)-- (0.8,-0.5);
\draw [line width=.4pt] (0.8,-0.5)-- (0.8,0.);
\draw (2.7,0.1) node[anchor=north west] {\scriptsize $p$};
\draw [->,line width=.4pt] (1.,0.) -- (1.,1.);
\draw [->,line width=.4pt] (2.,0.) -- (2.,1.);
\draw [->,line width=.4pt] (3.,0.) -- (3.,1.5);
\draw [->,line width=.4pt] (4.,0.) -- (4.,1.);
\draw [->,line width=.4pt] (5.,0.) -- (5.,1.);
\draw [->,line width=.4pt] (1.,-1.5) -- (1,-0.5);
\draw [->,line width=.4pt] (2.,-1.5) -- (2.,-0.5);
\draw [->,line width=.4pt] (3.,-2) -- (3,-0.5);
\draw [->,line width=.4pt] (4.,-1.5) -- (4.,-0.5);
\draw [->,line width=.4pt] (5.,-1.5) -- (5.,-0.5);
\draw (0.7,-1.4) node[anchor=north west] {\scriptsize $1$};
\draw (1.1,-1.) node[anchor=north west] {\scriptsize $\ldots$};
\draw (1.2,-1.4) node[anchor=north west] {\scriptsize $i-1$};
\draw (3.7,-1.4) node[anchor=north west] {\scriptsize $i$};
\draw (4.1,-1.) node[anchor=north west] {\scriptsize $\ldots$};
\draw (4.5,-1.4) node[anchor=north west] {\scriptsize $k-1$};
\draw (0.7,1.7) node[anchor=north west] {\scriptsize $1$};
\draw (1.1,0.6) node[anchor=north west] {\scriptsize $\ldots$};
\draw (1.2,1.7) node[anchor=north west] {\scriptsize $j-1$};
\draw (3.7,1.7) node[anchor=north west] {\scriptsize $j$};
\draw (4.1,0.6) node[anchor=north west] {\scriptsize $\ldots$};
\draw (4.5,1.7) node[anchor=north west] {\scriptsize $l-1$};
\draw [shift={(3.5,-2.)},line width=0.4pt]  plot[domain=3.141592653589793:4.71238898,variable=\t]({1.*0.5*cos(\t r)+0.*0.5*sin(\t r)},{0.*0.5*cos(\t r)+1.*0.5*sin(\t r)});
\draw [line width=.4pt] (3.5,-2.5) -- (6.,-2.5);
\draw [shift={(6.,-2.)},line width=0.4pt]  plot[domain=4.71238898:6.283185307,variable=\t]({1.*0.5*cos(\t r)+0.*0.5*sin(\t r)},{0.*0.5*cos(\t r)+1.*0.5*sin(\t r)});
\draw [shift={(3.5,1.5)},line width=0.4pt]  plot[domain=1.570796327:3.141592653589793,variable=\t]({1.*0.5*cos(\t r)+0.*0.5*sin(\t r)},{0.*0.5*cos(\t r)+1.*0.5*sin(\t r)});
\draw [line width=.4pt] (3.5,2.) -- (6.,2.);
\draw [shift={(6,1.5)},line width=0.4pt]  plot[domain=0.:1.570796327,variable=\t]({1.*0.5*cos(\t r)+0.*0.5*sin(\t r)},{0.*0.5*cos(\t r)+1.*0.5*sin(\t r)});
\draw [line width=.4pt] (6.5,-2.) -- (6.5,1.5);
\end{tikzpicture}\]
which is $t_{i,j}(\nu_P(p))$, where here $t_{i,j}$ is the trace operator of $\Gacirc(P)$. As $\alpha_P$ is a TraP morphism:
\[t'_{i,j}(p)=\alpha_P \circ t_{i,j}\circ \nu_P(p)=t_{i,j}\circ \alpha_P \circ \nu_P(p)=t_{i,j}(p),\]
so $P'=P$ and $\mathcal{G}\circ \mathcal{F}$ is the identity functor of $\Trap$.

Let now $(P,\alpha)$ be a $\Gacirc$-algebra and let us consider $(P',\alpha')$ be the $\Gacirc$-algebra
$ \mathcal{F}\circ  \mathcal{G}(P)$. Both $\alpha$ and $\alpha'$ are TraP morphisms from
$\Gacirc(P)$ to $\mathcal{G}(P)$; for any $p\in P$,
\[\alpha \circ \nu_P(p)=\alpha'\circ \nu_P(p)=p.\]
As $\Gacirc(P)$ is generated, as a TraP, by the elements $\nu_P(p)$, $\alpha=\alpha'$,
so  $\mathcal{F}\circ \mathcal{G}$ is the identity functor of $\Gacirc-\mathbf{Alg}$.
\end{proof}

\begin{remark}
$\Gacirc$-algebras appear in the literature \cite{Merkulov2006,Merkulov2009,Merkulov2010,Merkulov2010-2} 
under the name of unitary wheeled props; see \cite{Merkulov2009} for the description of the monad of graphs
used for wheeled props, and \cite{Merkulov2010,Merkulov2010-2} for applications of wheeled props.

We defined the structure of TraPs having their application to Feynman graphs in QFT in mind.
 Since our focus in this paper is on traces for which we need an explicit realisation of the structures under consideration, we choose to 
 keep here the terminology TraP.
\end{remark}

\subsection{TraPs are ProPs}
 
	TraPs can be equipped with a ProP structure   as a result of the fact that both the trace  and composition of morphisms   can be expressed in terms of a dual pairing. 
  Corollary \ref{cor:lien_Trap_wProP}, 
		  yields an isomorphism between the categories of TraPs and wheeled ProPs. It is known that wheeled ProPs are ProPs, and we give here a 
		  detailed  construction of the ProP structure on our TraPs, showing how the partial 
		 trace maps (referred to as contractions  by Merkulov) of wheeled ProPs 
	give rise to a vertical composition, and therefore to a ProP structure, a fact readily observed in \cite[Remarks 2.1.1]{Merkulov2009}.

\begin{prop} \label{prop:trop_prop}
Let $P$ be a TraP. We define a vertical composition in the following way:
\begin{align*}
&\forall p\in P(k,l),\:\forall q\in P(l,m),&
q\circ p&=t_{k+1,1}\circ \ldots \circ t_{k+l-1,l-1}\circ t_{k+l,l}(p*q).
\end{align*}
Then $P$ is a ProP.
\end{prop}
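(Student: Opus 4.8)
The plan is to verify that the vertical composition
\[
q\circ p=t_{k+1,1}\circ \cdots \circ t_{k+l,l}(p*q)
\]
satisfies all the ProP axioms (items 3, 4, 5 of Definition~\ref{def:prop}), reusing the horizontal concatenation and the $\sym\times\sym^{op}$-action already present in the TraP structure. Before anything else, I would note that the composite of partial traces on the right-hand side is well defined precisely because of axiom 3.(a) (commutativity of the $t_{i,j}$): the indices $k+1,\dots,k+l$ and $1,\dots,l$ are chosen so that each successive trace acts on the pair consisting of the $l$-th surviving input of $p$ and the $l$-th surviving output of $p$, which after gluing $p*q$ are exactly the ``internal'' slots to be contracted. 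It is also worth recording the intuition: $p*q\in P(k+l,l+m)$, and we trace out the $l$ outputs of $p$ against the $l$ inputs of $q$, leaving $P(k,m)$. I would first do a small computation confirming that $t_{k+1,1}\circ\cdots\circ t_{k+l,l}$ really lands in $P(k,m)$ and that reordering these particular traces via axiom 3.(a) leaves the result unchanged, so the formula is unambiguous.

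Next I would establish the unit axiom 3.(b) of ProPs, i.e. $p\circ I_k=I_l\circ p=p$ where $I_n=I^{*n}$. For $I_l\circ p$ one has $I_l\circ p=t_{k+1,1}\circ\cdots\circ t_{k+l,l}(p*I^{*l})$, and I would peel off the traces one at a time using axiom 3.(d): the relation $t_{i,l+1}(p*I)=p\cdot(i,i+1,\dots,k)$ and its iterates let one absorb each copy of $I$, the accumulated permutations telescoping to the identity. The computation for $p\circ I_k$ is symmetric, using instead the first two relations of axiom 3.(d) together with Lemma~\ref{lemmeaxiomessimples}. This is largely bookkeeping with the permutations $\alpha_p$ of \eqref{defalphak}, but it is the kind of thing that must be done carefully; I expect to lean on the word-deletion description of $\alpha_p$ given right after \eqref{defalphak} to keep the permutation algebra manageable.

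Then I would treat associativity of $\circ$ (axiom 3.(a) of ProPs) and the compatibility axioms 4, 5, 6. For compatibility with the symmetric actions (axiom 5), the three identities follow from axiom 3.(b) of TraPs (compatibility of $t_{i,j}$ with $\sigma\cdot(-)\cdot\tau$) applied repeatedly, after checking that the permutations $\sigma_j,\tau_i$ produced at each step are the expected block permutations acting on $p*q$; this again reduces to a word-manipulation lemma about how $\alpha_p$ interacts with the special permutations $(i,\dots,k)$. For compatibility of the two concatenations (axiom 4), I would use axiom 3.(c) of TraPs (compatibility of $t_{i,j}$ with $*$): since the traces defining $q\circ p$ only touch the middle block of $p*q$, and those defining $q'\circ p'$ only the middle block of $p'*q'$, pushing a horizontal concatenation past the string of partial traces is immediate once one reindexes via 3.(c), possibly after conjugating by a suitable shuffle permutation (of the form $c_{m,n}$ or a tensor product of such) to line up the blocks, using axiom 2.(d). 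Associativity of $\circ$ is the most involved: $r\circ(q\circ p)$ and $(r\circ q)\circ p$ both unfold to a single long composite of partial traces applied to $p*q*r$, and one must show these two composites agree; this is where axiom 3.(a) does the real work, and I expect this to be the main obstacle — one has to track which pair of indices each trace contracts through two different nesting orders and invoke the commutation relations 3.(a) enough times to reconcile them. A clean way to organise this is to first prove a lemma: for $p\in P(k,l)$, $q\in P(l,m)$, and any $1\le s\le l$, partially composing only the top $s$ slots gives a well-defined element of $P(k+l-s,\,s+m)$ independent of the order of the $s$ traces, and $q\circ p$ is the $s=l$ case; associativity then follows by comparing the $s=l$ partial composites built in the two orders on $p*q*r$. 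The other ProP axioms (existence of $I_0$, commutativity of $*$, its compatibility with the actions) are inherited verbatim from the TraP structure and require no work.
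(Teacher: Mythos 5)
Your proof goes by direct verification of the ProP axioms from the TraP axioms, whereas the paper takes a much shorter structural route: it observes that every TraP is a quotient of a free TraP $\PGr(X)$ (Theorem \ref{theo:freeTrapplanar} together with the surjection $\alpha_P$ of Proposition \ref{prop:PdecGrc}), and that in $\PGr(X)$ the composite $t_{k+1,1}\circ\cdots\circ t_{k+l,l}(p*q)$ is, by the very definition of the partial trace maps on graphs, exactly the grafting of the output edges of $p$ onto the input edges of $q$ --- i.e.\ the vertical concatenation already shown to satisfy the ProP axioms in Theorem \ref{theo:ProP_graph}. Since the ProP axioms are universally quantified identities in the TraP operations, they descend along the surjective TraP morphism to an arbitrary $P$, and no index bookkeeping is needed. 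Your plan is sound and matches each ProP axiom to the correct TraP axiom (3.(d) for units, 3.(b) for the symmetric actions, 3.(c) plus the shuffle $c_{m,n}$ for the interchange law, 3.(a) for associativity via your partial-composite lemma); it is essentially the standard direct proof that a unital wheeled ProP is a ProP, and it has the merit of not invoking the freeness theorem. But be aware that what you have written is a roadmap rather than a proof: the telescoping of the permutations $(1,\dots,j-1)$ and $(i,\dots,k)$ in the unit axiom, the identification of the $\sigma_j,\tau_i$ of \eqref{defalphak} with block permutations, and above all the reconciliation of the two trace orderings in associativity are precisely the computations that carry the content, and none of them is carried out. If you want to avoid executing them, the paper's reduction to $\PGr(X)$ is the cheaper path; if you want to keep the elementary approach, your proposed lemma on order-independence of partial composites is the right organising device and should be proved first.
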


\begin{example}
In the TraP of graphs $\Gr$:
\end{example}

\begin{center}
\begin{tikzpicture}[line cap=round,line join=round,>=triangle 45,x=0.5cm,y=0.5cm]
\clip(-2.5,-4.) rectangle (1.,4.);
\draw [line width=0.4pt] (-2.,1.)-- (0.5,1.);
\draw [line width=0.4pt] (0.5,1.)-- (0.5,-1.);
\draw [line width=0.4pt] (0.5,-1.)-- (-2.,-1.);
\draw [line width=0.4pt] (-2.,-1.)-- (-2.,1.);
\draw [->,line width=0.4pt] (-1.5,1.) -- (-1.5,3.);
\draw [->,line width=0.4pt] (0.,1.) -- (0.,3.);
\draw [->,line width=0.4pt] (-1.5,-3.) -- (-1.5,-1.);
\draw [->,line width=0.4pt] (0.,-3.) -- (0.,-1.);
\draw (-1.25,0.5) node[anchor=north west] {$H$};
\draw (-1.8,-3) node[anchor=north west] {$1$};
\draw (-0.3,-3) node[anchor=north west] {$l$};
\draw (-1.4,-2.2) node[anchor=north west] {$\ldots$};
\draw (-1.8,4.2) node[anchor=north west] {$1$};
\draw (-0.3,4.) node[anchor=north west] {$m$};
\draw (-1.4,2.) node[anchor=north west] {$\ldots$};
\end{tikzpicture}
$\substack{\displaystyle \circ\\ \vspace{3cm}}$
\begin{tikzpicture}[line cap=round,line join=round,>=triangle 45,x=0.5cm,y=0.5cm]
\clip(-2.5,-4.) rectangle (0.7,4.);
\draw [line width=0.4pt] (-2.,1.)-- (0.5,1.);
\draw [line width=0.4pt] (0.5,1.)-- (0.5,-1.);
\draw [line width=0.4pt] (0.5,-1.)-- (-2.,-1.);
\draw [line width=0.4pt] (-2.,-1.)-- (-2.,1.);
\draw [->,line width=0.4pt] (-1.5,1.) -- (-1.5,3.);
\draw [->,line width=0.4pt] (0.,1.) -- (0.,3.);
\draw [->,line width=0.4pt] (-1.5,-3.) -- (-1.5,-1.);
\draw [->,line width=0.4pt] (0.,-3.) -- (0.,-1.);
\draw (-1.25,0.5) node[anchor=north west] {$G$};
\draw (-1.8,-3) node[anchor=north west] {$1$};
\draw (-0.3,-3) node[anchor=north west] {$k$};
\draw (-1.4,-2.2) node[anchor=north west] {$\ldots$};
\draw (-1.8,4.2) node[anchor=north west] {$1$};
\draw (-0.3,4.2) node[anchor=north west] {$l$};
\draw (-1.4,2.) node[anchor=north west] {$\ldots$};
\end{tikzpicture}
$\substack{\displaystyle =\\ \vspace{3cm}}$
\begin{tikzpicture}[line cap=round,line join=round,>=triangle 45,x=0.5cm,y=0.5cm]
\clip(-2.5,-4.) rectangle (7.,5.);
\draw [line width=0.4pt] (-2.,1.)-- (0.5,1.);
\draw [line width=0.4pt] (0.5,1.)-- (0.5,-1.);
\draw [line width=0.4pt] (0.5,-1.)-- (-2.,-1.);
\draw [line width=0.4pt] (-2.,-1.)-- (-2.,1.);
\draw [line width=0.4pt] (-1.5,1.) -- (-1.5,2.);
\draw [line width=0.4pt] (0.,1.) -- (0.,2.);
\draw [->,line width=0.4pt] (-1.5,-3.) -- (-1.5,-1.);
\draw [->,line width=0.4pt] (0.,-3.) -- (0.,-1.);
\draw (-1.25,0.5) node[anchor=north west] {$G$};
\draw (-1.8,-3) node[anchor=north west] {$1$};
\draw (-0.3,-3) node[anchor=north west] {$k$};
\draw (-1.4,-2.2) node[anchor=north west] {$\ldots$};
\draw (-1.4,2.) node[anchor=north west] {$\ldots$};
\draw [shift={(0.,2.)},line width=0.4pt]  plot[domain=0.:3.141592653589793,variable=\t]({1.*1.5*cos(\t r)+0.*1.5*sin(\t r)},{0.*1.5*cos(\t r)+1.*1.5*sin(\t r)});
\draw [shift={(3.,-2.)},line width=0.4pt]  plot[domain=3.141592653589793:6.283185307179586,variable=\t]({1.*1.5*cos(\t r)+0.*1.5*sin(\t r)},{0.*1.5*cos(\t r)+1.*1.5*sin(\t r)});
\draw [->,line width=0.4pt] (1.5,2.) -- (1.5,-2.);
\draw [shift={(1.5,2.)},line width=0.4pt]  plot[domain=0.:3.141592653589793,variable=\t]({1.*1.5*cos(\t r)+0.*1.5*sin(\t r)},{0.*1.5*cos(\t r)+1.*1.5*sin(\t r)});
\draw [shift={(4.5,-2.)},line width=0.4pt]  plot[domain=3.141592653589793:6.283185307179586,variable=\t]({1.*1.5*cos(\t r)+0.*1.5*sin(\t r)},{0.*1.5*cos(\t r)+1.*1.5*sin(\t r)});
\draw [->,line width=0.4pt] (3.,2.) -- (3.,-2.);
\draw [line width=0.4pt] (4.,1.)-- (6.5,1.);
\draw [line width=0.4pt] (6.5,1.)-- (6.5,-1.);
\draw [line width=0.4pt] (6.5,-1.)-- (4.,-1.);
\draw [line width=0.4pt] (4.,-1.)-- (4.,1.);
\draw [->,line width=0.4pt] (4.5,1.) -- (4.5,3.);
\draw [->,line width=0.4pt] (6.,1.) -- (6.,3.);
\draw [line width=0.4pt] (4.5,-2.) -- (4.5,-1.);
\draw [line width=0.4pt] (6.,-2.) -- (6.,-1.);
\draw (4.75,0.5) node[anchor=north west] {$H$};
\draw (4.2,4.2) node[anchor=north west] {$1$};
\draw (5.7,4) node[anchor=north west] {$m$};
\draw (4.4,-2.2) node[anchor=north west] {$\ldots$};
\draw (4.6,2.) node[anchor=north west] {$\ldots$};
\end{tikzpicture}
$\substack{\displaystyle =\\ \vspace{3cm}}$
\begin{tikzpicture}[line cap=round,line join=round,>=triangle 45,x=0.5cm,y=0.5cm]
\clip(-2.5,-4.) rectangle (1.,8.);
\draw [line width=0.4pt] (-2.,1.)-- (0.5,1.);
\draw [line width=0.4pt] (0.5,1.)-- (0.5,-1.);
\draw [line width=0.4pt] (0.5,-1.)-- (-2.,-1.);
\draw [line width=0.4pt] (-2.,-1.)-- (-2.,1.);
\draw [->,line width=0.4pt] (-1.5,1.) -- (-1.5,3.);
\draw [->,line width=0.4pt] (0.,1.) -- (0.,3.);
\draw [->,line width=0.4pt] (-1.5,-3.) -- (-1.5,-1.);
\draw [->,line width=0.4pt] (0.,-3.) -- (0.,-1.);
\draw (-1.25,0.5) node[anchor=north west] {$G$};
\draw (-1.8,-3) node[anchor=north west] {$1$};
\draw (-0.3,-3) node[anchor=north west] {$k$};
\draw (-1.4,-2.2) node[anchor=north west] {$\ldots$};
\draw (-1.4,2.) node[anchor=north west] {$\ldots$};
\draw [line width=0.4pt] (-2.,5.)-- (0.5,5.);
\draw [line width=0.4pt] (0.5,5.)-- (0.5,3.);
\draw [line width=0.4pt] (0.5,3.)-- (-2.,3.);
\draw [line width=0.4pt] (-2.,3.)-- (-2.,5.);
\draw [->,line width=0.4pt] (-1.5,5.) -- (-1.5,7.);
\draw [->,line width=0.4pt] (0.,5.) -- (0.,7.);
\draw (-1.25,4.5) node[anchor=north west] {$H$};
\draw (-1.8,8.2) node[anchor=north west] {$1$};
\draw (-0.3,8.) node[anchor=north west] {$m$};
\draw (-1.4,6.) node[anchor=north west] {$\ldots$};
\end{tikzpicture}

\vspace{-1.5cm}
\end{center}

\begin{proof}
It is enough to prove it for a free TraP $\PGr(X)$, as any TraP is the quotient of such an object. 
If $G \in \PGr(X)(k,l)$ and $H\in \PGr(X)(l,m)$ are two $X$-decorated  planar graphs, then by definition 
of the partial trace maps,
$G*H$ is the $X$-decorated  planar graph obtained by grafting together the output edge $i$ of $G$
with the input edge $j$ of $H$ for any $i\in [k]$; this is precisely the vertical concatenation of  graphs,
adapted to $X$-decorated  planar graphs. So it is indeed a ProP.
\end{proof}

\begin{example}
\begin{enumerate}
\item For graphs, we recover the composition defined in Section \ref{sectiongraphes},
extended to graphs.
\item For the $\Hom_V$ TraP, for any $F=f_1\ldots f_k \otimes v_1\ldots v_l\in V^{*\otimes k}\otimes V^{\otimes l}\approx
\Hom(V^{\otimes k},V^{\otimes l})$
and $G=g_1\ldots g_l \otimes w_1\ldots w_n \in \in V^{*\otimes l}\otimes V^{\otimes m}\approx
\Hom(V^{\otimes l},V^{\otimes m})$:
\[F\circ G=g_1(v_1)\ldots g_l(v_l)f_1\ldots f_k \otimes  w_1\ldots w_n.\]
This is the composition of $\Hom_V$.
\end{enumerate}
\end{example}
Applied to the TraP $\Hom_V^c$ of Proposition \ref{lem:TrHom}, this method allows to recover the ProP $\Hom_V^c$ of Theorem 
\ref{thm:Hom_V_generalised}.
\begin{prop}\label{prop:TropPropHomc}
	Let $V$ be a Fr\'echet nuclear space. The ProP built from the TraP  $(\Hom_V^c(k, l))_{k,l\geq 0}$ as in Proposition \ref{prop:trop_prop} is 
	isomorphic, as a ProP, to the ProP $\Hom_V^c$ of Theorem \ref{thm:Hom_V_generalised}.
\end{prop}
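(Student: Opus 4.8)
The plan is to exhibit an explicit isomorphism of ProPs by comparing the two vertical compositions on the common underlying family $(\Hom_V^c(k,l))_{k,l\ge 0}$, since both the ProP $\Hom_V^c$ of Theorem~\ref{thm:Hom_V_generalised} and the ProP obtained from the TraP $(\Hom_V^c(k,l))_{k,l\ge 0}$ via Proposition~\ref{prop:trop_prop} share the same spaces, the same $\sym\times\sym^{op}$-action, the same horizontal concatenation (the topological tensor product of maps) and the same units $I_0$ and $I_1$. Thus the identity family $(\mathrm{Id}_{\Hom_V^c(k,l)})_{k,l\ge 0}$ is already a morphism for everything except possibly the vertical concatenation, and it suffices to check that the vertical composition $\circ_{\mathrm{TraP}}$ defined by $q\circ_{\mathrm{TraP}} p = t_{k+1,1}\circ\cdots\circ t_{k+l,l}(p*q)$ coincides with the usual composition of continuous linear maps $q\circ p$.

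First I would reduce to elementary tensors. By Definition~\ref{defi:Hom_V_generalised} and Equation~\eqref{eq:echange_dual_prod}, for a Fr\'echet nuclear $V$ we have $\Hom_V^c(k,l)\simeq (V')^{\widehat\otimes k}\widehat\otimes V^{\widehat\otimes l}$, and the partial trace maps $\mathrm{tr}_{i,j}$ of Proposition~\ref{lem:TrHom} act on a decomposable tensor $(v_1^*\otimes\cdots\otimes v_k^*)\otimes(w_1\otimes\cdots\otimes w_l)$ by pairing $v_i^*(w_j)$ and deleting the $i$-th covector and $j$-th vector. Since such decomposable tensors span a dense subspace and all maps in sight are continuous and linear, it is enough to verify the equality $q\circ_{\mathrm{TraP}} p = q\circ p$ on decomposable $p = (u_1^*\otimes\cdots\otimes u_k^*)\otimes(x_1\otimes\cdots\otimes x_l)\in\Hom_V^c(k,l)$ and $q = (y_1^*\otimes\cdots\otimes y_l^*)\otimes(z_1\otimes\cdots\otimes z_m)\in\Hom_V^c(l,m)$, using the identification of $p$ with the map $v\mapsto \prod_i u_i^*(v_i)\, (x_1\otimes\cdots\otimes x_l)$.

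Next I would compute both sides. On one hand, the composition $q\circ p$ as maps, using Lemma~\ref{lem:compo_dual_pairing} (which expresses composition of continuous morphisms of Fr\'echet nuclear spaces precisely as a dual pairing), gives $\bigl(\prod_{j=1}^l y_j^*(x_j)\bigr)\,(u_1^*\otimes\cdots\otimes u_k^*)\otimes(z_1\otimes\cdots\otimes z_m)$. On the other hand, $p*q$ is the decomposable tensor $(u_1^*\otimes\cdots\otimes u_k^*\otimes y_1^*\otimes\cdots\otimes y_l^*)\otimes(x_1\otimes\cdots\otimes x_l\otimes z_1\otimes\cdots\otimes z_m)$ in $\Hom_V^c(k+l, l+m)$, and applying the iterated partial trace $t_{k+1,1}\circ\cdots\circ t_{k+l,l}$ successively pairs the covector $y_j^*$ (sitting in input slot $k+j$) against the vector $x_j$ (sitting in output slot $j$) for $j=l,l-1,\dots,1$, each step producing the scalar $y_j^*(x_j)$ and deleting those two tensor legs, while the remaining reindexing is increasing. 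One has to keep track of how the indices shift after each trace is applied, but by the commutativity relation 3.(a) and the index bookkeeping of Definition~\ref{defi:Trap} the net effect is exactly to contract all $l$ paired legs, leaving $\bigl(\prod_{j=1}^l y_j^*(x_j)\bigr)\,(u_1^*\otimes\cdots\otimes u_k^*)\otimes(z_1\otimes\cdots\otimes z_m)$, which matches $q\circ p$.

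The main obstacle I anticipate is the index bookkeeping: one must verify carefully that applying $t_{k+l,l}$, then $t_{k+l-1,l-1}$, \dots, down to $t_{k+1,1}$ to $p*q$ contracts the correct pairs of legs given the successive reindexings after each trace, i.e. that after removing input slot $k+j$ and output slot $j$ the previously labelled slots still line up so the next trace $t_{k+j-1,j-1}$ hits the pair $(y_{j-1}^*, x_{j-1})$. This is best handled by an induction on $l$, or alternatively by invoking Lemma~\ref{lemmeaxiomessimples} and the already-established compatibilities of the $\mathrm{tr}_{i,j}$ with the symmetric actions and horizontal concatenation to normalise everything to the leftmost positions before tracing. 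Once the equality of vertical compositions is established on decomposable tensors, density and continuity promote it to all of $\Hom_V^c(k,l)\times\Hom_V^c(l,m)$, so the identity family is an isomorphism of ProPs, proving the proposition; the finite-dimensional statement follows since there $\Hom_V^c = \Hom_V$ and $\widehat\otimes = \otimes$.
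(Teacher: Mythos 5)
Your proposal is correct and follows essentially the same route as the paper: both reduce the claim to comparing the two vertical compositions on decomposable tensors of $(V')^{\widehat\otimes k}\widehat\otimes V^{\widehat\otimes l}$, compute the iterated partial traces of $p*q$ as the product of pairings $\prod_j y_j^*(x_j)$, and identify the result with the genuine composition via Lemma \ref{lem:compo_dual_pairing} applied to $E_1=V^{\widehat\otimes m}$, $E_2=V^{\widehat\otimes l}$, $E_3=V^{\widehat\otimes k}$. Your explicit attention to the index bookkeeping in the iterated traces and to the density/continuity step is in fact slightly more careful than the paper's own write-up, which asserts the trace computation directly.
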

\begin{proof}
	It is enough to check that the composition of two homomorphisms will give the right object. Let $f=\Hom_V^c(k,l)$ and $g=\Hom_V^c(l,m)$. 
	By Equation \eqref{eq:E_prime_otimes_F} we can write 
	\begin{equation*}
	f = \sum_\alpha \left((v_1^\alpha)^*\otimes\cdots\otimes(v_1^\alpha)^*\right)\otimes\left(w_1^\alpha\otimes\cdots\otimes w_k^\alpha\right), \qquad g = \sum_\beta \left((u_1^\beta)^*\otimes\cdots\otimes(u_m^\beta)^*\right)\otimes\left(r_1^\beta\otimes\cdots\otimes r_l^\beta\right).
	\end{equation*}
	Then the definition of the composition product of Proposition \ref{prop:trop_prop} implies
	\begin{equation*}
	f\circ g = \sum_\alpha\sum_\beta\left[\prod_{i=1}^l(v_i^\alpha)^*(r_i^\beta)\right]\left((u_1^\beta)^*\otimes\cdots\otimes(u_m^\beta)^*\right)\otimes\left(w_1^\alpha\otimes\cdots\otimes w_k^\alpha\right).
	\end{equation*}
	Using Equation \eqref{eq:echange_dual_prod}, we can apply Lemma \ref{lem:compo_dual_pairing} to the case $E_1=V^{\widehat\otimes  m}$, $E_2=V^{\widehat\otimes  l}$, 
	$E_3=V^{\widehat\otimes  k}$. The result then follows from this lemma and the observation that 
	\begin{equation*}
	\prod_{i=1}^l(v_i^\alpha)^*(r_i^\beta) = (v_1^\alpha\otimes\cdots\otimes v_1^\alpha)^*\otimes(r_1^\beta\otimes\cdots\otimes r_l^\beta)
	\end{equation*}
	for the duality pairing in $E_2$.
\end{proof}
We end this Subsection with a Corollary to Proposition \ref{prop:trop_prop}.
\begin{cor} \label{coro:generalised_traces}
Let $P$ be a TraP. For any $p\in P(k,k)$, we set:
\[\mathrm{Tr}(p)=t_{1,1}\circ \ldots \circ t_{k,k}(p).\]
\begin{enumerate}
\item For any $(k,l)\in  \N_0^2$, for any $(p,q)\in P(k,l)\times P(l,k)$,
\begin{align*}
\mathrm{Tr}(p\circ q)&=\mathrm{Tr}(q\circ p).
\end{align*}
\item For any $(k,l)\in  \N_0^2$, for any $(p,q)\in P(k,k)\times P(l,l)$,
\begin{align*}
\mathrm{Tr}(p*q)&=\mathrm{Tr}(p)\mathrm{Tr}(q).
\end{align*}\end{enumerate}\end{cor}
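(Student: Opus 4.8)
The plan is to prove both identities by reducing everything to the partial trace maps $t_{i,j}$, exploiting their commutativity relations (axiom 3.(a)) and their compatibility with the symmetric actions (axiom 3.(b)) and with horizontal concatenation (axiom 3.(c)). The key observation is that $\mathrm{Tr}(p) = t_{1,1}\circ\cdots\circ t_{k,k}(p)$ is, by Proposition \ref{prop:trop_prop} applied with $l=k$ and $m=0$, nothing but $I_0 \circ p$ up to the identification $P(0,0)$ with scalars via $I_0$; more precisely $\mathrm{Tr}(p)$ lands in $P(0,0)$ and the vertical composition $q\circ p = t_{k+1,1}\circ\cdots\circ t_{k+l,l}(p*q)$ specialises to a full trace when all remaining indices get contracted.

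For part (1), I would start from the definitions: $\mathrm{Tr}(p\circ q)$ means first forming $q\circ p$ — wait, one must be careful about the order; here $p\in P(k,l)$, $q\in P(l,k)$, so $p\circ q \in P(l,l)$ requires reading $\circ$ as ``$p$ after $q$'', consistent with Proposition \ref{prop:trop_prop} where $q\circ p = t_{k+1,1}\circ\cdots\circ t_{k+l,l}(p*q)$ for $p\in P(k,l)$, $q\in P(l,m)$. So $p\circ q$ with $q\in P(l,k)$, $p\in P(k,l)$ is obtained by contracting $q*p \in P(l+k, k+l)$ appropriately, then $\mathrm{Tr}$ contracts the rest. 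The upshot is that both $\mathrm{Tr}(p\circ q)$ and $\mathrm{Tr}(q\circ p)$ equal a full contraction of $p*q$ (or $q*p$) over \emph{all} indices, and by the commutativity of $*$ up to the block permutation $c_{k,l}$ (axiom 2.(d)) together with axiom 3.(b) describing how $t_{i,j}$ transforms under permutations, the two full contractions agree. The clean way to organise this: show first that $\mathrm{Tr}(r) = \mathrm{Tr}(\sigma\cdot r\cdot\sigma^{-1})$ for any $r\in P(n,n)$ and $\sigma\in\sym_n$ — this follows by induction on $n$ from axiom 3.(b), since conjugating and then tracing all indices is insensitive to the relabelling — and then observe $\mathrm{Tr}(p\circ q)$ and $\mathrm{Tr}(q\circ p)$ are full traces of $c_{k,l}$-conjugate elements of $P(k+l,k+l)$.

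For part (2), the argument is more direct: $\mathrm{Tr}(p*q)$ with $p\in P(k,k)$, $q\in P(l,l)$ is the full contraction of $p*q\in P(k+l,k+l)$. Using the compatibility of $t_{i,j}$ with horizontal concatenation (axiom 3.(c)) repeatedly, each contraction $t_{i,j}$ with $i,j\le k$ acts only on the $p$-block and each with $i,j>k$ acts only on the $q$-block; contracting first all the $q$-indices gives $p * \mathrm{Tr}(q)$, and since $\mathrm{Tr}(q)\in P(0,0)$ is central for $*$ (by the Remark following Definition \ref{def:prop}, or the analogous fact for TraPs, $p'\in P(0,0)$ satisfies $p*p'=p'*p$ and $p*p'$ identifies with scalar multiplication), this equals $\mathrm{Tr}(q)\cdot(p * I_0) = \mathrm{Tr}(q)\cdot p$; then contracting the remaining $k$ indices of $p$ yields $\mathrm{Tr}(q)\mathrm{Tr}(p)$. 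One must check that the ``mixed'' contractions never occur — in $\mathrm{Tr}(p*q) = t_{1,1}\circ\cdots\circ t_{k+l,k+l}(p*q)$ each step $t_{m,m}$ has equal row and column index, so by axiom 3.(c) it stays within one block provided we contract in an order that respects the block structure; a short lemma reordering the composition of $t_{m,m}$'s via axiom 3.(a) handles this.

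The main obstacle I anticipate is bookkeeping the index shifts: after each $t_{i,j}$ the indices of the remaining edges are renumbered, so the naive statement ``contract the $q$-block'' requires tracking how the indices $k+1,\dots,k+l$ move as earlier contractions are performed, and showing the reordering of the $t_{m,m}$'s is legitimate via axiom 3.(a). I would isolate this in an auxiliary lemma: for $p\in P(k,k)$, the composite $t_{1,1}\circ\cdots\circ t_{k,k}(p)$ is independent of the order in which the diagonal traces are applied (using the four cases of axiom 3.(a), the relevant ones here being $i'<i,j'<j$ and $i'\ge i,j'\ge j$, which after the shift still produce diagonal traces). Granting that lemma, both parts are a matter of carefully applying axioms 3.(b) and 3.(c); no genuinely new idea is needed beyond the structural axioms of a TraP already established.
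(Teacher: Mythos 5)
Your proposal is correct in outline but takes a genuinely different route from the paper's. The paper proves both identities by reduction to the free TraP $\PGr(X)$: since every TraP $P$ is a quotient of $\Gacirc(P)$ under the structure morphism $\alpha_P$, which is a morphism of TraPs, any identity among the TraP operations valid for decorated planar graphs holds in every TraP, and the two identities are then read off graphically --- $\mathrm{Tr}(H\circ G)$ and $\mathrm{Tr}(G\circ H)$ are literally the same closed diagram, while $\mathrm{Tr}(G*H)$ is the disjoint union of the closed diagrams for $\mathrm{Tr}(G)$ and $\mathrm{Tr}(H)$. You instead argue axiomatically inside an arbitrary TraP, which is legitimate and more self-contained (it avoids the freeness theorem, whose proof occupies the appendix), but it shifts the burden onto exactly the bookkeeping you flag. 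For part (2) that burden is actually nil: in $\mathrm{Tr}(p*q)=t_{1,1}\circ\cdots\circ t_{k+l,k+l}(p*q)$ the innermost traces $t_{k+l,k+l},\ldots,t_{k+1,k+1}$ satisfy $i>k$, $j>k$ at every stage, so axiom 3.(c) peels off $\mathrm{Tr}(q)$ with no reordering at all, and the remaining diagonal traces stay in the $p$-block; the detour through centrality of $P(0,0)$ is unnecessary, since $*$ restricted to $P(0,0)$ is just the product of that commutative algebra. Part (1) is where the real content lies, and your skeleton --- a reordering lemma for composites of partial traces from axiom 3.(a), the transformation law of contractions under the $\sym\times\sym^{op}$-action from axiom 3.(b), and the commutativity relation $c_{l,k}\cdot(p*q)=(q*p)\cdot c_{k,l}$ --- is the right one: both $\mathrm{Tr}(p\circ q)$ and $\mathrm{Tr}(q\circ p)$ are the full contraction of $p*q$ along the pairing $c_{k,l}$ (input $i$ of $p$ with output $i$ of $q$, input $j$ of $q$ with output $j$ of $p$), hence coincide. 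Those two lemmas are true and provable by induction from axioms 3.(a)--(b), but they are only sketched; until they are written out your argument remains a proposal rather than a proof. In short, the paper buys a two-line graphical verification at the price of the freeness machinery, and you buy independence from that machinery at the price of the index gymnastics.
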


\begin{example}
\begin{enumerate}
\item In $\Gr$, for any graph $G\in \Gr(k,k)$, $\mathrm{Tr}(G)$ is obtained by
gluing together the $i$-th output edge with the $i$-th output edge of $G$. 
In particular, $\grapheo=\mathrm{Tr}(I)$. Graphically:

\begin{center}
\begin{tikzpicture}[line cap=round,line join=round,>=triangle 45,x=0.5cm,y=0.5cm]
\clip(-2.5,-4.) rectangle (1.,4.);
\draw [line width=0.4pt] (-2.,1.)-- (0.5,1.);
\draw [line width=0.4pt] (0.5,1.)-- (0.5,-1.);
\draw [line width=0.4pt] (0.5,-1.)-- (-2.,-1.);
\draw [line width=0.4pt] (-2.,-1.)-- (-2.,1.);
\draw [->,line width=0.4pt] (-1.5,1.) -- (-1.5,3.);
\draw [->,line width=0.4pt] (0.,1.) -- (0.,3.);
\draw [->,line width=0.4pt] (-1.5,-3.) -- (-1.5,-1.);
\draw [->,line width=0.4pt] (0.,-3.) -- (0.,-1.);
\draw (-1.25,0.5) node[anchor=north west] {$G$};
\draw (-1.8,-3) node[anchor=north west] {$1$};
\draw (-0.3,-3) node[anchor=north west] {$k$};
\draw (-1.4,-2.2) node[anchor=north west] {$\ldots$};
\draw (-1.8,4.2) node[anchor=north west] {$1$};
\draw (-0.3,4.2) node[anchor=north west] {$k$};
\draw (-1.4,2.) node[anchor=north west] {$\ldots$};
\end{tikzpicture}
$\substack{\displaystyle \stackrel{Tr}{\longrightarrow}\\ \vspace{3cm}}$
\begin{tikzpicture}[line cap=round,line join=round,>=triangle 45,x=0.5cm,y=0.5cm]
\clip(-2.5,-4.) rectangle (3.5,4.);
\draw [line width=0.4pt] (-2.,1.)-- (0.5,1.);
\draw [line width=0.4pt] (0.5,1.)-- (0.5,-1.);
\draw [line width=0.4pt] (0.5,-1.)-- (-2.,-1.);
\draw [line width=0.4pt] (-2.,-1.)-- (-2.,1.);
\draw [line width=0.4pt] (-1.5,1.) -- (-1.5,2.);
\draw [line width=0.4pt] (0.,1.) -- (0.,2.);
\draw [line width=0.4pt] (-1.5,-2.) -- (-1.5,-1.);
\draw [line width=0.4pt] (0.,-2.) -- (0.,-1.);
\draw (-1.25,0.5) node[anchor=north west] {$G$};
\draw (-1.4,-2.2) node[anchor=north west] {$\ldots$};
\draw (-1.4,2.) node[anchor=north west] {$\ldots$};
\draw [shift={(1.5,-2.)},line width=0.4pt]  plot[domain=3.141592653589793:6.283185307179586,variable=\t]({1.*1.5*cos(\t r)+0.*1.5*sin(\t r)},{0.*1.5*cos(\t r)+1.*1.5*sin(\t r)});
\draw [shift={(0.,-2.)},line width=0.4pt]  plot[domain=3.141592653589793:6.283185307179586,variable=\t]({1.*1.5*cos(\t r)+0.*1.5*sin(\t r)},{0.*1.5*cos(\t r)+1.*1.5*sin(\t r)});
\draw [shift={(1.5,2.)},line width=0.4pt]  plot[domain=0.:3.141592653589793,variable=\t]({1.*1.5*cos(\t r)+0.*1.5*sin(\t r)},{0.*1.5*cos(\t r)+1.*1.5*sin(\t r)});
\draw [shift={(0.,2.)},line width=0.4pt]  plot[domain=0.:3.141592653589793,variable=\t]({1.*1.5*cos(\t r)+0.*1.5*sin(\t r)},{0.*1.5*cos(\t r)+1.*1.5*sin(\t r)});
\draw [->,line width=0.4pt] (1.5,2.) -- (1.5,-2.);
\draw [->,line width=0.4pt] (3.,2.) -- (3.,-2.);
\end{tikzpicture}

\vspace{-1.5cm}
\end{center}

\item Let $V$ be a finite dimensional vector space of dimension $n$. In the TraP $\Hom_V$ introduced in Proposition 
\ref{prop:Trap_fin_dim_VS} we obtain a trace for morphisms $F:V^{\otimes k}\mapsto V^{\otimes k}$. Specialising to the case $k=1$, we recover the 
usual trace of linear endomorphisms: choose $(e_1,\cdots,e_n)$ a basis of $V$. Any morphism $f:V\mapsto V$ can be represented in this basis by 
	$\sum_{i,j=1}^na^f_{ij}e_i^*\otimes e_j$ for some complex numbers $a^f_{ij}$. Then 
	$\mathrm{Tr}(f)=\sum_{i,j=1}^na^f_{ij}e_i^*(e_j)=\sum_{i=1}^na^f_{ii}$.
$\mathrm{Tr}(f)$   lies in $\K$, is viewed here as an element of $\Hom_V(0,0)$ via the identification of a constant $\lambda$ in $\K$ to a linear map 
$x\longmapsto \lambda \, x$ on $\K$.

The vertical composition  $f\circ g= t_{2,1}(f * g)$ of two morphisms $f$ and $g$,  defined   according to Proposition \ref{prop:trop_prop}  is  
indeed represented by   the usual  matrix product:   
\[\sum_{i,j=1}^n\sum_{k,l=1}^na^f_{ik}\,a^g_{lj} \,e_i^*\otimes e_k^*(e_l)\otimes e_j=\sum_{i,j=1}^n\left(\sum_{k=1}^na^f_{ik}\,a^g_{kj} \right)\,e_i^*\otimes  e_j,\]
where $(a_{ij}^f)_{i, j}$, $(a_{ij}^g)_{i, j}$ are the matrix representations of $f$ and $g$ respectively.
\end{enumerate}
\end{example}

\begin{proof} Again, it is enough to prove the result for a free TraP $\PGr(X)$.

Let  $G\in \PGr(X)(k,l)$ and $H \in \PGr(X)(l,k)$ be two graphs.
Then $\mathrm{Tr}(H\circ G)$ is graphically represented by each of the graphs:

\begin{align*}
&\begin{tikzpicture}[line cap=round,line join=round,>=triangle 45,x=0.5cm,y=0.5cm]
\clip(-2.5,-4.) rectangle (3.5,8.);
\draw [line width=0.4pt] (-2.,1.)-- (0.5,1.);
\draw [line width=0.4pt] (0.5,1.)-- (0.5,-1.);
\draw [line width=0.4pt] (0.5,-1.)-- (-2.,-1.);
\draw [line width=0.4pt] (-2.,-1.)-- (-2.,1.);
\draw [->,line width=0.4pt] (-1.5,1.) -- (-1.5,3.);
\draw [->,line width=0.4pt] (0.,1.) -- (0.,3.);
\draw [line width=0.4pt] (-1.5,-2.) -- (-1.5,-1.);
\draw [line width=0.4pt] (0.,-2.) -- (0.,-1.);
\draw (-1.25,0.5) node[anchor=north west] {$G$};
\draw (-1.4,-2.2) node[anchor=north west] {$\ldots$};
\draw (-1.4,2.) node[anchor=north west] {$\ldots$};
\draw [line width=0.4pt] (-2.,5.)-- (0.5,5.);
\draw [line width=0.4pt] (0.5,5.)-- (0.5,3.);
\draw [line width=0.4pt] (0.5,3.)-- (-2.,3.);
\draw [line width=0.4pt] (-2.,3.)-- (-2.,5.);
\draw [->,line width=0.4pt] (-1.5,5.) -- (-1.5,6.);
\draw [->,line width=0.4pt] (0.,5.) -- (0.,6.);
\draw (-1.25,4.5) node[anchor=north west] {$H$};
\draw (-1.4,6.) node[anchor=north west] {$\ldots$};
\draw [shift={(1.5,-2.)},line width=0.4pt]  plot[domain=3.141592653589793:6.283185307179586,variable=\t]({1.*1.5*cos(\t r)+0.*1.5*sin(\t r)},{0.*1.5*cos(\t r)+1.*1.5*sin(\t r)});
\draw [shift={(0.,-2.)},line width=0.4pt]  plot[domain=3.141592653589793:6.283185307179586,variable=\t]({1.*1.5*cos(\t r)+0.*1.5*sin(\t r)},{0.*1.5*cos(\t r)+1.*1.5*sin(\t r)});
\draw [shift={(1.5,6.)},line width=0.4pt]  plot[domain=0.:3.141592653589793,variable=\t]({1.*1.5*cos(\t r)+0.*1.5*sin(\t r)},{0.*1.5*cos(\t r)+1.*1.5*sin(\t r)});
\draw [shift={(0.,6.)},line width=0.4pt]  plot[domain=0.:3.141592653589793,variable=\t]({1.*1.5*cos(\t r)+0.*1.5*sin(\t r)},{0.*1.5*cos(\t r)+1.*1.5*sin(\t r)});
\draw [->,line width=0.4pt] (1.5,6.) -- (1.5,-2.);
\draw [->,line width=0.4pt] (3.,6.) -- (3.,-2.);
\end{tikzpicture}& 
\begin{tikzpicture}[line cap=round,line join=round,>=triangle 45,x=0.5cm,y=0.5cm]
\clip(-2.5,-4.) rectangle (3.5,8.);
\draw [line width=0.4pt] (-2.,1.)-- (0.5,1.);
\draw [line width=0.4pt] (0.5,1.)-- (0.5,-1.);
\draw [line width=0.4pt] (0.5,-1.)-- (-2.,-1.);
\draw [line width=0.4pt] (-2.,-1.)-- (-2.,1.);
\draw [->,line width=0.4pt] (-1.5,1.) -- (-1.5,3.);
\draw [->,line width=0.4pt] (0.,1.) -- (0.,3.);
\draw [line width=0.4pt] (-1.5,-2.) -- (-1.5,-1.);
\draw [line width=0.4pt] (0.,-2.) -- (0.,-1.);
\draw (-1.25,0.5) node[anchor=north west] {$H$};
\draw (-1.4,-2.2) node[anchor=north west] {$\ldots$};
\draw (-1.4,2.) node[anchor=north west] {$\ldots$};
\draw [line width=0.4pt] (-2.,5.)-- (0.5,5.);
\draw [line width=0.4pt] (0.5,5.)-- (0.5,3.);
\draw [line width=0.4pt] (0.5,3.)-- (-2.,3.);
\draw [line width=0.4pt] (-2.,3.)-- (-2.,5.);
\draw [->,line width=0.4pt] (-1.5,5.) -- (-1.5,6.);
\draw [->,line width=0.4pt] (0.,5.) -- (0.,6.);
\draw (-1.25,4.5) node[anchor=north west] {$G$};
\draw (-1.4,6.) node[anchor=north west] {$\ldots$};
\draw [shift={(1.5,-2.)},line width=0.4pt]  plot[domain=3.141592653589793:6.283185307179586,variable=\t]({1.*1.5*cos(\t r)+0.*1.5*sin(\t r)},{0.*1.5*cos(\t r)+1.*1.5*sin(\t r)});
\draw [shift={(0.,-2.)},line width=0.4pt]  plot[domain=3.141592653589793:6.283185307179586,variable=\t]({1.*1.5*cos(\t r)+0.*1.5*sin(\t r)},{0.*1.5*cos(\t r)+1.*1.5*sin(\t r)});
\draw [shift={(1.5,6.)},line width=0.4pt]  plot[domain=0.:3.141592653589793,variable=\t]({1.*1.5*cos(\t r)+0.*1.5*sin(\t r)},{0.*1.5*cos(\t r)+1.*1.5*sin(\t r)});
\draw [shift={(0.,6.)},line width=0.4pt]  plot[domain=0.:3.141592653589793,variable=\t]({1.*1.5*cos(\t r)+0.*1.5*sin(\t r)},{0.*1.5*cos(\t r)+1.*1.5*sin(\t r)});
\draw [->,line width=0.4pt] (1.5,6.) -- (1.5,-2.);
\draw [->,line width=0.4pt] (3.,6.) -- (3.,-2.);
\end{tikzpicture}
\end{align*}
which are the same. So $\mathrm{Tr}(H\circ G)=\mathrm{Tr}(G\circ H)$. 
Moreover, the graph $\mathrm{Tr}(G*H)$ is represented by the 
graph

\begin{center}
\begin{tikzpicture}[line cap=round,line join=round,>=triangle 45,x=0.5cm,y=0.5cm]
\clip(-2.5,-4.) rectangle (3.5,4.);
\draw [line width=0.4pt] (-2.,1.)-- (0.5,1.);
\draw [line width=0.4pt] (0.5,1.)-- (0.5,-1.);
\draw [line width=0.4pt] (0.5,-1.)-- (-2.,-1.);
\draw [line width=0.4pt] (-2.,-1.)-- (-2.,1.);
\draw [line width=0.4pt] (-1.5,1.) -- (-1.5,2.);
\draw [line width=0.4pt] (0.,1.) -- (0.,2.);
\draw [line width=0.4pt] (-1.5,-2.) -- (-1.5,-1.);
\draw [line width=0.4pt] (0.,-2.) -- (0.,-1.);
\draw (-1.25,0.5) node[anchor=north west] {$G$};
\draw (-1.4,-2.2) node[anchor=north west] {$\ldots$};
\draw (-1.4,2.) node[anchor=north west] {$\ldots$};
\draw [shift={(1.5,-2.)},line width=0.4pt]  plot[domain=3.141592653589793:6.283185307179586,variable=\t]({1.*1.5*cos(\t r)+0.*1.5*sin(\t r)},{0.*1.5*cos(\t r)+1.*1.5*sin(\t r)});
\draw [shift={(0.,-2.)},line width=0.4pt]  plot[domain=3.141592653589793:6.283185307179586,variable=\t]({1.*1.5*cos(\t r)+0.*1.5*sin(\t r)},{0.*1.5*cos(\t r)+1.*1.5*sin(\t r)});
\draw [shift={(1.5,2.)},line width=0.4pt]  plot[domain=0.:3.141592653589793,variable=\t]({1.*1.5*cos(\t r)+0.*1.5*sin(\t r)},{0.*1.5*cos(\t r)+1.*1.5*sin(\t r)});
\draw [shift={(0.,2.)},line width=0.4pt]  plot[domain=0.:3.141592653589793,variable=\t]({1.*1.5*cos(\t r)+0.*1.5*sin(\t r)},{0.*1.5*cos(\t r)+1.*1.5*sin(\t r)});
\draw [->,line width=0.4pt] (1.5,2.) -- (1.5,-2.);
\draw [->,line width=0.4pt] (3.,2.) -- (3.,-2.);
\end{tikzpicture}
\begin{tikzpicture}[line cap=round,line join=round,>=triangle 45,x=0.5cm,y=0.5cm]
\clip(-2.5,-4.) rectangle (6.,4.);
\draw [line width=0.4pt] (-2.,1.)-- (0.5,1.);
\draw [line width=0.4pt] (0.5,1.)-- (0.5,-1.);
\draw [line width=0.4pt] (0.5,-1.)-- (-2.,-1.);
\draw [line width=0.4pt] (-2.,-1.)-- (-2.,1.);
\draw [line width=0.4pt] (-1.5,1.) -- (-1.5,2.);
\draw [line width=0.4pt] (0.,1.) -- (0.,2.);
\draw [line width=0.4pt] (-1.5,-2.) -- (-1.5,-1.);
\draw [line width=0.4pt] (0.,-2.) -- (0.,-1.);
\draw (-1.25,0.5) node[anchor=north west] {$H$};
\draw (-1.4,-2.2) node[anchor=north west] {$\ldots$};
\draw (-1.4,2.) node[anchor=north west] {$\ldots$};
\draw [shift={(1.5,-2.)},line width=0.4pt]  plot[domain=3.141592653589793:6.283185307179586,variable=\t]({1.*1.5*cos(\t r)+0.*1.5*sin(\t r)},{0.*1.5*cos(\t r)+1.*1.5*sin(\t r)});
\draw [shift={(0.,-2.)},line width=0.4pt]  plot[domain=3.141592653589793:6.283185307179586,variable=\t]({1.*1.5*cos(\t r)+0.*1.5*sin(\t r)},{0.*1.5*cos(\t r)+1.*1.5*sin(\t r)});
\draw [shift={(1.5,2.)},line width=0.4pt]  plot[domain=0.:3.141592653589793,variable=\t]({1.*1.5*cos(\t r)+0.*1.5*sin(\t r)},{0.*1.5*cos(\t r)+1.*1.5*sin(\t r)});
\draw [shift={(0.,2.)},line width=0.4pt]  plot[domain=0.:3.141592653589793,variable=\t]({1.*1.5*cos(\t r)+0.*1.5*sin(\t r)},{0.*1.5*cos(\t r)+1.*1.5*sin(\t r)});
\draw [->,line width=0.4pt] (1.5,2.) -- (1.5,-2.);
\draw [->,line width=0.4pt] (3.,2.) -- (3.,-2.);
\end{tikzpicture}
\end{center}
which is also a graphical representation of $\mathrm{Tr}(G)*\mathrm{Tr}(H)$. So $\mathrm{Tr}(G*H)=\mathrm{Tr}(G)*Tr(H)$. 
\end{proof}

\subsection{Quasi-TraPs} \label{subsec:partial}

The partial trace maps $t_{i,j}$ arising in the definition of a TraP  might not be defined on every operator. To circumvent this difficulty, 
we work with a $\sym\times \sym^{op}$-module $(P(k,l))_{k,l\geqslant 0}$ with a horizontal concatenation $\star$,
satisfying all the required axioms, and for any $k,l\geqslant 1$, for any $i\in [k]$, $j\in [l]$,
a map $T_{i,j}:P'(k,l) \longrightarrow P(k-1,l-1)$ defined on a submodule of $P(k,l)$; we assume that it satisfies all the required axioms
as soon as all the maps they imply are defined. 

We can then embed such a quasi-TraP 
in a "complete" TraP: consider the TraP $\Gacirc(P)$,
and quotient it by the TraP ideal generated by the elements:
\begin{enumerate}
\item $\nu_P(p)*\nu_P(q)-\nu_P(p\star q)$, where $p,q \in P$.
\item $t_{i,j}\circ \nu_P(p)-\nu_P\circ T_{i,j}(p)$, where $p\in P$ such that $T_{i,j}(p)$ is defined.
\end{enumerate}
We obtain in this way a TraP $\overline{P}$, with partial trace maps $t_{i,j}$ induced on the quotient by the partial trace maps of $\Gacirc(P)$.
It contains a $\sym\times \sym$-module isomorphic to $P$ and formed by graphs with only one vertex,
which we identify with $P$ itself. Then,  if $T_{i,j}(p)$ is defined,  $T_{i,j}(p)=t_{i,j}(p)$.

\begin{example}
Let $V=\K[X]$, $(X^n)_{n\geqslant 0}$ its canonical basis and $(\delta_n)_{n\geqslant 0}$ the dual basis.
Let us denote by $E^+$ the subspace of $\Hom(V)$ generated by the endomorphisms  of the form
\[f_{i,j}:\left\{\begin{array}{rcl}
\K[X]&\longrightarrow&\K[X]\\
X^k&\longrightarrow&\delta_{i,k}X^j,
\end{array}\right.\]
where $i,j\geqslant 0$ (i.e. $f_{i,j}(X^k)=X^j$ if $k=i$, and $f_{i,j}(X^k)=0$ otherwise). This is the subspace of endomorphisms of $V$ with a 
finite support when applied on monomials.
Note that $E^+$ does not contains $\mathrm{Id}_V$: we put $E=E^+\oplus \K \mathrm{Id}_V$. For any $k,l\geqslant 0$,
let $P(k,l)$ be the submodule of $\Hom (V^{\otimes k},V^{\otimes k})$ generated by $E^{\otimes k}$ if $k=l$,
and $\{0\}$ otherwise. This is stable under the horizontal concatenation of $\Hom_V$.  

The elements of $P(k,k)$ are linear spans of  terms:
\[\sigma \cdot (f_1\otimes \ldots \otimes f_k) \cdot \tau,\]
where $\sigma,\tau \in \sym_k$, and for any $p$, $f_p$ is one of the $f_{i,j}$ or is $\mathrm{Id}_V$. 
We define a partial trace map on $P$ by putting $T_{1,1}(f_{i,j})=\delta_{i,j}$; but $T_{1,1}(\mathrm{Id}_V)$ is not defined.
This is extended to $P$ using the axioms of a TraP. For example:
\begin{align*}
T_{1,1}(f_{i,j}\otimes f_{k,l})&=\delta_{i,j} f_{k,l},&T_{1,1}(f_{i,j}\otimes \mathrm{Id}_V)&=\delta_{i,j}\mathrm{Id}_V,\\
T_{2,2}(f_{i,j}\otimes f_{k,l})&=\delta_{k,l} f_{i,j},&T_{2,2}(f_{i,j}\otimes \mathrm{Id}_V)&\mbox{ is not defined},\\
T_{1,2}(f_{i,j}\otimes f_{k,l})&=\delta_{i,l} f_{k,j},&T_{1,2}(f_{i,j}\otimes \mathrm{Id}_V)&=f_{i,j},\\
T_{2,1}(f_{i,j}\otimes f_{k,l})&=\delta_{j,k} f_{i,l},&T_{2,1}(f_{i,j}\otimes \mathrm{Id}_V)&=f_{i,j}.
\end{align*}
Denoting by $\grapheo$ the graph with only one loop,
we obtain that for any $k\geqslant 0$,
\[\overline{P}(k,k)=\K[\grapheo]\otimes P(k,k),\] 
and $t_{1,1}(\mathrm{Id}_V)=\grapheo$.  Any $p\in P(k,k)$ is identified with $1\otimes p\in \overline{P}(k,k)$. For example, in $\overline{P}$:
\begin{align*}
t_{1,1}(f_{i,j}\otimes f_{k,l})&=\delta_{i,j} f_{k,l},&t_{1,1}(f_{i,j}\otimes \mathrm{Id}_V)&=\delta_{i,j}\mathrm{Id}_V,\\
t_{2,2}(f_{i,j}\otimes f_{k,l})&=\delta_{k,l} f_{i,j},&t_{2,2}(f_{i,j}\otimes \mathrm{Id}_V)&=f_{i,j}\otimes\grapheo,\\
t_{1,2}(f_{i,j}\otimes f_{k,l})&=\delta_{i,l} f_{k,j},&t_{1,2}(f_{i,j}\otimes \mathrm{Id}_V)&=f_{i,j},\\
t_{2,1}(f_{i,j}\otimes f_{k,l})&=\delta_{j,k} f_{i,l},&t_{2,1}(f_{i,j}\otimes \mathrm{Id}_V)&=f_{i,j}.
\end{align*}
Choosing for any $k\geqslant 1$ an element $f_k\in P(k,l)$,
any graph $G$ such that $L(G)=\emptyset$ is sent to an element of $P$ by $\Phi$.
\end{example}

\section{The TraP $\overline{\mathcal{K}}_{{X}}^\infty$ of smoothing pseudo-differential operators}

We apply our results on TraPs to  tensor products of a class of  of Fr\'echet nuclear spaces introduced in Section \ref{sec:propHom}, 
 namely Fr\'echet spaces ${\mathcal E}(X)$  of smooth sections of $X$. Recall from Proposition 
\ref{prop:prod_function}   that such spaces are stable under tensor products and morphisms in  ${\rm Hom}^c( {\mathcal E}^\prime(X), {\mathcal E}(Y))$ are determined by    smoothing kernels in ${\mathcal E}(X\times Y)$.

\subsection{Trace of smoothing pseudo-differential operators}

Let $X$   be a smooth finite dimensional closed manifold.
Let us set $E=\mathcal{E}({X})$, and 
 $F=\mathcal{E}'({X})$, which is not Fr\'echet, in which case Lemma \ref{lem:compo_dual_pairing} does not apply. 
 
 Instead, we  restrict ourselves to   smooth kernels
 which stabilise ${\mathcal E}({X})$. 
 We  set, for $(k,l)\neq(1,1)$:
 \[{\mathcal K}_{{X}}^\infty(k, l):={\mathcal E}({{X}}^k\times {{X}}^l)\simeq{\mathcal E}({{X}})^{\widehat\otimes  k}\, \hat \otimes \,  {\mathcal E}({{X}})^{\widehat\otimes  l},\]
 where the identification holds by Proposition \ref{prop:prod_function}. For $(k,l)=(1,1)$ we set 
 \begin{equation*}
  {\mathcal K}_{{X}}^\infty(1, 1):={\mathcal E}(X\times X)\bigcup \{\delta\}\simeq{\mathcal E}(X)\, \hat \otimes \,  {\mathcal E}(X)\bigcup\{\delta\}
 \end{equation*}
 with $\delta$ the (singular) kernel of the identity operator on $\mathcal{E}(X)$. With the notations of Definition \ref{defi:Trap}, we will 
 have $I=\delta$.

For a closed  Riemannian manifold ${{X}}$ equipped with a volume measure $\mu$, the 
   canonical embedding 
   ${\mathcal E}({{X}})\hookrightarrow {\mathcal E}^\prime({{X}})$, $f\longmapsto \left(\varphi\mapsto \int_{{X}}f(x)\, \varphi(x)\, d\mu(x)\right)$ induces an 
   embedding 
\[{\mathcal K}_{{X}}^\infty(k, l)\hookrightarrow  {\mathcal K}_{{X}} (k, l)\simeq{\mathcal E}'({{X}})^{\widehat\otimes  k}\, \hat \otimes \,  {\mathcal E}({{X}})^{\widehat\otimes  l}.\]
\begin{prop}
The family of topological vector spaces $\left({\mathcal K}_{{X}}^\infty(k, l)\right)_{k,l\geq 0}$ equipped with the partial traces
\begin{align*}
t_{i,j}&:\left\{\begin{array}{rcl}
{\mathcal K}_{{X}}^\infty(k, l)&\longrightarrow&{\mathcal K}_{{X}}^\infty(k-1,l-1)\\
K_1\otimes K_2 &\longmapsto&  {t_{i,j}(K_1\otimes K_2)}
\end{array}\right.
\end{align*}
 with, for $K_1\otimes K_2\neq\delta$, $t_{i,j}(K_1\otimes K_2)$ defined by
\begin{align*}
 t_{i,j}(K_1\otimes K_2) & (x_1, \cdots, x_{k-1}, y_1, \cdots,  y_{l-1}):= \\
 & \int_{X} K_1(x_1 ,  \cdots,x_{i-1},z,x_{i}\cdots, x_{k-1})\, K_2(y_1,\cdots,y_{j-1},z,y_{j}\cdots, y_k) \,d\mu(z)
\end{align*}
(with an obvious abuse of notation in the cases where $i$ (or $j$) is equal to 1 or $k$ (or to 1 or $l$))

defines a TraP, written $\mathcal{K}_{{X}}^\infty$. 
\end{prop}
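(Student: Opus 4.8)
The plan is to verify that $\left(\mathcal{K}_X^\infty(k,l)\right)_{k,l\geq 0}$ satisfies all the axioms of Definition \ref{defi:Trap}, treating the singular kernel $\delta\in\mathcal{K}_X^\infty(1,1)$ as the unit $I$ and being careful about the fact that partial traces involving $\delta$ need separate attention. The $\sym\times\sym^{op}$-module structure is the obvious one permuting the $X$-variables $x_1,\dots,x_k$ (on the right) and $y_1,\dots,y_l$ (on the left), and the horizontal concatenation is the tensor product of smooth kernels, which is well-defined since $\mathcal{E}(X^k\times X^l)\,\widehat\otimes\,\mathcal{E}(X^{k'}\times X^{l'})\simeq\mathcal{E}(X^{k+k'}\times X^{l+l'})$ by Proposition \ref{prop:prod_function} and Lemma \ref{lem:prod_Frechet_nuclear}. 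Axioms 1 and 2.(a)--(d) are then immediate from associativity and commutativity of the completed tensor product of function spaces on manifolds, exactly as for $\Hom_V^c$ in Theorem \ref{thm:Hom_V_generalised}; the unit for $*$ is $I_0=1\in\mathcal{K}_X^\infty(0,0)=\K$.

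The core of the proof is checking the partial trace axioms 3.(a)--(d). First I would note that the integral defining $t_{i,j}(K_1\otimes K_2)$ converges and produces a smooth kernel: since $X$ is closed (compact without boundary), integration against the volume measure $\mu$ sends $\mathcal{E}(X^{k}\times X^{l})$ to $\mathcal{E}(X^{k-1}\times X^{l-1})$ continuously, so $t_{i,j}$ is a well-defined continuous map on $\mathcal{K}_X^\infty(k,l)$ whenever $(k,l)\neq(1,1)$ on the source, and one must separately record that $t_{1,1}(\delta)$ is \emph{not} in the picture — rather, what we need is the behaviour of $t_{i,j}$ on products $\delta\otimes K$ and $K\otimes\delta$ arising in axiom 3.(d), where integration against $\delta$ is just evaluation: $\int_X \delta(z,w)\,f(z)\,d\mu(z)=f(w)$. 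Axiom 3.(a) (commutativity of iterated partial traces) follows from Fubini's theorem applied to the iterated integrals over copies of $X$, with the index bookkeeping matching the four cases in the axiom precisely as in the combinatorial verification; this is routine but tedious. Axiom 3.(b) (compatibility with the symmetric actions) is a change-of-variables computation identical in spirit to the one in the proof of Proposition \ref{lem:TrHom}: permuting the $x$- and $y$-slots and then contracting the $(i,j)$-slots gives the same kernel as contracting the $(\tau(i),\sigma^{-1}(j))$-slots first and then applying the residual permutations $\sigma_j,\tau_i$. Axiom 3.(c) (compatibility with horizontal concatenation) is clear since if $i\leq k$ and $j\leq l$ the contracted variable $z$ lives entirely in the $K_1$-factor, so $t_{i,j}(K_1\otimes K_2)=t_{i,j}(K_1)\otimes K_2$ — and by Lemma \ref{lemmeaxiomessimples}(1) it suffices to check the case $(i,j)=(1,1)$.

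For axiom 3.(d), by Lemma \ref{lemmeaxiomessimples}(2) it is enough to prove that $t_{1,2}(\delta * K)=K$ for any $K\in\mathcal{K}_X^\infty(k,l)$ with $l\geq 1$; but $\delta*K$ is the kernel $\delta(x_0,y_0)\,K(x_1,\dots,x_k,y_1,\dots,y_l)$ in variables $(x_0,\dots,x_k,y_0,\dots,y_l)$, and contracting the $0$-th input slot with the $2$-nd output slot — after the reindexing convention places $y_0$ as the first output and $y_1$ as the second — gives $\int_X \delta(z,y_1')\,K(\dots)\,d\mu(z)$ evaluated appropriately, which collapses to $K$. The main obstacle I anticipate is bookkeeping: one must be meticulous about how $t_{i,j}$ reindexes inputs and outputs after the contraction (the ``reindex increasingly'' step), and about the precise slot-matching in the $\delta$-contraction so that the four identities in axiom 3.(d) come out with exactly the cyclic permutations $(1,\dots,j-1)$, $(1,\dots,i-1)^{-1}$, etc., stated there — this is the same delicate point already handled combinatorially for $\Gr$ in Proposition \ref{prop:TrapGG} and analytically for $\Hom_V^c$ in Proposition \ref{lem:TrHom}, so the strategy is to mirror those arguments, replacing finite sums $\sum_{i\in I}e_i^*(\cdot)e_i$ by the reproducing property $\int_X\delta(z,\cdot)(\cdot)\,d\mu(z)$ of the delta kernel. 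The only genuinely new analytic ingredient beyond the finite-dimensional case is the continuity of the contraction maps, which is guaranteed by compactness of $X$ and the nuclearity results recalled in Section \ref{sec:propHom}.
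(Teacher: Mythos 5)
Your plan is viable but takes a genuinely different, and considerably longer, route than the paper. The paper does not re-verify the TraP axioms at all: it observes that $\mathcal{K}_X^\infty(k,l)$ embeds (via the volume measure) into the ambient TraP $\mathcal{K}_X(k,l)=\left(\mathcal{E}'(X)\right)^{\widehat\otimes k}\widehat\otimes\,\mathcal{E}(X)^{\widehat\otimes l}$ of Example \ref{ex:KM}, and that the integral formula for $t_{i,j}$ is precisely the restriction of the dual-pairing partial trace of Proposition \ref{lem:TrHom} to the smooth subfamily. Hence every axiom of Definition \ref{defi:Trap} is inherited, and the \emph{entire} content of the proof reduces to the single analytic claim that $t_{i,j}(K_1\otimes K_2)$ again lies in $\mathcal{K}_X^\infty(k-1,l-1)$, i.e.\ that the contraction integral of two smooth kernels over the closed manifold $X$ is smooth; this is proved by differentiating under the integral sign using compactness of $X$ and dominated convergence. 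You instead propose to check axioms 1, 2.(a)--(d) and 3.(a)--(d) from scratch via Fubini, changes of variables and Lemma \ref{lemmeaxiomessimples} -- which would work, but duplicates verifications already carried out for $\Hom_V^c$ and leaves the heaviest bookkeeping (the four cases of 3.(a), the reindexing in 3.(b) and 3.(d)) unexecuted. Notably, the fact you relegate to a closing remark (``continuity of the contraction maps, guaranteed by compactness'') is exactly the one point the paper actually proves; everything you treat as the main work is what the paper gets for free from the embedding. Your handling of $\delta$ is consistent with the paper's (which itself concedes in Remark \ref{rk:smoothkernelquasitrop} that one really only gets a quasi-TraP), so no genuine gap there -- but if you carry out your plan, I would strongly recommend adopting the inheritance argument instead, as it isolates the only statement that is actually new.
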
  
\begin{remark}\label{rk:smoothkernelquasitrop}
 Technically, $\mathcal{K}_{{X}}^\infty$ is a quasi-TraP in the sense of Subsection \ref{subsec:partial} since $t_{1,1}(I)=t_{1,1}(\delta)$ is not 
 defined. Following Subsection \ref{subsec:partial}, this quasi-TraP can be completed to a full TraP $\overline{\mathcal{K}}_{{X}}^\infty$.
\end{remark}
\begin{proof}
The unit $I_0\in\mathcal{K}_{{X}}^\infty(0,0)\simeq \C\otimes\C$ of the vertical concatenation $*=\otimes$ is the constant map defined by 
$f(x)=1$. It is the unit of $\otimes$ by bilinearity of the tensor product.

The unit $I\in\mathcal{K}_{{X}}^\infty(0,0)$ is $\delta$ by definition of the action of Dirac's distribution on smooth kernels.

 It suffices to show that $t_{i,j}(K_1\otimes K_2) (x_1, \cdots, x_{k-1}, y_1, \cdots,  y_{l-1})$ lies in  
 ${\mathcal K}_X^\infty(k-1,l-1)$.
 The axioms of the 
 TraP will then hold since they are in ${\mathcal K}_{{X}} (k, l)$ (Example \ref{ex:KM}).
 
 The existence of the integral comes from the smoothness of $K_1$ and $K_2$ and the 
 closedness of $X$. It is enough to show that the function 
 $t_{i,j}(K_1\otimes K_2):X^{k-1}\times X^{l-1}\longrightarrow \C$ is smooth. Since
 $K_1$ and $K_2$ are smooth, the map 
 \begin{equation*}
  (x_1 ,\cdots, x_{k-1},y_1,\cdots, y_k) \mapsto K_1(x_1 ,  \cdots,x_{i-1},z,x_{i}\cdots, x_{k-1})\, K_2(y_1,\cdots,y_{j-1},z,y_{j}\cdots, y_k)
 \end{equation*}
 is infinitely differentiable for any $z\in X$. Since $X$ is compact, the partial derivatives
 \begin{equation*}
  \partial_{\vec x}^{\vec\alpha}\partial_{\vec y}^{\vec \beta}K_1(x_1 ,  \cdots,x_{i-1},z,x_{i}\cdots, x_{k-1})\, K_2(y_1,\cdots,y_{j-1},z,y_{j}\cdots, y_k)
 \end{equation*}
 are bounded uniformly in $z$. We can therefore use the dominated convergence theorem to get that
 \begin{align*}
 & \int_{X} \partial_{\vec x}^{\vec\alpha}\partial_{\vec y}^{\vec \beta}K_1(x_1 ,  \cdots,x_{i-1},z,x_{i}\cdots, x_{k-1})\, K_2(y_1,\cdots,y_{j-1},z,y_{j}\cdots, y_k) \,d\mu(z) \\
  = & \partial_{\vec x}^{\vec\alpha}\partial_{\vec y}^{\vec \beta}\int_{X}K_1(x_1 ,  \cdots,x_{i-1},z,x_{i}\cdots, x_{k-1})\, K_2(y_1,\cdots,y_{j-1},z,y_{j}\cdots, y_k) \,d\mu(z) \\
  = & \partial_{\vec x}^{\vec\alpha}\partial_{\vec y}^{\vec \beta} t_{i,j}(K_1\otimes K_2) (x_1, \cdots, x_{k-1}, y_1, \cdots,  y_{l-1}).
\end{align*}
Therefore the map $t_{i,j}(K_1\otimes K_2) (x_1, \cdots, x_{k-1}, y_1, \cdots,  y_{l-1})$ is smooth.
\end{proof}
In view of the fact that the trace of a smoothing pseudodifferential operator $P$ with kernel $K$ is 
\[\mathrm{Tr}(P)= \int_X K(x,x) \, d\mu(x)\]
Corollary \ref{coro:generalised_traces} yields a generalised trace 
\begin{equation*}
 \mathrm{Tr}:\bigsqcup_{k\in  \N_0} \mathcal{K}_{{X}}^\infty(k,k)\longrightarrow \C
\end{equation*}
on smoothing pseudo-differential operators on a closed smooth finite dimensional manifold. This trace is indeed cyclic for the horizontal and vertical 
composition products of $\overline{\mathcal{K}}_{{X}}^\infty$ in the sense of Corollary \ref{coro:generalised_traces}.

\subsection{Generalised convolution of smoothing operators}

Let ${{X}}$ be a smooth finite dimensional closed Riemannian manifold. Set $X_{k,l}:=\mathcal{K}_X^\infty(k,l)$. 
Recall from Proposition \ref{prop:PdecGrc} that there exists a TraP map $\Phi:\Gacirc(P)\longrightarrow X$,
as $X$ is a TraP.
\begin{defi}
 Let $G$ be a graph decorated by $X=(\mathcal{K}_X^\infty(k,l))_{kl,\in  \N_0}$. 
 The \textbf{generalised convolution} associated to $G$ is the 
 smoothing operator $\Phi(G)\in\overline{\mathcal{K}}_{{X}}^\infty$ given by the image of $G$ under 
 $\Phi$.
\end{defi}

The name generalised convolution is justified by the following remark.
\begin{remark}
 Let $G$ be a ladder graph decorated by $X=(\mathcal{K}_X^\infty(k,l))_{kl,\in  \N_0}$  i.e., a graph such that $I(G)=O(G)=[1]$, 
 $IO(G)=L(G)=\emptyset$,  $V(G)=\{v_1,\cdots,v_n\}$, $E(G)=\{e_1,\cdots,e_{n-1}\}$ and the source and target maps defined by
\begin{align*}
&& s_G(1)&=v_n,&t_G(1)&=v_1,\\
&\forall i\in [n-1],&s_G(e_i)&=v_i,&t_G(e_i)&=v_{i+1}.
 \end{align*}
Here is a graphical representation of this graph:
 \[\xymatrix{1\ar[r]& \rond{v_1}\ar[r]&\ldots \ar[r]&\rond{v_n}\ar[r]&1}\]
  Let $O_i$ be the smoothing pseudo-differential operator defined by the kernel $K_i$ that decorates the vertex $v_i$: 
 $K_i:=\mathrm{dec}(v_i)$ for any 
 $v_i\in[n]$. Then the generalised convolution associated to the graph $G$ is the convolution of the kernels $K_i,\cdots,K_n$, which is the 
 kernel of the smoothing pseudo-differential 
 operator $O_1\circ\cdots\circ O_n$. 
\end{remark}
The previous remark  leads to the following statement.
\begin{cor}
 The convolution of smoothing pseudo-differential operators is well-defined and associative.
\end{cor}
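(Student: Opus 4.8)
The statement asserts two facts about the convolution of smoothing pseudo-differential operators on a closed Riemannian manifold $X$: that it is well-defined and that it is associative. The plan is to reduce both to the TraP structure on $\mathcal{K}_X^\infty$ (or rather its completion $\overline{\mathcal{K}}_X^\infty$) together with the ProP structure it induces via Proposition \ref{prop:trop_prop}, and then to identify the convolution of kernels with the vertical composition $\circ$ in that ProP applied to $(1,1)$-kernels.

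First I would recall that by Lemma \ref{lem:compo_dual_pairing} and the identification $\mathcal{K}_X^\infty(1,1)\simeq \mathcal{E}(X)\,\widehat\otimes\,\mathcal{E}(X)$ from Proposition \ref{prop:prod_function}, a smooth kernel $K$ corresponds to a smoothing operator $L_K$, and the kernel of $L_{K_1}\circ L_{K_2}$ is precisely
\[
(K_1 \star K_2)(x,y) = \int_X K_1(x,z)\, K_2(z,y)\, d\mu(z),
\]
which is exactly $t_{2,1}(K_1 * K_2)$ in the TraP $\mathcal{K}_X^\infty$, i.e. the vertical composition $K_1 \circ K_2$ of the ProP built from the TraP as in Proposition \ref{prop:trop_prop} (specialised to $k=l=m=1$). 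The \emph{well-definedness} then follows from the fact, established in the proof of the preceding Proposition, that $t_{i,j}$ maps $\mathcal{K}_X^\infty(k,l)$ into $\mathcal{K}_X^\infty(k-1,l-1)$: in particular $t_{2,1}$ sends $\mathcal{K}_X^\infty(1,1)\,\widehat\otimes\,\mathcal{K}_X^\infty(1,1) \simeq \mathcal{K}_X^\infty(2,2)$ into $\mathcal{K}_X^\infty(1,1)$, so the convolution of two smooth kernels is again a smooth kernel (and when one of the arguments is the singular kernel $\delta = I$, the axiom for the unit of the partial trace gives $K \star \delta = \delta \star K = K$, handled in $\overline{\mathcal{K}}_X^\infty$). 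The \emph{associativity} is then immediate: the ProP axiom 3.(a) of Definition \ref{def:prop}, which holds for the ProP structure of Proposition \ref{prop:trop_prop} on any TraP, gives $(K_1\circ K_2)\circ K_3 = K_1\circ(K_2\circ K_3)$, and transporting this along the isomorphism $K\mapsto L_K$ gives associativity of $\star$ on the level of operators. One can also phrase it purely on kernels via Fubini, the hypotheses of which are met because $X$ is compact and the kernels are smooth, hence bounded together with all derivatives — but invoking the ProP axiom is cleaner and avoids repeating the analytic estimates.

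Concretely, the steps in order are: (1) recall the identification $\mathcal{K}_X^\infty(1,1)\simeq \mathcal{E}(X\times X)\cup\{\delta\}$ and the correspondence $K\leftrightarrow L_K$; (2) observe that $L_{K_1}\circ L_{K_2}$ has kernel $t_{2,1}(K_1 * K_2) = K_1 \circ K_2$, the ProP vertical composition from Proposition \ref{prop:trop_prop}; (3) conclude well-definedness from the stability statement $t_{i,j}:\mathcal{K}_X^\infty(k,l)\to\mathcal{K}_X^\infty(k-1,l-1)$ proved just above, noting the role of $\delta$ as the unit $I$; (4) conclude associativity from ProP axiom 3.(a), which holds for the ProP of any (quasi-)TraP, here $\overline{\mathcal{K}}_X^\infty$. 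The main obstacle — which is really only a bookkeeping subtlety rather than a deep difficulty — is the careful treatment of the singular kernel $\delta$: since $\mathcal{K}_X^\infty$ is only a quasi-TraP ($t_{1,1}(\delta)$ is undefined), one must work in the completion $\overline{\mathcal{K}}_X^\infty$ of Subsection \ref{subsec:partial} and check that the convolution with $\delta$ corresponds to the unit axioms 3.(d) there, so that associativity of $\circ$ on $(1,1)$-elements is not spoiled by the presence of the unit; everything else is a direct application of the axioms already verified.
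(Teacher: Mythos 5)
Your proof is correct, and it rests on the same underlying fact as the paper's, but it takes a slightly different route to it. The paper's (very terse) argument transports associativity from the graph side: the generalised convolution is defined as the image of a (ladder) graph under the TraP morphism $\Phi:\Gacirc(X)\longrightarrow \overline{\mathcal{K}}_X^\infty$, the vertical composition built from the TraP structure of graphs is associative, and the morphism carries this associativity over to kernels. You instead work intrinsically in the target: $\overline{\mathcal{K}}_X^\infty$ is a TraP, hence by Proposition \ref{prop:trop_prop} a ProP, and the convolution is the vertical composition $t_{2,1}(K_1*K_2)$ of that ProP restricted to $(1,1)$-components, so associativity is just the ProP axiom; well-definedness is the stability $t_{i,j}:\mathcal{K}_X^\infty(k,l)\to\mathcal{K}_X^\infty(k-1,l-1)$ established in the preceding proposition. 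Since Proposition \ref{prop:trop_prop} is itself proved by reduction to the free TraP $\PGr(X)$, the two arguments bottom out in the same combinatorial fact about graphs; yours has the advantage of making explicit where well-definedness actually comes from (the smoothness of the partially traced kernel, plus the quasi-TraP completion to handle $\delta$), which the paper dismisses with ``follows from the definition.'' One cosmetic point: with the orientation convention of Proposition \ref{prop:trop_prop} ($q\circ p = t_{k+1,1}\circ\cdots\circ t_{k+l,l}(p*q)$ for $p\in P(k,l)$, $q\in P(l,m)$), the element $t_{2,1}(K_1*K_2)$ is $K_2\circ K_1$ rather than $K_1\circ K_2$; this swap is immaterial for both well-definedness and associativity, but worth fixing for consistency with the paper's notation.
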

\begin{proof}
 Well-definedness follows from the definition. The associativity follows from the fact that the vertical composition build from the 
 TraP structure of graphs is associative, 
 together with the fact that $\phi_{\mathrm{Id}}$ is a morphism of TraP. 
\end{proof}

\appendix

\section{Appendix: topologies on tensor products} \label{section:topologies_tens_prod}

Tensor products ot topological spaces can be equipped with various topologies. A first possibility is the so-called \textbf{$\epsilon$-topology}; \cite[Definition 43.1]{Treves67}. For two   topological vector spaces $E$ and $F$, one 
can show (\cite[Proposition 42.4]{Treves67}) the isomorphism  of vector spaces
$E\otimes F\simeq \mathcal{B}^c(E'_\sigma\times F'_\sigma,\K)$ where 
$\mathcal{B}^c(E'_\sigma\times F'_\sigma,\K)$ denotes the space of continuous bilinear maps from $E'_\sigma\times F'_\sigma$ to $\K$ and $E'_\sigma$ (resp. 
$F'_\sigma$)  the topological dual of $E$ (resp. $F$) for $\sigma$, the weak topology. 

Recall that a bilinear map $f:E\times F\longrightarrow K$ is called separately continuous if, 
for any pair $(x,y)\in E\times F$, the maps $z\to f(x,z)$ and $z\to f(z,y)$ are continuous. We then clearly have that continuous bilinear maps 
build a linear subspace of the space $\mathcal{B}^{sc}(E\times F,\K)$ of separately continuous bilinear maps.

The space  $\mathcal{B}^{sc}(E\times F,\K)$ can be equipped with the topology of uniform convergence on products of equicontinuous subsets of $E'_\sigma$ with 
equicontinuous subsets of $F'_\sigma$. Recall that, for a topological space $X$ and a topological vector space $G$, a set $S$ of maps from $X$ to $G$ is 
said to be equicontinuous at $x_0\in X$ if, for any $V\subseteq G$ neighbourhood of zero, there is some neighbourhood  $V(x_0)\subseteq X$   of $x_0$, such 
that 
\begin{equation*}
 \forall f\in S,~x\in V(x_0) \Rightarrow f(x)-f(x_0)\in V.
\end{equation*}
In our case, $G$ is $\K$ and $X$ is $E_\sigma$ (resp. $F_\sigma$). This topology induces a topology on the subspace 
$\mathcal{B}^c(E'_\sigma\times F'_\sigma,\K)$ and thus on $E\otimes F$. We denote by $E\otimes_\epsilon F$ the topological vector space obtained by 
endowing $E\otimes F$ with this topology.

There is another  topology on $E\otimes F$   called the \textbf{projective topology}; 
\cite[Definition 43.2]{Treves67}.  The projective topology is defined as the strongest 
locally convex topology on $E\otimes F$ such that the canonical map $\phi:E\times F\longrightarrow E\otimes F$ is continuous. 
We write $E\otimes_\pi F$ the topological vector space obtained by 
endowing $E\otimes F$ with this topology.

The neighbourhoods of zero of the projective topology can be simply described in terms of neighbourhoods of zero in $E$ and $V$. A convex subset $S$ of 
$E\otimes F$ containing zero is a neighbourhood of zero if it exist a neighbourhood $U$ (resp. V) of zero in $E$ (resp. $F$) such that 
$U\otimes V:=\{u\otimes v|u\in U\wedge v\in V\}\subseteq S$.

\section{Appendix: definition of the partial trace maps on $\Gr$}

We give a rigorous definition of the partial trace maps on the space of graphs $\Gr$, which were only loosely defined in the bulk of the article.

Let $G\in \Gr(k,l)$ with $k,l\geqslant 1$, $i\in [k]$ and $j\in [l]$. We put $e_i=\alpha_G^{-1}(i)$
and $f_j=\beta_G^{-1}(j)$. We define the graph $G'=t_{i,j}(G)$ in the following way:
\begin{enumerate}
\item If $e_i\in I(G)$ and $f_j\in O(G)$, then:
\begin{align*}
V(G')&=V(G),&E(G')&=E(G)\sqcup \{(e_i,f_j)\},\\
I(G')&=I(G)\setminus\{e_i\},& O(G')&=O(G)\setminus\{f_j\},\\
IO(G')&=IO(G),&L(G')&=L(G),\\
s_{G'}(e)&=\begin{cases}
s_G(f_j)\mbox{ if }e=(e_i,f_j),\\
s_G(e)\mbox{ otherwise},
\end{cases}&
t_{G'}(e)&=\begin{cases}
t_G(e_i)\mbox{ if }e=(e_i,f_j),\\
t_G(e)\mbox{ otherwise},
\end{cases}\\
\alpha_{G'}(e)&=\begin{cases}
\alpha_G(e)\mbox{ if }\alpha_G(e)<i,\\
\alpha_G(e)-1\mbox{  if }\alpha_G(e)\geqslant i,
\end{cases}&
\beta_{G'}(e)&=\begin{cases}
\beta_G(e)\mbox{ if }\beta_G(e)<j,\\
\beta_G(e)-1\mbox{ if }\beta_G(e)\geqslant j.
\end{cases}
\end{align*}

\item If $e_i\in IO(G)$ and $f_j\in O(G)$, then:
\begin{align*}
V(G')&=V(G),&E(G')&=E(G),\\
I(G')&=I(G),& O(G')&=O(G)\setminus\{f_j\}\sqcup \{(e_i,f_j,)\},\\
IO(G')&=IO(G)\setminus\{e_i\},&L(G')&=L(G),\\
s_{G'}(e)&=\begin{cases}
s_G(f_j)\mbox{ if }e=(e_i,f_j),\\
s_G(e)\mbox{ otherwise},
\end{cases}&
t_{G'}(e)&=t_G(e),\\
\alpha_{G'}(e)&=\begin{cases}
\alpha_G(e)\mbox{ if }\alpha_G(e)<i,\\
\alpha_G(e)-1\mbox{  if }\alpha_G(e)\geqslant i,
\end{cases}&
\beta_{G'}(e)&=\begin{cases}
\beta_G(e_i)\mbox{ if }e=(e_i,f_j)\mbox{ and }\beta_G(e_i)<j,\\
\beta_G(e_i)-1\mbox{ if }e=(e_i,f_j)\mbox{ and }\beta_G(e_i)\geqslant j,\\
\beta_G(e)\mbox{ if }e\neq(e_i,f_j)\mbox{ and }\beta_G(e)<j,\\
\beta_G(e)-1\mbox{ if }e\neq(e_i,f_j)\mbox{ and }\beta_G(e)\geqslant j.
\end{cases}
\end{align*}

\item If $e_i\in I(G)$ and $f_j\in IO(G)$, then:
\begin{align*}
V(G')&=V(G),&E(G')&=E(G),\\
I(G')&=I(G)\setminus\{e_i\}\sqcup \{(e_i,f_j)\},& O(G')&=O(G),\\
IO(G')&=IO(G)\setminus\{f_j\},&L(G')&=L(G),\\
s_{G'}(e)&=s_G(e),&
t_{G'}(e)&=\begin{cases}
t_G(e_i)\mbox{ if }e=(e_i,f_j),\\
t_G(e)\mbox{ otherwise},
\end{cases}\\
\alpha_{G'}(e)&=\begin{cases}
\alpha_G(f_i)\mbox{ if }e=(e_i,f_j)\mbox{ and }\alpha_G(f_j)<i,\\
\alpha_G(f_i)-1\mbox{ if }e=(e_i,f_j)\mbox{ and }\alpha_G(f_j)\geqslant i,\\
\alpha_G(e)\mbox{ if }e\neq(e_i,f_j)\mbox{ and }\alpha_G(e)<i,\\
\alpha_G(e)-1\mbox{ if }e\neq(e_i,f_j)\mbox{ and }\alpha_G(e)\geqslant i,
\end{cases}&
\beta_{G'}(e)&=\begin{cases}
\beta_G(e)\mbox{ if }\beta_G(e)<j,\\
\beta_G(e)-1\mbox{ if }\beta_G(e)\geqslant j.
\end{cases}
\end{align*}

\item If $e_i\in IO(G)$, $f_j\in IO(G)$ and $e_i\neq f_j$, then:
\begin{align*}
V(G')&=V(G),&E(G')&=E(G),\\
I(G')&=I(G),& O(G')&=O(G),\\
IO(G')&=\{(e_i,f_j)\}\sqcup IO(G)\setminus\{e_i,f_j\},&L(G')&=L(G),\\
s_{G'}(e)&=s_G(e),&
t_{G'}(e)&=t_G(e),\\
\alpha_{G'}(e)&=\begin{cases}
\alpha_G(f_i)\mbox{ if }e=(e_i,f_j)\mbox{ and }\alpha_G(f_j)<i,\\
\alpha_G(f_i)-1\mbox{ if }e=(e_i,f_j)\mbox{ and }\alpha_G(f_j)\geqslant i,\\
\alpha_G(e)\mbox{ if }e\neq(e_i,f_j)\mbox{ and }\alpha_G(e)<i,\\
\beta_G(e)-1\mbox{ if }e\neq(e_i,f_j)\mbox{ and }\alpha_G(e)\geqslant i,
\end{cases}\\
\beta_{G'}(e)&=\begin{cases}
\beta_G(e_i)\mbox{ if }e=(e_i,f_j)\mbox{ and }\beta_G(e_i)<j,\\
\beta_G(e_i)-1\mbox{ if }e=(e_i,f_j)\mbox{ and }\beta_G(e_i)\geqslant j,\\
\beta_G(e)\mbox{ if }e\neq(e_i,f_j)\mbox{ and }\beta_G(e)<j,\\
\beta_G(e)-1\mbox{ if }e\neq(e_i,f_j)\mbox{ and }\beta_G(e)\geqslant j.
\end{cases}
\end{align*}

\item If $e_i\in IO(G)$, $f_j\in IO(G)$ and $e_i=f_j$, then:
\begin{align*}
V(G')&=V(G),&E(G')&=E(G),\\
I(G')&=I(G),& O(G')&=O(G),\\
IO(G')&=IO(G)\setminus\{e_i,f_j\},&L(G')&=L(G)\sqcup \{(e_i,f_j)\},\\
s_{G'}(e)&=s_G(e),&
t_{G'}(e)&=t_G(e),\\
\alpha_{G'}(e)&=\begin{cases}
\alpha_G(e)\mbox{ if }\alpha_G(e)<i,\\
\alpha_G(e)-1\mbox{  if }\alpha_G(e)\geqslant i,
\end{cases}&
\beta_{G'}(e)&=\begin{cases}
\beta_G(e)\mbox{ if }\beta_G(e)<j,\\
\beta_G(e)-1\mbox{ if }\beta_G(e)\geqslant j.
\end{cases}
\end{align*}
\end{enumerate}

\section{Appendix: full proofs}

\subsection{Proof of Theorem \ref{thm:freeness_Gr}} \label{appendix:proof_freeness_Gr}

\begin{proof}
Let us define $\Phi(G)$ for any graph $G$ by induction on $n=|V(G)|$, such that for any permutation
$\sigma\in \sym_{i(G)}$, $\tau \in \sym_{o(G)}$,
\[\Phi(\sigma\cdot G\cdot \tau)=\sigma\cdot \Phi(G)\cdot \tau.\]
If $n=0$, there exists a unique permutation $\gamma\in \sym_k$ such that $G=\gamma\cdot I_k$. We put
\[\Phi(G)=\gamma\cdot I_k,\]
where we used the same notation $I_k$ for the units of $\textbf{Gr}$ and $P$.

If $\sigma,\tau\in \sym_k$:
\begin{align*}
\Phi(\sigma \cdot G\cdot \tau)&=\Phi((\sigma\gamma)\cdot I_k\cdot \tau)\\
&=\Phi((\sigma\gamma \tau)\cdot I_k)\\
&=(\sigma\gamma \tau)\cdot I_k\\
&=\sigma\cdot(\gamma\cdot I_k)\cdot \tau\\
&=\sigma \cdot \Phi(G)\cdot \tau.
\end{align*}
Let us assume that $\Phi(G')$ is defined for any graph $G'$ such that $|V(G')|<n$.
Let \[G=\gamma \cdot (G_1*\ldots *G_k*I_p)\circ G_0*\grapheo^{*\ell}\]
be a  minimal decomposition of $G$.  {If $G$ is indecomposable, we set $\Phi(G)=\phi(G)$. Otherwise,} as $V(G_1)\neq \emptyset$, $|V(G_0)|<n$. We put:
\[\Phi(G)=\gamma \cdot  (\phi(G_1)*\ldots *\phi(G_k)*I_p)\circ\Phi(G_0)*\phi(\grapheo)^{*\ell}.\]
Let us first prove that this does not depend on the choice of the minimal decomposition of $G$.
Starting from a minimal decomposition of $G$, one obtains all possible minimal decompositions
of $G$ by a finite sequence of operations of type A and B:
\begin{itemize}
\item Type A: changing the indexations of the input and output edges of the graphs $G_i$.
We obtain a minimal decomposition $G=\gamma' \cdot (G'_1*\ldots*G'_k*I_p)\circ G'_0*\grapheo^{*\ell}$,
such that there exists permutations $\alpha_i$, $\beta_i$, with:
\begin{align*}
G'_i&=\alpha_i\cdot G_i\cdot \beta_i,\\
G'_0&=(\beta_1^{-1}\otimes \ldots \otimes \beta_k^{-1}\otimes \mathrm{Id}_p)\cdot G_0,\\
\alpha'&=\alpha(\alpha_1^{-1}\otimes \ldots \otimes \alpha_k^{-1}\otimes \mathrm{Id}_p).
\end{align*}
\item Type B: permuting $G_l$ and $G_{l+1}$  for  $l\in [k-1]$. 
We obtain another minimal decomposition $G=\gamma' \cdot (G'_1*\ldots*G'_k*I_p)\circ G'_0*\grapheo^{*\ell}$, with:
\begin{align*}
G'_i&=\begin{cases}
G_{l+1}\mbox{ if }i=l,\\
G_l\mbox{ if }j=l+1,\\
G_i\mbox{ otherwise};
\end{cases}\\
G'_0&= (\mathrm{Id}_{i(G_1)+\ldots+i(G_{l-1})}\otimes c_{i(G_{l+1}),i(G_l)}\otimes \mathrm{Id}_{i(G_{l+2})+\ldots+i(G_k)+p})
\circ G_0,\\
\gamma'&=\gamma(\mathrm{Id}_{o(G_1)+\ldots+o(G_{l-1})}\otimes c_{o(G_l),o(G{l+1})}\otimes
 \mathrm{Id}_{o(G_{l+2})+\ldots+o(G_k)+p}).
\end{align*}
\end{itemize}
Let $G=\gamma \cdot (G'_1*\ldots*G'_k*I_p)\circ G'_0*\grapheo^{*\ell'}$ be another minimal decomposition of $G$.
Then $\ell=\ell'$ is the number of loops of $G$. It is enough to prove that
\[\gamma \cdot  (\phi(G_1)*\ldots *\phi(G_k)*I_p)\circ\Phi(G_0)
=\gamma' \cdot  (\phi(G_1')*\ldots *\phi(G_k')*I_p)\circ\Phi(G_0').\]
We can assume that $G'$ is obtained from $G$ by a single operation of type A or of type B.
If it is of type A:
\begin{align*}
&\gamma' \cdot (\phi(G'_1)*\ldots*\phi(G'_k)*I_p)\circ \Phi(G'_0)\\
&=\gamma\cdot(\alpha_1^{-1}\otimes \ldots \otimes \alpha_k^{-1}\otimes \mathrm{Id}_p)\cdot
(\phi(\alpha_1\cdot G_1\cdot \beta_1)\otimes \ldots \otimes \phi(\alpha_k\cdot G_k\cdot \beta_k)*I_p)\\
&\circ \Phi((\beta_1^{-1}\otimes \ldots \otimes \beta_k^{-1}\otimes \mathrm{Id}_p)\cdot G_0)\\
&=\gamma\cdot(\alpha_1^{-1}\otimes \ldots \otimes \alpha_k^{-1}\otimes \mathrm{Id}_p)\cdot
(\alpha_1\cdot \phi(G_1)\cdot \beta_1\otimes \ldots \otimes\alpha_k\cdot  \phi(G_k)\cdot \beta_k*I_p)\\
&\circ((\beta_1^{-1}\otimes \ldots \otimes \beta_k^{-1}\otimes \mathrm{Id}_p)\cdot \Phi( G_0))\\
&=\gamma(\alpha_1^{-1}\otimes \ldots \otimes \alpha_k^{-1}\otimes \mathrm{Id}_p)
(\alpha_1\otimes \ldots \otimes \alpha_k\otimes \mathrm{Id}_p)\cdot
(\phi(G_1)\otimes \ldots \otimes\phi(G_k)*I_p)\\
&\circ  (\beta_1\otimes \ldots \otimes \beta_k\otimes \mathrm{Id}_p)(\beta_1^{-1}\otimes \ldots \otimes \beta_k^{-1}\otimes \mathrm{Id}_p)\cdot \Phi(G_0)\\
&=\gamma\cdot(\phi(G_1)\otimes \ldots \otimes\phi(G_k)*I_p)\circ \Phi(G_0).
\end{align*}
If it is of type B:
\begin{align*}
&\gamma' \cdot (\phi(G'_1)*\ldots*\phi(G'_k)*I_p)\circ \Phi(G'_0)\\
&=\gamma (\mathrm{Id}_{o(G_1)+\ldots+o(G_{l-1})}\otimes c_{o(G_l),o(G_{l+1})}
\otimes \mathrm{Id}_{o(G_{l+2})+\ldots+o(G_k)+p})\\
& \cdot(\phi(G_1)*\ldots*\phi(G_{l+1})*\phi(G_l)*\ldots*\phi(G_k)*I_p)\\
&\circ\Phi((\mathrm{Id}_{i(G_1)+\ldots+i(G_{l-1})}\otimes c_{i(G_{l+1}),i(G_l)}\otimes \mathrm{Id}_{i(G_{l+2})+\ldots+i(G_k)+p})
\cdot G_0)\\
&\gamma' \cdot (\phi(G'_1)*\ldots*\phi(G'_k)*I_p)\circ \Phi(G'_0)\\
&=\gamma (\mathrm{Id}_{o(G_1)+\ldots+o(G_{l-1})}\otimes c_{o(G_l),o(G_{l+1})}
\otimes \mathrm{Id}_{o(G_{l+2})+\ldots+o(G_k)+p})\\
& \cdot(\phi(G_1)*\ldots*\phi(G_{l+1})*\phi(G_l)*\ldots*\phi(G_k)*I_p)\\
&\circ(\mathrm{Id}_{i(G_1)+\ldots+i(G_{l-1})}\otimes c_{i(G_{l+1}),i(G_l)}\otimes \mathrm{Id}_{i(G_{l+2})+\ldots+i(G_k)+p})
\cdot\Phi(G_0)\\
&=\gamma \cdot (\phi(G_1)*\ldots*c_{o(G_l),o(G_{l+1})}\cdot(\phi(G_{l+1})*\phi(G_l))\cdot c_{i(G_{l+1}),i(G_l)}
*\ldots*\phi(G_k)*I_p)\circ\Phi(G_0)\\
&=\gamma \cdot (\phi(G_1)*\ldots*\phi(G_l)*\phi(G_{l+1}))*\ldots*\phi(G_k)*I_p)\circ\Phi(G_0)\\
&=\gamma\cdot(\phi(G_1)\otimes \ldots \otimes\phi(G_k)*I_p)\circ \Phi(G_0).
\end{align*}
So $\Phi(G)$ is well-defined. Let $\sigma \in \sym_{o(G)}$ and $\tau\in \sym_{i(G)}$. We put $H=\sigma\cdot G\cdot \tau$.
A minimal decomposition of $H$ is given by:
\begin{align*}
H_0&=G_0\cdot \tau,&H_i&=G_i\mbox{ if }i\in [k],&\gamma'&=\sigma\gamma.
\end{align*}
Hence:
\begin{align*}
\Phi(H)&=\sigma\gamma\cdot (\phi(G_1)*\ldots*\phi(G_k)*I_p)\circ \Phi(G_0\cdot \tau)*\phi(\grapheo)^{*\ell}\\
&=\sigma\gamma\cdot (\phi(G_1)*\ldots*\phi(G_k)*I_p)\circ \Phi(G_0)\cdot \tau*\phi(\grapheo)^{*\ell}\\
&=\sigma\cdot(\gamma\cdot (\phi(G_1)*\ldots*\phi(G_k)*I_p)\circ \Phi(G_0))\cdot \tau*\phi(\grapheo)^{*\ell}\\
&=\sigma\cdot \Phi(G)\cdot \tau.
\end{align*}
Consequently, we have defined a map $\Phi:\Gr\longrightarrow P$, extending the morphism  $\phi$ of $\sym\times \sym^{op}$-modules.
Let us prove that it is compatible with both concatenations.

Let $G$ and $G'$ be two graphs, both  with no loop. Let us prove that $\Phi(G*G')=\Phi(G)*\Phi(G')$ by induction on $n'=|V(G')|$.
 If $n'=0$, there exists $\tau' \in \sym_q$ and $\ell'\in  \N_0$, such that $G'=\sigma'\cdot I_q$. 
We proceed by induction on $n=|V(G)|$. If $n=0$, there exists $\tau\in \sym_p$, such that 
$G=\sigma\cdot I_p$. Then $G*G'=(\sigma\otimes \sigma')\cdot I_{p+q}$, and:
\begin{align*}
\Phi(G*G')&=(\sigma\otimes \sigma')\cdot I_{p+q}\\
&=(\sigma\otimes \sigma')\cdot (I_p*I_q)\\
&=(\sigma \cdot I_p)*(\sigma'\cdot I_q)\\
&=\Phi(G)*\Phi(G').
\end{align*} 
Otherwise, let $G=\gamma\cdot (G_1*\ldots*G_k*_p)\circ G_0$ be a minimal decomposition of $G$.
A minimal decomposition of $G*G'$ is:
\[G*G'=(\alpha\otimes \sigma')\cdot (G_1*\ldots* G_k*I_{p+q}) \circ (G_0*I_q),\]
so, using the induction hypothesis on $G_0$:
\begin{align*}
\Phi(G*G')&=(\gamma\otimes \sigma')\cdot (\phi(G_1)*\ldots*\phi(G_k)*I_{p+q})
\circ \Phi(G_0*I_q)\\
&=(\gamma\otimes \sigma')\cdot\phi(G_1)*\ldots*\phi(G_k)*I_p*I_q)
\circ (\Phi(G_0)*I_q)\\
&=(\gamma\cdot (\phi(G_1)*\ldots*\phi(G_k)*I_p)\circ \Phi(G_0))*(\sigma'\cdot I_q)\\
&=\Phi(G)*\Phi(G').
\end{align*}
So the result holds at rank $n'=0$.

Let us assume the results hold at any rank  $<n'$. Let us consider minimal decompositions of $G$ and $G'$:
\begin{align*}
G&=\gamma \cdot (G_1*\ldots*G_k*I_p)\circ G_0,&G'&=\gamma' \cdot  (G'_1*\ldots*G'_l*I_q)\circ G'_0,
\end{align*}
with the convention $k=0$ if $V(G)=\emptyset$. We obtain a minimal decomposition of $G*G'$:
\begin{align*}
G*G'&=(\gamma\otimes \gamma')(\mathrm{Id}_{o(G_1)+\ldots+o(G_k)}\otimes c_{p,o(G'_1)+\ldots+o(G'_l)}\otimes \mathrm{Id}_q)\\
&\cdot (G_1*\ldots*G_k*G'_1*\ldots*G'_l*I_{p+q})\\
&\circ ((\mathrm{Id}_{i(G_1)+\ldots+i(G_k)}\otimes c_{i(G'_1)+\ldots+i(G'_l),p}\otimes \mathrm{Id}_q)) \cdot (G_0*G'_0)).
\end{align*}
We apply the induction assumption $\Phi(G*G')=\Phi(G)*\Phi(G')$ for  $|V(G')|<n'$  to  $G'_0$ whose number of vertices is smaller than that of 
$G'$ and hence smaller than $n'$.
\begin{align*}
\Phi(G*G')
&=(\gamma\otimes \gamma')(\mathrm{Id}_{o(G_1)+\ldots+o(G_k)}\otimes c_{p,o(G'_1)+\ldots+o(G'_l)}\otimes \mathrm{Id}_q)\\
&\cdot (\phi(G_1)*\ldots*\phi(G_k)*\phi(G'_1)*\ldots*\phi(G'_l)*I_{p+q})\\
&\circ \Phi((\mathrm{Id}_{i(G_1)+\ldots+i(G_k)}\otimes c_{i(G'_1)+\ldots+i(G'_l),p}\otimes \mathrm{Id}_q)) \cdot (G_0*G'_0))\\
&=(\gamma\otimes \gamma')(\mathrm{Id}_{o(G_1)+\ldots+o(G_k)}\otimes c_{p,o(G'_1)+\ldots+o(G'_l)}\otimes \mathrm{Id}_q)\\
&\cdot (\phi(G_1)*\ldots*\phi(G_k)*\phi(G'_1)*\ldots*\phi(G'_l)*I_{p+q})\\
&\circ (\mathrm{Id}_{i(G_1)+\ldots+i(G_k)}\otimes c_{i(G'_1)+\ldots+i(G'_l),p}\otimes \mathrm{Id}_q)) \cdot\Phi(G_0*G'_0)\\
&=(\gamma\otimes \gamma')(\mathrm{Id}_{o(G_1)+\ldots+o(G_k)}\otimes c_{p,o(G'_1)+\ldots+o(G'_l)}\otimes \mathrm{Id}_q)\\
&\cdot (\phi(G_1)*\ldots*\phi(G_k)*\phi(G'_1)*\ldots*\phi(G'_l)*I_p*I_q)\\
&\circ (\mathrm{Id}_{i(G_1)+\ldots+i(G_k)}\otimes c_{i(G'_1)+\ldots+i(G'_l),p}\otimes \mathrm{Id}_q)) \cdot(\Phi(G_0)*\Phi(G'_0))\\
&=(\gamma\otimes \gamma')\cdot
(\phi(G_1)*\ldots*\phi(G_k)*I_p*\phi(G'_1)*\ldots*\phi(G'_l)*I_q)\circ(\Phi(G_0)*\Phi(G'_0))\\
&=(\gamma\cdot(\phi(G_1)*\ldots*\phi(G_k)*I_p)\circ \Phi(G_0))
*(\gamma' \cdot(\phi(G'_1)*\ldots*\phi(G'_l)*I_q)\circ(\Phi(G'_0))\\
&=\Phi(G)*\Phi(G').
\end{align*}
So if $G$ and $G'$ are both graphs with no loop, $\Phi(G*G')=\Phi(G)*\Phi(G')$. \\

Let $G$, $G'$ be two graphs, both with no loop. Let us prove that $\Phi(G'\circ G)=\Phi(G')\circ \Phi(G)$. 
We proceed by induction on $n=|V(G)|+|V(G')|$. If $V(G')=\emptyset$,
there exists a permutation $\sigma\in \sym_p$ such that $G'=\sigma\cdot I_k$. Then:
\begin{align*}
\Phi(G'\circ G)&=\Phi(\sigma\cdot G)=\sigma\cdot \Phi(G)=\sigma \cdot (I_p\circ \Phi(G))=(\sigma\cdot I_p)\circ \Phi(G)
=\Phi(G')\circ \Phi(G).
\end{align*}
Similarly, if $V(G)=\emptyset$, $\Phi(G'\circ G)=\Phi(G')\circ \Phi(G)$. Thus we have proved the cases $n=0$ and $1$.

Let us assume it holds up to rank $N$ and take $G$ and $G'$ such that $n=N+1$. By the previous argument, 
$\Phi(G\circ G')=\Phi(G)\circ \Phi(G')$ if $V(G)=\emptyset$ or $V(G')=\emptyset$.
We now assume that $V(G)$ and $V(G')$ are nonempty. 
Let us consider minimal decompositions of $G$ and $G'$:
\begin{align*}
G&=\gamma \cdot(G_1*\ldots*G_k*I_p)\circ G_0&
G'&=\gamma' \cdot  (G'_1*\ldots*G'_l*I_q)\circ G'_0.
\end{align*}
In $G'\circ G$, the output edges of $G$ are glued with an input or an input-output edge of $G'$.
In particular, for any $i$,  output edges of $G_i$ are glued with input edges or input-output edges of $G'$.
Up to a change of indexation we assume that there is some $r$ such that:
\begin{itemize}
\item For all $i\leq r$, at least one output edge of $G_i$ is glued with an input edge of $G'$.
\item If $i>r$, all output edges of $G_i$ are glued with input-output edges of $G'$. 
\end{itemize}
\textit{A particular sub-case}. We assume that the input-output edges of $G'$ glued with an output of one of the 
$G_i$ are the input edges of $G'$ with the greatest indices. 
Then $G'_0=G''_0*I_{s+o(G_{r+1})+\ldots+o(G_k)}$
for a certain $s$. Moreover, $\gamma$ can be written as $\gamma=\gamma_1\otimes \gamma_2$, such that a minimal
decomposition of $H=G'\circ G$ is given by:
\begin{align*}
H_0&=(\mathrm{Id}_{i(G'_1)+\ldots+i(G'_l)}\otimes c_{i(G_{r+1})+\ldots+i(G_k)+p,s})\cdot G'_0\\
&\circ (\gamma_1\cdot (G_1*\ldots*G_r *I_{i(G_{r+1})+\ldots+i(G_k)+p})\cdot G_0,\\
(H_1,\ldots,H_m)&=(G'_1,\ldots,G'_l,G_{r+1},\ldots,G_k),\\
\gamma''&=\gamma'(\mathrm{Id}_{o(G'_1)+\ldots+o(G'_l)}\otimes c_{s,o(G_{r+1})+\ldots+o(G_k)+p})
(\mathrm{Id}_{o(G'_1)+\ldots+o(G'_l)+s}\otimes \gamma_2).
\end{align*}
Applying the induction hypothesis on $G_0$ and $G'_0$:
\begin{align*}
\Phi(H)&=\gamma'(\mathrm{Id}_{o(G'_1)+\ldots+o(G'_l)}\otimes c_{s,o(G_{r+1})+\ldots+o(G_k)+p})
(\mathrm{Id}_{o(G'_1)+\ldots+o(G'_l)+s}\otimes \gamma_2)\\
&\cdot((\phi(G'_1)*\ldots *\phi(G'_l)*\phi(G_{r+1}*\ldots*\phi(G_k))\\
&\circ \Phi((\mathrm{Id}_{i(G'_1)+\ldots+i(G'_l)}\otimes c_{i(G_{r+1})+\ldots+i(G_k)+p,s})\cdot G'_0\\
&\circ (\gamma_1\cdot (\phi(G_1)*\ldots*\phi(G_r) *I_{i(G_{r+1})+\ldots+i(G_k)+p})\cdot G_0)\\
&=\gamma'(\mathrm{Id}_{o(G'_1)+\ldots+o(G'_l)}\otimes c_{s,o(G_{r+1})+\ldots+o(G_k)+p})
(\mathrm{Id}_{o(G'_1)+\ldots+o(G'_l)+s}\otimes \gamma_2)\\
&\cdot((\phi(G'_1)*\ldots *\phi(G'_l)*\phi(G_{r+1}*\ldots*\phi(G_k))\\
&\circ (\mathrm{Id}_{i(G'_1)+\ldots+i(G'_l)}\otimes c_{i(G_{r+1})+\ldots+i(G_k)+p,s})\cdot \Phi(G'_0)\\
&\circ (\gamma_1\cdot (\phi(G_1)*\ldots*\phi(G_r) *I_{i(G_{r+1})+\ldots+i(G_k)+p})\cdot \Phi(G_0))\\
&=(\gamma\cdot (\phi(G_1)*\ldots*\phi(G_k)*I_p)\circ\Phi(G_0))
\circ(\gamma' \cdot (\phi(G'_1)*\ldots*\phi(G'_l)*I_q)\circ\Phi(G'_0))\\
&=\Phi(G)\circ \Phi(G').
\end{align*}

\textit{General case}. There exists a permutation $\sigma$, such that if $H'=G'\cdot \sigma^{-1}$
and $H=\sigma\cdot G$, then the condition of the particular sub-case holds for $(H,H')$. Then:
\begin{align*}
\Phi(G'\circ G)&=\Phi((G'\cdot \sigma^{-1}\sigma)\circ G)\\
&=\Phi((G'\cdot \sigma^{-1})\circ (\sigma\cdot G))\\
&=\Phi(G'\cdot \sigma^{-1})\circ\Phi(\sigma\cdot G)\quad\text{since the subcase holds}\\
&=(\Phi(G')\cdot \sigma^{-1})\circ(\sigma\cdot\Phi(G))\\
&=(\Phi(G')\cdot \sigma^{-1})\sigma)\cdot\Phi(G)\\
&=\Phi(G')\cdot \Phi(G).
\end{align*}
Finally, if $G$ and $G'$ are both graphs with no loop, $\Phi(G\circ G')=\Phi(G)\circ \Phi(G')$.\\

Let us finish this proof by considering loops. First, if $H$ is a graph, there exist a (unique) graph with no loop
and a (unique) integer $\ell$, such that $H=G*\grapheo^{*\ell}$.  
Let \[G=\gamma \cdot (G_1*\ldots *G_k*I_p)\circ G_0\]
be a  minimal decomposition of $G$. Then a minimal decomposition of $H$ is:
 \[H=\gamma \cdot (G_1*\ldots *G_k*I_p)\circ G_0*\grapheo^{*\ell}\],
 so
 \[\Phi(H)=\gamma \cdot (\phi(G_1)*\ldots \phi(G_k)*I_p)\circ \Phi(G_0)*\phi(\grapheo)^{\ell}
 =\Phi(G)*\phi(\grapheo)^{\ell}.\]
Hence, if $H$ and $H'$ are two graphs, let us consider graphs $G$ and $G'$ with no loop and integers $\ell$ and $\ell'$,
such that
\begin{align*}
H&=G*\grapheo^{*\ell},&H'&=G'*\grapheo^{*\ell'}.
\end{align*} 
Then $H*H'=G*G'*\grapheo^{*(\ell+\ell')}$ and $G*G'$ is a graph with no loop. Hence, by commutativity of the horizontal
concatenation of the product of $P$:
\begin{align*}
\Phi(H*H')&=\Phi(G*G')*\phi(\grapheo)^{*(\ell+\ell')}\\
&=\Phi(G)*\Phi(G')*\phi(\grapheo)^{*\ell}*\phi(\grapheo)^{*\ell'}\\
&=\Phi(G)*\phi(\grapheo)^{*\ell}*\Phi(G')*\phi(\grapheo)^{*\ell'}\\
&=\Phi(H)*\Phi(H').
\end{align*}
 So $\Phi$ is compatible with the horizontal concatenation. 
 
 If moreover, $H\in \Gr(l,m)$ and $H'\in \Gr(k,l)$, then $H\circ H'=(G\circ G')*\grapheo^{*(\ell+\ell')}$,
 and $G\circ G'$ is a graph with no loop.
 By the compatibility of the two concatenations of $P$:
 \begin{align*}
 \Phi(H\circ H')&=\Phi(G\circ G')*\phi(\grapheo)^{*(\ell+\ell')}\\
 &=(\Phi(G) \circ \Phi(G'))*\phi(\grapheo)^{*\ell}*\phi(\grapheo)^{*\ell'}\\
 &=(\Phi(G)*\phi(\grapheo)^{*\ell}) \circ( \Phi(G')*\phi(\grapheo)^{*\ell'})\\
&=\Phi(H)\circ \Phi(H').
 \end{align*}
 So $\Phi$ is compatible with the vertical concatenation.
\end{proof}

\subsection{Proof of Theorem \ref{theoFGlibre}}

\begin{proof}
We first define $\Phi(G)$ for any graph such that, if $G\in \Gr(k,l)$, for any $(\sigma,\tau)\in \sym_l\times \sym_k$, 
$\Phi(\sigma\cdot G\cdot \tau)=\sigma\cdot\Phi(G)\cdot \tau$. We proceed by induction 
on the number $N$  of internal edges of $G$. If $N=0$, then $G$ can be written (non uniquely) as
\[G=\grapheo^{*p}*\sigma\cdot(I^*q*G_{k_1,l_1}*\ldots *G_{k_r,l_r})\cdot \tau,\]
where $p,q,r\in  \N_0$ are unique, $(k_i,k_i) \in  \N_0^2$ for any $i$, unique up to a permutation,
and $\sigma \in \sym_{q+k_1+\ldots+k_r}$, $\tau\in \sym_{q+l_1+\ldots+l_r}$. 
We then put:
\[\Phi(G)=t_{1,1}(I)^{*p}*\sigma\cdot(I^{*q}*x_{k_1,l_1}*\ldots*x_{k_r,l_r})\cdot \tau.\]
Let us prove that this does not depend of the choice of the writing of $G$. As this is up to a permutation of the vertices
and of the choice of $\sigma$ and $\tau$, we can go from one decomposition of $G$ to any other one in a finite steps among the following two 
cases:
\begin{enumerate}
\item We consider two writing of $G$ of the form
\begin{align*}
G&=\grapheo^{*p}*\sigma\cdot(I^{*q}*G_{k_1,l_1}*\ldots* G_{k_i,l_i}*G_{k_{i+1},l_{i+1}} *\ldots *G_{k_r,l_r})\cdot \tau,\\
G&=\grapheo^{*p}*\sigma'\cdot(I^{*q}*G_{k_1,l_1}*\ldots*G_{k_{i+1},l_{i+1}}* G_{k_i,l_i} *\ldots *G_{k_r,l_r})\cdot \tau',
\end{align*}
with
\begin{align*}
\sigma'&=\sigma(\mathrm{Id}_{q+l_1+\ldots+l_{i-1}}\otimes c_{l_i,l_{i+1}}\otimes \mathrm{Id}_{l_{i+2}+\ldots+l_r}),\\
\tau'&=(\mathrm{Id}_{q+k_1+\ldots+k_{i-1}}\otimes c_{k_{i+1},k_i}\otimes \mathrm{Id}_{k_{i+2}+\ldots+k_r})\tau.
\end{align*}
Then, by commutativity of $*$:
\begin{align*}
&\sigma'\cdot(I^{*q}*x_{k_1,l_1}*\ldots*x_{k_r,l_r})\cdot \tau'\\
&=\sigma\cdot (I^{*q}*x_{k_1,l_1}*\ldots * c_{l_i,l_{i+1}}
\cdot(x_{k_{i+1},l_{i+1}}*x_{k_i,l_i})\cdot c_{k_{i+1},k_i}*\ldots*x_{k_r,l_r})\cdot \tau\\
&=\sigma\cdot (I^{*q}*x_{k_1,l_1}*\ldots *x_{k_i,l_i}*x_{k_{i+1},l_{i+1}}*\ldots *x_{k_r,l_r})\cdot \tau.
\end{align*}
\item We consider two writings of $G$ of the form
\begin{align*}
G&=\grapheo^{*p}*\sigma\cdot(I^{*q}*G_{k_1,l_1}*\ldots*G_{k_r,l_r})\cdot \tau,\\
G&=\grapheo^{*p}*\sigma'\cdot(I^{*q}*G_{k_1,l_1}*\ldots*G_{k_r,l_r})\cdot \tau',
\end{align*}
with
\begin{align*}
\sigma'&=\sigma (\sigma_0\otimes \sigma_1\otimes \ldots \otimes \sigma_r),&
\tau'&=(\sigma_0^{-1}\otimes \tau_1\otimes \ldots \otimes \tau_r)\tau',
\end{align*}
with $\sigma_0\in \sym_q$, $\sigma_i\in \sym_{k_i}$ and $\tau_i\in \sym_{l_i}$ if $i\geqslant 1$.
Using the commutativity of $*$ and the invariance of the $x_{k,l}$:
\begin{align*}
&\sigma'\cdot (I^{*q}*x_{k_1,l_1}*\ldots*x_{k_r,l_r})\cdot \tau'\\
&=\sigma \cdot (\sigma_0\cdot I^{*q}\cdot \sigma_0^{-1}
*\sigma_1\cdot x_{k_1,l_1}\cdot \tau_1*\ldots*\sigma_r\cdot x_{k_r,l_r}\cdot \tau_r)\cdot \tau\\
&=\sigma\cdot (I^{*q}*x_{k_1,l_1}*\ldots*x_{k_r,l_r})\cdot \tau.
\end{align*}
\end{enumerate}
Hence, $\Phi(G)$ is well-defined. Moreover, of $\tau' \in \sym_k$, $\sigma'\in \sym_l$, choosing a writing of $G$ of the form
\[G=\grapheo^{*p}*\sigma\cdot(I^{*q}*G_{k_1,l_1}*\ldots* G_{k_i,l_i}*G_{k_{i+1},l_{i+1}} *\ldots *G_{k_r,l_r})\cdot \tau,\]
a writing of $G'=\sigma'\cdot G\cdot \tau'$ is
\[\grapheo^{*p}*\sigma'\sigma\cdot(I^{*q}*G_{k_1,l_1}*\ldots*G_{k_r,l_r})\cdot \tau\tau',\]
and, by definition of $\Phi(G')$:
\begin{align*}
\Phi(G')&=t_{1,1}(I)^{*p}*\sigma'\sigma\cdot(I^{*q}*x_{k_1,l_1}*\ldots*x_{k_r,l_r})\cdot \tau\tau'\\
&=\sigma'\cdot( t_{1,1}(I)^*p*\sigma\cdot(I^{*q}*x_{k_1,l_1}*\ldots*x_{k_r,l_r})\cdot \tau)*\tau'\\
&=\sigma'\cdot \Phi(G)\cdot \tau'.
\end{align*}
Let us assume now that $\Phi(G')$ is defined for any graph with $N-1$ internal edges, for a given $N \geqslant 1$.
Let $G$ be a graph with $N$ internal edges and let $e$ be one of these edges. 
Let $G_e$ be a graph obtained by cutting this edge in two:
\begin{enumerate}
\item $V(G_e)=V(G)$.
\item $E(G_e)=E(G)\setminus \{e\}$, $I(G_e)=I(G)\sqcup \{e\}$, $O(G_e)=O(G)\sqcup \{e\}$, $IO(G_e)=IO(G)$, $L(G_e)=L(G)$.
\item $s_{G_e}=s_G$ and $t_{G_e}=t_G$.
\item For any $e'\in I(G_e)\sqcup IO(G_e)$, for any $f'\in O(G_e)\sqcup IO(G_e)$:
\begin{align*}
\alpha_{G_e}(e')&=\begin{cases}
1\mbox{ if }e'=e,\\
\alpha_{G}(e')+1\mbox{ if }e'\neq e,
\end{cases}
&\beta_{G_e}(f')&=\begin{cases}
1\mbox{ if }f'=e,\\
\beta_{G}(f')+1\mbox{ if }f'\neq e.
\end{cases}
\end{align*}
\end{enumerate}
Then $G=t_{1,1}(G_e)$ and $G_e$ has $N-1$ internal edges. We then put:
\[\Phi(G)=t_{1,1}\circ \Phi(G_e).\]
Let us prove that this does not depend of the choice of $e$. If $e'$ is another internal edge of $G$,
then:
\[(G_e)_{e'}=(12)\cdot (G_{e'})_e\cdot (12),\]
which implies, by definition of $\Phi(G_e)$ and $\Phi(G_{e'})$:
\begin{align*}
t_{1,1}\circ \Phi(G_e)&=t_{1,1}\circ t_{1,1}\circ \Phi((G_e)_{e'})\\
&=t_{1,1}\circ t_{1,1} \circ ((12)\cdot \Phi((G_{e'})_e)\cdot (12))\\
&=t_{1,1}\circ t_{2,2}\circ \Phi((G_{e'})_e)\\
&=t_{1,1}\circ t_{1,1}\circ \Phi((G_{e'})_e)\\
&=t_{1,1}\circ \Phi(G_{e'}).
\end{align*}
So $\Phi(G)$ is well-defined. Let $\sigma \in \sym_k$ and $\tau\in \sym_l$. Then:
\[(\sigma\cdot G\cdot \tau)_e=((1)\otimes \sigma)\cdot (G_e)\cdot ((1)\otimes \tau),\]
so:
\begin{align*}
\Phi(\sigma \cdot G\cdot \tau)&=t_{1,1}\circ \Phi((\sigma\cdot G\cdot \tau)_e)\\
&=t_{1,1}((1)\otimes \sigma)\cdot \Phi(G_e)\cdot ((1)\otimes \tau)\\
&=((1)\otimes \sigma)_1\cdot t_{1,1}\circ \Phi(G_e)\cdot ((1)\otimes \tau)_1\\
&=\sigma\cdot \Phi(G)\cdot \tau.
\end{align*}
where, for $\sigma\in\sym_k$ we use $\sigma_i$ for the permutation in $\sym_{k-1}$ defined by
\begin{equation*}
  \sigma_i(j) = \begin{cases}
               & \sigma(j) \quad\text{if }j\leq i-1, \\
               & \sigma(j-1)\quad\text{if }j\geq i. 
              \end{cases}
\end{equation*}
where $((1)\otimes \tau)_1$ is defined by (\ref{defalphak}).

We have therefore defined a map $\Phi:\textbf{GGr}\longrightarrow P$, compatible with the action of the symmetric groups.
Let us prove that for any graphs $G$, $G'$,
\[\Phi(G*G')=\Phi(G)*\Phi(G').\]
We proceed by induction on the number $N$ of internal edges of $G*G'$. If $N=0$, we put:
\begin{align*}
G&=\grapheo^{*p}*\sigma\cdot (I^{*q}*G_{k_1,l_1}*\ldots*G_{k_r,l_r})\cdot \tau,\\
G'&=\grapheo^{*p'}*\sigma'\cdots (I^{*q'}*G_{k'_1,l'_1}*\ldots*G_{k'_{r'},l'_{r'}})\cdot \tau'.
\end{align*}
We obtain:
\begin{align*}
G*G'&=\grapheo^{*(p+p')}*(\sigma \otimes \sigma')* (\mathrm{Id}_q\otimes c_{k_1+\ldots+k_r,q'}\otimes \mathrm{Id}_{k'_1+\ldots+k'_{r'}})\\
&\cdot (I^{q+q'}*G_{k_1,l_1}*\ldots*G_{k'_{r'},l'_{r'}})\cdot
(\mathrm{Id}_q\otimes c_{q',l_1+\ldots+l_r}\otimes \mathrm{Id}_{l'_1+\ldots+l'_{r'}}),
\end{align*}
which gives, by commutativity of $*$:
\begin{align*}
\Phi(G*G')&=t_{1,1}(I)^{*(p+p')}*(\sigma \otimes \sigma')* (\mathrm{Id}_q\otimes c_{l_1+\ldots+l_r,q'}
\otimes \mathrm{Id}_{l'_1+\ldots+l'_{r'}})\\
&\cdot (I^{q+q'}*x_{k_1,l_1}*\ldots*x_{k'_{r'},l'_{r'}})\cdot
(\mathrm{Id}_q\otimes c_{q',k_1+\ldots+k_r}\otimes \mathrm{Id}_{k'_1+\ldots+k'_{r'}})\\
&=t_{1,1}(I)^{*p}*\sigma\cdot(I^{*q}*x_{k_1,l_1}*\ldots*x_{k_r,l_r})\cdot \tau\\
&*t_{1,1}(I)^{*p'}*\sigma'\cdot(I^{*q'}*x_{k'_1,l'_1}*\ldots*x_{k'_{r'},l'_{r'}})\cdot \tau'\\
&=\Phi(G)*\Phi(G').
\end{align*}
If $N\geqslant 1$, let us take an internal edge $e$ of $G*G'$. If $e$ is an internal edge of $G$, then
$(G*G')_e=G_e*G'$, and:
\begin{equation*}
 \Phi(G*G')=t_{1,1}\circ \Phi((G*G')_e)=t_{1,1}\circ \Phi(G_e*G')=t_{1,1}(\Phi(G_e)*G')=t_{1,1}\circ \Phi(G_e)*\Phi(G')=\Phi(G)*\Phi(G').
\end{equation*}
If $e$ is an internal edge of $G'$, we obtain similarly that $\Phi(G'*G)=\Phi(G')*\Phi(G)$.
The result then 
follows from the commutativity of $*$ (axiom $2.(d)$ of Definition \ref{defi:Trap}).
So $\Phi$ is compatible with $*$. \\

It remains to prove the compatibility of $\Phi$ with the partial trace maps. By Lemma \ref{lemmemorphismes},
it is enough to prove that $\Phi$ is compatible with $t_{1,1}$. 
Let $G\in \Gr(k,l)$ be a graph, $e_1=\alpha^{-1}(1)$, $f_1=\beta^{-1}(1)$. We put $G'=t_{1,1}(G)$
and $e=\{e_1,f_1\}$ be the edge of $G'$ created in the process. There are five different cases:
\begin{enumerate}
\item If $e_1\in I(G)$ and $f_1\in O(G)$, then $e\in E(G')$ and $G'_e=G$. By construction of $\Phi(G')$:
\[\Phi\circ t_{1,1}(G)=\Phi(G')=t_{1,1}\circ \Phi(G'_e)=t_{1,1}\circ \Phi(G).\]
\item If $e_1\in IO(G)$ and $f_1\in O(G)$, let us put $j=\beta(e_1)$. Then there exists a graph $H$
such that $(1,j)\cdot G=I*H$. Then:
\begin{align*}
t_{1,1}(G)&=t_{1,1}((1,j)\cdot(I*H))=(1,\ldots,j)\cdot (t_{1,i}(I*H))=
(1,\ldots,j)\cdot H,
\end{align*}
so:
\begin{align*}
t_{1,1}\circ \Phi(G)&=t_{1,1}((1,j)\cdot (I*\Phi(H))\\
&=  (1,j)(1,\ldots,j-1)\cdot \Phi(H)\\
&=(1,\ldots,j)\cdot \Phi(H)\\
&=\Phi((1,\ldots,j)\cdot H)\\
&=\Phi\circ t_{1,1,}(G).
\end{align*}
\item If $e_1\in I(G)$ and $f_1\in IO(G)$: similar computation.
\item If $e_1,f_1\in IO(G)$, with $e_1\neq f_1$: similar computation.
\item If $e_1=f_1$ in $IO(G)$, then $G=I*H$ for a certain graph $G$ and $t_{1,1}(G)=\grapheo*H$.
Then:
\[\Phi\circ t_{1,1}(G)=\Phi(\grapheo)*\Phi(H)=t_{1,1}\circ \Phi(I)*\Phi(H)=t_{1,1}(\Phi(I)*\Phi(H)
=t_{1,1}\circ \Phi(G).\]
\end{enumerate}
So $\Phi$ is compatible with the partial trace maps. \end{proof}

\bibliographystyle{alpha}
\addcontentsline{toc}{section}{References}
\bibliography{biblio_new}

\newcommand{\etalchar}[1]{$^{#1}$}
\begin{thebibliography}{BDLGR17}

\bibitem[ABM{\etalchar{+}}01]{Segal01}
V.~I. Arnold, J.~W. Bruce, H.~K. Moffatt, R.~B. Pelz, and G.~Segal.
\newblock Topological structures in string theory.
\newblock {\em Philosophical Transactions of the Royal Society of London.
  Series A: Mathematical, Physical and Engineering Sciences},
  359(1784):1389--1398, 2001.

\bibitem[BDLGR17]{BrDaLGRe17}
C.~Brouder, V.~Dang, C.~Laurent-Gengoux, and K.~Rejzner.
\newblock Properties of field functionals and characterization of local
  functionals.
\newblock {\em Journal of Mathematical Physics}, 59, 05 2017.

\bibitem[BV68]{BV68}
J.~M. Boardman and R.~M. Vogt.
\newblock Homotopy-everything h-spaces.
\newblock {\em Bull. Amer. Math. Soc.}, 74:1117--1122, 1968.

\bibitem[BV73]{BV73}
J.~M. Boardman and R.~M. Vogt.
\newblock {\em Homotopy Invariant Algebraic Structures on Topological Spaces},
  volume 2003/30 of {\em Lecture Notes in Mathematics}.
\newblock Springer, 1973.

\bibitem[DZ17]{DangZhang17}
V.~Dang and B.~Zhang.
\newblock Renormalization of {F}eynman amplitudes on manifolds by spectral zeta
  regularization and blow-ups.
\newblock {\em arXiv:1712.03490 [math-ph]}, 12 2017.

\bibitem[Gro52]{Gr52}
A.~Grothendieck.
\newblock R\'esum\'e des r\'esultats essentiels dans la th\'eorie des produits
  tensoriels topologiques et des espaces nucl\'eaires.
\newblock {\em Annales de l'Institut Fourier}, 4:73--112, 1952.

\bibitem[Gro54]{Gr54}
A.~Grothendieck.
\newblock Produits tensoriels topologiques et espaces nucl\'eaires.
\newblock In {\em S\'eminaire Bourbaki : ann\'ees 1951/52 - 1952/53 - 1953/54,
  expos\'es 50-100}, number~2 in S\'eminaire Bourbaki, pages 193--200.
  Soci\'et\'e math\'ematique de France, 1954.
\newblock talk:69.

\bibitem[GV64]{gelfand1964}
I.M. Gelfand and N.Y. Vilenkin.
\newblock {\em Generalized Functions. Vol. 4: Applications of Harmonic
  Analysis}.
\newblock New York, 1964.

\bibitem[Hö89]{Ho89}
L.~Hörmander.
\newblock {\em The Analysis of Linear Partial Differential Operators I}.
\newblock ISSN. Springer, 1989.

\bibitem[HR12]{HaRo12}
P.~Hackney and M.~Robertson.
\newblock On the category of {PROP}s.
\newblock {\em Applied Categorical Structures}, 23, 07 2012.

\bibitem[HS08]{hida2008}
T.~Hida and S.~Si.
\newblock {\em Lectures on White Noise Functionals}.
\newblock World Scientific, 2008.

\bibitem[Ion07a]{Ionescu}
L.~Ionescu.
\newblock The {F}eynman {L}egacy.
\newblock {\em Adv. Theor. Math. Phys.}, 1:1--22, 2007.

\bibitem[Ion07b]{Ionescu07}
L.~Ionescu.
\newblock From operads and {PROP}s to feynman processes.
\newblock {\em JP Journal of Algebra, Number Theory and Applications}, 02 2007.

\bibitem[JSV96]{JSV}
Andr\'{e} Joyal, Ross Street, and Dominic Verity.
\newblock Traced monoidal categories.
\newblock {\em Math. Proc. Cambridge Philos. Soc.}, 119(3):447--468, 1996.

\bibitem[K{\"o}t69]{kothe1969}
G.~K{\"o}the.
\newblock {\em Topological vector spaces}.
\newblock Number Bd. 2 in Grundlehren der mathematischen Wissenschaften.
  Springer-Verlag, 1969.

\bibitem[Lan65]{MacLane65}
S.~Mac Lane.
\newblock Categorical algebra.
\newblock {\em Bull. Amer. Math. Soc.}, 71:40–106, 1965.

\bibitem[Mar08]{Markl}
M.~Markl.
\newblock Operads and {PROP}s.
\newblock In {\em Handbook of algebra. {V}ol. 5}, volume~5 of {\em Handb.
  Algebr.}, pages 87--140. Elsevier/North-Holland, Amsterdam, 2008.

\bibitem[May72]{May72}
J.~P. May.
\newblock {\em The Geometry of Iterated Loop Spaces}, volume 271 of {\em
  Springer Lecture Notes in Mathematics}.
\newblock Springer, 1972.

\bibitem[Mer04]{Merkulov2004}
S.~Merkulov.
\newblock {PROP} profile of deformation quantization and graph complexes with
  loops and wheels.
\newblock {\em arXiv:0412257v7}, 2004.

\bibitem[Mer06]{Merkulov2006}
S.~A. Merkulov.
\newblock P{ROP} profile of {P}oisson geometry.
\newblock {\em Comm. Math. Phys.}, 262(1):117--135, 2006.

\bibitem[Mer10a]{Merkulov2010-2}
S.~A. Merkulov.
\newblock Wheeled {P}ro(p)file of {B}atalin-{V}ilkovisky formalism.
\newblock {\em Comm. Math. Phys.}, 295(3):585--638, 2010.

\bibitem[Mer10b]{Merkulov2010}
Sergei~A. Merkulov.
\newblock Wheeled props in algebra, geometry and quantization.
\newblock In {\em European {C}ongress of {M}athematics}, pages 83--114. Eur.
  Math. Soc., Z\"{u}rich, 2010.

\bibitem[MMS09]{Merkulov2009}
M.~Markl, S.~Merkulov, and S.~Shadrin.
\newblock Wheeled {PROP}s, graph complexes and the master equation.
\newblock {\em J. Pure Appl. Algebra}, 213(4):496--535, 2009.

\bibitem[Pir01]{Pirashvili01}
T.~Pirashvili.
\newblock On the {PROP} corresponding to bialgebras.
\newblock {\em Cah. Topol. Géom. Différ. Catég.}, 43, 11 2001.

\bibitem[Tre67]{Treves67}
F.~Treves.
\newblock {\em Topological Vector Spaces, Distributions and Kernels}.
\newblock ISSN. Elsevier Science, 1967.

\bibitem[Val03]{Vallette1}
B.~Vallette.
\newblock {\em Dualit\'{e} de {K}oszul des {PROP}s}, volume 2003/30 of {\em
  Pr\'{e}publication de l'Institut de Recherche Math\'{e}matique Avanc\'{e}e
  [Prepublication of the Institute of Advanced Mathematical Research]}.
\newblock Universit\'{e} Louis Pasteur, D\'{e}partement de Math\'{e}matique,
  Institut de Recherche Math\'{e}matique Avanc\'{e}e, Strasbourg, 2003.
\newblock Dissertation, Universit\'{e} de Strasbourg I, Strasbourg, 2003.

\bibitem[Val04]{Vallette3}
B.~Vallette.
\newblock Koszul duality for {PROP}s.
\newblock {\em Comptes Rendus Math.}, 338((12)):909–914, 2004.

\bibitem[Val09]{Vallette2}
B.~Vallette.
\newblock Free monoid in monoidal abelian categories.
\newblock {\em Appl. Categ. Structures}, 17(1):43--61, 2009.

\bibitem[vdBC13]{BaCr13}
E.~van~den Ban and M.~Craivic.
\newblock {\em Analysis on manifolds}.
\newblock Lecture Notes, 2013.

\end{thebibliography}

\end{document}